\begin{document}

\theoremstyle{plain}
\newtheorem{theo}{Theorem}[section]
\newtheorem{prop}[theo]{Proposition}
\newtheorem{coro}[theo]{Corollary}
\newtheorem{conj}{Conjecture}
\newtheorem{propr}{Property}[subsection]
\newtheorem{lemm}[propr]{Lemma}
\newtheorem{fact}[propr]{Fact}
\newtheorem{proprf}[propr]{Fundamental property}

\theoremstyle{definition}
\newtheorem{exem}[propr]{Example}
\newtheorem{exems}[propr]{Examples}
\newtheorem{rema}[propr]{Remark}
\newtheorem{defi}[propr]{Definition}
\newtheorem{ques}[propr]{Question}

\title{Persistence of stratification of  normally expanded laminations }
\date {October 2007}
\author{Pierre Berger}
\maketitle
\thispagestyle{empty}

\def\IMSmarkvadjust{0 pt}
\def\IMSmarkhadjust{0 pt}
\def\IMSmarkhpadding{0 pt}
\def\IMSpubltext{Published in modified form:}
\def\SBIMSMark#1#2#3{
 \font\SBF=cmss10 at 10 true pt
 \font\SBI=cmssi10 at 10 true pt
 \setbox0=\hbox{\SBF \hbox to \IMSmarkhpadding{\relax}
                Stony Brook IMS Preprint \##1}
 \setbox2=\hbox to \wd0{\hfil \SBI #2}
 \setbox4=\hbox to \wd0{\hfil \SBI #3}
 \setbox6=\hbox to \wd0{\hss
             \vbox{\hsize=\wd0 \parskip=0pt \baselineskip=10 true pt
                   \copy0 \break%
                   \copy2 \break%
                   \copy4 \break}}
 \dimen0=\ht6   \advance\dimen0 by \vsize \advance\dimen0 by 8 true pt
                \advance\dimen0 by -\pagetotal
	        \advance\dimen0 by \IMSmarkvadjust
 \dimen2=\hsize \advance\dimen2 by .25 true in
	        \advance\dimen2 by \IMSmarkhadjust

%
%
  \openin2=publishd.tex
  \ifeof2\setbox0=\hbox to 0pt{}
  \else 
     \setbox0=\hbox to 3.1 true in{
                \vbox to \ht6{\hsize=3 true in \parskip=0pt  \noindent  
                {\SBI \IMSpubltext}\hfil\break
                \input publishd.tex 
                \vfill}}
  \fi
  \closein2
  \ht0=0pt \dp0=0pt
 \ht6=0pt \dp6=0pt
 \setbox8=\vbox to \dimen0{\vfill \hbox to \dimen2{\copy0 \hss \copy6}}
 \ht8=0pt \dp8=0pt \wd8=0pt
 \copy8
 \message{*** Stony Brook IMS Preprint #1, #2. #3 ***}
}

\SBIMSMark{2007/3}{October 2007}{}

\begin{abstract}

This manuscript complements the Hirsch-Pugh-Shub (HPS) theory on persistence of normally hyperbolic laminations and the theorem of Robinson on the structural stability of diffeomorphisms that satisfy Axiom A and the strong transversality condition (SA).

We generalize these results by introducing a geometric object: the stratification of laminations. It is a stratification whose strata are laminations. Our main theorem implies the persistence of some stratifications whose strata are normally expanded. The dynamics is a $C^r$-endomorphism of a manifold (which is possibly not invertible). The persistence means that for any $C^r$-perturbation of the dynamics, there exists a close $C^r$-stratification preserved by the perturbation.

This theorem in its elementary statement (the stratification is constituted by a unique stratum) gives the persistence of normally expanded laminations by endomorphisms, generalizing HPS theory. Another application of this theorem is the persistence, as stratifications, of submanifolds with boundary or corners normally expanded. 

Moreover, we remark that SA diffeomorphism gives a canonical stratifications: the stratification whose strata are the stable sets of basic pieces of the spectral decomposition. Our Main theorem then implies the persistence of some ``normally SA'' laminations which are not normally hyperbolic.
\end{abstract}
\newpage
\setlength{\parskip}{0pt}
\tableofcontents

\newpage\section*{Introduction}
\addtocontents{toc}{Introduction}
\subsection{Motivations}

In 1977, M. Hirsch, C. Pugh and M. Shub \cite{HPS} developed a theory which has been very useful for hyperbolic dynamical systems.
The central point of their work was to prove the $C^r$-persistence of manifolds, foliations, or more generally laminations which are $r$-normally hyperbolic and plaque-expansive, for all $r\ge 1$.

  We recall that a lamination is \emph{$r$-normally hyperbolic}, if the dynamics preserves the lamination (each leaf is sent into a leaf) and if the normal space to the leaves splits into two $Tf$-invariant subspaces, that $Tf$ contracts (or expands) $r$-times more sharply than the tangent space to the leaves. Plaque expansiveness is a generalization\footnote{For instance a normally hyperbolic lamination, whose leaves are the fibers of a bundle, is plaque-expansive.} of expansiveness to the concept of laminations. The $C^r$-persistence of such a lamination means that for any $C^r$-perturbation of the dynamics, there exists a lamination, $C^r$-close to the first, which is  preserved by the new dynamics, and such that the dynamics induced on the space of the leaves remains the same.

A direct application of this theory was the construction of an example of a robustly transitive  {diffeomorphism (every close diffeomorphism has a dense orbit) but not Anosov}.
Then their work was used for example by C. Robinson \cite{Rs} to prove the structural stability of  $C^1$-diffeomorphisms that satisfy Axiom A and the strong transversality condition.

Nowadays, this theory remains very useful in several mathematical areas such  as generic dynamical systems, differentiable dynamics, foliations theory or Lie group theory.

 Nevertheless, this theory is not optimal. There are laminations which are not normally hyperbolic but are stable. For example, let $S$ be the $2$-dimensional sphere and let $N$ be a compact manifold. Let $\mathcal L$ be the lamination structure on $N\times S$ whose leaves are the fibers of the canonical projection $N\times S\rightarrow S$. Let $f$ be the north-south dynamics on $S$. Let $F$ be the diffeomorphism on $N\times S$ equal to the product of the identity of $N$ with
$f$. One can  easily show that for any diffeomorphism $F'$ close to $F$ in the $C^1$-topology, there exists a lamination structure $\mathcal L'$ on $N\times S$ which is preserved by $F'$ and isomorphic to $\mathcal L$ by a map close to the identity. Here the lamination $\mathcal L$ is $C^1$-persistent, but is not $1$-normally hyperbolic.

Furthermore, in his thesis, M. Shub \cite{Shubthese} has shown that for a manifold $M$ and a $C^1$-endomorphism $f$, every compact set $K$ which is stable and on which $f$ is expanding, is structurally stable (any $C^1$-perturbation of $f$ preserves a compact subset, homeomorphic and $C^0$-close to $K$, such that via this homeomorphism the restriction of the dynamics to these compact sets are conjugate). By an endomorphism we mean a differentiable map, not necessarily bijective and possibly with some singularities. 

 Also, M. Viana \cite{V} has used a persistent normally expanded lamination of (co-)dimension one to build a robustly non-uniformly expanding map. However, to our knowledge, it has not been proved that a $r$-normally expanding and plaque-expansive lamination, {by an endomorphism}, is $C^r$-persistent. Yet, this result seems fundamental
in the study of endomorphisms, and should be helpful in order to reduce the gap between the understanding of endomorphisms and of diffeomorphisms (structural stability, existence of new non-uniformly expanding maps,...). 

Finally, since Ma\~ne's thesis \cite{Manethese}, we know that a  compact $C^1$-submanifold is $(1)$-normally hyperbolic
 if and only if it is
$C^1$-persistent and uniformly locally maximal ({\it i.e.} there exists a neighborhood $U$ of the submanifold $N$ such that the maximal invariant subset in $U$ of any $C^1$-perturbation is a submanifold $C^1$-close to $N$). 
However, a uniform locally maximal submanifold $N$ can be persistent as a stratified space without being normally hyperbolic. For example, assume that a planar diffeomorphism has a hyperbolic fixed point $P$ with a one-dimensional stable manifold $X$. We suppose that $X$ punctured by $\{P\}$ is contained in the repulsive basin of an expanding fixed point $R$. The set $\mathbb S$, equals to the union of $X$ and $\{R\}$, is homeomorphic to a circle. We may even define a stratification structure with $X$ and $\{R\}$ as strata. One easily shows that for any $C^1$-perturbation of the dynamics, there exists a hyperbolic fixed point $P'$ close to $P$ whose
one-dimensional stable manifold $X'$ punctured by $P'$ belongs to the repulsive basin of a fixed point $R'$ close to $R$. In particular, there is a stratification $(X',\{R'\})$ on $\mathbb S':= X'\cup \{R'\}$ which is preserved by the perturbation of the dynamics, such that $X'$ is $C^1$-close to $X$, $\{R'\}$ is close to $\{R\}$ and $\mathbb S'$ is $C^0$-close to $\mathbb S$. 

For these reasons, it seems useful and natural to ask the question of the persistence of stratifications of normally expanded laminations, in the endomorphism context. As the concept of stratification of laminations is new, we are going to define all the above terms. Then we will give several applications of the theory developed in this work. At the end of this introduction, we will formulate a more transparent special case of our theorem suitable for most of our applications.

\subsection{Stratifications of normally expanded laminations}

We recall that a lamination is a second-countable metric space $L$ locally modeled (via compatible charts) on the product of $\mathbb R^d$ with a locally compact space. The maximal set of compatible charts is denoted by $\mathcal L$.

Let $(L,\mathcal L)$ be a lamination $C^r$-embedded into a Riemannian manifold $M$. Let $f$ be a $C^r$-endomorphism of $M$, \emph{preserving} $(L,\mathcal L)$: $f$ sends each leaf of $\mathcal L$ into a leaf of $\mathcal L$, for some $r\ge 1$.
Let $T\mathcal L$ be the subbundle of $TM_{|L}$ whose fibers are the tangent spaces to the leaves of $\mathcal L$. We say that $f$ \emph{ $r$-normally expands} $(L,\mathcal L)$ if there exist $\lambda >1$ and a continuous positive function $C$ on $L$ such that for any $x\in L$, any unitary vectors $v_0\in T_x \mathcal L$ and $v_1\in (T_x\mathcal L)^\bot$, any $n\ge 0$, we have
\[\|p\circ Tf^n(v_1)\|\ge C(x)\cdot \lambda^n\cdot (1+\|Tf^n(v_0)\|^r),\]
with $p$ the orthogonal projection of $TM_{|L}$ onto $T\mathcal L^\bot$.

When $L$ is compact, it is consistent with the usual definitions of normal expansion by replacing $C$ with its minimum.

 A first result is:
\begin{theo} Let $(L,\mathcal L)$ be a lamination $C^r$-embedded into a Riemannian manifold $M$. Let $f$ be a $C^r$-endomorphism of $M$ which is $r$-normally expanding and plaque-expansive at $(L, \mathcal L)$. Let $L'$ be a precompact open subset of $L$ whose  closure is sent by $f$ into $L'$. Then the lamination structure induced by $(L, \mathcal L)$ on $L'$  is $C^r$-persistent.\end{theo}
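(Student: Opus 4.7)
I would follow the Hirsch--Pugh--Shub strategy, adapted to endomorphisms and pure normal expansion, reducing persistence to a fixed-point problem for a graph transform. First, pick an intermediate set $L''$ with $\overline{L'} \subset L'' \subset L$, precompact, and $f(\overline{L''}) \subset L''$; this is possible by an open-exhaustion argument using $f(\overline{L'}) \subset L'$. All of the analysis takes place on the compact base $\overline{L''}$, and the conclusion on $L'$ follows by restriction.

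Pick a tubular neighborhood $\mathcal{U}$ of $L''$ in $M$ and use the exponential map to identify small $C^0$-perturbations of the embedding $L'' \hookrightarrow M$ with small continuous sections $\sigma$ of the normal bundle $N\mathcal{L} \to L''$. For $f'$ that is $C^r$-close to $f$, define a graph transform $\Gamma_{f'}$ on a closed ball $B_\rho$ of the Banach space of bounded continuous sections by requiring that $\Gamma_{f'}(\sigma)(y)$ be the unique $v \in N_y\mathcal{L}$ for which $f'(\exp_y(v))$ lies on the graph of $\sigma$ above a point $y'$ of the leaf through $f(y)$, with $y'$ close to $f(y)$. The implicit function theorem supplies existence and uniqueness: the tangent-direction equation determines $y'$ from the leaf-preservation of $f$ and the $C^0$-closeness of $f'$, while the normal-direction equation is a fibered problem whose solvability and contraction constant $\lambda^{-1}$ come directly from the $1$-normal expansion of $Tf$. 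Banach's theorem applied to $\Gamma_{f'}$ on $B_\rho$ then produces a unique continuous fixed section $\sigma^*$ whose graph is preserved by $f'$.

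To upgrade regularity from $C^0$ to $C^r$, I would lift $\Gamma_{f'}$ to the bundle of $k$-jets of sections for $k=1,\ldots,r$ and apply the Hirsch--Pugh fiber contraction theorem inductively. At stage $k$ the relevant fiber contraction rate is of order $\lambda^{-1}\|Tf\|^k$, and the $r$-normal expansion hypothesis $\|p\circ Tf^n v_1\| \ge C\lambda^n(1+\|Tf^n v_0\|^r)$ is precisely what is needed to make this quantity strictly less than $1$ for every $k\le r$. Since the fixed sections at successive orders coincide by uniqueness, $\sigma^*$ is $C^r$.

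Finally, to promote $\sigma^*(L'')$ from an invariant set to a $C^r$-lamination $\mathcal{L}'$ whose induced leaf dynamics matches that of $f$, I would transport $\mathcal{L}$ by the exponential identification and invoke plaque-expansiveness, in standard HPS fashion, to resolve the ambiguity in matching plaques of $\mathcal{L}$ with the candidate plaques of $\mathcal{L}'$. Restricting to $L'$ yields the theorem. I expect the main obstacle to be the set-up of $\Gamma_{f'}$ in the endomorphism case: without global invertibility of $f'$, the graph transform must be defined by a fibered local inversion that relies critically on both leaf-preservation and normal expansion, and one has to verify that the domain of this fibered inverse can be chosen uniform in the perturbation, which is exactly where the forward-invariance condition $f(\overline{L''})\subset L''$ is essential.
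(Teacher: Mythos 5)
Your plan is sound, and its analytic core coincides with what the paper actually does — but the route differs: the paper never proves this theorem directly, instead obtaining it as the single-stratum special case of the stratified main theorem (Theorem \ref{th2} and Corollary \ref{cor2}), whose proof machinery (controlled liftings $N_k$, the tube map $Exp$, the operator $S^0$ defined by intersecting $f'^{-1}$ of an image plaque with a transversal $\mathcal F_{px}^{\eta'}$, a cone-field semi-distance for the $C^0$ contraction, an explicit contraction on the jet bundles $P^k$ for $k\le r$, and plaque-expansiveness to force injectivity of the perturbed map) reduces, for one stratum, to exactly your graph transform, Banach fixed point, and fiber-contraction scheme. What your direct HPS-style argument buys is economy: no adapted filtration, no gluing of tubular neighborhoods, no pseudo-orbit exiting lemmas — just a fixed-point problem over the compact base $cl(L'')$; what the paper's detour buys is that the same work simultaneously yields the stratified statement and the uniform neighborhood/continuity conclusions of Theorem \ref{th1}. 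Two points in your sketch deserve more care, and they are precisely where the paper expends technical effort. First, the naive sup-norm on sections of the normal bundle is not quite the right metric for the contraction, because $\Gamma_{f'}(\sigma)(y)$ and $\Gamma_{f'}(\sigma')(y)$ are compared through values of $\sigma,\sigma'$ at \emph{different} base points $y',y''$ on the plaque of $f(y)$; one must measure distance to the graph along a plaque (the paper uses the cone-field quantity $d_x(i',i'')$ and an adapted metric from Property \ref{property B}) rather than pointwise in the fiber. Second, $\exp\circ\sigma^*$ need not be injective just because $\sigma^*$ is a section: distinct leaves of a lamination can accumulate on one another, so embeddedness of the perturbed lamination is a genuine issue, and this — not merely matching plaques — is where plaque-expansiveness is used (via an $\epsilon$-pseudo-orbit argument as in the paper's proof of Corollary \ref{cor2}). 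With those two adjustments your outline fills in to a complete and somewhat more elementary proof of this particular statement.
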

 In particular, this theorem implies the $C^r$-persistence of compact $r$-normally expanded and plaque-expansive laminations. Actually, this theorem is a particular case of our main theorem.  In this memoir we will not give a direct proof of it. 

Let us define the stratifications of laminations. Following J. Mather \cite{Ma}, a \emph{stratified space} is the data of a second countable metric space $A$ with a locally finite partition $\Sigma$ of $A$ into locally closed subsets, satisfying
the axiom of the frontier:
  \[\forall (X,Y)\in \Sigma^2,\; cl(X)\cap Y\not=\emptyset\Rightarrow cl(X)\supset Y\]
The pair $(A,\Sigma)$ is called \emph{stratified space} with \emph{support} $A$ and \emph{stratification} $\Sigma$.

Following H. Whitney, R. Thom or J. Mather, we can endow a stratified space with some geometric structure. In such way, we define a \emph{laminar} structure on $(A,\Sigma)$ as a lamination structure on each stratum, such that if the closure of a stratum $X$ intersects a stratum $Y$, then the dimension of $X$ is at least equal to the dimension of $Y$. Then $\Sigma$ is called a \emph{stratification of laminations}. A \emph{(stratified) $C^r$-embedding} of this space into a manifold $M$ is a homeomorphism onto its image such that, its restriction to each stratum $X$ is a $C^r$-embedding of the lamination $X$ into $M$. We often identify the stratified space $(A,\Sigma)$ with its image by the embedding $i$.

\begin{exem} A Whitney's stratification is a laminar stratification whose strata consist of a single leaves.\end{exem}
\begin{exem}\label{intro:K}
Let $f$ be an endomorphism of a manifold $M$. Let $K$ be a compact subset of $M$, $f$-invariant $(f^{-1}(K)=K)$, nowhere dense and expanded. Then $K$ endowed with its $0$-dimensional lamination structure and $M\setminus K$ endowed with its manifold structure form a stratification of normally expanded laminations on $M$. This example will be useful for number of our results.  
\end{exem}
\begin{exem}\label{intro:axiom A} Given a diffeomorphism that satisfies Axiom A and the strong transversality condition, if we denote by $(\Lambda_i)_i$ the basic sets and $X_i:=W^s(\Lambda_i)$ the canonical lamination on the stable set of each $\Lambda_i$ (whose leaves are stable manifolds), then the partition $(X_i)_i$ is a stratification of normally expanded laminations.\end{exem}
 
  Given a manifold $M$, a stratification of laminations $\Sigma$ on $A\subset M$ and an endomorphism $f$ of $M$, we say that $f$ \emph{preserves} $(A,\Sigma)$ if $f$ preserves each stratum $X\in \Sigma$, as a lamination.
  
   A stratification of laminations $(A,\Sigma)$ preserved by $f\in End^r(M)$ is \emph{$C^r$-persistent}, if for any endomorphism $f'$ $C^r$-close to $f$, there exists a stratified embedding $i'$ $C^r$-close to the canonical inclusion $i$ such that $f'$ preserves the stratification $(A,\Sigma)$ embedded by $i'$, and for each stratum $X\in \Sigma$, every point $i'(x)\in i'(X)$ is sent by $f'$ into the image by $i'$ of a small plaque of $X$ which contains $f(x)$.

     The aim of this memoir is to present and prove a general theorem providing, for any $r\ge 1$, the $C^r$-persistence of stratifications of $r$-normally expanded laminations, under some extra geometric conditions.
    
    Let us illustrate, by some applications of our main result, the persistence of stratifications of laminations.

\subsubsection{Submanifolds with boundary}

\begin{theo} 
Let $(M,g)$ be a Riemannian manifold and let $N$ be a compact submanifold with boundary of $M$. Let $f$ be an endomorphism of $M$ which preserves and $1$-normally expands the boundary $\partial N$ and the interior $\mathring{N}$ of $N$.
Then the stratification $(\mathring N,\partial N)$ on $N$ is $C^1$-persistent.

In other words, for any endomorphism $f'$ $C^1$-close to $f$, there exist two submanifolds $\partial N'$ and $\mathring N'$ such that:
\begin{itemize}\item  $\mathring N'$ (resp. $\partial N'$) is preserved by $f'$, diffeomorphic and $C^1$-close to $\mathring N$ (resp. $\partial N$) for the compact-open topology,
\item the pair $(\mathring N',\partial N')$ is a stratification (of laminations) on $N':= \mathring{N'} \cup \partial N'$,
\item  the set $N'$ is  the image of $N$ by an embedding $C^0$-close to the canonical inclusion of $N$ into $M$.
\end{itemize}
\end{theo}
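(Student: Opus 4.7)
The plan is to recognize $(\mathring N,\partial N)$ as a stratification of $1$-normally expanded laminations and to invoke the main theorem of the memoir.

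First I would verify the stratification axioms. The partition $\{\mathring N,\partial N\}$ of $N$ is locally finite with locally closed strata; the frontier axiom holds since $\overline{\mathring N}=N\supset \partial N$ while $\overline{\partial N}=\partial N$ is disjoint from $\mathring N$; and $\dim \mathring N > \dim \partial N$. Each stratum is a single manifold viewed as a one-leaf lamination, so the laminar structure is trivially compatible and plaque-expansiveness is automatic. Invariance and $1$-normal expansion of each stratum are given by hypothesis.

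Next I would handle the persistence of the two strata individually as a warm-up. The boundary $\partial N$ is compact and $1$-normally expanded, so the theorem on persistence of normally expanded laminations stated above yields a unique nearby invariant submanifold $\partial N'$ for any $C^1$-small perturbation $f'$. For the non-compact interior $\mathring N$, I would use the same theorem in its precompact form: since $f(\partial N)=\partial N$ and the direction normal to $\partial N$ into $\mathring N$ is expanded, points of $\mathring N$ sufficiently close to $\partial N$ are pushed deeper into $\mathring N$ under $f$. This furnishes an exhaustion of $\mathring N$ by precompact open subsets $L'\subset \mathring N$ with $\overline{L'}\subset f^{-1}(L')$, to which the theorem applies, giving a perturbed interior manifold $\mathring N'$.

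The real work, and the reason the statement is phrased as an application of the main stratified theorem rather than two independent applications, is the compatibility step: one must show that the perturbed strata glue into a topological submanifold with boundary $N':=\mathring N'\cup \partial N'$ which is the image of a stratified $C^0$-embedding of $N$ close to the canonical inclusion. I would carry this out by working in a Fermi-type collar $\partial N\times [0,\varepsilon)$ of $\partial N$ in $N$, graph-transforming the half-collar over $\partial N'$ using the normal expansion of $\partial N$ in $M$, and matching this collar extension with the interior lamination $\mathring N'$ produced by the exhaustion above. The main obstacle will be to control this graph transform uniformly up to the boundary, so that $\overline{\mathring N'}\cap \partial N'=\partial N'$ and the resulting stratified embedding is $C^0$-close to the canonical one; this is precisely the kind of matching condition that the \emph{extra geometric conditions} of the main theorem are designed to handle.
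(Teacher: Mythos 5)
Your first two paragraphs are fine (the stratification axioms, and the exhaustion of $\mathring N$ by precompact open sets $L'_\epsilon=\{x\in \mathring N:\ d(x,\partial N)>\epsilon\}$ with $f(cl(L'_\epsilon))\subset L'_\epsilon$, which works for small $\epsilon$ by normal expansion near $\partial N$ and compactness of $N\setminus U$), but they are not where the theorem lives, and your third paragraph contains the genuine gap. The paper's proof is: construct a $C^1$-trellis structure on $(N,(\partial N,\mathring N))$, i.e.\ a lamination $(L_{\partial N},\mathcal L_{\partial N})$ of codimension one in $N$ on a neighborhood of $\partial N$ whose leaves contain $\partial N$ and which $C^1$-foliates $\mathring N$ near the boundary, \emph{and} verify that this trellis satisfies hypotheses $(i)$ and $(iii)$ of corollary \ref{cor3} (the dynamical ones: plaques of $\mathcal L_{\partial N}$ in a neighborhood $V_{\partial N}$ are sent by $f$ into leaves of $\mathcal L_{\partial N}$, and pseudo-orbits of $V_{\partial N}$ respecting $\mathcal L_{\partial N}$ stay in $\partial N$); hypotheses $(ii)$ and $(iv)$ are the obvious ones. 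You never do this. The ``equidistant-leaves'' trellis of the introduction satisfies the trellis axioms but is in general \emph{not} preserved by $f$ (images of equidistant hypersurfaces need not be equidistant), and the paper itself flags that building a trellis compatible with the dynamics is ``the main difficulty'' in applying the theorem; that construction is exactly what your sentence ``this is precisely the kind of matching condition that the extra geometric conditions of the main theorem are designed to handle'' defers to, which is circular: to invoke theorem \ref{th2}/corollary \ref{cor3} you must exhibit the trellis and check $(i)$ and $(iii)$, not appeal to the theorem to supply them.

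The stratum-by-stratum route of your second paragraph also cannot be promoted to the full statement by a collar gluing as sketched. For a fixed precompact $L'\subset\mathring N$ the lamination persistence theorem gives a perturbed embedding only on $L'$ and only for $f'$ in a neighborhood $V_f$ depending on $L'$; as $L'$ exhausts $\mathring N$ these neighborhoods may shrink, so for a single $f'$ you do not get a perturbed interior defined uniformly up to $\partial N'$, nor the conclusion $cl(\mathring N')\supset \partial N'$ with a $C^0$-small stratified homeomorphism on all of $N$. Obtaining that uniform control near the boundary is precisely what the trellis hypotheses of the main theorem buy (the iteration scheme of the paper glues the boundary stratum's data to the interior stratum along the tubular-neighborhood foliation). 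So the proof needs the explicit construction of a dynamically compatible $C^1$-trellis near $\partial N$ (for instance an $f$-invariant $C^1$-foliation of a collar by leaves parallel to $\partial N$, built by a graph-transform using the normal expansion, with $C^1$-regularity of the leaves and of the foliation verified), together with the verification of the pseudo-orbit condition $(iii)$; your proposal names the collar and the graph transform but does not carry out this construction, which is the entire content of the paper's (deferred) argument.
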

\begin{rema}
Usually, $N'$ is not a submanifold with boundary.\end{rema}
\begin{rema}
Our main theorem also implies the $C^r$-persistence of $r$-normally expanded submanifold as stratification, for any $r\ge 1$.\end{rema}

Let us generalize the above result to a larger context:

\subsubsection{Submanifolds with corners}
We recall that a compact manifold with corner $N$ is a differentiable manifold modeled on  $\mathbb R_+^d$. We denote by $\partial^{0_k}N$ the set of points of $N$ which, seen in a chart, have exactly $k$ coordinates equal to zero. The pair $(N,\Sigma:=\{\partial^{0_k} N\})$ is a stratified space. 
\begin{theo}
Let $i$ be a $C^1$-embedding of $N$ into a Riemannian manifold $(M,g)$. Let $f$ be a $C^1$-endomorphism of $N$, which preserves and $1$-normally expands each stratum  $\partial^{0_k} N$.
Then the stratification $\Sigma$ on $N$ is stable for $C^1$-perturbations of $f$.

 In other words, for every endomorphism $f'$ $C^1$-close to $f$, there exist submanifolds $(\partial^{0_k} N')_k$ such that:
 \begin{itemize}
\item for each $k$, $\partial^{0_k} N'$ is preserved by $f'$, is diffeomorphic and close to $\partial^{0_k} N$ in the $C^1$-compact-open topology,
\item the family $(\partial^{0_k} N')_k$ is a stratification (of laminations) on $N':=\cup_k \partial^{0_k} N'$,
\item the set $N'$ is the image of $N$ by an embedding $C^0$-close to the canonical inclusion of $N$ into $M$.
\end{itemize}
\end{theo}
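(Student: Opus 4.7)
The plan is to deduce this from the paper's main theorem by verifying that the partition $\Sigma=\{\partial^{0_k}N\}_{k=0}^{d}$ is a stratification of $1$-normally expanded laminations fitting the hypotheses developed earlier. First I would check the combinatorics of the decomposition in local charts $\mathbb R_+^d$: the stratum $\partial^{0_k}N$ is a $(d-k)$-dimensional submanifold (without boundary in itself), and its closure in $N$ equals $\bigcup_{\ell\ge k}\partial^{0_\ell}N$. Hence $\mathrm{cl}(\partial^{0_k}N)\cap \partial^{0_\ell}N\ne\emptyset$ precisely when $\ell\ge k$, in which case the inclusion $\mathrm{cl}(\partial^{0_k}N)\supset \partial^{0_\ell}N$ holds and the dimension ordering $\dim\partial^{0_k}N\ge \dim\partial^{0_\ell}N$ is satisfied. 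Since each stratum is itself a manifold, its canonical (one-leaf-per-component) structure makes $\Sigma$ a laminar stratification; the normal expansion at every stratum is given by hypothesis. This puts us exactly in the setting of the main theorem of the memoir.

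Next I would apply that theorem to produce, for each $C^1$-small perturbation $f'$ of $f$, a stratified $C^1$-embedding $i'$ close to the canonical inclusion $i$ such that each $\partial^{0_k}N':=i'(\partial^{0_k}N)$ is preserved by $f'$ and $C^1$-close to $\partial^{0_k}N$. The laminar $C^1$-closeness stratum by stratum yields the diffeomorphism and $C^1$-compact-open closeness claimed in the first bullet; the fact that $i'$ is a stratified embedding close to $i$ gives both the stratified structure on $N':=\bigcup_k \partial^{0_k}N'$ (second bullet) and the $C^0$-proximity of $N'$ to $N$ as a subset of $M$ (third bullet).

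The substantive content lies in checking that the main theorem applies, i.e.\ that the corner stratification satisfies whatever additional geometric conditions it imposes (cone-like attachments of lower to higher strata, coherent plaque-expansiveness across strata, etc.). The real obstacle is the simultaneous persistence of strata of different depths in a consistent way: perturbing each stratum in isolation is easy, but the new pieces must glue into a single stratified embedding. I would handle this by a downward induction on depth, starting from the deepest stratum $\partial^{0_d}N$ (a finite set of corner points, automatically persistent as an expanded $f$-invariant compact, cf.\ Example~\ref{intro:K}), and at each step perturbing the next stratum $\partial^{0_k}N$ by the preceding submanifold-with-boundary theorem applied component-wise, with the already-perturbed higher-depth strata playing the role of the prescribed boundary. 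A tubular-neighborhood argument (using $C^1$-closeness of the new stratum and its normal bundle to the old ones) then upgrades the leaf-wise proximity into the joint stratified embedding $i'$, closing the induction.
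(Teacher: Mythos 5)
Your reduction to the main theorem skips precisely the step that the paper identifies as the substantive content of this result. Corollary \ref{cor3} (equivalently theorem \ref{th2}) does not apply to a stratification of normally expanded laminations as such: it requires a $C^1$-trellis structure $\mathcal T$ on $(N,\Sigma)$ together with hypothesis $(i)$ — that $f_{|N}$ is $\mathcal T$-controlled, i.e.\ near each stratum $\partial^{0_k}N$ the plaques of the auxiliary tubular lamination $\mathcal L_{\partial^{0_k}N}$ are sent by $f$ into leaves of that lamination — and hypothesis $(iii)$ on pseudo-orbits respecting these tubular laminations. Your proposal only verifies the frontier/dimension axioms and the normal expansion, and then gestures at ``whatever additional geometric conditions it imposes''; but these conditions are not a formality. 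The counterexample of section \ref{cexp} exhibits a compact, $a$-regular, $r$-normally expanded differentiable stratification that is \emph{not} persistent, so stratum-wise normal expansion alone cannot imply the conclusion; one must actually construct, for the given $f$, a trellis of foliated tubular neighborhoods of the corner strata that $f$ controls, and this is the part the paper calls ``far from obvious'' and defers to the complete proof. (For the model map $(x,y)\mapsto(x^2,y^2)$ on the square the edge-parallel foliations happen to work, but for a general normally expanding endomorphism of a manifold with corners such an invariant system of foliations has to be built, typically by an iteration/graph-transform argument using the normal expansion.)

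Your fallback scheme — downward induction on depth, reapplying the submanifold-with-boundary theorem with the already-perturbed deeper strata ``as prescribed boundary'', then gluing by a tubular-neighborhood argument — does not close this gap. First, after the initial perturbation the union of the new deeper strata is in general no longer the boundary/corner locus of a smooth submanifold (the paper explicitly remarks that $N'$ is usually not a submanifold with boundary or corners), so the boundary theorem cannot be invoked again with these objects as boundary data; moreover that theorem is itself proved in the paper by the very trellis construction you have omitted, so the induction is circular. Second, the ``upgrade'' from stratum-wise closeness to a single stratified embedding is exactly the delicate part of the proof of theorem \ref{th2} (the adapted filtration $(K_p)_p$, the fixed-point construction of $S_{f'}$ and the gluing lemma), and it is exactly what fails in the counterexample of section \ref{cexp}; it cannot be obtained from a generic tubular-neighborhood argument. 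A minor additional slip: the deepest nonempty stratum need not be the $0$-dimensional corner set $\partial^{0_d}N$ (e.g.\ a product of a manifold with boundary with a closed manifold), so the base of your induction should be a compact closed normally expanded submanifold, handled by corollary \ref{cor1}, not by Example \ref{intro:K}.
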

\begin{rema}
Our main theorem also implies the $C^r$-persistence of $r$-normally expanded submanifold with corners as stratification, for any $r\ge 1$.\end{rema}

Our main result easily provides the persistence of many stratifications of  normally expanded laminations in product dynamics, as in the following examples.

\subsubsection{Invariant laminations of the Viana map in $\mathbb C\times \mathbb R$}
\label{intro:Viana}
\[\mathrm{Let}\; V\;:\; \mathbb C\times \mathbb R\rightarrow \mathbb C\times \mathbb R\]
\[(z,h)\mapsto (z^4,h^2+c)\]

 The map $z\mapsto z^4$ expands the unit circle $\mathbb S^1$ and preserves the interior of the unit disk $\mathbb D$. We endow $\mathbb S^1$ and $\mathbb D$ with the lamination structures of dimension 0 and 2 respectively.
 
  Let $c\in ]-2,1/4[$, then the map $h\mapsto h^2+c$ sends an open interval $I$ into it self and expands its boundary $\partial I$. 
  
We stratify the filled cylinder $C:= cl(\mathbb D\times I)$ by the laminations:
\begin{itemize}
\item $X_0:=\mathbb S^1\times \partial I$ of dimension 0,
\item $X_1:=\mathbb S^1\times I$ of dimension 1, whose leaves are $(\{\alpha\}\times I)_{\alpha\in \mathbb S^1}$,
\item $X_2:=\mathbb D\times \partial I$ of dimension 2,
\item $X_3:=\mathbb D\times I$ of dimension 3.
\end{itemize}
Let $\Sigma$ be the stratification of laminations on $C$ defined by these strata. We notice that $V$ preserves and $1$-normally expands this stratification.

The persistence  of this stratification, for $C^1$-perturbations of $V$, follows from our main theorem.

In other words, for every endomorphism $V'$ $C^1$-close to $V$, there exists a homeomorphism $i'$ of $C$ onto its image in $\mathbb C\times \mathbb R$, $C^0$-close to the canonical inclusion such that for each stratum $X_k\in \Sigma$:
\begin{itemize}
\item the restriction $i'_{|X_k}$ is an embedding of lamination, $C^1$-close to the canonical inclusion of $X_k$ in $\mathbb C\times \mathbb R$; in particular $i'$ is continuously leafwise differentiable, 
\item the lamination $i'(X_k)$ is preserved by $V'$, and for $x\in X_k$, $V'\circ i'(x)$ belongs to the image by $i'$ of a small plaque of $X_k$ containing $V(x)$.
\end{itemize}

\subsubsection{ Products of hyperbolic rational functions}\label{intro:hyper}
\[\mathrm{Let}\; f\;:\; \hat {\mathbb C}^n\rightarrow \hat{\mathbb C}^n\]
\[(z_i )_i\mapsto (R_i(z_i))_i\]

 We assume that for each $i$, $R_i$ is a hyperbolic rational function of the Riemann sphere $\hat {\mathbb C}$. It follows that its Julia set $K_i$ is expanded and the complement $X_i$ of $K_i$ in $\hat{ \mathbb C}$ is the union of attraction basins of the attracting periodic orbits. 

 Let $\Sigma$ be the stratification of laminations on $\hat{\mathbb  C}^n$ formed by the strata $(Y_J)_{J\subset \{1,\dots,n\}}$, $Y_J$ being of real dimension twice the cardinal of $J$ and with support:
\[ Y_k=\prod_{j\in J}X_j\times \prod_{j\in J^c}K_j.\]
The leaves of $Y_J$ are in the form $\prod_{j\in J} C_j\times \prod_{j\in J^c} \{k_j\}$, with $C_j$ a connected component of $\hat{\mathbb C}\setminus K_j$ and $k_j$ a point of $K_j$.

The $C^r$-persistence  of this stratification of $r$-normally expanded laminations, for all $r\ge 1$, follows from our main theorem.

A similar result exists on $\mathbb R^n$ for products of real hyperbolic polynomial functions.

\subsection{Structure of trellis of laminations and main result}We construct, in section \ref{cexp}, a very simple example of a stratification of normally expanded laminations which is not persistent. Therefore, some new conditions are necessary to imply the persistence of stratifications of laminations.

The hypotheses of our main result on persistence of stratified space $(A,\Sigma)$ require the existence of a {\it tubular neighborhood} $(L_X, \mathcal L_X)$ for each stratum $X\in \Sigma$: this is a lamination structure $\mathcal L_X$ on an open neighborhood $L_X$ of $X$ in $A$, such that each leaf of $X$ is a leaf of $\mathcal L_X$.
 
  Existence of a similar structure was already conjectured in a local way by H. Whitney \cite{W1} in the study of analytic varieties. It was also a key ingredient in the proofs by W. de Melo \cite{dM} and by C. Robinson \cite{Rs} of the structural stability of diffeomorphisms that satisfy axiom $A$ and the strong transversality condition defined in example \ref{intro:axiom A}.

A {\it $C^r$-trellis (of laminations)} on a laminar stratified space $(A,\Sigma)$ is a family of tubular neighborhoods $\mathcal T=(L_X, \mathcal L_X)_{X\in \Sigma}$ such that for all strata $X\le Y$:\begin{itemize}
\item each plaque of $\mathcal L_Y$ included in $L_X$ is $C^r$-foliated by plaques of $\mathcal L_X$, 
\item given two close points $(x,x')\in (\mathcal L_X\cap \mathcal L_Y)^2$, there exist two plaques of $\mathcal L_Y$ containing respectively $x$ and $x'$ for which  such foliations are diffeomorphic and $C^r$-close.\end{itemize} 

\begin{exem}\label{treillis sur compact repu} The stratification in example \ref{intro:K} admits a trellis structure. Let $L_K$ be a neighborhood of $K$ in $M$ endowed with the 0-dimensional lamination structure $\mathcal L_K$. Then $((L_K, \mathcal L_K),X)$ is a trellis structure on $(M,(K,X))$.\end{exem} 
\begin{exem} The canonical stratification $( \partial N, \mathring N)$ of a manifold with boundary $N$ admits a trellis structure: Let $\mathcal L_{\partial N}$ be the lamination structure on a small neighborhood $L_{\partial N}$ of the boundary $\partial N$ whose leaves are the subset of points in $N$ equidistant to the boundary. Then $((L_{\partial N},\mathcal L_{\partial N}), \mathring N)$ is a trellis structure on $(N, ( \partial N, \mathring N))$.\end{exem}

 A  $C^r$-embedding $i$ of $(A,\Sigma)$ into a manifold $M$ is {\it $\mathcal T$-controlled} if $i$ is a homeomorphism onto its image and the restriction of $i$ to $L_X$ is a $C^r$-embedding of the lamination $\mathcal L_X$, for every $X\in \Sigma$.

 We can now formulate a special case of our main theorem:    
\begin{theo}\label{intro:main} Let $r\ge 1$ and let $(A,\Sigma)$ be a compact stratified space supporting a $C^r$-trellis structure $\mathcal T$. Let $i$ be a $\mathcal T$-controlled $C^r$-embedding of $(A,\Sigma)$ into a manifold $M$.  We identify $A$, $\Sigma$ and $\mathcal T$ with their images in $M$. Let $f$ be a $C^r$-endomorphism of $M$ preserving $\Sigma$ and satisfying for each stratum $X$:
\begin{enumerate}[(i)]
\item $f$ $r$-normally expands $X$ and is plaque-expansive at $X$,

there exists a neighborhood $V_X$ of $X$ in  $L_X$ such that 
\item each plaque of $\mathcal L_X$ included in $V_X$ is sent into a leaf of $\mathcal L_X$,

\item there exists $\epsilon>0$, such that every $\epsilon$-pseudo orbit\footnote{An $\epsilon$-pseudo-orbit $(x_n)_n\in V_X^\mathbb N$ respects $\mathcal L_X$, if for all $n\ge 0$, the points $f(x_n)$ and $x_{n+1}$ belong to a same plaque of $\mathcal L_X$ of diameter less than $\epsilon$.}  of $V_X$ which respects $\mathcal L_X$ is included in $X$.\end{enumerate}

  Then for $f'$ $C^r$-close to $f$, there exists a $\mathcal T$-controlled embedding  $i'$, close to $i$, such that for the identification  of $A$, $\Sigma$ and $\mathcal T$ via $i'$, the properties $(i)$, $(ii)$ and $(iii)$ hold with $f'$. Moreover, for each stratum $X\in \Sigma$, each point $i'(x)\in i'(X)$ is sent by $f'$ into the image by $i'$ of a small $X$-plaque of $x$ containing $f(x)$. 
  
   In particular, the stratification of laminations $\Sigma$ is $C^r$-persistent.
\end{theo}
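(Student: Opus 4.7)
The plan is to prove the theorem by induction along the partial order on $\Sigma$ given by the frontier axiom ($X\le Y$ iff $cl(X)\supset Y$, equivalently $\dim X\le \dim Y$ when they are frontier-related), starting from the minimal strata and working outward. The base case and each inductive step will be a graph-transform / fixed-point argument adapted from Hirsch--Pugh--Shub, applied in a Banach space of $C^r$-sections of a suitable normal bundle that respects the trellis $\mathcal T$ and is compatible with strata already perturbed at earlier inductive stages.

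For the base case, where $X$ is minimal in $\Sigma$, the statement reduces to the single-stratum persistence result already advertised in the introduction. I would build the perturbed lamination $i'(X)$ as the unique fixed point of a graph transform $\Gamma_{f'}$ acting on small $C^r$-sections of $T\mathcal L_X^\perp$. Normal expansion by $f$ makes $\Gamma_f$ fibered-contracting with rate essentially $\lambda^{-1}$ in the normal direction, and the $r$-normal expansion hypothesis supplies exactly the spectral gap (``bunching'') needed to iterate in $C^r$. Plaque-expansiveness at $(L,\mathcal L_X)$ gives local uniqueness of the fixed point and hence a well-defined perturbed leaf space, $C^r$-close to the original.

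For the inductive step at a stratum $X$, assume that perturbed embeddings of all $Y<X$ together with the perturbed tubular pieces $(L_Y',\mathcal L_Y')$ have been produced and glued consistently. Condition (ii) says that $f$ sends $\mathcal L_X$-plaques of $V_X$ into $\mathcal L_X$-leaves, so $f$ induces on the quotient leaf space of $\mathcal L_X$ a well-defined dynamics which still normally expands $X$; this is exactly the setting where the HPS-style graph transform can be run. I would run it in the subspace of sections that, over $L_X\cap L_Y$ for each $Y<X$, coincide with the already-chosen $\mathcal L_Y'$-plaques; the first trellis axiom (sub-foliation of plaques of $\mathcal L_Y$ by plaques of $\mathcal L_X$) makes this subspace $\Gamma_{f'}$-invariant, and the second trellis axiom ($C^r$-diffeomorphy of the plaque foliations at nearby points) ensures it is a closed affine subspace on which the contraction still applies. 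The unique fixed point gives $i'(X)$ together with the perturbed tubular foliation $\mathcal L_X'$.

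The main obstacle, in my view, is the global gluing: showing that the stratum-by-stratum fixed points assemble into a single $\mathcal T$-controlled stratified embedding $i'$ --- that is, that the two trellis axioms still hold for the new family $(L_X',\mathcal L_X')_{X\in\Sigma}$ and that $i'$ is a homeomorphism of $A$ onto its image, $C^0$-close to $i$. The ``no drift between strata'' part is exactly what condition (iii) delivers: an $\epsilon$-pseudo-orbit in $V_X$ that respects $\mathcal L_X$ must lie in $X$, which prevents the leaves built over $X$ from colliding with those built over a neighboring stratum and prevents the perturbation procedure from wandering off $X$. Once this coherence is established, the conclusion that $f'\circ i'(x)$ lies in an $X$-plaque through $i'(f(x))$ and that conditions (i)--(iii) survive for $(f',i')$ follow from continuity in $f$ of the graph-transform construction, yielding $C^r$-persistence of the entire stratification.
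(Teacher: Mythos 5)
Your overall template---a stratum-by-stratum induction, an HPS-type graph transform adapted to the trellis, and a gluing step---is the right family of ideas, but the direction of your induction is the reverse of the one the dynamics permits, and this is where the proposal breaks down. Taking your base case at face value (the minimal strata, i.e.\ the closed, lowest strata, where the single-stratum theorem applies because they are compact), your inductive step at a stratum $X$ assumes that for every smaller $Y<X$ not only the perturbed stratum but the perturbed tubular piece $(L_Y',\mathcal L_Y')$ has already been produced, and constrains the graph transform for $X$ to agree with it on $L_X\cap L_Y$. That inductive hypothesis is circular: $L_Y$ is a neighborhood of $Y$ in $A$, so it consists mostly of points of the \emph{higher} strata (including $X$ itself), and a perturbed tubular lamination $\mathcal L_Y'$ is precisely a choice of where those points and their plaques go---which is what the later steps are supposed to construct. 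Nothing available at the step for $Y$ lets you build it: normal expansion is assumed only transversally to each stratum's own leaves, not transversally to the $\mathcal L_Y$-plaques inside the bigger strata, so there is no contraction with which to produce those plaques before the embedding of the bigger strata is pinned down; and if instead you carry forward only the perturbed small strata themselves, your constraint subspace at the next step is empty of content, so nothing forces hypotheses $(ii)$--$(iii)$, the $\mathcal T$-control, or even continuity of $i'$ up to the frontier.

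The deeper reason the order must be reversed is that $f$ is an endomorphism: values can only be propagated \emph{backward} along forward orbits (one defines $i'(x)$ by intersecting a transversal disk at $x$ with $f'^{-1}$ of the image of the plaque of $f^*(x)$), and hypothesis $(iii)$ forces the forward orbit of every point of $V_X\setminus X$ to leave $V_X$ and drift toward the incident higher strata. Hence the region near the maximal strata must be constructed \emph{first}, to anchor the iteration; with your order the transform near the frontier of a higher stratum has nothing downstream to converge to. This is exactly how the paper proceeds: it builds an adapted filtration $K_1\supset\cdots\supset K_{N+1}$ with $f^*(K_p)\subset int(K_p)$, constructs the perturbed embedding by \emph{decreasing} induction---starting from an attracting block around the top strata and extending toward the minimal strata---using nested transversal bundles $N_k$, a $C^0$- and jet-space contraction for the $C^r$ control, the uniformity of exiting pseudo-chains supplied by $(iii)$, and a gluing lemma on the overlaps; plaque-expansiveness is then used to make $i'$ injective (an embedding), not for uniqueness of the fixed point. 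If, because of your reversed order convention, you actually meant to start at the top strata, then the gap moves to the base case: those strata are not compact, the advertised single-stratum theorem only covers a precompact forward-absorbed piece, and the extension of that piece up to the frontier---the filtration, the pullback anchoring, and the gluing---is precisely the content your sketch leaves out.
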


 \begin{rema}
This result has also a version which allows $A$ to be non-compact and/or $i$ to be an immersion. In the immersion case, the plaque-expansivity condition is not required.\end{rema}
\begin{rema}\label{r3} We have also a better conclusion: for every stratum $X$ there exists a neighborhood $V_X'$ of $X$ in $\mathcal L_X$ such that, for every $f'$ $C^r$-close to $f$,  each point $i'(x)\in i'(V_X')$ is sent by $f'$ into the image by $i'$ of a small plaque of $\mathcal L_X$ containing $f(x)$.\end{rema}

 The main difficulty to apply this theorem is to build a trellis structure that satisfies $(ii)$.

Nevertheless, thanks to the formalism, the following proposition provides many trellis structures which imply the persistence of these stratification, via our main result.

\begin{propr}\label{prop:intro} 
Let $r\ge 1$ and let $(A,\Sigma)$ and $(A',\Sigma')$ be compact stratified spaces endowed with $C^r$-trellis structures $\mathcal T$ and $\mathcal T'$ respectively. Let $i$ and $i'$ be $C^r$-embeddings  $\mathcal T$ and $\mathcal T'$-controlled of $(A,\Sigma)$ and  $(A,\Sigma)$ into manifolds $M$ and $M'$ respectively.
 Let $f\in End^r(M)$ and $f'\in End^r(M')$ satisfying properties $(i)$, $(ii)$ and $(iii)$ of theorem \ref{intro:main}.

 Then the partition $\Sigma\times \Sigma'$ on $A\times A'$, whose elements are the product of a stratum of $\Sigma$ with a stratum of $\Sigma'$, is a stratification of laminations which is preserved by the product dynamics $(f,f')$ of $M\times M'$. Moreover, if $(f,f')$ $r$-normally expands this stratification, then properties $(i)$, $(ii)$, and $(iii)$ are satisfied for $(f,f')$ and $\Sigma\times \Sigma'$.
In particular this last stratification is $C^r$-persistent.
\end{propr}

For instance, by using this proposition and example \ref{treillis sur compact repu}, we get the proof of the persistence of examples \ref{intro:Viana} and \ref{intro:hyper}.

On the other hand, remark \ref{r3} has to be considered if one wants to apply our result to prove theorems on structural stability or persistence of laminations which are not hyperbolic. For example, we needed this remark, together with a trellis structure built by de Melo on the stratification of laminations defined in example \ref{intro:axiom A}, to show the following:

\begin{theo} Let $s$ be a $C^1$-submersion of a compact manifold $M$ onto a compact surface $S$. Let $\mathcal L$ be the lamination structure on $M$ whose leaves are the connected components of the fibers of $s$.

Let $f$ be a diffeomorphism of $M$ which preserves the lamination $\mathcal L$. Let $f_b\in Diff^1(S)$ be the dynamics induced by $f$ on the leaves space of $\mathcal L$.
We suppose that:
\begin{itemize}
\item  $f_b$ satisfies axiom $A$ and the strong transversality condition,  
\item the $\mathcal L$-saturated subset generated by the non-wandering set of $f$ in $M$ is $1$-normally hyperbolic.\end{itemize}
Then $\mathcal L$  is $C^1$-persistent.\end{theo}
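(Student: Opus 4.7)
The plan is to reduce the theorem to Theorem~\ref{intro:main} by pulling back the axiom-A stratification from $S$ to $M$ via the submersion $s$, together with de Melo's trellis \cite{dM}. The spectral decomposition of $f_b$ gives basic pieces $(\Lambda_i^b)_i$, and Example~\ref{intro:axiom A} provides a canonical laminar stratification $\Sigma^b=(X_i^b)_i$ of $S$ with $X_i^b:=W^s(\Lambda_i^b)$, each stratum carrying its stable-manifold lamination. De Melo's construction supplies a $C^1$-trellis $\mathcal T^b=(L_{X_i^b},\mathcal L_{X_i^b})_i$ on $(S,\Sigma^b)$, and the axiom-A/strong-transversality hypothesis ensures that $f_b$ satisfies conditions $(i)$--$(iii)$ of Theorem~\ref{intro:main} for this data.

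I would then lift the stratification to $M$: set $X_i:=s^{-1}(X_i^b)$ and equip it with the lamination whose leaves are the connected components of $s^{-1}(W^s(p))$ for $p\in\Lambda_i^b$; equivalently, the $\mathcal L$-saturation of the stable-manifold lamination on $X_i^b$. Because $s$ is a submersion, $\Sigma:=(X_i)_i$ is a laminar stratification of $M$ and $\mathcal T:=(s^{-1}(L_{X_i^b}),\,\text{$s$-pullback of }\mathcal L_{X_i^b})_i$ is a $C^1$-trellis for which the canonical inclusion of $M$ into itself is $\mathcal T$-controlled. The key observation is that every leaf of $\mathcal L$ is contained in a plaque of the trellis lamination $\mathcal L_{X_i}$, so that once $\mathcal T$ persists, $\mathcal L$ can be reassembled inside the persisting trellis plaques.

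Verifying the hypotheses of Theorem~\ref{intro:main} for $(f,\Sigma,\mathcal T)$ is the technical core. For $(i)$, the normal bundle of $\mathcal L_{X_i}$ along $X_i$ is identified via $ds$ with a complement of $TW^s(\Lambda_i^b)$ in $TS_{|X_i^b}$; axiom-A expansion in this complement, combined with the $1$-normal hyperbolicity of the $\mathcal L$-saturation of $\Omega(f)$ (which bounds the tangential growth of $Tf$ along $\mathcal L$-leaves, and hence along the larger leaves of $\mathcal L_{X_i}$), yields the $1$-normal expansion of $X_i$. Plaque-expansiveness at $X_i$ pulls back from the corresponding property at $X_i^b$, using that $f$ is semiconjugate to $f_b$ via $s$. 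Conditions $(ii)$ and $(iii)$ transfer directly from $(S,\Sigma^b,\mathcal T^b)$ because $\mathcal T$ is the $s$-pullback of $\mathcal T^b$ and $f$ is fibred over $f_b$.

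Applying Theorem~\ref{intro:main} together with Remark~\ref{r3} then produces, for every $f'$ $C^1$-close to $f$, a close $\mathcal T$-controlled embedding $i'$ such that $i'(\Sigma)$ and the trellis laminations $i'(\mathcal L_{X_i})$ are preserved by $f'$, every point of $i'(V'_{X_i})$ being mapped into a small plaque of $i'(\mathcal L_{X_i})$ containing $f(x)$. To extract $\mathcal L'$: on the $\mathcal L$-saturation of $\Omega(f)$, classical HPS applied to the $1$-normally hyperbolic lamination $\mathcal L$ furnishes $\mathcal L'$ directly; elsewhere, since each $\mathcal L$-leaf lies in a plaque of the persistent $\mathcal L_{X_i}$, one propagates leaves forward under $f'$ into a neighbourhood of $\Omega(f')$ where they match the HPS lamination, and pulls them back along the trellis plaques. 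The hard part I expect is step~$(i)$: organising the pulled-back trellis so that condition~$(ii)$ holds and so that the fibre lamination $\mathcal L$ subfoliates the $\mathcal L_{X_i}$-plaques compatibly under $f$. This is precisely where de~Melo's refined construction is essential, and it is why Remark~\ref{r3} is required to upgrade persistence of the trellis to persistence of $\mathcal L$ itself.
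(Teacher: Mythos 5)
Your overall strategy --- pull back de Melo's trellis on the stratification of Example \ref{intro:axiom A} through $s$, apply Theorem \ref{intro:main} (i.e.\ Theorem \ref{th2}) together with Remark \ref{r3}, and treat the saturated nonwandering part by normal hyperbolicity --- is exactly the strategy the paper announces for this statement; note, however, that the paper does not prove the theorem in this manuscript but defers the proof to \cite{PB}, so what must be judged is whether your sketch closes the argument on its own.

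As a proof it has genuine gaps. First, the step that actually produces $\mathcal L'$ is missing. Theorem \ref{intro:main} only yields persistence of the coarse strata $X_i=s^{-1}(W^s(\Lambda_i^b))$ and of their trellis plaques; to recover the fibre lamination you must show (a) that the HPS continuation of the $1$-normally hyperbolic lamination $\mathcal L_{|\Omega(\mathcal L)}$ is compatible with the persistent trellis, i.e.\ that its leaves lie inside the perturbed plaques $i'(\mathcal L_{X_i})$ near $\Omega(f')$ --- this is not automatic when the two persistence theorems are invoked separately and requires a uniqueness/characterization argument or a graph transform carried out inside the persistent plaques --- and (b) that the leaves obtained by pulling back under $f'^{-n}$ are independent of $n$, vary continuously, are $C^1$-close to the original fibres, and glue coherently across the regions attached to different basic pieces; this gluing is precisely where the de Melo/Robinson-type compatibility conditions and the uniform neighbourhoods $V_X'$ of Remark \ref{r3} must be used quantitatively, and it is the heart of the argument in \cite{PB}, not a two-line consequence. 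Second, the hypothesis checks upstairs are asserted rather than proved, and some of the justifications are off target: $1$-normal hyperbolicity of the saturation of $\Omega(f)$ controls the fibrewise growth of $Tf$ only over that saturation, so $1$-normal expansion of the lifted strata (whose leaf-tangent spaces contain the fibre directions) requires propagating the domination along forward orbits via filtration/isolating neighbourhoods of the $\Lambda_i^b$ and locally bounded entry times, in order to produce the continuous function $C$ demanded by the definition; and plaque-expansiveness of $f$ at $X_i$ does not ``pull back'' from the base through the semiconjugacy alone --- after concluding that $s(x_0)$ and $s(y_0)$ lie in a common small base plaque you must still rule out that $x_0,y_0$ sit on different sheets of $s^{-1}$ of that plaque or are separated in the fibre direction, which uses the local product structure of the fibration and a uniform lower bound on the distance between distinct fibre components. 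None of this is fatal to the strategy, but these points are the actual content of the proof and cannot be treated as direct transfers.
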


We hope to publish soon a version of this result closer to our conjecture (see section \ref{axiom A}) whose statement generalizes the persistence of normally hyperbolic laminations and the structural stability of diffeomorphisms that satisfy Axiom A and the strong transversality condition.\\ 

This memoir is the main part of my PhD thesis under the direction of J-C. Yoccoz. I would like to thank him for his guidance. I would like to thank also M. Viana, C. Bonatti, E. Pujal, P. Pansu, F. Paulin, C. Murollo, and D. Trotman for many discussions. Finally, I thank P-Y. Fave who modeled the 3D illustrations of this memoir.

\subsection{Plan}

The first chapter is mostly geometric. We introduce the definitions and the terminologies necessary for all the other chapters. 

In the first section of this chapter, we recall the definitions of laminations, their morphisms and the topologies on these spaces.

In the second section of this chapter, we introduce the stratification of laminations. We present how they are related to other kinds of stratifications (analytic and differentiable) and we show some simple properties of them. Then this section proves that the diffeomorphisms satisfying axiom $A$ and the strong transversality condition defines canonically two stratifications of laminations. At the end of this section, we  define stratified morphisms and endow the space of morphisms with a topology. 

In the third section of this chapter, we introduce the trellis of laminations structure. Then we defines morphisms of this structure and endow the space of morphisms with a topology. Finally, in this chapter, we show how the trellis structure are linked to other works in dynamical system, differentiable geometry or analytic geometry.\\      

The second chapter contains our main result on persistence of stratifications of normally expanded laminations. 

In the first section, we restrict the study to the lamination. First, we discus on the definition of the preservation and persistence of laminations, embedded or immersed. The definitions are motivated by a negative answer to a question of Hirsch-Pugh-Shub. Then we define the $r$-normal expansion of an immersed or embedded lamination and gives some related properties.  Finally, in this section, we state the theorem \ref{th1} on $C^r$-persistence of $r$-normally expanded immersed laminations. After defining the plaque-expansivity, we give the corollary \ref{cor1} on $C^r$-persistence of $r$-normally and plaque-expansive embedded laminations. Both are the restrictions of our main theorem \ref{th2} and its corollary \ref{cor2}, to the case where the stratification consists of a unique stratum.

The second section of this chapter contains the main result. First, we define the preservation and the persistence of stratifications of laminations, embedded or immersed. Then it presents the main theorem \ref{th2} via its corollaries \ref{cor3}, \ref{cor2}, and some easy applications. 

In the third section, we motive our geometrical viewpoint  on the persistence of stratifications of laminations, by giving a counter example of a compact stratification which is not persistent and does not admit a trellis structure.  

The fourth section provides some applications of our main result. We begin by giving the statement of our result on the persistence of normally expanded submanifolds with boundary or corners as stratifications. But, we only give the idea of proof of such applications. Then we give an extension of the Shub's theorem on conjugacy of repulsive compact set. Further, we show proposition \ref{prop:intro} which implies some examples of persistent stratifications of laminations in product dynamics. Finally, we state a conjecture on the persistent of ``normally SA laminations'' and announce a partial result in this direction which was proved by using our main result.\\ 

The five section consists of the proof of our main result. Annex A provides some analysis results needed in this work. Annex B consists of the proof of the existence of an adapted metric to the normal expansion of a lamination by an endomorphism. In annex C, we adapt and develop some results on the plaque-expansivity to the endomorphism context.

\newpage\section{Geometry of stratification of laminations}
In this chapter, we introduce the laminar stratified space. The laminar stratified space is a natural generalization of laminations and stratifications. We know that laminations and stratifications occur in dynamical system as persistent and preserved  structures (as in Hirsch-Pugh-Shub theory or the Morse-Smale theory).

We will state and illustrate our main result on persistence of stratification of laminations in chapter \ref{chap:persist:strati}. In this chapter, we only deal with the geometry of this structure, recalling or introducing some definitions and properties.

Throughout this chapter we denote by $r$ a positive integer or the symbol $\infty$. 
\subsection{Laminations}
\subsubsection{Definitions}
 Let us consider  a locally compact and second-countable metric space $L$ covered by open sets $(U_i)_i$, called
 \emph{distinguished open sets}, endowed with homeomorphisms $h_i$ from $U_i$ onto $V_i\times T_i$, where $V_i$ is an open set of $\mathbb R^d$ and $T_i$ is a metric space.
 
 We say that the \emph{charts} $(U_i, h_i)_i$ define a $C^r$-\emph{atlas} of a lamination structure on $L$ of dimension $d$ if the \emph{coordinate change} $h_{ij}=h_j\circ h_i^{-1}$ can be written in the form
 \[h_{ij}(x,t)=(\phi_{ij}(x,t),\psi_{ij}(x,t)),\]
 where $\phi_{ij}$ takes its values in $\mathbb R^d$, the partial derivatives $(\partial_x^s \phi_{ij})_{s=1}^r$ exist and are continuous on the domain of $\phi_{ij}$, and $\psi_{ij}(\cdot,t)$ is locally constant for any $t$.
 
 Two $C^r$-atlases are said to be \emph{equivalent} if their union is a $C^r$-atlas.
 
 A $(C^r)$-\emph{lamination} is a metric space $L$ endowed with a maximal $C^r$-atlas $\mathcal{L} $.

 A \emph{ plaque } is a subset of $L$ which can be written in the form $h_i^{-1}(V_i^0\times\{t\})$, for a
 chart $h_i$ and a connected component $V_i^0$ of $V_i$. A plaque that contains a point $x\in L$ will be denoted by  $\mathcal{L}_x$; the union of the plaques containing $x$ and of diameter less than $\epsilon>0$ will be denoted by $\mathcal L_x^\epsilon$. As the diameter is given by the metric of $L$, the set $\mathcal L_x^\epsilon$ is, in general, not homeomorphic to a manifold. The \emph{leaves } of $\mathcal L$ are the smallest subsets of $L$ which contain any plaque that intersects them.
 
 We say that a subset $P$ of $L$ is \emph{saturated} if it is a union of leaves.

 If moreover it is a locally compact subset, this subset is \emph{$\mathcal L$-admissible}. Then the charts $(U_i,\phi_i)$ of $\mathcal L$ restricted to $U_i\cap P$ define a lamination structure on $P$. We will call this structure \emph{the restriction of $\mathcal L$ to $P$} and we denote this structure by $\mathcal{L}_{|P}$.
 
 Similarly, if $V$ is an open subset of $L$, the set of the charts $(U,\phi)\in\mathcal{L}$ such that $U\subset V$ constitutes a lamination structure on $V$, which is denoted by $\mathcal{L}_{|V}$.
 
 A subset $P$ of $L$ which is $\mathcal{L}_{|V}$-admissible for a certain open subset $V$ of $L$
will be called \emph{ $\mathcal{L}$-locally admissible}, and we will denote by $\mathcal{L}_{|P}$ its lamination structure $\mathcal{L}_{|V_{|P}}$.

We recall that the locally compact subsets of a locally compact metric space are the intersections of open and closed subsets.

\begin{exems}
\begin{itemize}
    \item A manifold of dimension $d$ is a lamination of the same dimension.
    \item A $C^r$-foliation on a connected manifold induces a $C^r$-lamination structure.
    \item A locally compact and second-countable metric space defines a lamination of dimension zero.
    \item If $K$ is a locally compact subset of $\mathbb S^1$, then the manifold structure of the circle $\mathbb S^1$ induces on $\mathbb S^1\times K$ a $C^\infty$-lamination structure whose leaves are $\mathbb S^1\times \{k\}$, for $k\in K$.
    \item The stable foliation of an Anosov $C^r$-diffeomorphism induces a $C^r$-lamination structure whose leaves are the stable manifold. \end{itemize}
\end{exems}
\begin{propr} If $(L, \mathcal{L})$ and $(L',\mathcal{L}')$ are two laminations, then $L\times L'$ is endowed with the lamination structure whose leaves are the product of the leaves of $(L, \mathcal{L})$ with the leaves of  $(L',\mathcal{L}')$. We denote this structure by $\mathcal{L}\times\mathcal{L}'$.\end{propr}

\begin{propr}\label{cuplam} If $\mathcal L$ and $\mathcal L'$ are two lamination structures on two open subsets $L$ and $L'$ of a metric space $A$ such that the atlases $\mathcal L_{L\cap L'}$ and $\mathcal L_{L\cap L'}$ are equivalent, then the union of the atlases $\mathcal L$ and $\mathcal L' $ is an atlas on $L\cup L'$.\end{propr}

\subsubsection{Morphisms of laminations }
A map $f$ is a \textit{$C^r$-morphism (of laminations)} from $(L,\mathcal{L})$ to $(L',\mathcal{L}')$ if it is a continuous map from $L$ to $L'$ such that, seen via charts $h$ and $h'$, it can be written in the form:
\[h'\circ f\circ h^{-1} (x,t)= (\phi (x,t),\psi(x,t))\]
 where $\phi$ takes its values in $\mathbb R^{d'}$, $\partial_x^s \phi$ exists, is continuous on the domain of $\phi$, for all $s\in\{1,\dots ,r\}$ and $\psi(\cdot , t)$ is locally constant.
 
If, moreover, the linear map $\partial_x \phi(x,t)$ is always one-to-one, we will say that $f$ is an \emph{immersion (of laminations)}.


 An \emph{embedding (of laminations)} is an immersion which is a homeomorphism onto its image.
 
 The \emph{endomorphisms of $(L,\mathcal{L})$} are the morphisms from $(L,\mathcal{L})$ into itself.

 We denote by:
 \begin{itemize}
\item $Mor^r(\mathcal{L},\mathcal{L}')$ the set of the $C^r$-morphisms from $\mathcal{L}$ into $\mathcal{L}'$,
\item $Im^r(\mathcal{L},\mathcal{L}')$ the set of the $C^r$-immersions from $\mathcal{L}$ into $\mathcal{L}'$,
\item $Emb^r(\mathcal{L},\mathcal{L}')$ the set of the $C^r$-embeddings from $\mathcal{L}$ into $\mathcal{L}'$,
\item $End^r(\mathcal{L})$ the set of the $C^r$-endomorphisms of $\mathcal L$.\end{itemize}

  We denote by $T \mathcal{L}$ the vector bundle over $L$, for which the fiber of $x\in L$, denoted by $T_x \mathcal{L}$, is the tangent space at $x$ to its leaf. If $f$ is morphism from $\mathcal{L}$ into $\mathcal{L}'$, we denote by $Tf$ the bundle morphism from
  $T\mathcal{L}$ to $T \mathcal{L}'$ over $f$ induced by the differential of $f$ along the leaves of $\mathcal L$.
\begin{rema} If $M$ is a manifold, we notice that $End^r(M)$ denotes the set of $C^r$-maps from $M$ into itself, possibly non-bijective and possibly with singularities.\end{rema}

\begin{exem} \label{lamhyper}
Let $f$ be a $C^r$-diffeomorphism of a manifold $M$ and let $K$ be hyperbolic compact subset of $M$. Then the union $W^s(K)$ of stable manifold of points in $K$ is the image of a $C^r$-lamination $(L,\mathcal L)$ immersed injectively.

Moreover if every stable manifold does not accumulate on $K$, then $(L,\mathcal L)$ is a $C^r$-embedded lamination.
 \end{exem}

\begin{proof} We endow $M$ with an adapted metric $d_M$ to the hyperbolic compact $K$. For a small $\epsilon >0$, we call \emph{local stable manifold of diameter $\epsilon$ of $x\in K$}, the set of points whose trajectory is $\epsilon$-distant to the trajectory of $x$. Let
$W^s_\epsilon(K)$ be the union of stable manifolds of points in $K$ of diameter $\epsilon$. For $\epsilon$ small enough, the closure of $W^s_\epsilon(K)$ is sent by $f$ into $W^s_\epsilon(K)$ and supports a canonical $C^r$-lamination structure $\mathcal L_0$. Let $C$ be the subset $W^s_\epsilon(K)\setminus f^2\big(cl(W^s_\epsilon(K))\big)$.
For $i>0$, we denote by $C_i$ the set $f^{-i}(C)$ and by $C_0$ the set $W^s_\epsilon(K)$.
The union $\cup_{n\ge 0} C_n $ is consequently equal to $W^s(K)$. Moreover, for $k,l\ge 0$, if $C_k$ intersects $C_l$ then $|k-l|\le 1$.

 Let us now construct a metric on $W^s(K)$ such that $( C_n)_n$ is an open covering and such that the topology induced by this metric on $ C_n$ is the same than the one of $M$. For $(x,y)\in  W^s(K)^2$, we denote by $d(x,y)$:
 \[\inf\Big\{\sum_{i=1}^{n-1} d_M(x_i,x_{i+1}); \;n>0,\;
 (x_i)_i\in W^s(K)^n,\;\mathrm{such\; that}\]
\[  x_1=x,\; x_n=y,\; \forall i\exists j: (x_i,x_{i+1})\in C_j^2\Big\}.\]
We remark that $d$ is a distance with announced properties. The metric space $L$ is therefore $W^s(K)$ endowed with this distance. We remark that if every stable manifold does not accumulate on $K$, then the topology on $L$ induced by this metric and the metric of $M$ are the same. In other words $\mathcal L$ is embedded.

 For $i>0$, the open subset $C_i$ supports the $C^r$-lamination structure $\mathcal L_{i}$ whose  charts are the composition of the charts of $\mathcal L_{0|C}$ with $f^{i}$. As $f$ is a diffeomorphism, for any $i,j$, the restriction of $\mathcal L_i$ and $\mathcal L_j$ to $C_i\cap C_j$ are equivalent.
 By property \ref{cuplam}, the structures $(\mathcal L_i)_{i\ge 0}$ generate a $C^r$-lamination structure $\mathcal L$ on $L$.
\end{proof}

\subsubsection{Riemannian metric on a lamination}
A \emph{Riemannian metric} $g$ on a $C^r$-lamination $(L,\mathcal L)$ is an inner product $g_x$ on each fiber $T_x\mathcal L$ of $T\mathcal L$, which depends $C^{r-1}$-continuously on the base point $x$.
It follows from the existence of partitions of unity (see proposition \ref{part1}) that any lamination $(L,\mathcal{L})$ can be endowed with a certain Riemannian metric.
 \footnote{As the tangent bundle is only continuous when $r=1$, we cannot define the geodesic flow along the leaves. Nevertheless, there exists a $C^\infty$-lamination structure, compatible with the $C^1$-structure. For such a structure, we can define the geodesic flow for another regular metric. To show it we can adapt the proof of theorem 2.9 in \cite{H} with the analysis techniques of annex \ref{partlam}.
}

A Riemannian metric induces -- in a standard way -- a metric on each leaf.
For two points $x$ and $y$ which belong to a same leaf, the distance between $x$ and $y$ is defined by:
\[d_g(x,y)=\inf_{\{\gamma\in Mor([0,1],\mathcal L);\gamma(0)=x,\gamma(1)=y\}}\int_0^1\sqrt{g(\partial_t\gamma(t),\partial_t\gamma(t))},dt\]

\subsubsection{Equivalent Classes of morphisms}\label{Top}
We will say that two elements $f$ and $f'$ in $Mor^r(\mathcal{L},\mathcal{L}')$ (resp. $Im^r(\mathcal{L}, \mathcal{L}')$ and $End^r(\mathcal{L}))$ are equivalent if for every $x\in L$, the points $f'(x)$
 and $f(x)$ belong to a same leaf of $\mathcal L'$. The equivalence class of $f$ will be denoted by $Mor^r_f (\mathcal{L},\mathcal{L}')$
(resp. $Im_f^r (\mathcal{L},\mathcal{L}')$ and $End_f^r(\mathcal{L})$).

Given a Riemannian metric $g$ on $(L',\mathcal L')$, we endow the equivalence class
with the compact-open topology $C^r$. Let us describe elementary open sets which generate the topology.

Let $K$ be a compact subset of $L$ such that $K$ and $f(K)$ are included in distinguished open subsets endowed with charts $(h,U)$ and $(h',U')$. We define $(\phi,\psi)$ by $ h'\circ f\circ h^{-1}= (\phi ,\psi)$ on $h(K)$.

Let $\epsilon>0$. The following subset is an elementary open set of the topology:
\[\Omega:=\Big\{f'\in Mor^r_f (\mathcal{L} ,\mathcal{L}')\;:\;
 f'(K)\subset U', \; \mathrm{and \; s.t.\; if} \; \phi'\;\mathrm{is\; defined\; by}\; \quad\]\[h'\circ f'\circ h^{-1}=(\phi' ,\psi),
 \; \mathrm{we \; have}\; \max_{h(K)}\big(\sum_{s=1}^r\|\partial_x^s\phi-\partial_x^s\phi'\|\big)<\epsilon\Big\}.\]

For any  manifold $M$, each space $Im^r(\mathcal L,M)$, $Emb^r(\mathcal L,M)$ and $End^r(M)$ contains a unique equivalence class. We endow these spaces with the topology of there unique equivalence class.

In particular the topology on $C^r(M,M)=End^r(M)$ is the (classical) $C^r$-compact-open topology.

Given a lamination $(L, \mathcal L)$ $C^r$-immersed by $i$ into a Riemannian manifold $(M,g)$, we define the $C^r$-strong topology on $Im^r(\mathcal L,M)$ by the following (partially defined) distance:
\[\forall (j,j')\in Im^r(\mathcal L,M),\quad d(j,j'):=\sup_{(x,u)\in T\mathcal L,\; \|u\|=1} \sum_{s=1}^r
d( \partial_{T_x\mathcal L}^sj(u^s), \partial_{T_x\mathcal L}^sj'(u^s)),\]
where $(L,\mathcal L)$ is endowed with the Riemannian distance $i^*g$ and $TM$ is endowed with the Riemannian distance induced by $g$.   
\subsection{Stratifications of laminations}\label{stra}
Throughout this section, all laminations are supposed to be of class $C^r$.   
\subsubsection{Stratifications}
The concept of stratification occurs in several mathematical fields. The definition depends on the fields and on the authors. One of the most general definitions was formulated by J. Mather \cite{Ma}\footnote{In this article, the object of  the definition is called a prestratification. This corresponds in fact to the stratifications defined here.}
:
\begin{defi} A \emph{stratification} of  a metric space $A$ is a partition of $A$ into subsets, called \emph{strata}, that satisfy the following conditions:
\begin{enumerate}
\item each stratum  is locally closed, {\it i.e}, it is equal to the intersection of a closed subset with an open subset of $A$,
\item the partition $\Sigma$ is locally finite,
\item (condition of frontier) for any pair of strata $(X,Y)\in \Sigma^2$ satisfying  $Y\cap cl(X)\not =\emptyset$, we have $Y\subset cl(X)$. We write $Y\le X$ and $X$ is said to be \emph{incident} to $Y$.
\end{enumerate}
\end{defi}
 The pair $\chi =(A,\Sigma)$ is called  the \emph{ stratified space of support $A$ and of stratification $\Sigma$}.

\begin{propr}
The stratification equipped with the relation $\le$ is a partially ordered set.\end{propr}
\begin{proof}
The reflexivity and the transitivity are clear. To show the antisymmetricity, we choose two strata $(X,Y)\in \Sigma$ such that $X\le Y$ and $Y\le X$.  This means that $X\subset cl(Y)$ and $Y\subset cl(X)$, hence
$cl(X)$ is equal to $cl(Y)$. As $X$ and $Y$ are locally closed, these two strata cannot be disjoint and as $\Sigma$ is a partition of $A$, these two strata are equal.\end{proof}
 
\subsubsection{Analytic and differentiable stratifications}
Among the fields where the stratifications occur, we can cite analytic geometry and differential geometry.  Let us describe our definitions of stratifications for both of these fields:
\begin{itemize}
\item In analytic geometry, we recall that an analytic variety is the set of zeros of an analytic map from $\mathbb C^n$ to $\mathbb C^m$.

 We define an \emph{analytic stratified space} as a space whose support is an analytic variety and whose strata are analytic manifolds such that:
\[\forall (X,Y)\in \Sigma^2,\quad \mathrm{if}\; X\le Y\; \mathrm{then}\; dim(X)\le dim(Y).\]

A such definition is equivalent to the definition of  H. Whitney \cite{W1} of stratifications in the context of analytic geometry.

\item In differential geometry, following the work of J. Mather \cite{Ma} and R. Thom \cite{Th}, C. Murolo and D. Trotman \cite{MT} define stratifications as stratified spaces whose strata, endowed with the topology induced by the support, are connected manifolds that satisfy:
 \[\forall (X,Y)\in \Sigma^2,\quad \mathrm{if}\; X< Y \; \mathrm{ then}\; dim(X)< dim(Y).\]

We define such a stratified space a \emph{differentiable stratified space}.
 Such spaces occur notably in singularity theory or in the study of zeros of a generic map (\cite{Th}, \cite{Ma}).
\end{itemize}

\subsubsection{Stratifications of laminations}
  Similarly, we introduce the concept of \emph{$(C^r)$-laminar stratified space}:
a stratified space  $(A,\Sigma)$ whose strata, endowed with the topology induced by $A$, are laminations that stratify:
\[\forall(X,Y)\in \Sigma^2,\quad \mathrm{if} \;X\le Y\; \mathrm{ then}\; dim(X)\le dim(Y).\]

 As an analytic or differentiable stratified space have a canonical structure of a laminar space, we will abuse of language by using \emph{stratified space} to refer to \emph{a laminar stratified space}, in the rest of this work.

 In general, differentiable stratified spaces are used with extra regularity conditions: either by supposing them embedded, with a certain regularity, into a manifold, or by endowing them with a supplementary geometric structure.

 Let us begin by introducing the embedding which allows us to introduce some examples of stratified spaces. In section \ref{structures} we shall introduce a supplementary geometric structure, the trellis of laminations which exists on certain stratified spaces.

 An \emph{embedding} $i$ of a stratified space $(A,\Sigma)$ into a manifold is a homeomorphism onto its image, whose  restriction to each stratum is an embedding of laminations.
 
 The embedding $i$ is \emph{$a$-regular} if, for all strata $(X,Y)\in \Sigma^2$ such that $X\le Y$, and for any sequence $(x_n)_n\in Y^\mathbb N$ which converges to a point $x\in X$ and such that the sequence of subspaces $(Ti(T_{x_n} Y))_{n}$ converges to a subspace $E$, then $Ti(T_xX)$ is included in $E$.
 
 When a stratified space is embedded, we often identify the stratified space with its image by the embedding. If the embedding is $a$-regular, we will say that (in this identification) the \emph{stratification (of laminations) is $a$-regular.}

 The $a$-regularity condition is due to H. Whitney, who showed that every analytic variety supports an  $a$-regular (analytic) stratification \cite{W2}. This definition is also standard in the study of differentiable stratified spaces.

\begin{exem} Given a submanifold with boundary, the connected components of the boundary and the interior of the submanifold endowed with their canonical manifold structure define a ($a$-regular) (differentiable) stratification.\end{exem}

\begin{exem} The set $\{0\}\times\mathbb R\cup \mathbb R\times\{0\}$ supports a (differentiable) stratification with two strata: a first stratum being $\{0\}$ and the second being $\{0\}\times\mathbb R^*\cup \mathbb R^*\times\{0\}$. This stratified space is canonically ($a$-regularly) embedded into $\mathbb R^2$.\end{exem}
\begin{exem} Given a manifold $M$ and a compact subset $K$ with empty interior, the set $K$ endowed with its $0$-dimensional lamination structure and the set $M\setminus K$ endowed with its canonical manifold structure define an $a$-regular laminar stratification  on $M$.\end{exem}

\begin{exem}\label{example:for:Sigmap} Let $\mathbb S^1$ be the unit circle and let $\mathbb D$ be the unit open disk of the complex plane $\mathbb C$. Let $A$ be the topological subspace $cl(\mathbb D)\times \{1\}\cup \{1\}\times \mathbb S^1$ of $\mathbb C^2$. We stratify $A$ by four strata: the first is the 2-dimensional lamination supported by $\mathbb D\times \{1\}$,
the others are $0$-dimensional and supported by respectively $\mathbb S^1\times \{1\}\setminus \{(1,1)\}$, $\{1\}\times \mathbb S^1\setminus \{(1,1)\}$ and \{(1,1)\}. This stratified space is canonically ($a$-regularly) embedded in $\mathbb C^2$.\end{exem}

\begin{propr}\label{Strat:AxiomA} Let $f$ be a diffeomorphism that satisfies axiom $A$ and the strong transversality condition\footnote{See section \ref{axiom A} for the definition.}. Let $(\Lambda_i)_i$ be the spectral decomposition of the nonwandering set $\Omega$. Let $W^s(\Lambda_i)$ be the union of the stable manifolds of $\Lambda_i$'s points. Then the family $(W^s(\Lambda_i))_i$ defines a stratification of laminations on $M$, where the leaves of $W^s(\Lambda_i)$ are stable manifolds.
\end{propr}
\begin{proof}
We recall that each basic piece $\Lambda_i$ is a hyperbolic compact subset disjoint from the other basic pieces. As the periodic points are dense in the nonwandering set, there exists a local product structure on $\Lambda_i$. It follows from proposition 9.1 of \cite{Sh} that:
  \[W^s(\Lambda_i):=\cup_{x\in \Lambda_i} W^s(x)=\{x\in M| d(f^n(x),\Lambda_i)\rightarrow 0,\; n\rightarrow \infty\}.\]
 As the subset $L(f)$ of the accumulation points of orbits is included in the nonwandering set, it follows from lemma 2.2 of \cite{Sh} that the manifold $M$ is the disjoint union of the subsets $(W^s(\Lambda_i))_i$.
 
 Let us now show the frontier condition:
 \[cl\big(W^s(\Lambda_i)\big)\cap W^s(\Lambda_j)\not= \emptyset \Rightarrow cl\big(W^s(\Lambda_i)\big)\supset W^s(\Lambda_j).\]
 
 First we recall that if the $W^s(\Lambda_k)$ intersects $W^u(\Lambda_l)$  by the strong transversality condition and the transitivity of $\Lambda_k$, the closure of $W^s(\Lambda_k)$ contains $W^s(\Lambda_l)$. We write then $\Lambda_k\prec \Lambda_l$.  Moreover, the strong transversality condition implies the nocycle condition which states that $\prec$ is an order on $(\Lambda_l)_l$.
 
 We now suppose that the closure of $W^s(\Lambda_i)$ intersects $W^s(\Lambda_j)$. If $i$ is not equal to $j$, it follows from lemmas 1 and 2 P.10 of \cite{Sh} that the closure of $W^s(\Lambda_i)$ intersects $W^u(\Lambda_i)\setminus \Lambda_i$. Let $x$ be a point which belongs to this intersection. As $(W^s(\Lambda_k))_k$ covers $M$, there exists $j_1$ such that $x$ belongs to $W^s(\Lambda_{j_1})$. By the above reminder, the closure of $W^s(\Lambda_{j_1})$ contains $W^s(\Lambda_{j})$. Moreover, the closure of $W^s(\Lambda_{i})$
intersects $W^s(\Lambda_{j_1})$. And so on, we can continue to construct $(\Lambda_{j_k})_k$. As the family $(\Lambda_i)_i$ is finite and there are nocycle, the family $(\Lambda_{j_k})_k$ is finite. Thus, we obtain:
\[cl(W^s(\Lambda_{i}))=cl(W^s(\Lambda_{j_n}))\supset \cdots \supset cl(W^s(\Lambda_{j_1}))\supset W^s(\Lambda_{j}).\]
This proves the frontier condition.

To finish, it only remains to show the existence of the canonical lamination structure on each $W^s(\Lambda_i)$ (which implies that $W^s(\Lambda_i)$ is locally closed). It follows from the nocycle condition and the fact that $L(f)$ is included in $\Omega$ that there exists an adapted filtration to $(\Lambda_i)_i$ (see theorem 2.3 of \cite{Sh}). In other words, there exists an increasing family of compact subsets $(M_i)_i$ such that:
  \[f(M_i)\subset int(M_i)\quad \mathrm{and}\quad \Lambda_i= \cap_{n\in \mathbb Z} f^n(M_i\setminus M_{i-1}).\]
  Let $U:= M_i\setminus M_{i-1}$.  As each point $x\in f^{-n}(M_{i-1})$ has its orbit which will belong eventually to $M_{i-1}$ (which does not contain $\Lambda_{i}$), we have:
  \[W^s(\Lambda_i)\cap U= W^s(\Lambda_i)\cap M_i\setminus \cup_{n\ge 0}f^{-n}(M_{i-1}).\]
  Thus, $W^s(\Lambda_i)\cap U$ is $f$-stable:
  \[f(W^s(\Lambda_i)\cap U)\subset W^s(\Lambda_i)\cap U.\]
  By replacing $(M_i)_i$ by $(f^n(M_i))_i$, the set $W^s(\Lambda_i)\cap U$ may be an arbitrarily small neighborhood of $\Lambda_i$. Hence, for any small $\epsilon$ and small $U$, we way suppose that each point $x$ of $W^s(\Lambda_i)\cap U$ can be $\epsilon$-shadowed by a point $y$ of $\Lambda_i$ ($\Lambda_i$ is endowed with a local product structure). It follows that $x$ belongs to the local stable manifold $W^s_\epsilon(y)$ of $y$. Consequently:
 \[W^s(\Lambda_i)\cap U\subset \cup_{y\in \Lambda_i} W^s_\epsilon(y).\]
 In other words, $W^s(\Lambda_i)$ does not auto-accumulate. Thus, example \ref{lamhyper} shows that $W^s(\Lambda_i)$ is endowed with a lamination structure whose leaves are the stable manifolds of points of $\Lambda_i$.
\end{proof}

\begin{propr}\label{exprod} Let $(A_1,\Sigma_1)$ and $(A_2,\Sigma_2)$ be two stratified spaces. Then the pair $(A_1\times A_2, \Sigma_1\times \Sigma_2)$, where:
 \[\Sigma_1\times \Sigma_2=\{X_1\times X_2\; ;\; X_1\in\Sigma_1\; \mathrm{and}\; X_2\in\Sigma_2\},\]
is a stratified space of support $A_1\times A_2$.

 Moreover, if $p_1$ and $p_2$ are embeddings of $(A_1,\Sigma_1)$ and $(A_2,\Sigma_2)$ into manifolds $M_1$ and $M_2$ respectively, the map $p:=(p_1,p_2)$ is an embedding of $(A_1\times A_2, \Sigma_1\times \Sigma_2)$ into $M_1\times M_2$. This embedding is $a$-regular if and only if $p_1$ and $p_2$ are $a$-regular.\end{propr}

\begin{proof}
 To check that $\Sigma_1\times \Sigma_2$ defines a stratified space is elementary:
for all $(X_1\times X_2,Y_1\times Y_2)\in (\Sigma_1\times \Sigma_2)^2$,  we have
\[X_1\times X_2\cap cl(Y_1\times Y_2)\not =\emptyset\Leftrightarrow  X_1\cap cl(Y_1)\not= \emptyset\; \mathrm{and}\; X_2\cap cl(Y_2)\not =\emptyset\]
Then, for each $i\in\{1,2\}$, we have $X_i\subset cl(Y_i)$ and $dim(X_i)\le dim (Y_i)$.
This implies that $X_1\times X_2\subset cl(Y_1\times Y_2)$ and $dim(X_1\times X_2)\le Y_1\times Y_2$.

The proof of the statement on $a$-regularity is left to reader.\end{proof}

 Let $(A,\Sigma)$ be a stratified space and $U$ an open subset of $A$. The set $\Sigma_{|U}$ of the restrictions of strata $X\in \Sigma$ that intersect $U$, to $U\cap X$, defines a stratification of laminations on $U$.

\subsubsection{Stratified morphisms} \label{Strat:mor} Let $(A,\Sigma)$ and $(A',\Sigma')$ be two $C^r$-stratified spaces.

 A continuous map $f$ from $A$ to $A'$ is a \emph{$(C^r)$-stratified morphism} (resp. \emph{stratified immersion}) if  each stratum $X\in\Sigma$ is sent into a stratum $X'\in \Sigma'$ and the restriction $f_{|X}$ is a $C^r$-morphism (resp. an immersion) from the lamination $X$ to $X'$. We will also say that $f$ is a $(C^r)$-morphism (resp. immersion) from $(A,\Sigma)$ to $(A',\Sigma')$.

 In the particular case of differentiable stratified spaces, we are coherent with the usual definition of stratified morphisms.
 
  A \emph{stratified endomorphism} $(A,\Sigma)$ is a stratified morphism which preserves each stratum.

   We denote respectively by $Mor^r(\Sigma,\Sigma')$, $Im^r(\Sigma,\Sigma')$ and $End^r(\Sigma)$ the set of $C^r$-stratified morphisms, immersions and endomorphisms.

    Two stratified morphisms $f$ and $\hat f$ are said to be {\it equivalent} if they send each stratum $X\in \Sigma$ into a same stratum $X'\in \Sigma'$ and if their restrictions to $X$ are equivalent as morphisms from the lamination  $X$ to $X'$. We denote by $Mor_f^r (\Sigma,\Sigma')$ the equivalence classes of $f$ endowed with the topology induced by the following product:
\[C^0(A,A')\times \prod_{X\in \Sigma,\; f(X)\subset X'\in \Sigma'} Mor_{f|X}^r(X,X')\]

 The aim of this work is to show the persistence of some $a$-regular normally expanded stratifications. However,
 the regularity of these stratifications is not sufficient to guarantee their persistence: there exist compact differentiable stratifications which are normally expanded but not persistent (we will give such a example in part \ref{cexp}).
We are going to introduce a stronger regularity condition: to support a trellis structure. In the differentiable stratified space, other authors have introduced other intrinsic conditions  (\cite{Ma}, \cite{Th}, \cite{MT}).

 \subsection{Structures of trellis of laminations}\label{structures}
Throughout this section, unless stated otherwise, all laminations, laminar stratified spaces and morphisms are supposed to be of class $C^r$, for $r\ge 1$ fixed. 
 
We need some preliminary definitions:

\begin{defi}[Coherence and compatibility of two laminations]
 Let $L_1$ and $L_2$ be two subsets of a metric space $L$, endowed with lamination structures denoted by $\mathcal L_1$ and $\mathcal L_2$ respectively. Let us suppose that, for example, the dimension of $\mathcal L_2$ is at least equal to
the dimension of $\mathcal L_1$.

 The laminations $\mathcal L_1$ and $\mathcal L_2$ are {\it coherent} if, for all $x\in L_1\cap L_2$, there exists a plaque of $\mathcal L_1$ containing $x$ and included in a plaque of $\mathcal L_2$.
 
  The laminations $\mathcal L_1$ and $\mathcal L_2$ are {\it compatible} if, for all  $x\in L_1\cap L_2$, the leaf of $\mathcal L_1$ containing $x$ is included in a leaf of $\mathcal L_2$.\end{defi}

\begin{defi}[Foliated lamination] Let $(L_1,\mathcal L_1)$ and $(L_2,\mathcal L_2)$ be two laminations of dimension $d_1\le d_2$ respectively. We will say that \emph{$\mathcal L_1$ is a foliation of $\mathcal L_2$} if $L_1=L_2$ and if, for every $x\in L_2$, there exists a neighborhood $U$ of  $x$ and a chart $(U,\phi)$ which belongs to $\mathcal L_1$ and to $\mathcal L_2$. This means that there exists open subsets $U_1$ and $U_2$ of $\mathbb R^{d_1}$ and $\mathbb R^{d_2-d_1}$  respectively, such that:
\[\Big(\phi\;:\; U\rightarrow U_1\times \underbrace{U_2\times \overbrace{T_2}^{\mathrm{Transversal\; space\; of}\; \mathcal L_2}}_{\mathrm{Transversal\; space\; of\; \mathcal L_1}}\Big)\in \mathcal L_1\cap \mathcal L_2.\]
We note that the laminations structure $\mathcal L_1$ and  $\mathcal L_2$ are then coherent.
\end{defi}

\begin{rema} If, in this definition, the lamination $\mathcal L_2$ is a manifold, then $\mathcal L_1$ is a (classical) $C^r$-foliation of dimension $d_1$ on this manifold.\end{rema}

\begin{rema} Unlike laminations, a $C^1$-foliation is not diffeomorphic to a $C^\infty$-foliation. For instance, the suspension of a Denjoy $C^1$-diffeomorphism defines a $C^1$-foliation which is not diffeomorphic to a $C^2$-foliation.\end{rema}

\begin{exem}
 Let $K$ be a locally compact set and let $L_1=L_2$ be the product $\mathbb R^{d_2}\times K$. Let $\mathcal L_2$ be the canonical lamination structure of dimension $d_2$ on $L_2$. Let $d_1$ be an integer less than $d_2$, let $\phi$ be a continuous map from $K$ to $Diff^r(\mathbb R^{d_2},\mathbb R^{d_2}))$ and let $\mathcal L_1$ be the lamination structure on $L_1$ whose leaves are:
\[\{\phi(k)(\mathbb R^{d_1}\times\{t\})\times \{k\},\; (k,t)\in K\times \mathbb R^{d_2-d_1}\}.\]
Then $\mathcal L_1$ is a $C^r$-foliation of $\mathcal L_2$.
\end{exem}

\begin{propr} Let $(L,\mathcal L)$ be a lamination immersed into a manifold $M$.
We identify $(L,\mathcal L)$ with its image in $M$.
Let $\mathcal F$ be a $C^r$-foliation on an open neighborhood of $L$, whose leaves are transverse to the leaves of $\mathcal L$.
Then the lamination on $L$, whose  plaques are the transverse intersection of plaques of $\mathcal L$ with plaques of $\mathcal F$, is a $C^r$-foliation of $\mathcal L$. Let us denote by $\mathcal L\pitchfork \mathcal F$ this lamination structure.
\end{propr}
\begin{proof}
As the statement is a locale property, it is sufficient to prove it in a neighborhood of every point $x\in L$.
Via a local chart of $\mathcal F$, we identify a neighborhood $U$ of $x$ to $\mathbb R^n$ and $\mathcal F$ to the
foliation  associated to the splitting $\mathbb R^{n-d}\times\mathbb R^d$, whose leaves are of dimension $d$. We may suppose that, in this identification, $T_x\mathcal L$ is the vectorial subspace $\mathbb R^{d'}\times \{0\}$ of $\mathbb R^n$, with $d'\ge n-d$. 

We may suppose $U$ small enough such that the intersection of the leaves of $\mathcal L$ with $U$ can be identified to a continuous family of $C^r$-graphs from $\mathbb R^{d'}$ to $\mathbb R^{n-d'}$. Let $(\rho_t)_{t\in T}$ be such a family of $C^r$-maps. We note that the following application:

\[\phi_0\;:\;\mathbb R^n\cap L\rightarrow \mathbb R^d\times T\]
\[(u,\rho_t(u))\mapsto (u,t)\]
is a chart of $\mathcal L$.

Thus, for all $t\in T$ and $v\in \mathbb R^{n-d}$, the intersection of the plaque of $\mathcal L$:
\[\big\{(u_1,u_2,\rho_t(u_1,u_2)):\; (u_1,u_2)\in \mathbb R^{n-d}\times \mathbb R^{d+d'-n}\big\}\]
 with the plaque $\{v\}\times \mathbb R^{d+d'-n}\times \mathbb R^{n-d'}$ of $\mathcal F$, is:
\[\big\{(v,u_2,\rho_t(u_1,u_2)):\; u_2\in \mathbb R^{d+d'-n}\big\}.\]

The chart $\phi$ sends this intersection onto $\{v\}\times \mathbb R^{d+d'-n}\times \{t\}$.

\[\mathrm{Let}\; \psi\; (v,u)\in \mathbb R^{n-d'}\times \mathbb R^{d+d'-n}\mapsto (u,v) \in \mathbb R^{d+d'-n}\times \mathbb R^{n-d'}.\]

 Finally, we define
\[\phi \;:\; \mathbb R^n\cap L\rightarrow \mathbb R^{d'+d-n}\times \mathbb R^{n-d}\times T\]
\[(u,\rho_t(u))\mapsto (\psi(u),t)\]
which is a chart of $\mathcal L$ and of $ \mathcal L\pitchfork\mathcal F$. We conclude that the lamination $\mathcal L\pitchfork \mathcal F$ is a foliation of the lamination $\mathcal L$.\end{proof}
 
\begin{defi}[Tubular neighborhood]
Let $(A,\Sigma)$ be a stratified space and let $X$ be a stratum of $\Sigma$. A {\it tubular neighborhood} of $X$ is a lamination $(L_X,\mathcal L_X)$ such that:\begin{itemize}
\item the support $L_X$ is a neighborhood of $X$ included in strata incident to $X$,
\item the leaves of the stratum $X$ are leaves of $\mathcal L_X$,
\item the lamination $(L_X,\mathcal L_X)$ is coherent with the other strata of  $\Sigma$.
\end{itemize}
\end{defi}
\begin{defi}[Trellis structure]
A {\it $(C^r)$-trellis (of laminations) structure} on a stratified space $(A,\Sigma)$ is a family of tubular neighborhoods
$\mathcal T=(L_X,\mathcal L_X)_{X\in \Sigma}$ satisfying, for all $X\le Y$, that the lamination $\mathcal L_{X|L_X\cap L_Y}$ defines a $C^r$-foliation of the lamination $\mathcal L_{Y|L_X\cap L_Y}$.\end{defi}

Throughout the rest of this chapter, all the trellis structures are supposed to be of class $C^r$. We will not mention their regularity.

\begin{rema} If $(A,\Sigma)$ is a lamination $(L,\mathcal L)$, then $(L,\mathcal L)$ is also the unique trellis structure on the stratified space $(A,\Sigma)$.\end{rema}

\begin{exem}\label{carre} Let $(X_0,X_1,X_2)$ be the canonical stratification on the filled square: the lamination $X_0$ is the subset of vertexes, the lamination $X_1$ is one-dimensional and supported by the edges and finally the lamination $X_3$ is two-dimensional and supported by the interior.

Let $\mathcal L_{X_0}$ be the $0$-dimensional lamination structure on a neighborhood of vertexes. Let $\mathcal L_{X_1}$ be the $1$-dimensional lamination structure on four disjoints neighborhoods of each edge, whose leaves are parallel to the associated edge. Then the family $((L_{X_0},\mathcal L_{X_0}), (L_1,\mathcal L_1),X_2)$ forms a trellis structure on $(A,\Sigma)$. We illustrate this structure in figure \ref{coupe treillis}.

\begin{figure}[h]
    \centering
\includegraphics[width=7cm]{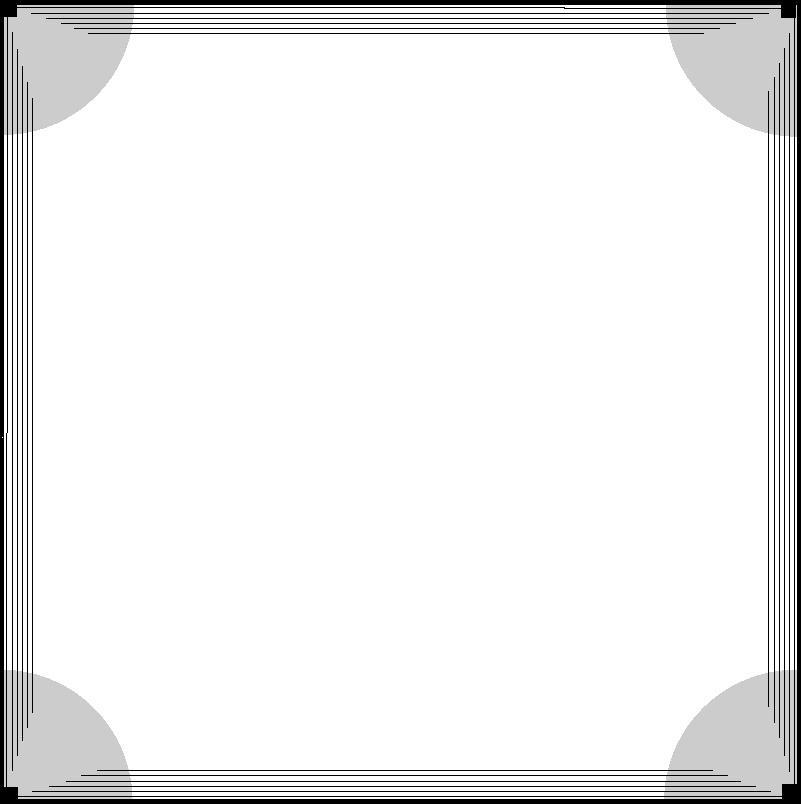}
    \caption{A trellis structure on the filled square}
    \label{coupe treillis}
\end{figure}
\end{exem}

\begin{exem}\label{previana} Let $I$ be an open interval of $\mathbb R$. We denote $\partial I$ its boundary. Let $C$ be the closed and filled cylinder in $\mathbb R^3$ defined by
\[C:=\{(x,y,z)\in\mathbb R^3; x^2+y^2\le 1\; \mathrm{and}\; z\in cl(I)\}.\]
The cylinder $C$ supports the stratification of laminations consisting of the following strata:\begin{itemize}
 \item the $0$-dimensional lamination $X_0$ supported by $\mathbb S^1\times \partial I$,
 \item the $1$-dimensional lamination $X_1$ supported by $\mathbb S^1\times I$ whose leaves are vertical,
 \item the $2$-dimensional lamination $X_2$ supported by $\mathbb D\times \partial I$,
 \item the $3$-dimensional lamination $X_3$ supported by the interior of $C$.
 \end{itemize}

  This stratification is canonically  $a$-regularly embedded into $\mathbb R^3$.

 Let us construct a trellis structure on this stratified space.
 Let $L_{X_0}$ be an open neighborhood of $X_0$ in $C$. We endow $L_{X_0}$ with the 0-dimensional lamination structure $\mathcal L_{X_0}$. Let $L_{X_1}$ and $L_{X_2}$ be two disjoint open subsets of $C\setminus X_0$ containing respectively  $X_1$ and $X_2$. We endow $L_{X_1}$ with the 1-dimensional lamination structure  $\mathcal L_{X_1}$ whose leaves are vertical. We endow $L_{X_2}$ with the 2-dimensional lamination structure $\mathcal L_{X_2}$ whose leaves are horizontal. Finally, we define $(L_{X_3},\mathcal L_{X_3})$ as
equal to the lamination $X_3$. We notice that $\mathcal T:=(L_{X_i},\mathcal L_{X_i})_{i=0}^3$ is a trellis structure on the stratified space $(C,\Sigma)$.

The figure \ref{coupe treillis} also illustrates a section of such a trellis structure by a plane containing the axis $(O_z)$.
\end{exem}

\begin{exem} With conventions of figure \ref{coupe treillis}, figure \ref{treilliscube} gives a trellis structure  on the canonical stratification of a cube, that is, the simplicial splitting into vertex, edges and faces.

\begin{figure}
    \centering
        \includegraphics{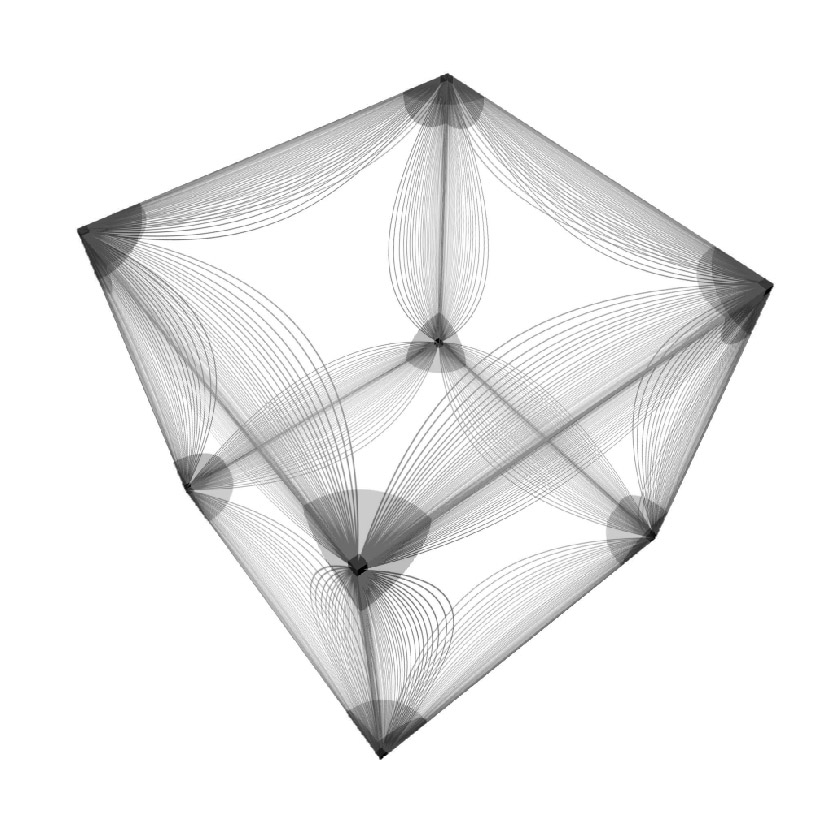}
    \caption{A trellis structure on a cube.}
    \label{treilliscube}
\end{figure}
\end{exem}

 Let $\mathcal T$ be a trellis structure on a stratified space $(A,\Sigma)$ and let $U$ be an open subset of $A$. Then the family of restrictions of the laminations $(L,\mathcal L)\in \mathcal T$ to $U$ makes up a trellis structure on $(U,\Sigma_{|U})$. We denote by $\mathcal T_{|U}$ this trellis structure.

\begin{rema} Given a trellis structure on a stratified space $(A,\Sigma)$, the foliation condition implies the coherence between the tubular neighborhoods.
\end{rema}

The following property implies in particular that every tubular neighborhood is compatible with every stratum.
\begin{propr}\label{coher}
Let $(A,\Sigma)$ be a stratified space and let $(L,\mathcal L)$ be a lamination such that $L$ is included in the union of strata of dimension at least equal to the dimension of $\mathcal L$. If the lamination $\mathcal L$ is coherent with each stratum of $\Sigma$, then for each stratum $X\in \Sigma$, the set $X\cap L$ is $\mathcal L$-admissible.
\end{propr}
\begin{proof}
The coherence implies that the leaves of $\mathcal L$ intersect the leaves of each stratum of $\Sigma$ in an open set.
This is why the partition $\Sigma$ of $A$ induces a partition of each leaves into open sets. By connectivity, each  leaf is contained in a unique leaf of a stratum of $\Sigma$. This implies that the subset $X\cap L$ is $\mathcal L$-saturated for every stratum $X\in \Sigma$. As the intersection of two locally compact subsets is locally compact, the subset  $X\cap L$ is $\mathcal L$-admissible.\end{proof}

\begin{propr}\label{locpt}
Let $(A,\Sigma)$ be a stratified space which admits a trellis structure. Then $A$ is locally compact.\end{propr}
\begin{proof} The family of the supports of the tubular neighborhoods is an open covering of $A$.
As each of these supports is locally compact, it follows that $A$ is locally compact.\end{proof}

 Let $M$ be a manifold and $(A,\Sigma)$ be a stratified space endowed with a $C^r$-trellis structure $\mathcal T$.
A stratified embedding $i$ from $(A,\Sigma)$ into $M$ is \emph{$(C^r)-\mathcal T$-controlled} if the restriction of $i$ to each tubular neighborhood $(L_X,\mathcal L_X)$ of $\mathcal T$ is a $C^r$-embedding of the lamination $\mathcal L_X$ into $M$. In other words, a $\mathcal T$-controlled $C^r$-embedding of $(A,\Sigma)$ in $M$ is a homeomorphism onto its image such that the $r$-first partial derivatives of $p$ along the leaves of each tubular neighborhood $(L_X,\mathcal L_X)$ exist, are continuous on $L_X$ and the first one is injective.

The figure \ref{cylindreconv} represents a  $\mathcal T$-controlled embedding into $\mathbb R^3$ of the stratified space defined in example \ref{previana} (not equal to the canonical inclusion).

\begin{figure}[h]
    \centering
\includegraphics[width=7cm]{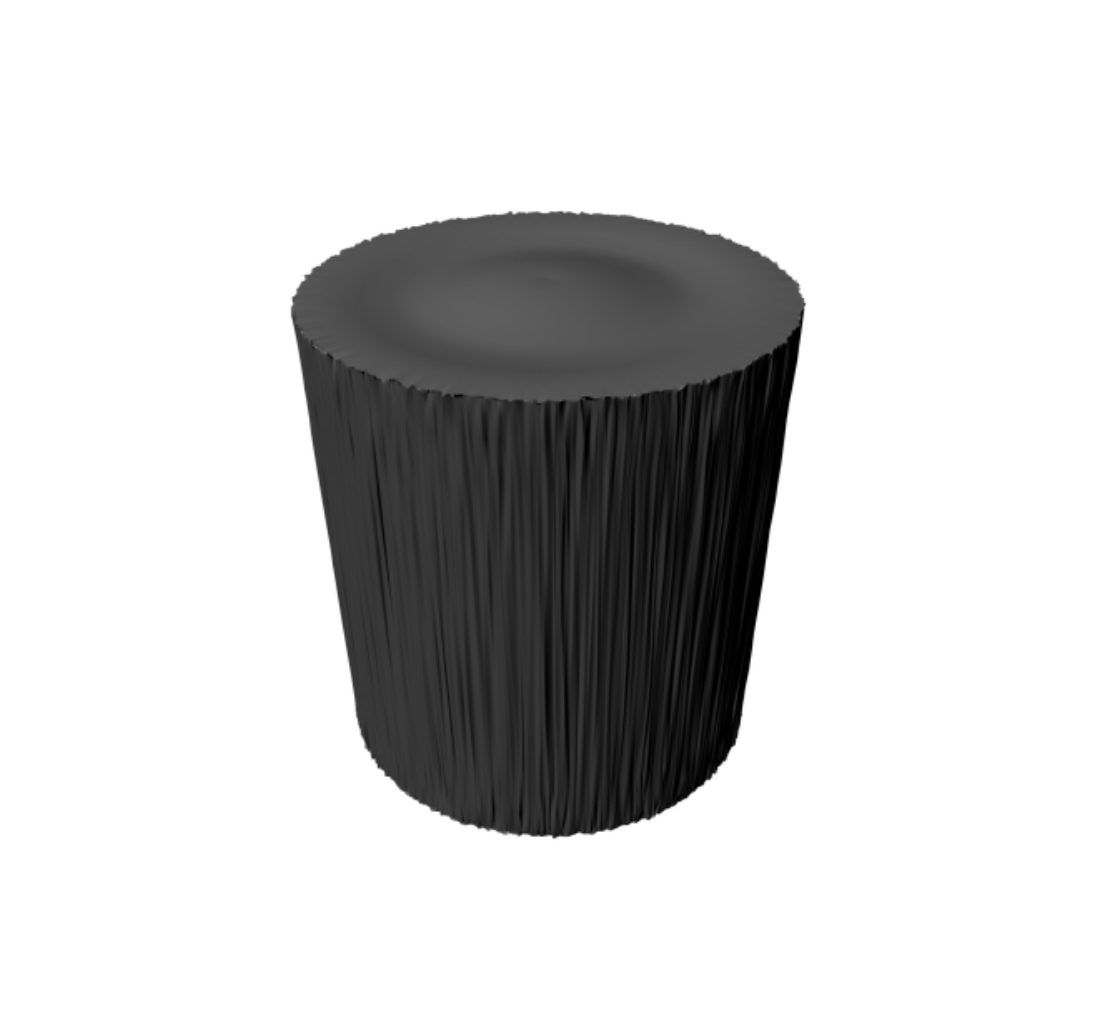}
    \caption{Controlled embedding of an exotic stratification on the cylinder.}
    \label{cylindreconv}
\end{figure}

 A controlled embedding into the space $\mathbb R^3$ of the cube, endowed with its trellis structure defined in figure \ref{treilliscube} is represented in figure \ref{cube::plonge}.
\begin{figure}[h]
    \centering
        \includegraphics[width=7cm]{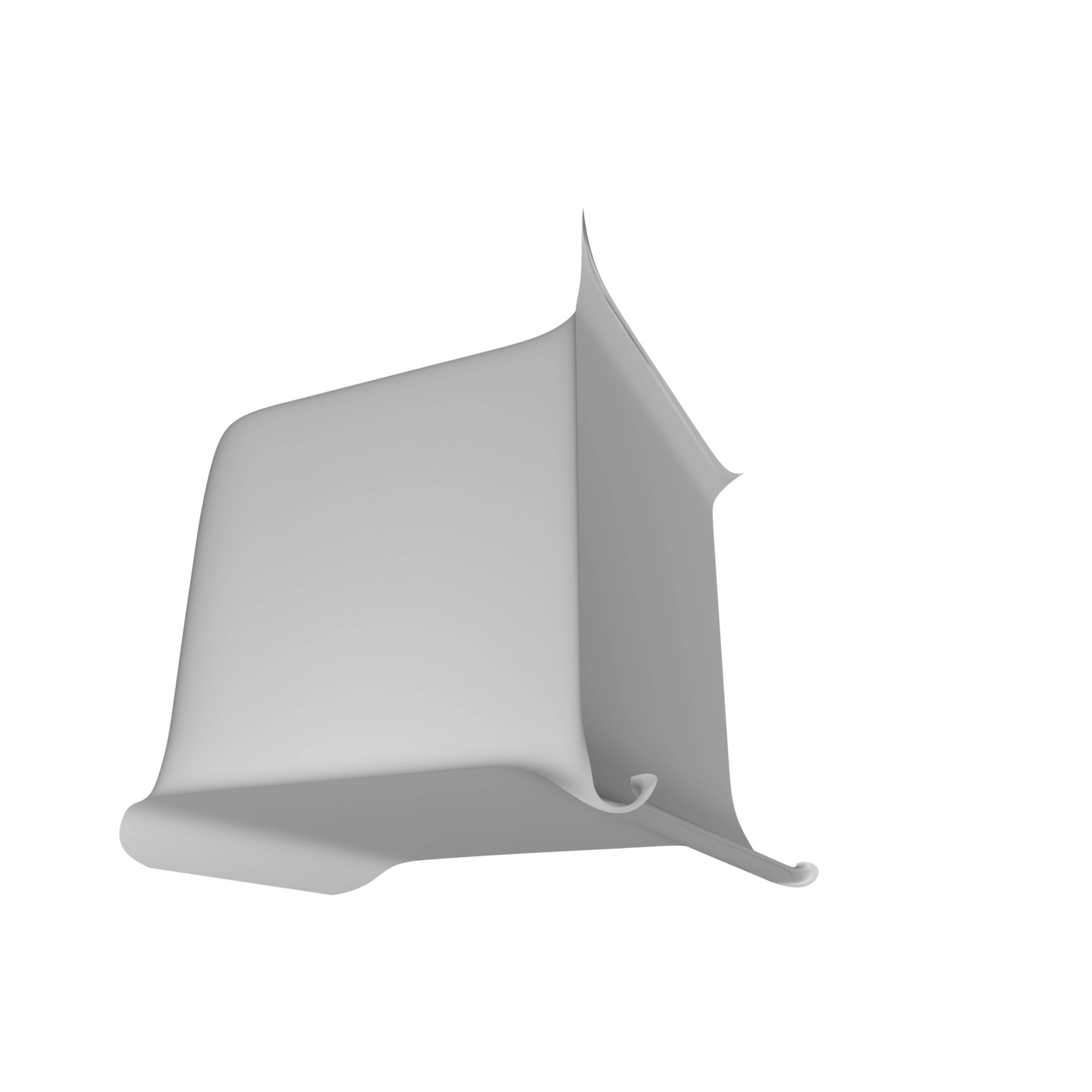}
    \caption[width=7cm]{Controlled embedding of a cube.}
    \label{cube::plonge}
\end{figure}

\begin{propr}\label{mora} Let $M$ be a manifold and let $(A,\Sigma)$ be a stratified space endowed with a trellis structure $\mathcal T$. Then any $\mathcal T$-controlled embedding  is $a$-regular.
\end{propr}
\begin{proof}
Let $X\le Y$ be two strata of $\Sigma$. Let $(x_n)_n\in Y^\mathbb N$ be a sequence which converges to $x\in X$. Then, for $n$ large enough, the point $x_n$ belongs to $L_X$ and $Ti(T_{x_n}Y)$ contains $Ti(T_{x_n}\mathcal L_{X})$ which converges to $Ti(T_{x}X)$.\end{proof}

We will remark in sections \ref{desstruc} and \ref{cexp}, that there exist stratified spaces which cannot support any trellis structure. However, the following proposition gives sufficient conditions for stratified space to admit a  trellis structure.

\begin{prop}\label{treillis:sur:produit}
 Let $(A,\Sigma)$ and $(A',\Sigma')$ be two stratified spaces. If each of these stratified spaces admits a trellis structure, then there exists a trellis structure on the product stratified space $(A\times A',\Sigma\times \Sigma')$.\end{prop}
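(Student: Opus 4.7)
The plan is to equip the product stratification with the obvious candidate trellis obtained by taking products of the given tubular neighborhoods, and then verify the three defining properties of a trellis (neighborhood and incidence, leaves, coherence) together with the foliation-compatibility condition for incident strata.

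More precisely, write $\mathcal T = (L_X,\mathcal L_X)_{X\in\Sigma}$ and $\mathcal T' = (L'_{X'},\mathcal L'_{X'})_{X'\in\Sigma'}$ for the two trellis structures, and define, for each product stratum $X\times X'\in \Sigma\times\Sigma'$,
\[
L_{X\times X'} := L_X\times L'_{X'},\qquad \mathcal L_{X\times X'} := \mathcal L_X\times \mathcal L'_{X'},
\]
using the product lamination structure given by the property recalled just after the definition of laminations. The first step is to observe that the order on $\Sigma\times\Sigma'$ factors through the two orders: $X\times X'\le Y\times Y'$ if and only if $X\le Y$ and $X'\le Y'$, since $cl(Y\times Y')=cl(Y)\times cl(Y')$. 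From this and the hypothesis that $L_X$ (resp.\ $L'_{X'}$) lies inside strata incident to $X$ (resp.\ $X'$), it follows that $L_X\times L'_{X'}$ lies inside strata incident to $X\times X'$. The leaves of $X\times X'$ are products of leaves of $X$ and $X'$, hence are leaves of $\mathcal L_X\times\mathcal L'_{X'}$; and coherence with another stratum $Y\times Y'$ meeting $L_X\times L'_{X'}$ follows by taking the Cartesian product of plaques provided by coherence of $\mathcal L_X$ with $Y$ and of $\mathcal L'_{X'}$ with $Y'$.

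The last and most substantial step is the foliation condition. Assume $X\times X'\le Y\times Y'$, so $X\le Y$ and $X'\le Y'$. By hypothesis $\mathcal L_X$ defines a $C^r$-foliation of $\mathcal L_Y$ on $L_X\cap L_Y$, so around each point of this intersection there is a chart $(U,\phi)$ lying in both $\mathcal L_X$ and $\mathcal L_Y$; similarly a chart $(U',\phi')$ around any point of $L'_{X'}\cap L'_{Y'}$ lies in both $\mathcal L'_{X'}$ and $\mathcal L'_{Y'}$. The product chart $(U\times U',\phi\times\phi')$ then belongs to both $\mathcal L_X\times\mathcal L'_{X'}$ and $\mathcal L_Y\times\mathcal L'_{Y'}$ on $(L_X\times L'_{X'})\cap(L_Y\times L'_{Y'})$, which is exactly the statement that $\mathcal L_{X\times X'}$ foliates $\mathcal L_{Y\times Y'}$ there.

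I expect no serious obstacle: the content of the proposition is purely bookkeeping once one checks that products of coherent/foliating pairs remain coherent/foliating, and this amounts to taking Cartesian products of the charts and plaques supplied by the two trellis structures. The only place requiring a moment of care is verifying that a stratum $Y\times Y'$ met by $L_X\times L'_{X'}$ is automatically incident to $X\times X'$ (so that the coherence hypothesis of $\mathcal T$ and $\mathcal T'$ actually applies componentwise), which follows from the inclusion of $L_X$ (resp.\ $L'_{X'}$) in strata incident to $X$ (resp.\ $X'$) together with the factorisation of the incidence order recalled above.
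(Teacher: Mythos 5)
Your verifications of the order factorisation, of the tubular-neighborhood axioms for $L_X\times L'_{X'}$, and the product-of-foliated-charts argument all match what the paper does; the genuine gap is that you take the \emph{full} products $L_{X\times X'}:=L_X\times L'_{X'}$, and these overlap badly for \emph{non-comparable} product strata. The trellis/foliation requirement is only imposed for comparable strata, but nothing prevents $L_X\times L'_{X'}$ and $L_Y\times L'_{Y'}$ from meeting when, say, $X< Y$ while $X'> Y'$, so that $X\times X'$ and $Y\times Y'$ are incomparable. Concretely, take $A=A'=[0,1]$ stratified by $\{0\}$ and $(0,1]$, with $L_{\{0\}}=[0,\tfrac12)$ carrying the $0$-dimensional lamination and $L_{(0,1]}=(0,1]$ (as in example \ref{treillis sur compact repu}). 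In the product, the incomparable strata $\{0\}\times(0,1]$ and $(0,1]\times\{0\}$ get the naive neighborhoods $[0,\tfrac12)\times(0,1]$ (leaves vertical) and $(0,1]\times[0,\tfrac12)$ (leaves horizontal), which overlap on $(0,\tfrac12)\times(0,\tfrac12)$ where the two $1$-dimensional laminations are transverse, hence neither coherent with nor foliating each other. Such a family is not a trellis in the sense the paper actually uses: it contradicts the remark following the definition (the foliation condition is supposed to force coherence between tubular neighborhoods) and it violates property \ref{arbo}, which is proved for \emph{every} trellis structure (the union of the tubular neighborhoods of the strata of a fixed dimension must again be a lamination) and is used later (controlled partitions of unity, lemma \ref{Ni}, the proof of theorem \ref{th2}). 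So even though your construction formally meets the letter of the definition as printed, it would break the downstream use of the proposition (proposition \ref{prod}, corollary \ref{cor3}).

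The missing idea, which is the actual content of the paper's proof, is a preliminary separation step: by lemma \ref{<} one chooses open sets $W_{X\times X'}\supset X\times X'$ in $A\times A'$ such that $W_{X\times X'}\cap W_{Y\times Y'}\neq\emptyset$ only when $X\times X'$ and $Y\times Y'$ are comparable, and then defines $L_{X\times X'}:=(L_X\times L'_{X'})\cap W_{X\times X'}$ with the restricted product lamination. After this shrinking, any two intersecting tubular neighborhoods correspond to comparable strata, hence to $X\le Y$ and $X'\le Y'$, and your product-of-foliations argument applies verbatim; the remaining points (leaves of $X\times X'$ remain leaves after restriction, coherence with the strata) go through essentially as you wrote them. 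Without this step your proof, as it stands, does not yield a trellis structure usable in the rest of the paper.
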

 \begin{proof} We have already seen in property \ref{exprod} that $(A\times A',\Sigma\times \Sigma')$ is a stratified space whose partial order $\le $ on $\Sigma\times \Sigma'$ satisfies 
\[\forall X\times X' \in \Sigma\times \Sigma',\;\forall Y\times Y' \in \Sigma\times \Sigma',\quad (X\times X'\le Y\times Y')\Leftrightarrow (X\le Y\;\mathrm{ and}\; X'\le Y').\]

 We now apply the following lemma for the stratified space $(A\times A',\Sigma\times \Sigma)$ equipped with this partial order.
\label{part<}
\begin{lemm}\label{<} For every stratified space $(A,\Sigma)$, there exists a family of subsets  $(W_X)_{X\in \Sigma}$ such that $W_X$ is an open neighborhood of $X$ and intersects $W_{Y}$ if and only if $X$ and $Y$ are incident.
\end{lemm}
\begin{proof}  Let $X$ be a stratum of $\Sigma$ and let $\chi$ be the subset of $\Sigma$ consisting of strata which are not comparable to $X$. We note that 
\[X\cap cl\big(\cup_{Y\in \chi} Y\big)=X\cap \big(\cup_{Y\in \chi} cl(Y)\big)=\cup_{Y\in \chi}\big(X\cap  cl(Y)\big)=\emptyset.\]
Moreover, for any point $x\in X$, the distance between $x$ and $\cup_{Y\in \chi} Y$ is positive.
We define 
 \[W_X:=\bigcup_{x\in X} B\left(x,\frac{d(x,\cup_{Y\in \chi} Y)}{2}\right).\]
We remark that the open set $W_X$ is a neighborhood of $X$. Let $Y$ be a stratum of $\chi$; we denote by $\Upsilon$ the subset of $\Sigma$ which is not comparable to $Y$. Let $x\in X$ and $y\in Y$; we have then $x\in \cup_{Z\in \Upsilon} Z$ and $y\in \cup_{Z\in \chi} Z$;
this implies 
\[B\left(x,\frac{d(x,\cup_{Z\in \chi} Z)}{2}\right)\cap B\left(y,\frac{d(y,\cup_{Z\in \Upsilon} Z)}{2}\right)=\emptyset.\]
Consequently $W_X$ and $W_Y$ are disjoint.
\end{proof}

By using this lemma, we can define an open family $(W_{X\times X'})_{(X,X')\in \Sigma\times \Sigma'}$ satisfying 
\[W_{X\times X'}\supset X\times X',\]
\[W_{X\times X'}\cap W_{Y\times Y'}\not= \emptyset\Rightarrow X\times X'\le Y\times Y'\;\mathrm{or}\; X\times X'\ge Y\times Y'.\]

 We denote by $(L_X,\mathcal L_X)_{X\in \Sigma}$ and $(L_{X'},\mathcal L_{X'})_{X'\in \Sigma'}$ the trellis structures on the stratified spaces $(A,\Sigma)$ and $(A',\Sigma')$.

 Let $L_{X\times X'}$ be the open neighborhood $(L_{X}\times L_{X'})\cap W_{X\times X'}$ of $X\times X'$, that we endow with the lamination structure $\mathcal L_{X\times X'}:=(\mathcal L_{X}\times \mathcal L_{X'})_{|L_{X\times X'}}$.
 
 As $X$ and $X'$ are restrictions of respectively $(L_X, \mathcal L_{X})$ and $(L_{X'}, \mathcal L_{X'})$ to  admissible subsets, the stratum $X\times X'$ is the restriction of $\mathcal L_{X\times X'}$ to an admissible subset.
 
 Moreover, if $L_{X\times X'}$ and $L_{Y\times Y'}$ have a non-empty intersection, then
$W_{X\times X'}$ and $W_{Y\times Y'}$ have also a non-empty intersection. Therefore,  $X\times X'$ and $Y\times Y'$ are  comparable. We suppose for instance that $X\times X'\le Y\times Y'$. This is equivalent to suppose that $X\le Y$ and $X'\le Y'$. The lamination $(L_X\cap L_{Y},\mathcal L_{X|L_X\cap L_Y})$ is a $C^r$-foliation of the lamination $(L_X\cap L_Y,\mathcal L_{Y|L_X\cap L_Y})$ and
$(L_{X'}\cap L_{Y'},\mathcal L_{X'|L_{X'}\cap L_{Y'}})$ is a $C^r$-foliation of the lamination $(L_{X'}\cap L_{Y'},\mathcal L_{Y'|L_{X'}\cap L_{Y'}})$. As the product of  $C^r$-foliated laminations is a $C^r$-foliated lamination, the restriction $(L_{X\times X'}\cap L_{Y\times Y'},\mathcal L_{X\times X'|L_{X\times X'}\cap L_{Y\times Y'}})$ is a foliation of the lamination of $(L_{X\times X'}\cap L_{Y\times Y'},\mathcal L_{Y\times Y'|L_{X\times X'}\cap L_{Y\times Y'}})$.

 Therefore $\mathcal T_{prod}:=(L_{X\times X'},\mathcal L_{X\times X'})_{X\times X'\in \Sigma\times \Sigma'}$ is a  trellis structure on the product stratified space.\end{proof}
 
\label{Tprod}

\subsubsection{Structure of the union of strata of the same dimension}

The following property sheds light on the geometry of the union of strata of the same dimension.
\begin{propr}\label{arbo}
Let $(A,\Sigma)$ be a stratified space endowed with a trellis structure $\mathcal T$. Let $(d_p)_{p\ge 0}$ be the strictly increasing sequence of the different dimensions of strata of $\Sigma$.

Then, for each $p\ge 0$, we have:
\begin{enumerate}
\item The union of strata of dimension $d_p$ constitutes a lamination $X_p$. Every stratum of dimension $d_p$ is the restriction of $X_p$ to an admissible subset.
\item The union of tubular neighborhoods of strata of dimension $d_p$ forms a lamination $(L_p,\mathcal L_p)$.
\item $X_p$ is the restriction of $\mathcal L_p$ to an admissible subset.
\item For all $q\le p$, the subset $cl(X_p)\cap X_q$ is an $X_q$-admissible subset.
\item The subset $cl(X_p)$ is included in $\cup_{q\le p} X_q$.
\end{enumerate}
\end{propr}
\begin{proof}

 2) Let $\Sigma_p\subset \Sigma$ be the subset of the $d_p$-dimensional strata. For all strata $X$, $Y\in \Sigma_p$, the lamination $\mathcal L_{X|L_X\cap L_Y}$ is a $C^r$-foliation of $\mathcal L_{Y|L_X\cap L_Y}$ of codimension 0. Then the restrictions of the laminations $\mathcal L_X$ and $\mathcal L_Y$ to $L_X\cap L_Y$ are generated by a same atlas and so are equal. It follows from property \ref{cuplam} that the set of the charts of $\mathcal L_X$ for all $X\in \Sigma_p$ generates a lamination
structure $\mathcal L_p$ on  $L_{p}:= \cup_{X\in \Sigma_p} L_X$.

 1)-3) First of all, by local finiteness of $\Sigma$, the set $X_p$ is locally compact. Moreover, all the tubular neighborhoods of strata that belong to $\Sigma_p$ is coherent with the strata of $\Sigma$ and their union constitutes an open covering of $L_p$. So, the lamination $\mathcal L_p$ is coherent with $\Sigma$.
The property \ref{coher} implies that for $X\in \Sigma_p$ the support of $X=X\cap L_p$ is $\mathcal L_p$-admissible.
Thus, a leaf of $\mathcal L_p$ which intersects $X$ is contained in $X$, and so equal to a leaf of $X$. Since $L_p$ contains $X_p$, $X_p$ is canonically endowed with the lamination structure $\mathcal L_{p|X_p}$ and 1)- 3) are satisfied.

 4) The frontier condition implies that 
\[cl(X_p)\cap X_q=\bigcup_{X\in \Sigma_p,\; Y\in \Sigma_q}cl(X)\cap Y=\bigcup_{X\in \Sigma_p,\; Y\in \Sigma_q,\; Y\le X} Y\]
As each stratum of $\Sigma_q$ is $X_q$-admissible and as the stratification is locally finite,
$cl(X_p)\cap X_q$ is $X_q$-admissible.

5) The frontier condition implies that 
 \[cl(X_p)=\bigcup_{X\in \Sigma_p}cl(X)=\bigcup_{X\in \Sigma_p}\bigcup_{Y\le X}Y\subset \bigcup_{q\le p}\bigcup_{X\in \Sigma_q}X=\bigcup_{q\le p}X_k\]
 \end{proof}

\begin{rema}
In example \ref{example:for:Sigmap}, the pair $(A,(X_p)_p)$ is not a stratified space.\footnote{However, if the frontier condition is replaced by the following more general condition:
\begin{center}
"for every pair of strata $(X,Y)$ such that $cl(X)$ intersects $Y$, the subset $cl(X)\cap Y$ is $Y$-admissible and $\dim Y$ is at least equal to $\dim X$",
\end{center}
it appears that all that is proved in this work remains true and that $(A,(X_p)_p)$ is still a stratified space. Moreover  $(L_p,\mathcal L_p)_p$ is also a trellis structure on $(A,(X_p)_p)$.}
\end{rema}
\begin{ques}
Given a compact subset $C$ of $\mathbb R^n$ which is the union of two disjoint locally compact subsets $A$ and $B$, does there exists an (abstract) stratification on $C$ such that $A$ and $B$ are unions of strata?\end{ques}

\subsubsection{Morphisms $\big(\mathcal T_A,\mathcal T_{A'}\big)$-controlled}

Let $(A,\Sigma)$ and $(A',\Sigma')$ be two stratified spaces admitting a trellis structure $\mathcal T$ and $\mathcal T'$ respectively.

A \emph{$(C^r)$-morphism} (resp. \emph{immersion}) \emph{$\big(\mathcal T,\mathcal T'\big)$-controlled} is a stratified morphism $f$ from $(A,\Sigma)$ to $(A',\Sigma')$ such that, for every stratum $X\in \Sigma$, there exists a neighborhood $V_X$ of $X$ in $L_X$ such that the restriction of $f$ to $V_X$ is a morphism (resp. immersion) from the lamination $\mathcal L_{X|V_X}$ into the lamination $\mathcal L_{X'}$, where $X'$ is the stratum of $\Sigma'$ which contains the image by $f$ of $X$.
In other words, every plaque of $\mathcal L_X$ contained in $V_X$ is sent into a leaf of $\mathcal L_{X'}$, the $r$-first derivatives of $f$ along such a plaque exist, are continuous on $V_X$ (resp. and the first one is moreover injective) and $f$ is continuous.

The neighborhood $V_X$ of $X$ is said to be \emph{adapted} to $f$, and the family $\mathcal V:=(V_X)_{X\in \Sigma}$ is called a \emph{family of neighborhoods adapted to $f$ (and to ($\mathcal T,\mathcal T'$))}.
 
If $(A',\Sigma')$ is a manifold $M$, then $M$ is also the only trellis structure on the stratified space $M$. In this case, we will say that this $C^r$-morphism is \emph{$\mathcal T$-controlled}.
 
We remark that $\mathcal T$-controlled embeddings from $(A,\Sigma)$ to $M$ are the $\mathcal T$-controlled are immersions which are homeomorphisms on their images.

A \emph{$\mathcal T$-controlled endomorphism } is a stratified endomorphism $f$ of $(A,\Sigma)$ which is $(\mathcal T,\mathcal T)$-controlled. This means that each stratum $X$ is sent by $f$ into itself and that there exists a neighborhood $V_X$ of $X$ in $L_X$ such that the restriction $f_{|V_X}$ is a morphism from the lamination $\mathcal L_{|V_X}$ to $\mathcal L_X$.

We denote by $Mor^r(\mathcal T, \mathcal T')$, $Im^r(\mathcal T,\mathcal T')$ and $Emb^r(\mathcal T,\mathcal T')$ the sets of  $(\mathcal T,\mathcal T')$-controlled $C^r$-morphisms, immersions and embeddings respectively.
We denote by $End^r(\mathcal T)$ the set of $\mathcal T$-controlled $C^r$-endomorphisms.

Throughout the rest of this chapter, all controlled morphisms are supposed to be of class $C^r$. 
\begin{propr}\label{mor}
Let $(A,\Sigma)$, $(A',\Sigma')$, and $(A'',\Sigma'')$ be stratified spaces admitting a trellis structure $\mathcal T$, $\mathcal T'$ and $\mathcal T''$ respectively.
\begin{itemize}
\item The identity of $A$ is a $\mathcal T$-controlled endomorphism.
\item The composition of a $(\mathcal T',\mathcal T'')$-controlled morphism with a $(\mathcal T,\mathcal T')$-controlled morphism is a $(\mathcal T,\mathcal T'')$-controlled morphism.
\end{itemize}
\end{propr}
\begin{proof}
The identity of $A$ is clearly a controlled endomorphism. Let us prove that the composition of two controlled morphisms is a controlled morphism.

Let $f\in Mor^r(\mathcal T,\mathcal T')$ and $f'\in Mor^r(\mathcal T',\mathcal T'')$.  Each stratum $X\in \Sigma$ is sent by $f'$ into a stratum $X'\in \Sigma'$ which is sent by $f'$ into a stratum $X''$. Let $V_X$ and $V_{X'}$ be two neighborhoods of $X$ and $X'$ adapted to respectively $f$ and $f'$. Then the neighborhood $V_X\cap f^{-1}(V_{X'})$ of $X$ is adapted to $f'\circ f$.\end{proof}

 \subsubsection{Equivalent controlled morphisms}\label{eq:co:mo}
Let $\mathcal T$ and $\mathcal T'$ be two trellis structures on the stratified spaces $(A,\Sigma)$ and $(A,\Sigma')$ respectively.
Two morphisms $f$ and $\hat f$ of  $Mor^r(\mathcal T , \mathcal T')$ (resp.
 $Im^r(\mathcal T,\mathcal T')$, resp. $End^r(\mathcal T'))$ are said to be \emph{equivalent} if, for each stratum $X\in \Sigma$, there exists a stratum $X'\in \Sigma'$ such that $f$ and $\hat f$ send $X$ into an $X'$ and there exists a neighborhood $V_{X}$ of $X$ in $L_X$ such that the restrictions $ f_{|V_X}$ and $\hat f_{|V_X}$ are equivalent as morphisms of laminations from $\mathcal L_{X|V_X}$ to $\mathcal L_{X'}$.
 This means that every point $x\in V_X$ is sent by $f$ and $\hat f$ into a same leaf of $\mathcal L_{X'}$.

Given a family of tubular neighborhoods $\mathcal V=(V_X)_X$ adapted to $f$, let $Mor_{f\mathcal V}^r(\mathcal T,\mathcal T')$ (resp.  $Im_{f\mathcal V}^r(\mathcal T,\mathcal T')$, resp. $End_{f\mathcal V}^r(\mathcal T'))$ be the set of $\mathcal T$-controlled morphisms $\hat f$ such that $\mathcal V$ is also adapted to $\hat f$ and such that, for every $X\in \Sigma$ sent by $f$ into a certain stratum $X'$,
the restrictions of $f$ and $\hat f$ to $V_X$ are equivalent as morphisms from the lamination $\mathcal L_{X|V_X}$ into $\mathcal L_{X'}$.

We endow $Mor_{f\mathcal V}^r(\mathcal T,\mathcal T')$ with the topology induced by the product topology on 
\[\prod_{X\in \Sigma\; f(X)\subset X'\in \Sigma'} Mor_{f|V_X}^r (\mathcal L_{X|V_X},\mathcal L_{X'}').\]
We equip the set of families of neighborhoods adapted to $f$ with the following partial order:
\[\mathcal V\le \mathcal V'\Longleftrightarrow \forall X\in \Sigma,\quad V_X\subset V_X'.\]
\begin{propr}\label{topmor}
Let $\mathcal T$ and $\mathcal T'$ be two trellis structures on two stratified spaces $(A,\Sigma)$ and $(A',\Sigma')$ respectively. Let $f$ be a $(\mathcal T,\mathcal T')$-controlled morphism
and let $\mathcal V\le \mathcal V'$ be two families adapted to $f$. Then the topology of $Mor_{f\mathcal V'}^r(\mathcal T,\mathcal T')$ is equal to the topology induced by $Mor_{f\mathcal V}^r(\mathcal T,\mathcal T')$.\end{propr}
\begin{proof}
By definition of these topologies it is clear that the topology of $Mor_{f\mathcal V'}^r(\mathcal T,\mathcal T')$ is finer
than the topology induced by $Mor_{f\mathcal V}^r(\mathcal T,\mathcal T')$. So it is sufficient to prove that the topology of $Mor_{f|V_X'}^r(\mathcal L_{X|V_X'}, \mathcal L_{X'}')$ is coarser  than the topology induced by $Mor_{f\mathcal V}^r(\mathcal T,\mathcal T')$, for every $X\in \Sigma$.
As a compact subset of  $V_{X}'$ is a finite union of compact subsets in $(V_Y)_{Y\ge X}$, the topology of $Mor_{f|V_X'}^r(\mathcal L_{X|V_X'}, \mathcal L_{X'}')$ is coarser than the topology induced by the product 
\[\prod_{Y\ge X\; f(Y)\subset Y'\in \Sigma'} Mor_{f|V_{Y}}^r (\mathcal L_{Y|V_Y},\mathcal L_{Y'}')\]
which is also coarser than the topology of $Mor_{f\mathcal V}^r(\mathcal T,\mathcal T')$.\end{proof}

This last property implies that the spaces $(Mor_{f\mathcal V}^r(\mathcal T,\mathcal T'))_\mathcal V$
are only different by plaque-preserving conditions. If $(A',\Sigma')$ is a manifold $M$, then the trellis structure $\mathcal T'$ consists of the only manifold $M$. Plaque-preserving conditions are then obviously always satisfied. Consequently, the space  $Mor_{f\mathcal V}^r (\mathcal T, \mathcal T')$ depends neither on $f$ nor on $\mathcal V$. That is why we abuse notation by denoting by $Mor^r(\mathcal T,M)$ this topological space.

\begin{rema} The topology of $Mor_{f\mathcal V}^r(\mathcal T,\mathcal T')$ is finer than the one induced by $Mor_f^r(\Sigma,\Sigma')$. Actually, the topology induced by the compact-open topology of $C^0(A,A')$ is coarser than the topology of $Mor_{f\mathcal V}^r(\mathcal T,\mathcal T')$. This is because, on the one hand, $\mathcal V$ is an open cover of $A$ and the canonical inclusion of $C^0(A,A')$ in $\prod_{X\in \Sigma} C^0(V_X,A')$ is a homeomorphism onto its image. And on the other, for any stratum $X\in \Sigma$ sent by $f$ into a stratum $X'\in \Sigma$, the topology of $Mor_{f|V_X}^r(\mathcal L_{X|V_X},\mathcal L_{X'}')$ is finer than the topology $Mor_{f|X}^r(X,X')$.

 Generally, the topology of $Mor_{f\mathcal V}^r(\mathcal T,\mathcal T')$ is strictly finer than the topology induced by $Mor_{f\mathcal V}^r(\Sigma,\Sigma')$. For example, let us consider the unit closed disk $cl(\mathbb D)$ of the complex plan $\mathbb C$, which supports the canonical (differentiable) stratification $\Sigma$ formed by the unit disk $\mathbb D$ and the unit circle $\mathbb S^1$. This stratified space admits the trellis structure $\mathcal T$ consisting of the tubular neighborhood  $\mathbb D$ of $\mathbb D$ and the lamination $(L_{\mathbb S^1},\mathcal L_{\mathbb S^1})$ whose leaves are the circles centered in 0 and of radius $\rho\in]1/2,1]$.
Let $(\theta,\rho)$ be the polar coordinates on $cl( \mathbb D)$.
The set of $(\mathcal T, \mathbb R)$-controlled functions $f$ on $cl(\mathbb D)$ such that $sup_{x\in L_{\mathbb S^1}}\|\partial_\theta f(x)\|< 1$ is an open subset of $Mor^r(\mathcal T,\mathbb R)$. But, in every open subset $O$ of $Mor^r(\Sigma,\mathbb R)$, there exists a sequence of functions $(f_n)_n\in (Mor(\mathcal T,\mathbb R)\cap O)^\mathbb N$ such that $\sup_{n\ge 0,\; x\in L_{\mathbb S^1}}
 \|\partial_\theta f_n(x)\|=\infty$.

\end{rema}
\subsubsection{Geometric structures on stratified spaces}\label{desstruc}
In this section, we recall other works defining similar structure than trellis. These structures  are almost always weaker than the trellis structure, because they were used in a topological context.

The study of such structures come back as far as the work of H. Whitney on the study of the singularities of analytic varieties \cite{W1}.
\begin{conj}[H. Whitney 1965]
Every analytic variety $V$ of $\mathbb C^n$ supports an analytic stratification such that, for every point $p$ of a stratum $X$, there exists a neighborhood $U$ of $p\in V$, a metric space $T$ and a homeomorphism
\[\phi\;:\; (X\cap U)\times T\rightarrow U\]
such that, for every $t\in T$, the restriction $\phi_{|X\cap U\times \{t\}}$ is biholomorphic onto its image, the differential on these restrictions are continuous on $(X\cap U)\times T$ and the lamination generated by the chart $\phi^{-1}$ (and of the same dimension as $X$) is coherent with all the strata.
\end{conj}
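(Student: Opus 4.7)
The plan is to start from Whitney's own earlier result that every complex analytic variety $V\subset \mathbb C^n$ admits an analytic stratification satisfying conditions $(a)$ and $(b)$, and then to upgrade the topological local triviality (which follows from $(b)$ by Thom--Mather) to a local product chart that is \emph{leafwise} biholomorphic and with continuously varying derivatives, as required. Fix a stratum $X$, a point $p\in X$, and local complex coordinates on a neighborhood of $p$ in $\mathbb C^n$ in which $X$ is a complex linear subspace near $p$. The first step is to choose a complex analytic slice $S\subset \mathbb C^n$ through $p$ transverse to $X$, of complementary dimension, and to set $T:=V\cap S$ as the candidate transversal space.

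The second step is to construct $\phi$. I would use the classical Thom--Mather construction: pick a tubular neighborhood of $X$ in $\mathbb C^n$ (holomorphic in the $X$-direction) with its retraction $\pi$ onto $X$, and build a controlled family of vector fields tangent to the strata incident to $X$ which project onto a prescribed basis of $TX$ near $p$. Integrating these vector fields produces a stratum-preserving homeomorphism $\phi:(X\cap U)\times T\to U$, and the restriction to each $\{t\}\times(X\cap U)$ is a flow map, which along $X$ itself is biholomorphic. Whitney's condition $(b)$ is exactly what makes the flow continuous up to the boundary strata, so $\phi$ is a homeomorphism and its differential along the $X$-factor varies continuously over $(X\cap U)\times T$. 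The coherence of the induced lamination with all other strata is built in, because $\phi$ was constructed from vector fields tangent to strata.

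The main obstacle, and the reason this remains a conjecture in the excerpt rather than a theorem, is the upgrade from \emph{real smooth} to \emph{complex analytic} fiberwise. The Thom--Mather controlled vector fields are, in general, only real $C^\infty$ (or even $C^0$), not holomorphic, because a real vector field tangent to a complex stratum and satisfying the control conditions will typically not be the real part of a holomorphic one; integrating it along $X$ gives a diffeomorphism of $X\cap U$ but not necessarily a biholomorphism. I would try to circumvent this by working with holomorphic vector fields on $U$ tangent to $X$ and holomorphic in a holomorphic retraction direction, and showing that Whitney's conditions force the flow of such fields to preserve the stratification locally; this is the delicate analytic step. Even with this, one must verify that the family of leaves so obtained depends continuously (in the $C^1$-sense demanded by a trellis/chart) on the transverse parameter $t\in T$, which forces a careful choice of the retraction $\pi$ adapted to the Whitney stratification of $V$.

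I would expect the proof to succeed modulo this analytic difficulty in codimension one (where $T$ is one-dimensional and the holomorphic structure can often be arranged directly from a Puiseux-type normal form), and to require serious input from resolution of singularities, or from Hironaka/Lojasiewicz inequalities on the variation of tangent spaces, in the general case. The coherence of the resulting lamination with strata of intermediate dimension would then follow from the frontier condition combined with the fact that all the flows used were tangent to every stratum crossed.
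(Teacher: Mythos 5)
The statement you set out to prove is not proved anywhere in the paper: it is recorded there precisely as an open conjecture of Whitney (1965), quoted as motivation for the trellis-of-laminations structure, alongside Trotman's analogous (and also open) conjecture in the differentiable setting. So there is no ``paper proof'' to compare against, and your text should be judged on whether it closes the conjecture on its own. It does not. The decisive step is the one you yourself flag: passing from the Thom--Mather controlled vector fields, which give only a stratum-preserving \emph{homeomorphism} $(X\cap U)\times T\to U$, to a trivialization whose plaques are holomorphic and whose leafwise differentials vary continuously in $t$. You write that you ``would try to circumvent this by working with holomorphic vector fields \dots this is the delicate analytic step,'' but that step is not a technical refinement of the Thom--Mather argument --- it is the entire content of the conjecture. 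Controlled vector fields are in general only continuous (not even $C^1$) transversally, and Whitney $(b)$-regularity is not known to force the existence of a $C^1$ (let alone leafwise holomorphic) lamination structure on a neighborhood of a stratum that is coherent with the incident strata; if $(b)$-regularity did yield such local tubular-neighborhood laminations, both Whitney's conjecture and Trotman's 1993 conjecture would already be theorems, and the paper's emphasis on the trellis structure as a genuinely \emph{additional} hypothesis (together with its example of an $a$-regular normally expanded stratification admitting no trellis structure, even locally) would be pointless.

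Concretely, two assertions in your sketch are unsupported. First, ``the restriction to each $\{t\}\times(X\cap U)$ is a flow map, which along $X$ itself is biholomorphic'': the flow of a real controlled vector field tangent to the strata has no reason to act holomorphically on the nearby strata, and no reason to have a leafwise differential at all on strata other than $X$, much less one continuous in $t$. Second, ``Whitney's condition $(b)$ is exactly what makes the flow continuous up to the boundary strata, so \dots its differential along the $X$-factor varies continuously'': $(b)$ gives $C^0$ control of the trivialization (Thom--Mather local topological triviality) but says nothing about continuity of first derivatives along the plaques, which is what ``lamination of the same dimension as $X$ coherent with all strata'' requires. Your closing appeal to resolution of singularities and {\L}ojasiewicz inequalities is a plausible direction but is stated as a hope, not an argument. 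In short, the proposal reproduces the known reduction (Whitney stratification plus Thom--Mather triviality) and leaves open exactly the analytic upgrade that distinguishes the conjecture from established theory.
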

In other words, H. Whitney conjectured the existence of a stratification that admits locally a tubular neighborhood, with holomorphic leaves, for each stratum.

 Later, R. Thom and J. N. Mather were also interested in these structures for the study of the singularities  of differentiable maps. They introduced differentiable stratified spaces $(A,\Sigma)$ with extra intrinsic regularity conditions which in some cases allow to proof that their stratifications are locally trivial: for every point $x\in A$ which belongs to a stratum $X\in \Sigma$, there exists a neighborhood $U$ of $x$ in $A$, a neighborhood $V$ of $x$ in $X$, a differentiable stratified space
$(A',\Sigma')$ and a homeomorphism $h\;:V\times A'\rightarrow U$ such that the strata of $\Sigma_{|U}$ are the images by $h$ of strata of the product stratified space $(V\times A',X_{|V}\times \Sigma')$.

 In 1993, D. Trotman adapted the Whitney conjecture to the differentiable stratified spaces\footnote{This an adaptation because H. Whitney shows that every analytic variety supports a $b$-regular analytic stratification \cite{W2}.}. To formulate his conjecture, let us recall that an embedding $p$ of a differentiable stratified space $(A,\Sigma)$ into $\mathbb R^n$ is \emph{$b$-regular} if 
 
 For all strata $(X,Y)\in \Sigma^2$ with $Y< X$, for all sequences $(x_i)_i\in X^\mathbb N$ and $(y_i)_i\in Y^\mathbb N$ which converge to $y\in Y$, if $(T_{x_i}X)_i$ converges to $\tau$ and the unitary vector in the direction of $
\overrightarrow{x_i y_i}\in\mathbb R^n$ converges to $\lambda$, then $\lambda$ is included in $\tau$.

 It is well known and easy to show that the $b$-regularity implies the $a$-regularity.

The Trotman's conjecture is the following:
\begin{conj}[D. Trotman 1993]\label{trot}
 Let $(A,\Sigma)$ be a differentiable stratified space $b$-regularly embedded by $p$ into $\mathbb R^n$; then for every stratum $X\in \Sigma$ and point $x\in X$,
 there exists a neighborhood $U$ of $x$ and a tubular neighborhood $(L,\mathcal L)$ of $X_{|U}$ in the stratified space  $(U,\Sigma_{|U})$ such that the restriction  $p_{|L}$ is an embedding of $(L,\mathcal L)$.
\end{conj}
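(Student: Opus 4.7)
The plan is to work locally at the point $x\in X$ and produce the required tubular neighborhood by combining a $b$-regular normal-form for $p(X)$ with Thom--Mather style controlled vector fields. First, since $p$ is a $C^r$-embedding, choose local coordinates on a neighborhood $U$ of $p(x)$ in $\mathbb R^n$ that straighten $p(X)$: we may assume $p(X)\cap U=(\mathbb R^d\times\{0\})\cap U$, where $d=\dim X$. Let $\pi\colon U\to (\mathbb R^d\times\{0\})\cap U$ be the orthogonal projection in these coordinates, and let $\rho(y):=\|y-\pi(y)\|^2$ be the squared distance to $p(X)\cap U$. The candidate trellis leaf through a point $y\in U$ will be the integral leaf of the horizontal distribution $H_y:=\ker d\rho(y)\cap\ker (d\text{(fiber)})(y)$; equivalently, we want a foliation of $U$ by the graphs over $\mathbb R^d\times\{0\}$ obtained from horizontal lifts of tangent vectors of $X$.

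To make this foliation respect $\Sigma$ I would use $b$-regularity stratum by stratum. Order the finitely many incident strata $Y_1<\cdots<Y_k$ around $x$ by dimension. By $b$-regularity, for each $Y=Y_j$ the restriction $\pi|_{Y\cap U'}$ is a $C^r$-submersion onto its image in $p(X)$ on some smaller neighborhood $U'\subset U$ (this is the standard consequence of Whitney $(b)$: limiting tangent spaces to $Y$ contain limiting secant directions to $X$, hence they project surjectively onto $T_xX$). Then lift each coordinate vector field $\partial_{x_i}$ on $p(X)\cap U'$ to a $C^r$ vector field $\xi_i$ on $U'$ which is simultaneously $\pi$-related to $\partial_{x_i}$ and tangent to every $p(Y_j)$; such a lift exists by a standard partition-of-unity argument on the subbundle $T\pi^{-1}(\partial_{x_i})\cap TY_j$, using the submersion property of step~2. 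The commuting flows of the $\xi_i$'s (after a Lie-bracket correction, or equivalently by applying the Frobenius construction to the $d$-plane field spanned by $\xi_1,\dots,\xi_d$, which by construction is $\pi$-horizontal and tangent to every stratum) integrate to a $C^r$ foliation $\mathcal L_X$ of an open set $L_X\ni x$.

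It remains to verify that $(L_X,\mathcal L_X)$ is a tubular neighborhood in the sense of the trellis definition. The leaf through $x$ is $X\cap L_X$ by construction, since the $\xi_i$ restrict to $\partial_{x_i}$ on $p(X)$. Each stratum $Y$ intersecting $L_X$ is a union of plaques of $\mathcal L_X$, because the $\xi_i$ are tangent to $p(Y)$, so its flows preserve $p(Y)$; this gives coherence and the axiom that $\mathcal L_X|_{L_X\cap L_Y}$ foliates $\mathcal L_Y|_{L_X\cap L_Y}$ (once the same construction is performed at $Y$ and the lifts chosen compatibly). Finally $p|_{L_X}$ is a $C^r$-embedding of $\mathcal L_X$ because the leaves are images of the commuting $C^r$-flows of the $\xi_i$ starting from a transversal, which is a $C^r$-parametrisation onto its image in $\mathbb R^n$.

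The main obstacle is ensuring the $C^r$ regularity of the lift transverse to $X$. Along $X$ everything is smooth, but the coefficients of $\xi_i$ on $Y$ depend on how the submersion $\pi|_Y$ behaves as one approaches $X$; $b$-regularity gives the existence and continuity of a horizontal lift but not automatically its $C^r$-smoothness of the transverse dependence, which would really require an $r$-regular refinement of Whitney's condition (the $(b^r)$ or Bekka-type conditions). So the proof as sketched gives the conjecture under the natural strengthening to $C^r$ $b$-regularity; obtaining it from bare $b$-regularity is where the genuine difficulty lies, and where one must either refine the lifting construction or invoke a smoothing theorem for controlled vector fields.
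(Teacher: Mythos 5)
You have set out to prove a statement that the paper does not prove and does not claim: it is recorded there as an open conjecture of D.~Trotman (1993), stated in the survey on geometric structures on stratified spaces, precisely because no one knows how to extract from bare $b$-regularity the tubular-neighborhood laminations that a trellis-type structure requires. So there is no proof in the paper to compare against, and your argument would have to stand on its own; it does not, and in fact you concede the decisive point yourself in your last paragraph.

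Concretely, the gap is in the passage from controlled vector fields to a $C^r$-lamination. Lifting $\partial_{x_1},\dots,\partial_{x_d}$ to vector fields $\xi_i$ that are $\pi$-related to $\partial_{x_i}$ and tangent to every incident stratum is the standard Thom--Mather construction (and the submersion property of $\pi|_Y$ near $X$ already follows from $(a)$-regularity), but such lifts are only smooth \emph{stratum by stratum}; they are in general not continuous, let alone $C^r$, across strata, and there is no partition-of-unity argument that makes them so. Consequently their flows give the usual topological local triviality of the stratification, not a lamination: the plaques through points of a higher stratum $Y$ are smooth inside $Y$, but the definition of a tubular neighborhood $(L,\mathcal L)$ in the sense of the conjecture requires charts in which the leafwise $r$-jets depend continuously on the transverse parameter as one approaches $X$, and it is exactly this transverse continuity that $b$-regularity is not known to provide. (There is also a secondary issue: the lifts of commuting fields need not commute, and the $d$-plane field spanned by the $\xi_i$ need not be integrable; one can fix this by lifting successively as in the second isotopy lemma, but that does not help with the regularity.) Your closing remark that the argument really needs a $(b^r)$ or Bekka-type strengthening is accurate, and it is precisely the admission that the conjecture, as stated, remains unproved by this route.
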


Unfortunately, we will see in part \ref{part varbor} that there exist $b$-regular stratifications that are persistent but not $b$-regularly persistent.

In the spirit of this conjecture, in his PhD thesis C. Murolo \cite{MT} defines the "Syst\`eme de contr\^ole feuillet\'e, totalement compatible et $a$-r\'egulier" which is the data of compatible tubular neighborhoods $(L_X,\mathcal L_X)_{X\in \Sigma}$, on a differentiable stratified space.

In another context, for surface diffeomorphisms that satisfy Axiom A and the strong transversality condition,
 W. De Melo \cite{dM} built a real  trellis structure on the stratification of laminations $(W^s(\Lambda_i))_i$ defined  in property \ref{Strat:AxiomA}
\footnote{Actually, he requires the existence of a "system of unstable tubular families". This structure is a family of compatible tubular neighborhoods. Even if the foliation's condition is not required, he proves it. Actually, he is just interested in finding topological properties. The present work can be used to show smoother property of the conjugacy homeomorphism: to be an stratified endomorphism.}.
 This construction allowed him to prove the structural persistence of surface $C^1$-diffeomorphism that satisfies Axiom A and the strong transversality condition. In a local way, this idea was improved by  C. Robinson  \cite{Rs} to achieve the proof of the Palis and Smale's conjecture \cite{Sm}:  every $C^1$-diffeomorphism of compact manifold that satisfies Axiom A and the strong transversality condition is $C^1$-structurally stable.
 \footnote{Actually, he requires the existence of "Compatible families of unstable disks". Even if the algorithm builds, locally, a trellis structure, for the same reason as W. de Melo, he only requires to have locally a "system of unstable tubular families".}.

 We will see that there exist $a$-regular stratifications that do not admit trellis structures and are not even locally trivial.
 
 But beyond the local obstructions, there exist also global topological constrains, that prevent stratified spaces to admit trellis structures.
 
For instance, let us consider the stratification on the tangent bundle of the sphere, consisting of two strata: the first being the graph of the zero section endowed with the structure of $2$-manifold (that we identify with the sphere $\mathbb S^2$) and the second being the complement of this graph, endowed with its structure of $4$-manifold. For the sake of contradiction, let us suppose that this stratification admits a trellis structure.
Then there exists a lamination on a neighborhood of the sphere in the tangent bundle, such that the sphere is a leaf.
As the sphere is simply connected, the holonomy along the leaves is trivial. Therefore we can transport a small non-zero vector of the tangent bundle, by holonomy to define a vector field on the tangent bundle without zero. But this is well known that such a vector field does not exit on the 2-sphere.

\newpage\section{Persistence of stratification of laminations}\label{chap:persist:strati}
Throughout this chapter, all manifold considered are of class $C^\infty$. The regularity classes of laminations, stratifications or trellis structures will be not mentioned if they are not relevant.

In section \ref{main:result}, we state our main result on persistence of stratification of normally expanded laminations. In section \ref{applications}, we give various applications of this result. Before dealing with stratifications of laminations, we study the restricted case of laminations (which is a stratification consisting of only one stratum), hoping that our definitions and main result will be better understood.

\subsection{Persistence of lamination}
\subsubsection{Preserved laminations}
 A lamination $(L,\mathcal L)$ embedded by $i$ into a manifold $M$ is \emph{preserved} by an endomorphism $f$ of $M$ if
 the embedding of each leaf of $\mathcal L$ is sent by $f$ into a leaf of $\mathcal L$.
 
This is equivalent to suppose the existence of an endomorphism $f^*$ of $(L,\mathcal L)$ such that the following diagram commutes:
\[\begin{array}{rcccl}
&&f&&\\
&M&\rightarrow&M&\\
i&\uparrow&&\uparrow&i\\
&L&\rightarrow&L&\\
&&f^*&&\end{array}\]

The endomorphism $f^*$ is the \emph{pullback of $f$ via $i$}.

When the lamination is only immersed by $i$, these two definitions are not equivalent.

A lamination $(L,\mathcal L)$ immersed by $i$ into a manifold $M$ is \emph{preserved} by an endomorphism $f$ of $M$ if there exists a \emph{pull back of $f$ in $(L,\mathcal L)$ via $i$}. That is an endomorphism $f^*$ of $(L,\mathcal L)$ such that the following diagram commutes:

\[\begin{array}{rcccl}
&&f&&\\
&M&\rightarrow&M&\\
i&\uparrow&&\uparrow&i\\
&L&\rightarrow&L&\\
&&f^*&&\end{array}\]

The leaves of a lamination $(L,\mathcal L)$ immersed by $i$ into a manifold $M$ is \emph{preserved} by an endomorphism $f$ of $M$ if
 the immersion of each leaf of $\mathcal L$ is sent by $f$ into an immersion of a leaf of $\mathcal L$.

Clearly, if $f$ preserves an immersed laminations, then it preserves its leaves. We now gives two examples of diffeomorphisms preserving the leaves of an immersed lamination but not the lamination.
 
 \begin{exem}
 Let $(L,\mathcal L)$ be the circle $\mathbb S^1$ and let $i$ be the immersion from $\mathbb S^1$ into $\mathbb R^2$ represented below:
\begin{figure}[h]
    \centering
\includegraphics[width=7cm]{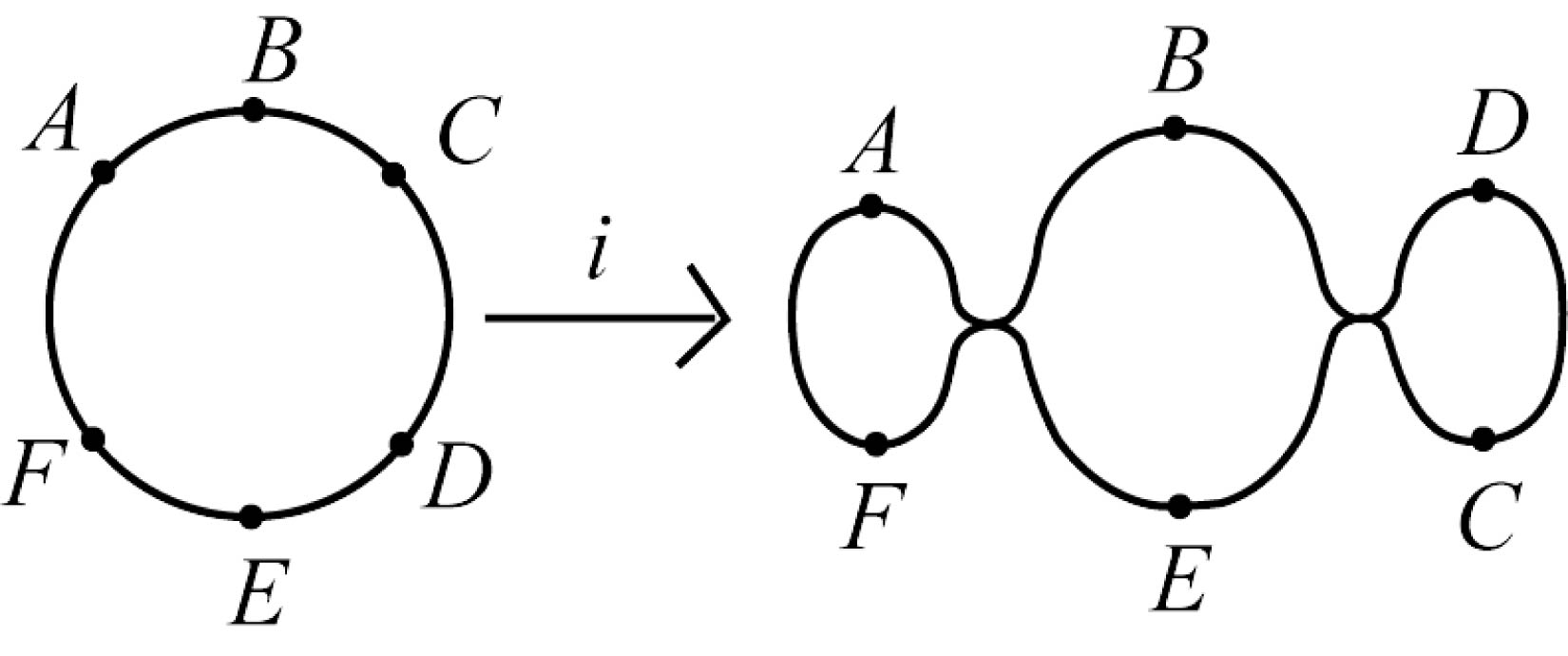}
    \caption{}
\end{figure}

 Let $f$ be the diffeomorphism of $\mathbb R^2$, preserving $i(\mathbb S^1)$ and such that $f_{|i(\mathbb S^1)}$ is homotopic to the symmetry of axis $(BE)$. One notes therefore that $f$ does not pull back to $\mathbb S^1$.\footnote{Actually this example is similar to the example P70 of \cite{HPS}, but we allow $f$ to be a diffeomorphism.}
\end{exem}
\begin{exem}\label{contrempletore} Let $\mathbb T^2$ be the torus which is the quotient $\mathbb R^2/\mathbb Z^2$. Let $f$ be the diffeomorphism of $\mathbb T^2$, whose lift in $\mathbb R^2$ is the linear map, with matrix 
\[ \left[\begin{array}{cc}2&1\\1&1\end{array}\right]\]
Let $i\;:\; \mathbb T^2\rightarrow \mathbb T^2$ be a $2$-covering map of $\mathbb T^2$. Hence $i$ is an immersion, but there does not exist any pullback of $f$ into $\mathbb T^2$ via $i$.\end{exem}
    \begin{proof}
     We suppose, for the sake of contradiction, that there exists an endomorphism $f^*$ of $f$. As $f$ fixes the point $0$,        $f^*$ preserves the fiber $i^{-1}(\{0\})$ that we denote by $\mathbb Z/2\mathbb Z$.

    Given two integers $(a,b)$, we denote by $hol_{ ( a,b)}$ the automorphism of the fiber $\mathbb Z/2\mathbb Z$                   obtained by holonomy along a closed path of $\mathbb T^2$ pointed in $0$ and tangent to the vector $(a,b)$.
    By commutation of the diagram, we have for any integers $a$, $b$

    \begin{equation}\label{hol*} hol_{ A( a,b)}\circ f^*=f^*\circ hol_{ ( a,b)}.\end{equation}

We remark that $f^*_{\mathbb Z/2\mathbb Z}$ is either an automorphism or a non-bijective map. If $f^*_{|\mathbb Z/2\mathbb Z}$  is non-bijective, we may suppose that $f^*$ sends $\mathbb Z/2\mathbb Z$ onto $\{0\}$. The above                 equation implies that 
    \[hol_{A(a,b)}=0,\quad \forall (a,b)\in \mathbb Z^2.\]
But this is not possible because the covering is connected.

    If $f^*_{\mathbb Z/2\mathbb Z}$ is an automorphism, as $Aut(\mathbb Z/2\mathbb Z)\cong\mathbb Z/2\mathbb Z$ is                          commutative, equation (\ref{hol*}) implies 
    \[ hol_{ A( a,b)}=hol_{ ( a,b)}, \quad \forall (a,b)\in \mathbb  N^2.\]
    As $\mathbb T^2$ is connected, we have three possibilities for the morphism $hol$:
        \begin{itemize}
        \item $hol_{(1,0)}=+1$ and $hol_{(0,1)}=+1$. Then $hol_{ A( 0,1)}=1+1=0$  is not equal to $hol_{(0,1)}=1$.
        \item $hol_{(1,0)}=+1$ and $hol_{(0,1)}=+0$. Then $hol_{ A( 1,0)}=2+0=0$  is not equal to $hol_{(1,0)}=1$.
        \item $hol_{(1,0)}=+0$ and $hol_{(0,1)}=+1$. Then $hol_{ A( 1,0)}=0+1=1$  is not equal to $hol_{(1,0)}=0$.
        \end{itemize}
    \end{proof}

Both above examples standardize the following question of Hirsch-Pugh-Shub ( \cite{HPS}, P. 70):

\begin{ques}\label{question:HPS} Let $N$ be a manifold immersed by $i$ into a manifold $M$ and let $f$ be a diffeomorphism of $M$ which preserves the leaves of $N$.
Does there exists any immersion $i'$ from $N$ into $M$, whose image is the same as the image of $i$ and such that $f$ pullback to $N$ via $i'$?\end{ques}


\begin{proof}[Negative answer to this question \ref{question:HPS}] The idea of the proof is to use example \ref{contrempletore}, by obliging $i$ to be a $2$-covering. To do it, as an algebraic geometer, we blow up $\mathbb T^2$ at the fixed point $0$ of the diffeomorphism $f$ of $\mathbb T^2$. Hence, we obtain a diffeomorphism $f^\#$ of the connected sum $M^\#:=\mathbb T^2\#\mathbb P^2(\mathbb R)$. As there exists a $2$-covering of the torus by the torus, there exist a $2$-covering of $M^\#$ by the manifold
$\hat{M}^\#:= \mathbb T^2\#\mathbb P^2(\mathbb R)\#\mathbb P^2(\mathbb R)$. Such a covering is an immersion from $\hat{M}^\#$ to ${M}^\#$, whose image is obviously $f^\#$-invariant.
 
Let us show that any immersion $j$ from $\hat{M}^\#$ onto ${M}^\#$ is a $2$-covering.

 As the preimage by $j$ of any point is a compact discrete subset, its cardinality is finite. This cardinality depends lower semi-continuously on the point. The upper semi-continuity follows from the compactness of ${M}^\#$ and $\hat{M}^\#$. By connectedness, the cardinality $k$ of $j$-fibers is constant. Thus, the map $j$ is a $k$-covering.
 
As the Euler constant of $\hat{M}^\#$ is equal to $-2$ which is twice the Euler constant of $M^\#$, the map $j$ is a $2$-covering (since a triangulation of $M^\#$ small enough has all vertexes, edges or faces which are $k$-times lifted in $\hat{M}^\#$).

To reply to the question, it is sufficient to prove that there does not exist any $2$-covering map from $\hat{M}^\#$ onto ${M}^\#$, such that $f^\#$ pullback to $\hat {M}^\#$.

We suppose, for the sake of contradiction, that there exists such a $2$-covering map $j$.

The point $0\in \mathbb T^2$ was blowing up to a circle $\mathbb S^1$ and a small neighborhood of $0$ was blowing up to a M\"obius strip. The preimage by $j$ of this strip is either two disjoint M\"obius strips or one cylinder.
In the first case, the restriction of $j$ to each of these strips is a homeomorphism. In the second case, the restriction of $j$ to the cylinder is a $2$-covering.

 In the first case, we shall blow down $M^\#$ and $\hat{M}^\#$ at the circle $S^1$ and its preimages by $j$. Hence, we make up a pull back of $f$ in a $2$-covering of the torus $\mathbb T^2$. Therefore, example \ref{contrempletore} shows a contradiction. But in the second case, we cannot blown down at the preimage of $\mathbb S^1$. Let us use a little trick in the second case.
 
 Let $\hat{ \mathbb S}^1$ be the circle which is the unique preimage of $\mathbb {S}^1$. We cut along $\hat {\mathbb {S}}^1$ the surface $\hat{M}^\#$. This makes a surface $\hat{M}'^\#$ with two boundaries $B_1$ and $B_2$. Each of these boundaries is a $2$-covering of $\mathbb S^1$ via the immersion $j'$ from $\hat{M}'^\#$ onto ${M^\#}$, canonically made up from $j$. We know identify the points of $B_1$ (resp. $B_2$) which have the same image into $\mathbb S^1$ via $j'$. This constructs a new surface which
is now a $2$-covering of ${M^\#}$, for which $f^\#$ pullbacks to a certain map $\hat {f}'^\#$, and such that the preimage of a small neighborhood of $\mathbb S^1$ consists of two M\"obius strips.
 
  We will only use these last properties shared with the first case, in order to find a contradiction.

Let us come back to the lighter notations of the first case. We blow down the surface $M^\#$ to $M$ along a small neighborhood of the circle $\mathbb S^1$. As $f^\#$ was obtained by bowing up at the fixed point 0, via this blowing down, the dynamics induced by $f^\#$ on $M$ is $f$. As $j$ is a homeomorphism from both preimages of the small neighborhood of the circle $\mathbb S^1$, we can blow down $\hat{M}^\#$ along both M\"obius strips in order to construct a $2$-covering of the torus in which we can
pullback $f'$. A contradiction follows from example \ref{contrempletore}.
 \end{proof}

\subsubsection{Persistence of laminations}
Let $r$ be a fixed positive integer.

Let $(L,\mathcal L)$ be a lamination $C^r$-embedded by $i$ into a manifold $M$.  Let $f$ be a $C^r$-endomorphism of $M$ which preserves $\mathcal L$. Then the embedded lamination $(L,\mathcal L)$ is \emph{$C^r$-persistent} if for any endomorphism $f'$ $C^r$-close to $f$, there exists an embedding $i'$ $C^r$-close to $i$ such that $f'$ preserves the lamination $(L,\mathcal L)$ embedded by $i'$ and such that 
each point of $i'(L)$ is sent by $f'$ into the image by $i'$ of a small plaque containing $f(x)$. 
This implies that the pullback $f'^*$ of $f'$ is equivalent and $C^r$-close to the pullback $f^*$ of $f$.

Let $(L,\mathcal L)$ be a lamination immersed by $i$ into a manifold $M$.  Let $f$ be a $C^r$-endomorphism of $M$ which preserves $\mathcal L$. Let $f^*$ be a pull back of $f$ in $(L,\mathcal L)$.
Then the immersed lamination $(L,\mathcal L)$ is \emph{persistent} if for any endomorphism $f'$ $C^r$-close to $f$, there exists an immersion $i'$ $C^r$-close to $i$, such that $f'$ preserves the lamination $(L,\mathcal L)$ immersed by $i'$ and pullback to $(L,\mathcal L)$ to an endomorphism $f'^*$ equivalent and $C^r$-close to $f^*$. In other words, for  every $f'\in End^r(M)$ close to $f$ there exists $i'\in Im^r(\mathcal L,M)$ and $f'^*\in End_{f^*}^r(\mathcal L)$ close to $i$ and $f^*$ such that the following diagram commutes:
\[\begin{array}{rcccl}
&&f'&&\\
&M&\rightarrow&M&\\
i'&\uparrow&&\uparrow&i'\\
&L&\rightarrow&L&\\
&&f'^*&&\end{array}\]

In the above definitions the topology of $End^r(M)$, $Im^r(\mathcal L,M)$, $Emb^r(\mathcal L,M)$ and $End_{f^*}^r(\mathcal L)$ are described in section \ref{Top}.

\subsubsection{Lamination persistence theorems}
Up to now the following result was the most general theorem showing that hyperbolicity implies persistence of laminations.
\begin{theo}[Hirsch-Pugh-Shub \cite{HPS}]
Let $r\ge 1$ and let $(L,\mathcal L)$ be a compact lamination $C^r$-immersed by $i$ into a manifold $M$. Let $f$ be a $C^r$-diffeomorphism of $M$ which preserves $\mathcal L$, is $r$-normally hyperbolic at it and such that a pull back $f^*$ of $f$ let invariant $L$ ($f^*(L)=L$).  Then  the immersed lamination is $C^r$-persistent.

 If moreover $i$ is an embedding and $f$ is plaque-expansive at $\mathcal (L,\mathcal L)$ then the embedded lamination is $C^r$-persistent.\end{theo}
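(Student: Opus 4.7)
The plan is the classical Hirsch--Pugh--Shub graph transform argument, adapted to the immersed lamination setting. Using an adapted Riemannian metric on $M$ (Annex B), one obtains a continuous $Tf$-invariant splitting $TM_{|i(L)} = Ti(T\mathcal L) \oplus E^s \oplus E^u$ with sharp constants $\lambda_s < 1 < \lambda_u$ such that $\|Tf_{|E^s}\| \le \lambda_s$, $m(Tf_{|E^u}) \ge \lambda_u$, and the normal rates dominate the $r$-th power of the tangential rates. Via the exponential map, identify a neighborhood of the zero section of $E := E^s \oplus E^u$ with a tubular neighborhood of $i(L)$ in $M$, so that candidate immersions $C^r$-close to $i$ are parameterized by small sections $\sigma : L \to E$ continuous on $L$ and $C^r$ along the leaves, through $i_\sigma(x) := \exp_{i(x)} \sigma(x)$.

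The preservation condition $f' \circ i_\sigma = i_\sigma \circ \phi$ for some $\phi$ in a neighborhood of $f^*$ in $End^r_{f^*}(\mathcal L)$ then becomes a fixed point equation for a graph transform $\Gamma_{f'}$ on pairs $(\sigma, \phi)$. In the chosen coordinates $f'$ reads $(y, v) \mapsto (f'^*(y), A(y) v + R(y, v))$ with $A(y) = Tf_{|E}(y)$ and $R$ of higher order; the $E^s$-component of $\Gamma_{f'}$ is a direct forward contraction, the $E^u$-component is solved by inverting the expansion, and the tangential component determines $\phi$ by projecting onto the leaf through the image point. Contraction of $\Gamma_{f'}$ in the $C^0$-norm with ratio $\max(\lambda_s, \lambda_u^{-1}) < 1$ yields, via the Banach fixed point theorem, a unique $C^0$ section $\sigma_{f'}$ depending continuously on $f'$, hence a candidate immersion $i' := i_{\sigma_{f'}}$ together with a pullback $f'^*$ close to $f^*$.

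The main obstacle is regularity: promoting $\sigma_{f'}$ from merely $C^0$ to $C^r$ along leaves, and obtaining $C^r$-closeness of $i'$ to $i$ as $f' \to f$. I would invoke the $C^r$-section theorem of Hirsch--Pugh--Shub, applied successively to the bundles of $k$-jets of leafwise sections for $k = 1, \ldots, r$: at each stage one lifts $\Gamma_{f'}$ to a transform on jets whose fiber contraction factor is controlled by $\lambda_s \cdot m(Tf_{|T\mathcal L})^{-k}$ and $\lambda_u^{-1} \cdot \|Tf_{|T\mathcal L}\|^{k}$, and $r$-normal hyperbolicity is precisely the quantitative hypothesis keeping these factors below $1$ up to $k = r$. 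Continuity of the $r$-th leafwise derivative across leaves (as opposed to mere leafwise $C^r$-regularity) is obtained by a separate holonomy-continuity argument, using that the contraction data vary continuously with the base point.

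For the embedded plaque-expansive case, one further concludes that $i'$ is an embedding and that $f'^*$ is unique up to the natural leafwise equivalence. Plaque-expansivity enters exactly at this point: it forces any two plaque-respecting pseudo-orbits that stay close to coincide, which rules out spurious nearby invariant laminations corresponding to distinct, non-equivalent pullbacks, and so translates into uniqueness of the fixed point of $\Gamma_{f'}$ up to equivalence. In the present memoir this theorem appears as the single-stratum special case of Theorem \ref{intro:main}, so the genuinely new content lies in the stratified induction over strata rather than in this single-lamination step, which is why the memoir defers to \cite{HPS} rather than proving it directly.
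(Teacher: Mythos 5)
You should first be aware that the memoir contains no proof of this statement: it is quoted from \cite{HPS} as background, so there is no in-paper argument to compare yours against. As an outline of the original Hirsch--Pugh--Shub proof your sketch is sound and is indeed the classical route: adapted metric, graph transform on sections of the normal bundle $E^s\oplus E^u$ over $L$ (pulled back by $i$, since the immersion need not be injective), the $C^r$-section theorem applied to leafwise jet bundles with the $r$-normal hyperbolicity inequalities supplying the fiber contraction up to order $r$, and plaque-expansiveness in the embedded case to identify the perturbed invariant plaque family with the original lamination and to make the produced map an embedding. This is also consistent in spirit with how the memoir itself uses plaque-expansiveness (in the proof of Corollary \ref{cor2}, it serves to prove injectivity of $i(f')$ on the compact pieces $K_p$).

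The one genuine error is your closing claim that this theorem ``appears as the single-stratum special case of Theorem \ref{intro:main}.'' It does not. The main theorem of the memoir (Theorem \ref{th2}, with single-stratum restrictions Theorem \ref{th1} and Corollary \ref{cor1}) concerns laminations that are $r$-normally \emph{expanded} by a possibly non-invertible endomorphism: there is no stable normal direction, and the graph-transform step of the memoir's proof (the map $S^0$ of Lemma \ref{prelem6}) is constructed by intersecting the $f'$-preimage of a plaque with the exponentiated normal disk $\mathcal F_{px}^{\eta'}$, which requires all normal directions to be expanded. A normally hyperbolic diffeomorphism with $E^s\neq 0$ simply does not satisfy the hypotheses of Theorem \ref{th2}, which is exactly why the memoir cites \cite{HPS} for this statement rather than deducing it; the single-stratum case of the main theorem is the normally expanded, endomorphism analogue stated just below it, not the HPS theorem itself.
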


We recall the definition of the normal hyperbolicity and the plaque-expansiveness in section \ref{axiom A} and \ref{plaque-expansiveness} respectively.

A first consequence of our main result is an analogous theorem of the above:

We allow $f$ to be an endomorphism, that is to be possibly non-bijective and with singularities, but we suppose $f$ to be normally expanding instead of normally hyperbolic.

\begin{theo}
Let $(L,\mathcal L)$ be a compact lamination $C^r$-immersed by $i$ into a manifold $M$. Let $f$ be a $C^r$-endomorphism of $M$ which preserves and $r$-normally expands $\mathcal L$.  Then  the immersed lamination is $C^r$-persistent.

 If moreover $i$ is an embedding and $f$ is plaque-expansive at $\mathcal (L,\mathcal L)$ then the embedded lamination is $C^r$-persistent.\end{theo}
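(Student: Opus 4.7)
The plan is to adapt the Hadamard graph transform of Hirsch--Pugh--Shub to the endomorphism plus normal-expansion setting, in which the graph transform naturally acts by pullback along $f^{\ast}$; non-invertibility of $f$ is benign because only the restriction of $Tf$ to the normal bundle needs to be inverted, and normal expansion makes this automatic. The statement is in fact the particular case of Theorem \ref{intro:main} in which $\Sigma$ has a single stratum $X=L$ and the trellis is $\mathcal{T}=\{(L,\mathcal{L})\}$, so conditions (ii) and (iii) become vacuous and only the $r$-normal expansion (and plaque-expansivity in the embedded case) remain. I nonetheless sketch a self-contained argument, since the same scheme is the core of the main theorem.

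First I would fix an adapted Riemannian metric on $M$ (annex B) and the orthogonal decomposition $TM|_{i(L)} = Ti(T\mathcal{L})\oplus N$ with $N$ the normal bundle. By compactness of $L$ and $r$-normal expansion, there exists a uniform $\lambda>1$ such that, writing $p_y : T_{i(y)}M\to N_y$ for the orthogonal projection,
\[
\|p_{f^{\ast}(x)}\circ Tf(v)\| \;\ge\; \lambda\,\bigl(1+\|Tf|_{T_x\mathcal{L}}\|^{r}\bigr)\,\|v\| \qquad (v\in N_x).
\]
Choose $\delta>0$ small enough that $\exp_{i(x)}$ is a diffeomorphism on the $\delta$-ball of $N_x$, and let $\mathcal{S}$ be the complete metric space of continuous sections $\sigma : L\to N$ with $\|\sigma\|_\infty\le\delta$, each of which defines an immersion $i_\sigma(x):=\exp_{i(x)}(\sigma(x))$ that is $C^{0}$-close to $i$. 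For any $C^{r}$-endomorphism $f'$ sufficiently $C^{1}$-close to $f$, I would define the graph transform $\Gamma_{f'}:\mathcal{S}\to\mathcal{S}$ by letting $\Gamma_{f'}(\sigma)(x)$ be the unique $v\in N_x$ with $\|v\|\le\delta$ such that
\[
p_{f^{\ast}(x)}\bigl(\exp_{i(f^{\ast}(x))}^{-1}\bigl(f'(\exp_{i(x)}(v))\bigr)\bigr)=\sigma(f^{\ast}(x));
\]
existence and uniqueness follow from the implicit function theorem, since the linearization at $v=0$ of the left-hand side is, up to $O(\|f'-f\|_{C^{1}})$, the expansion $p_{f^{\ast}(x)}\circ Tf$ of factor at least $\lambda$. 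The fixed-point equation $\Gamma_{f'}(\sigma)=\sigma$ is precisely the condition that $f'(i_\sigma(x))$ lie in a small plaque through $i_\sigma(f^{\ast}(x))$, which is the preservation condition in the definition of persistence.

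A direct estimate shows $\Gamma_{f'}$ is $\lambda^{-1}$-Lipschitz on $\mathcal{S}$, so Banach's theorem supplies a unique fixed point $\sigma^{\ast}$, and $i':=i_{\sigma^{\ast}}$ is a $C^{0}$-close immersion preserved by $f'$ with pullback $C^{0}$-equivalent to $f^{\ast}$. To upgrade $i'$ to $C^{r}$-closeness, I would apply the Hirsch--Pugh--Shub fiber contraction theorem iteratively on the tangential derivatives of $\sigma^{\ast}$: at order $k\le r$ the derivative satisfies its own graph-transform equation with contraction rate controlled by $\lambda^{-1}(1+\|Tf|_{T\mathcal{L}}\|^{k})$, which is strictly less than one precisely because of the factor $(1+\|Tf^{n}(v_{0})\|^{r})$ in the $r$-normal-expansion hypothesis. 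For the embedded case, plaque-expansivity is the extra ingredient that prevents two candidate fixed points of $\Gamma_{f'}$ from producing distinct immersions differing only by plaque sliding; this yields injectivity of $i'$ and the $C^{r}$-closeness of the pullback $f'^{\ast}$ to $f^{\ast}$ via $i'$.

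The hard part will be the $C^{r}$-closing step at the top order: the contraction margin at order $r$ is the tightest, and the passage from $C^{r-1}$ to $C^{r}$ requires the full $n$-step form $(1+\|Tf^{n}(v_{0})\|^{r})$ of the normal-expansion estimate rather than any naive one-step bound, together with the adapted metric to realize the contraction at every order simultaneously. By contrast the endomorphism character of $f$ (so that $f^{\ast}$ is in general neither surjective nor injective) causes no trouble, because $\Gamma_{f'}$ is defined fibrewise over $L$ via the continuous map $f^{\ast}$ and no inverse of $f^{\ast}$ is ever invoked.
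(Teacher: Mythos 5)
Your overall strategy --- a graph transform on sections of the normal bundle over $L$, a $C^0$ contraction coming from normal expansion, higher leafwise derivatives handled by a fiber-contraction/jet argument, and plaque-expansiveness used only for injectivity in the embedded case --- is exactly the scheme of the paper, which obtains this statement as the single-stratum case of Theorem \ref{th2}. But the operator you actually write down does not do the job. You require $p_{f^{*}(x)}\bigl(\exp^{-1}_{i(f^{*}(x))}\bigl(f'(\exp_{i(x)}(v))\bigr)\bigr)=\sigma(f^{*}(x))$, i.e.\ you match only the normal coordinate of $f'(i_\sigma(x))$, computed in the chart centred at $i(f^{*}(x))$, with the single value $\sigma(f^{*}(x))$. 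The set of points satisfying that constraint is the exponential of the affine slice $\sigma(f^{*}(x))+Ti(T_{f^{*}(x)}\mathcal L)$, which coincides with the image $i_\sigma(\mathcal L^{\epsilon}_{f^{*}(x)})$ of an actual plaque only to first order. Hence a fixed point $\sigma^{*}$ only guarantees that $f'(i_{\sigma^{*}}(x))$ and $i_{\sigma^{*}}(f^{*}(x))$ have the same normal coordinate at $i(f^{*}(x))$; it does not place $f'(i_{\sigma^{*}}(x))$ on $i_{\sigma^{*}}(L)$, so the limiting lamination need not be preserved by $f'$, and no pullback $f'^{*}$ is produced. Your sentence ``the fixed-point equation is precisely the preservation condition'' is the point where the argument breaks.

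The repair is to define the transform as in Lemma \ref{prelem6}: the new value at $x$ is the unique point of the small normal disc through $i(x)$ whose image under $f'$ lies on $i_\sigma(\mathcal L^{\eta'}_{f^{*}(x)})$, the image of a genuine plaque under the current candidate immersion, the intersection being transverse by normal expansion. Then the fixed point literally satisfies $f'\circ i'(x)\in i'(\mathcal L^{\eta'}_{f^{*}(x)})$, and the tangential sliding of the intersection point is what defines the pullback $f'^{*}(x)$, which you must also show is an equivalent, $C^r$-close endomorphism of $(L,\mathcal L)$ --- this is required even in the immersed case, since persistence is defined through the commuting square, and you never construct it. With this correction your $C^0$ estimate is no longer a one-line fiberwise computation: because the target plaque now moves with $\sigma$, the contraction must be extracted with an adapted metric and invariant cone fields (Property \ref{property B} and the semi-distances $d_x$ in the proof of Lemma \ref{lem6}), and the passage to $C^r$ is done there by a genuine contraction on the bundle $P^r$ of $r$-jets of graphs (your rate ``$\lambda^{-1}(1+\|Tf|_{T\mathcal L}\|^{k})<1$'' is also misstated; the correct condition is that the normal expansion dominates $\|Tf|_{T\mathcal L}\|^{k}$ for all $k\le r$, which is what the adapted metric of Proposition \ref{P1} provides). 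These are fixable, but as written your central fixed-point equation proves invariance of the wrong object.
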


 Let us describe this theorem.

\subsubsection{Normal expansion}
Let $(L,\mathcal L)$ be a lamination and let $(M,g)$ be a Riemannian manifold. Let $i\in Im(\mathcal L,M)$ and let $f\in End(M)$ which preserves the immersion $i$ of $(L,\mathcal L)$. Let $f^*$ be a pullback of $f$ in $(L,\mathcal L)$.
 
We identify, via the injection given by $i$, the bundle $T\mathcal L\rightarrow L$ to a subbundle of $\pi\;:\; i^*TM\rightarrow L$. Thus, $\mathcal L$ is endowed with the Riemannian metric $i^*g$. By commutativity of the diagram above, the endomorphism $i^*Tf$ of $i^*TM\rightarrow L$, over $f^*$, preserves the subbundle $T\mathcal L$.
The action of the endomorphism $i^*Tf$ to the quotient $i^*TM/T\mathcal L$ is denoted by 
\[[i^*Tf]\;:\; i^*TM/T\mathcal L\rightarrow i^*TM/T\mathcal L\]
We notice that the quotient $i^*TM/T\mathcal L$ is the normal bundle of $\mathcal L$. We endow this bundle with the norm induced by the Riemannian metric of $M$: the norm of a vector $u\in i^*TM/T\mathcal L$ is the norm of the vector $i^*TM$ which is orthogonal to $T\mathcal L$ and represents $u$.

\begin{defi} For every $r\ge 1$, we say that $f$ \emph{$r$-normally expands the lamination $(L,\mathcal L)$ (immersed by $i$ over $f^*$)}, if there exist a function $C$ on $L$ and $\lambda<1$ such that for all $v\in i^*TM/T\mathcal L\setminus \{0\}$ and $n\ge 0$, we have 
\[\max\big( 1,\|T_{\pi(v)}f^{*^n}\|^r\big)\cdot \|v\|<C(x)\cdot \lambda^n\cdot \|[i^*Tf]^n(v)\|\]\end{defi}
\begin{rema} Usually, one supposes $L$ to be compact and $C$ to be constant. This is coherent with this definition, by replacing $C$ by its maximum on $L$.\end{rema}
\begin{defi} If the function $C$ is bounded, we say that $f$ \emph{uniformly $r$-normally expands the lamination} $(L,\mathcal L)$.\end{defi}

\begin{propr}\label{propertyA}
Let $(L,\mathcal L)$ be a lamination immersed by $i$ into a manifold $M$ and $1$-normally expanded.
Then, for all $x$, $y\in L$ with the same images by $i$, the spaces $T_x \mathcal L$ and $T_y\mathcal L$ are sent by $Ti$ to the same subspace of $T_{i(x)}M$. Thus, we can abuse of notation by denoting $T_{i(x)}\mathcal L$ the subspace $Ti(T_x\mathcal L)$.
 
Moreover, for every compact subset $K$ of $L$, the section of the Grassmannian $z\in i(K)\rightarrow T_z\mathcal L$ is continuous.
\end{propr}
\begin{proof}
By normal expansion at $x$, the vectors of $Ti(T_y\mathcal L)\setminus Ti(T_x\mathcal L)$ grow exponentially faster than those of $Ti(T_x\mathcal L)$ and by normal expansion at $y$, the vectors of $Ti(T_x\mathcal L)\setminus Ti(T_y\mathcal L)$ grow exponentially faster than those of $Ti(T_y\mathcal L)$. Then the vectors of $Ti(T_y\mathcal
L)\setminus Ti(T_x\mathcal L)$ grow and decrease exponentially faster than those of  $Ti(T_x\mathcal L)\setminus 
Ti(T_y\mathcal L)$. Therefore the spaces $Ti(T_x\mathcal L)$ and $Ti(T_y\mathcal L)$ are equal.

For any compact subset $K$ of $L$, the continuity of the map $z\in i(K)\rightarrow T_z\mathcal L$ follows from the compactness of $K$: given a sequence $(z_n)\in i(K)^\mathbb N$ which converge to some $z\in i(K)$, there exists a sequence in $(x)_n \in K^\mathbb N$, sent by $i$ to $(z_n)_n$. By compactness of $K$, we may suppose that $(x_n)_n$ converges to some $x\in K$. Therefore, by continuity, $x$ is sent by $i$ to $z$ and we have 
\[\lim_{n\rightarrow \infty} T_{z_n}\mathcal L =\lim_{n\rightarrow \infty} Ti(T_{x_n}\mathcal L)=  Ti(T_{x}\mathcal L)=T_z\mathcal L\]
\end{proof}

\begin{rema} The definition of the $r$-normal expansion above is equivalent to the following:

There exists $\lambda>1$ and a continuous positive function $C$ on $L$ such that for every $x\in L$, for all unitary vectors $v_0\in T_{i(x)}\mathcal L$ and $v_1\in (T_{i(x)}\mathcal L)^\bot$, for any $n\ge 0$, we have
\[\|p\circ Tf^{n}(v_1)\|\ge C(x)\cdot \lambda^n\cdot (1+\|Tf^n(v_0)\|^r),\]
with $p$ equal to the orthogonal projection of $TM_{|i(L)}$ onto $T\mathcal L^\bot$.\end{rema}

 \begin{prop}\label{P1}
Let $r\ge 1$. Let $(L,\mathcal{L})$  be a lamination immersed by $i$ into a Riemannian manifold $(M,g)$.
Let $f\in End(M)$, $i\in Im(\mathcal{L},M)$, and $f^*\in End(\mathcal{L})$.

 If $f$ $r$-normally expands the immersed lamination $\mathcal L$ over $f^*$, for every compact subset $K$ of $L$ stable by $f^*$ ($f^*(K)\subset K$), there exists a Riemannian metric $g'$ on $M$ and $\lambda'<1$ such that, for the norm induced by $g'$ on $i^*TM$ and every $v\in (i^*TM/T\mathcal L)_{|K}\setminus \{0\}$, we have 
\[\max\big( 1,\|T_{\pi(v)}f^{*}\|^r\big)\cdot \|v\|<\lambda'\cdot \|[i^*Tf](v)\|.\]

We say that $g'$ is an adapted metric to the normal expansion of $f$ on $K$.
\end{prop}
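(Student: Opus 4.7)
The plan is to apply the classical Mather-style averaging construction to the normal cocycle: a norm which encodes a long but finite window of future iterates automatically exhibits, in one step, the asymptotic expansion furnished by the hypothesis. With $\alpha := 1/\lambda > 1$ and an integer $N$ to be chosen, I set, for $v$ in the fibre over $x \in K$,
\[
\|v\|_x'^{\,2} \;:=\; \sum_{n=0}^{N-1} \alpha^{-2n}\,\frac{\|[i^*Tf]^n v\|^2}{\max(1,\|T_x f^{*n}\|^r)^2}.
\]
This is a continuous inner product on each fibre. By continuity of $T\mathcal L$, of the cocycle, and by $f^*$-stability of $K$, the assignment $x\mapsto \|\cdot\|_x'$ is continuous on $K$ and equivalent to the original norm there: the lower bound on each summand comes from the $n$-th iterate of the normal-expansion hypothesis applied at $x\in K$, and the upper bound from continuity on the compact set $K$.

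Next I check the one-step estimate. Re-indexing the sum for $\|[i^*Tf] v\|'^2_{f^*(x)}$ with $m = n+1$ and using $\max(1,\|T_x f^{*m}\|)^r \le \max(1,\|T_{f^*(x)} f^{*(m-1)}\|)^r \cdot \max(1,\|T_x f^*\|)^r$ (a consequence of $\max(1,ab) \le \max(1,a)\max(1,b)$ together with the chain rule), one obtains
\[
\|[i^*Tf] v\|_{f^*(x)}'^{\,2} \;\geq\; \alpha^2\,\max(1,\|T_x f^*\|^r)^2\left[\|v\|_x'^{\,2} - \|v\|^2 + \alpha^{-2N}\,\frac{\|[i^*Tf]^N v\|^2}{\max(1,\|T_x f^{*N}\|^r)^2}\right].
\]
The $N$-th iterate of the hypothesis bounds the last summand from below by $\|v\|^2/C(x)^2$, while the $n=0$ term of $\|v\|_x'^{\,2}$ alone gives $\|v\|^2 \le C(x)^2\,\|v\|_x'^{\,2}/N$. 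Setting $C_K := \sup_{x\in K} C(x) < \infty$ (finite by compactness of $K$ and continuity of $C$), choosing any $\alpha'\in (1,\alpha)$, and then $N$ large enough so that $\alpha^2[1 - (C_K^2-1)/N] \ge \alpha'^2$, produces
\[
\max(1,\|T_x f^*\|^r)\cdot \|v\|_x' \;<\; \lambda'\cdot \|[i^*Tf] v\|_{f^*(x)}', \qquad \lambda' := 1/\alpha' < 1,
\]
uniformly on $K$.

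It remains to realise the fibrewise norm as an honest Riemannian metric $g'$ on $M$. Property \ref{propertyA} makes $T_{i(x)}\mathcal L$ depend only on $i(x)\in i(K)$, and the equivariance $i\circ f^* = f\circ i$ forces the whole forward orbit, hence both $[i^*Tf]^n$ and $\|T_x f^{*n}\|$, to factor through $i$, so $\|\cdot\|_x'$ in fact depends only on $i(x)$. I extend to an inner product on all of $TM|_{i(K)}$ by keeping the original metric $g$ on $T\mathcal L$ and installing the adapted one on $(T\mathcal L)^{\perp_g}$; a standard partition of unity on $M$ then interpolates this with the background metric $g$ to yield a continuous Riemannian metric $g'$ whose induced norm on $i^*TM/T\mathcal L$ above $K$ satisfies the conclusion with the constant $\lambda'$ just produced. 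The principal subtlety, which is precisely what Property \ref{propertyA} and the equivariance are designed to handle, is the consistency and continuity of the adapted norm at self-intersections of $i(L)$; everything else is book-keeping of the telescoping sum.
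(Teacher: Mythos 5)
Your argument is correct, and it takes a genuinely different route from the paper's. The paper (adapting Gourmelon) first uses compactness to extract a uniform window $N$ and a continuous gap function $r(x)>1$ squeezed between $\frac{1}{a}\sqrt[N]{\|T^N_{|F'}(x)\|^r}$ and $a\sqrt[N]{m([T]^N(x))}$, forms the multiplicative weights $R_n(x)=\prod_{i\le n}r(f^i(x))$, and then changes the metric on \emph{both} factors: an infinite weighted sum on the tangential bundle $F'$ and a finite weighted sum on the normal quotient, so that in the new norm $\|T_{|F'}(x)\|'\le r(x)^{1/r}$ while $\|[T](x)^{-1}\|'^{-1}>r(x)$, and the conclusion follows pointwise and then uniformly by compactness. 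You instead leave the tangential metric alone and perform a single finite Mather average on the normal quotient, with weights $\alpha^{-2n}/\max(1,\|T_xf^{*n}\|^r)^2$ that already absorb the tangential growth; the re-indexing plus the hypothesis at steps $n$ and $N$ then yields the one-step domination with a uniform $\lambda'=1/\alpha'$. Your route is shorter (no auxiliary function $r$, no infinite sum, no renormalization of the leafwise metric), and keeping $g$ on $T\mathcal L$ is exactly what makes the quantity $\|T_{\pi(v)}f^*\|$ in the conclusion agree with the one inside your average, so the bookkeeping closes; what the paper's heavier construction buys is a metric adapted on the tangential factor as well, which the statement does not require. The treatment of self-intersections via Property \ref{propertyA} and equivariance is the same in both proofs.

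Two small repairs. First, the bound $\|v\|^2\le C(x)^2\|v\|_x'^2/N$ is not given by the $n=0$ term alone: it follows because \emph{each} of the $N$ terms of $\|v\|_x'^2$ is bounded below by $\|v\|^2/C(x)^2$ (apply the hypothesis at step $n$), so the sum exceeds $N\|v\|^2/C(x)^2$; the inequality you use is true, only its attribution is off. Second, your partition-of-unity extension produces a metric that is merely continuous; as in the last lines of the paper's proof you should add the approximation of this continuous metric by a $C^\infty$ Riemannian metric, which preserves the strict inequality over the compact set $K$ because the estimate holds with a uniform margin. Finally, your use of $C_K=\sup_K C<\infty$ presupposes $C$ continuous, which is legitimate by the remark following the definition of normal expansion (or can be replaced by extracting the uniform window $N$ directly from compactness, as the paper does).
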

\label{section:P1}
We will show this proposition in annex \ref{proof P1}.

The following property give a geometrical equivalent interpretation of the $1$-normal expansion, which is useful.
\begin{propr}\label{property B} Let $(L,\mathcal L)$ be a lamination immersed by $i$ into a manifold $M$. Let $f\in End^1(M)$ which $r$-normally expands $(L,\mathcal L)$ over $f^*\in End^1(\mathcal L)$, for some $r\ge 1$. 
Let $K$ be a compact of $L$ sent into itself by $f^*$. Then there exist $\lambda>1$, a Riemannian metric on $M$ adapted to the $r$-normal expansion of $\mathcal L$ over $K$ and a (open) cone field $C$ on $i(K)$ such that, for every $x\in i(K)$:
\begin{enumerate}
\item $T_x\mathcal L^\bot$ is a maximal subspace of $T_xM$ contained in $C_x$,
\item $Tf\Big(cl\big(C(x)\big)\Big)$ is included in $C(f(x))\cup\{0\}$,
\item $\|Tf(u)\|>\lambda\cdot \|u\|$ for every $u\in C(x)$.
\end{enumerate}
\end{propr}
\begin{proof}
We endow $M$ with an adapted Riemannian metric to the normal expansion of $f$ on $K$. Let $x\in i(K)$, let $u\in T_y\mathcal L$ be a unitary vector and let $v\in T_y\mathcal L^\bot$ be small.
Then we have 
\[\tan \angle(T_xf(u+v),T_{f(x)}\mathcal L)= \frac{\|p_{\bot}\circ T_xf(v)\|}{p_T\circ T_xf(v)+T_xf(u)\|},\]
where $p_T$ and $p_\bot$ are respectively the orthogonal projection of $T_xM$ onto $T_x\mathcal L$ and $T_x\mathcal L^\bot$ respectively.

We have,
\[\frac{\|p_\bot\circ T_xf(v)\|}{p_\bot\circ T_xf(v)+T_xf(u)\|}\ge \frac{\|p_\bot \circ T_xf(v)\|}{\|T_xf(u)\|} +o(v)\]
Thus, by $1$-normal expansion 
\[\tan \angle\Big(T_xf(u+v),T_{f(x)}\mathcal L\Big) \ge \lambda\cdot \frac{\|v\|}{\|u\|}=\tan \angle\big(u+v,T_x\mathcal L\big)\]
Thus, for $\epsilon>0$ small enough, the properties 1-2 are satisfied by the following  cone field:
\[C_x:= \big\{(u+v)\in TM:\; u\in T\mathcal L,\; v\in T\mathcal L^\bot\; \mathrm{and}\; \|v\|>\epsilon \|u\|\big\}.\]
Let $\eta>0$ and $g'$ be the following inner product on $TM_{|i(K)}$:
\[g':= \eta\cdot g_{|T\mathcal L}+ g_{|T\mathcal L^\bot}.\]
For every $w\in C(x)$, we have 
\[g'(w)=\eta\dot g(u)+ g(v),\]
where $u\in T_x\mathcal L$  and $v\in T_x\mathcal L^\bot$ satisfy  $w=u+v$.

 By normal expansion, we have the existence of $\lambda'>1$ which does not depends on $x$, such that 
\[g'(Tf(w))=\eta\cdot g(p_T(Tf(v))+Tf(u))+g(p_\bot (Tf(v)))\ge \lambda'^2 g(v).\]
As $\epsilon\cdot \|u\|<\|v\|$, we have 
\[g'(Tf(w))\ge (\lambda'^2-\eta/\epsilon^2) g(v)+\eta\cdot  g(u).\]
Thus, for $\eta>0$ sufficiently small, $(\lambda'^2-\eta/\epsilon^2)$ is greater than $1$ and we get conclusion 3.
\end{proof}    

\subsubsection{Persistence of immersed laminations}\label{persistence:lam}
 
The following result is a particular case of our main theorem (theorem \ref{th2}).
\begin{theo}\label{th1}
Let $r\ge 1$ and $(L,\mathcal L)$ be lamination $C^r$-immersed by $i$ into a manifold $M$. Let $f$ be a $C^r$-endomorphism of $M$ preserving this immersed lamination. Let $f^*$ be a pullback of $f$ and let $L'$ be a precompact open subset of $L$ such that 
\[f^*(cl(L'))\subset L'.\]
If $f$ $r$-normally expands $(L,\mathcal L)$, then the immersed lamination $(L',\mathcal L_{|L'})$ is persistent.
Moreover, there exists a continuous map 
\[f'\mapsto (i(f'),f'^*)\in Im^r(\mathcal L,M)\times End^r_{f^*}(\mathcal L)\]
defined on a neighborhood $V_f$ of $f$, such that $i(f)$ is equal to $i$ and such that the following diagram commutes:
\[\begin{array}{rcccl}
&&f'&&\\
&M&\rightarrow&M&\\
i(f')&\uparrow&&\uparrow&i(f')\\
&L'&\rightarrow & L'&\\
&&f'^*&&\end{array}.\]
Furthermore, there exists a compact neighborhood $W$ of $L'$ such that, for every $f'\in V_f$, the maps $i(f')$ and $f'^*$ are equal to $i$ and $f^*$ respectively, on the complement of the compact set $W$.
\end{theo}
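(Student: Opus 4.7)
\textbf{Proof plan for Theorem \ref{th1}.}
My plan is to reduce the statement to a Banach fixed-point problem for sections of the normal bundle of $\mathcal L$. First I would apply Proposition \ref{P1} to the compact set $K:=\mathrm{cl}(L')$ (which is $f^*$-stable by hypothesis) in order to put on $M$ a metric $g'$ adapted to the $r$-normal expansion, so that the bundle endomorphism $[i^*Tf]\colon i^*TM/T\mathcal L\to i^*TM/T\mathcal L$ over $f^*$ admits a \emph{uniform} expansion estimate $\|[i^*Tf](v)\|\ge \lambda^{-1}\max(1,\|T f^*\|^r)\,\|v\|$ on $K$ with $\lambda<1$. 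I would then fix, via a tubular neighborhood of $i$ obtained from the exponential map $\exp$ of $g'$, an identification of a neighborhood of the zero section of $(T\mathcal L)^\bot$ with a neighborhood of $i(L)$ in $M$; any immersion $j$ sufficiently $C^r$-close to $i$ on a neighborhood of $K$ can then be written uniquely as $j(x)=\exp_{i(x)}\sigma(x)$ for some continuous section $\sigma$ of $(T\mathcal L)^\bot$.

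The commutativity equation $f'\circ j=j\circ f^*$ becomes, after pre- and post-composition with $\exp$, a functional equation of the form
\[
\sigma(f^*(x))\;=\;F_{f'}\bigl(x,\sigma(x)\bigr),
\]
where $F_f(x,0)=0$ (because $f\circ i=i\circ f^*$) and whose linearization along the zero section in the fiber variable is the bundle map $[i^*Tf]$. Because the latter is uniformly expanding, I would rewrite this equation in the equivalent form
\[
\sigma(x)\;=\;\mathcal G_{f'}(\sigma)(x)
\;:=\;\Phi_{f'}\bigl(x,\,\sigma(f^*(x))\bigr),
\]
where $\Phi_{f'}(x,\cdot)$ is a local inverse of $F_{f'}(x,\cdot)$; this local inverse exists for $f'$ close to $f$ by the implicit function theorem applied fiberwise, and the Lipschitz constant of $\Phi_{f'}(x,\cdot)$ in the fiber variable is bounded above by $\lambda$ plus $O(\|f'-f\|_{C^1})$. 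To turn this into a genuine fixed-point problem on a global space I would pick, using paracompactness of $L$, a smooth bump $\rho$ on $L$ that is $1$ on a neighborhood of $\mathrm{cl}(L')$ and supported in a precompact open set $W$ with $f^*(\mathrm{cl}(W))\subset W\cup L'$; replacing $f'-f$ by $\rho\cdot (f'-f)$ in the construction of $F_{f'}$ forces any solution to satisfy $\sigma\equiv 0$ off $W$, which yields the last assertion of the theorem.

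The fixed-point problem is then solved in the Banach space of bounded continuous sections of $(T\mathcal L)^\bot$ supported in $W$, with the supremum norm coming from $g'$. The map $\mathcal G_{f'}$ is a contraction with factor $\lambda+o(1)$, hence has a unique small fixed point $\sigma_{f'}$ depending continuously on $f'$. Setting $i(f'):=\exp_i\sigma_{f'}$ and defining $f'^*:= f^*$ (the map $f^*$ is unchanged; only the immersion varies) gives the commutative diagram on $L'$ and, by the support condition, $i(f')=i$ outside $W$. The construction is intrinsically $C^0$; to upgrade to $C^r$ smoothness of $\sigma_{f'}$ along the leaves and $C^r$ closeness to $i$, I would apply the standard fiber-contraction / Hadamard graph-transform argument: lift $\mathcal G_{f'}$ to the jet bundle of $r$-jets of sections along the leaves, and use precisely the $r$-normal expansion inequality
\[
\|[i^*Tf](v)\|\ge C\lambda^{-1}\max\bigl(1,\|Tf^{*n}\|^r\bigr)\|v\|
\]
to show that the fiber maps on the $k$-jet levels ($k\le r$) are contractions. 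The continuous dependence of $f'\mapsto i(f')$ then follows from the joint-continuity version of the Banach fixed-point theorem, and from this $f'^*$ inherits the required continuity and equivalence to $f^*$.

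\emph{Main obstacle.} The $C^0$ contraction is essentially formal once the adapted metric of Proposition \ref{P1} is in hand; the genuine difficulty is the $C^r$ step, because along the leaves of $\mathcal L$ the tangential differential $Tf^*$ may itself be expanding (only the ratio of normal to tangential expansion is controlled). It is exactly the $r$-normal expansion condition that gives the contraction on $r$-jets, so the hard technical point is setting up the right jet bundle, choosing a norm on it adapted to $g'$, and checking that the graph transform on jets satisfies the analogues of the above estimates uniformly on $W$. I would handle this by working in local plaque charts and transferring the classical HPS fiber-contraction argument to the endomorphism, immersed setting, noting that the non-invertibility of $f'$ plays no role because the transform $\mathcal G_{f'}$ only uses $f^*$ in the argument (pull-back in the source) and solves for the new section by the fiberwise local inverse $\Phi_{f'}$.
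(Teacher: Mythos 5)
There is a genuine gap, and it is the step where you declare ``$f'^*:=f^*$ (the map $f^*$ is unchanged; only the immersion varies)''. With that choice your functional equation $\sigma(f^*(x))=F_{f'}(x,\sigma(x))$, for $\sigma$ a section of $(T\mathcal L)^\bot$, is overdetermined and in general has no solution: at each point the requirement $f'(\exp_{i(x)}\sigma(x))=\exp_{i(f^*(x))}\sigma(f^*(x))$ imposes $n=\dim M$ scalar conditions, while the unknown fiber value has only $n-d$ components; the leafwise component of $f'(\exp_{i(x)}\sigma(x))$, read in the chart at $i(f^*(x))$, cannot be matched by a purely normal section. This is not a technicality one can absorb in the contraction estimate. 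Concretely, take $L$ a single compact leaf, e.g.\ a normally expanded invariant circle on which $f$ acts as an irrational rotation; a $C^r$-small perturbation changes the rotation number of the dynamics on the (unique, by normal expansion) nearby invariant circle, so no immersion $j$ close to $i$ can satisfy $f'\circ j=j\circ f^*$ with the original $f^*$. This is exactly why the statement of Theorem \ref{th1} only asserts $f'^*\in End^r_{f^*}(\mathcal L)$, i.e.\ an endomorphism \emph{equivalent} to $f^*$ (sending each point into the leaf of $f^*(x)$, indeed into a small plaque of $f^*(x)$), not $f'^*=f^*$. Because of this, even your $C^0$ fixed-point step fails before the $C^r$ jet issue you single out as the main obstacle ever arises.

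The repair is to make the leafwise reparametrization part of the unknown, which is what the paper's proof of the main theorem (Theorem \ref{th2}, of which Theorem \ref{th1} is the one-stratum case) does: the graph-transform $S^0(f',i')(x)$ is defined as the unique transverse intersection of the exponential transversal $\mathcal F_{x}^{\eta'}$ through $i(x)$ with the $f'$-preimage of the immersed plaque $i'(\mathcal L_{f^*(x)}^{\eta'})$, and the new pullback $f'^*(x)$ is then \emph{defined} as the point of that plaque hit by $f'$. So the point is only required to land in the plaque of $f^*(x)$, and the tangential discrepancy is absorbed into $f'^*$ rather than forced to vanish. Apart from this, your plan is close in spirit to the paper's: Proposition \ref{P1} (adapted metric), a backward/preimage formulation exploiting normal expansion, a cutoff to get $i(f')=i$ and $f'^*=f^*$ outside a compact neighborhood $W$, and a jet-level (the paper's space $P^r$) contraction to obtain $C^r$ regularity and closeness; but all of these must be run for the pair $(i(f'),f'^*)$ simultaneously, not for a normal section conjugating to a frozen $f^*$.
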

\begin{rema}We remark that, every map $f'\in V_f$, close enough to $f$ normally expand the lamination $(L',\mathcal L_{|L'})$ immersed by $i(f')$ over $f'^*$. Hence the hypotheses of the theorem are open. \end{rema}

\begin{rema} In the above theorem, the continuity of
\[f'\mapsto (f'^*,i(f'))\in End^r_{f^*}(\mathcal L)\times Im^r(\mathcal L,M)\]
and the existence of $W$ imply that, for any $\epsilon>0$, for any $f'$ close enough to $f$ and for every $x\in L$, the points $i(x)$ and $i(f')(x)$ are $\epsilon$-distant and the points $f^*(x)$ and $f'^*(x)$ belong to a same plaque of $\mathcal L$ with diameter less than $\epsilon$. Similarly, the $r$-first derivatives of $i(f')$ and $f'^*$ along the leaves of $\mathcal L$ are uniformly close to those of $i$ and $f^*$, for $f'$ close to $f$.
\end{rema}

\begin{exem} Let $f_1$ be a $C^r$ diffeomorphism of a manifold $N_1$. Let $K$ be a hyperbolic compact subset. Then, by example \ref{lamhyper}, $W^s(K)$ is the image of a lamination $(L_1,\mathcal L_1)$ $C^r$-immersed injectively, whose leaves are the stable manifolds. 
Let $E_u$ be the unstable direction of $K$ and 
\[m:=\min_{u\in E_u\setminus \{0\}} \frac{\|Tf(u)\|}{\|u\|}.\]
We may suppose $m>1$.

Let $M_2$ be a compact Riemannian manifold and let $f_2$ be a $C^r$-endomorphism of $M_2$ whose differential is less than $\sqrt[r]{m}$ (hence $f_2$ has possibly many singularities and is not necessarily bijective). 

Thus, the product dynamics $f:= (f_1,f_2)$ on $M:=M_1\times M_2$ $r$-normally expands the $C^r$-immersed lamination $(L,\mathcal L):=(L_1\times M_2, \mathcal L_1\times M_2)$ over an endomorphism $f^*$. Thus, for any precompact subset $L'$ of $L$, whose closure is sent into itself by $f^*$ (there exists arbitrarily big such a subset), the lamination $(L',\mathcal L_{|L'})$ is $C^r$-persistent.
\end{exem} 
 
\subsubsection{Plaque-expansiveness}\label{plaque-expansiveness}
\begin{defi}[pseudo-orbit] Let $(L,\mathcal L)$ be a lamination and let $f$ be an endomorphism of $(L,\mathcal L)$. Let $\epsilon$ be a positive continuous function on $L$. An \emph{$\epsilon$-pseudo-orbit which respects $\mathcal L$} is a sequence\footnote{In the diffeomorphism context, as in Hirsch-Pugh-Shub's theorem, sequences are indexed by $\mathbb Z$.} $(x_n)_{n\ge 0}\in L^{\mathbb N}$ such that, for any $n\ge 0$, the point
$f(x_n)$ belongs to a plaque of $\mathcal L$ containing $x_{n+1}$ whose diameter is less than $\epsilon(x_{n+1})$.\end{defi}

\begin{defi}[Plaque-expansiveness] Let $\epsilon$ be a positive continuous function on $L$. The endomorphism $f$ is \emph{$\epsilon$-plaque-expansive at $(L,\mathcal L)$} if for any positive continuous function $\eta$ on $L$ less than $\epsilon$, for all $\eta$-pseudo-orbits $(x_n)_{n}$ and $(y_n)_{n}$ which respect $\mathcal L$, if for any $n$
the distance between $x_n$ and $y_n$ is less than $\eta(x_n)$, then $x_0$ and $y_0$ belong to a same small plaque of $\mathcal L$. \end{defi}

\begin{rema} Usually, one supposes $L$ to be compact and $\epsilon$ to be constant. This is coherent with this definition by replacing $\epsilon$ by its minimum.\end{rema}
\begin{rema} We do not know if the normal expansion implies the plaque-expansiveness, even when $L$ is compact. But in many case this is true (see annex \ref{pppppplaque}).\end{rema}

\subsubsection{Persistence of embedded laminations}
The following result is a particular case of the corollary \ref{cor2} of our main theorem.
\begin{coro}\label{cor1}
Let $r\ge 1$ and let $(L,\mathcal L)$ be lamination embedded by $i$ into a manifold $M$. Let $f$ be a $C^r$-endomorphism of $M$ preserving this embedded lamination. Let $f^*$ be a pullback of $f$ and let $L'$ be a precompact open subset of $L$ such that 
\[f^*(cl(L'))\subset L'.\]
If $f$ $r$-normally expands $(L,\mathcal L)$ and if $f^*$ is plaque-expansive,
then the embedded lamination $(L',\mathcal L_{|L'})$ is persistent.

Moreover there exists a continuous map 
\[f'\mapsto (i(f'),f'^*)\in Em^r(\mathcal L,M)\times End^r(\mathcal L)\]
defined on a neighborhood $V_f$ of $f$, such that $i(f)$ is equal to $i$ and such that the following diagram commutes:
\[\begin{array}{rcccl}
&&f'&&\\
&M&\rightarrow&M&\\
i'&\uparrow&&\uparrow&i'\\
&L'&\rightarrow&L'&\\
&&f'^*&&\end{array}.\]
Furthermore, there exists a compact neighborhood $W$ of $L'$ such that, for every $f'\in V_f$, the maps $i(f')$ and $f'^*$ are equal to $i$ and $f^*$ respectively, on the complement of $W$.
\end{coro}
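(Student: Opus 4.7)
The plan is to derive Corollary \ref{cor1} from Theorem \ref{th1} by upgrading the immersion $i(f')$ it produces to an embedding, using plaque-expansiveness. Theorem \ref{th1} already supplies, for every $f'$ in a neighborhood $V_f$ of $f$, a $C^r$-immersion $i(f')$ close to $i$ and a pullback $f'^{*}\in End^r_{f^*}(\mathcal L)$ close to $f^{*}$, together with a compact neighborhood $W$ of $L'$ outside which $i(f')=i$ and $f'^{*}=f^{*}$. What remains is (a) injectivity of $i(f')$ on a neighborhood of $cl(L')$ and (b) upgrading injectivity to the property of being a homeomorphism onto the image, i.e.\ an embedding of laminations.

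For the injectivity (the main step), I would fix small constants and run the standard pseudo-orbit / shadowing argument adapted to plaque-expansiveness. Let $\epsilon>0$ be a plaque-expansiveness constant for $f^{*}$ on $cl(L')$ (concretely, so small that any two $\epsilon$-pseudo-orbits of $f^{*}$ respecting $\mathcal L$ that stay $\epsilon$-close point-wise begin in a common plaque). Shrink $V_f$ so that for all $f'\in V_f$:
\begin{itemize}
\item $f'^{*}(cl(L'))\subset L'$, so forward orbits of $f'^{*}$ stay in $cl(L')$;
\item for each $z\in cl(L')$, the points $f^{*}(z)$ and $f'^{*}(z)$ lie in a common $\mathcal L$-plaque of diameter $<\epsilon/3$ (this uses the topology on $End^r_{f^{*}}(\mathcal L)$);
\item $d_M(i(z),i(f')(z))<\eta$ for all $z\in cl(L')$, where $\eta$ is a modulus-of-uniform-continuity constant for $i^{-1}$ restricted to $i(cl(L'))$ chosen so that $d_M(i(x),i(y))<2\eta$ forces $d_L(x,y)<\epsilon/3$.
\end{itemize}
Now suppose $i(f')(x)=i(f')(y)$ with $x,y\in cl(L')$. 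Setting $x_n:=f'^{*n}(x)$ and $y_n:=f'^{*n}(y)$, applying $f'^{n}$ to the commuting diagram gives $i(f')(x_n)=i(f')(y_n)$, so $d_M(i(x_n),i(y_n))<2\eta$ and hence $d_L(x_n,y_n)<\epsilon/3$. Moreover both $(x_n)$ and $(y_n)$ are $\epsilon$-pseudo-orbits of $f^{*}$ respecting $\mathcal L$ by the second bullet. Plaque-expansiveness then forces $x_0=x$ and $y_0=y$ to lie in a common plaque $P$ of $\mathcal L$ of small diameter. Since $i|_P$ is a $C^r$-embedding of a small relatively compact plaque and $i(f')|_P$ is a $C^r$-small perturbation, $i(f')|_P$ is still injective; combined with $i(f')(x)=i(f')(y)$ this gives $x=y$.

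For (b), I would combine injectivity with the fact that $i(f')=i$ outside $W$. On the compact set $cl(W)$ an injective continuous map into a Hausdorff space is automatically a homeomorphism onto its image; on the complement $i(f')=i$ is already an embedding. A short patching argument (using that the two images meet only along the common ``boundary'' where $i(f')=i$) yields that $i(f')$ is a global homeomorphism onto its image, hence a $C^r$-embedding of laminations. The continuity of $f'\mapsto (i(f'),f'^{*})$, the commuting diagram, and the fact that $i(f)=i$, $f'^{*}=f^{*}$ off $W$ are all inherited from Theorem \ref{th1}.

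The hard part is the injectivity step: it is essential that one phrases closeness of $f'^{*}$ to $f^{*}$ inside the equivalence-class topology of $End^r_{f^{*}}(\mathcal L)$ (so that the orbits of $f'^{*}$ really are plaque-controlled pseudo-orbits of $f^{*}$), and that $\eta$ is chosen using the uniform continuity of $i^{-1}$ on $i(cl(L'))$ to translate $C^0$-closeness of $i(f')$ to $i$ in $M$ into $L$-closeness of the two orbits. Once these quantifiers are set up correctly, plaque-expansiveness does exactly the work it was designed for, and the remainder is bookkeeping already handled by Theorem \ref{th1}.
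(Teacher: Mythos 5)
Your proposal is correct and follows essentially the same route as the paper: there, the corollary is deduced from its stratified analogue (Corollary \ref{cor2}), whose proof in the one-stratum case likewise takes the pair $(i(f'),f'^*)$ furnished by the immersed-lamination theorem and obtains injectivity by applying plaque-expansiveness to the forward $f'^*$-orbits of two points with equal $i(f')$-images (which are $\epsilon$-close pseudo-orbits respecting $\mathcal L$), then uses injectivity on small plaques and compactness to conclude that the restriction is a homeomorphism onto its image. The only cosmetic difference is that where you invoke uniform continuity of $i^{-1}$ on $i(cl(L'))$, the paper uses the continuous separation function $\phi(f')=\min_{(z,z'),\,d(z,z')\ge\epsilon} d\big(i(f')(z),i(f')(z')\big)$, positive near $f$, to convert equality of images into $\epsilon$-closeness of the orbits.
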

\begin{rema} We remark that, every map $f'\in V_f$, close enough to $f$, normally expands the lamination $(L',\mathcal L')$ embedded by $i(f')$ and is plaque-expansive. Thus, the hypotheses of this corollary are open.\end{rema}
\begin{exem}
Let $P:= x\mapsto x^2+c$ which has some repulsive compact subset $K$. For instance, $c$ can be a Collet-Eckmann parameter or a hyperbolic parameter. Let $f:= (x,y)\in \mathbb R^2\mapsto (x^2+c,0)$. The one dimensional embedded lamination $K\times \mathbb R$ is $r$-normally expanded by $f$ for any $r\ge1$. As this lamination is a bundle, $f$ is obviously plaque-expansive at this lamination. Let $R>0$ and let $L':= K\times ]-R,R[$. Then this lamination is $C^r$-persistent. This means that, for any $C^r$-perturbation $f'$ of $f$, there exists a $C^r$-embedding $i'$ of $K\times ]-R,R[$ into $\mathbb R^2$ such that $f'$ sends $i'(\{k\}\times ]-R,R[)$ into $i'(\{P(k)\}\times ]-R,R[)$, and $i'$ is $C^r$ close to $i$: $i'$ is uniformly $C^0$ close to $i$, the $r$-first partial derivatives of $i$ with respect to its second coordinate exists, are continuous and are uniformly close to those of $i$. \end{exem}       
\subsection{Main result on persistence of stratifications laminations}
\subsubsection{Problematics}
\subsubsection{Embedded stratifications of laminations}
Throughout this section, we denote by $r$ a fixed positive integer. 

Let $(A,\Sigma)$ be a stratified (laminar) space $C^r$-embedded by $i$ into a manifold $M$. We can identify the space $(A,\Sigma)$ with its image in $M$.

  A $C^r$-endomorphism $f$ of $M$ \emph{preserves} the stratification (of lamination) $\Sigma$ if
$f$ preserves each stratum of $\Sigma$ as an embedded lamination.

  The stratification $\Sigma$ is \emph{$C^r$-persistent} if every endomorphism $f'$ $C^r$-close to $f$ preserves a stratified embedding $i'$ of $(A,\Sigma)$, $C^r$-close to $i$ and such that the image by $f'$ of every point $i'(x)\in i'(A)$ belongs to the image by $i'$ of a small plaque containing $f(x)$ (of the stratum containing $x$).
This is equivalent to require the existence of an endomorphism $f'^*\in End_{f^*}^r(\Sigma)$ close to the restriction $f_{|A}\in End^r(\Sigma)$ such that the following diagram commutes:
\[\begin{array}{lll}
 & f'& \\
  M& \rightarrow &  M \\
  \uparrow i'& & \uparrow i'\\
  A&\rightarrow  &  A \\
 & f'^*& \\
\end{array}\]
In particular, this implies also that the dynamic induced by $f'$ and $f$ on the space of the leaves of each stratum $X$ are the same.

We notice that by ``close'' we mean close for the topologies of $Em^r(\Sigma ,M)$ or $End_{f^*_{|A}}^r(\Sigma)$ described in \ref{Strat:mor}.

If, moreover, $i$ is an embedding $a$-regular and the embedding $i'$ is also $a$-regular, for every $f'$ close to $f$, we say that {\it the $a$-regular stratification $\Sigma$ is $C^r$-persistent}

Our problematics are to found sufficient conditions implying the $C^r$-persistence of stratifications of laminations.
 
As we are dealing with endomorphisms, the normal expansion together with the plaque-expansiveness of each stratum appear to be good hypotheses.

 If we reject the hypothesis of plaque-expansiveness, we shall consider the immersions of stratifications of laminations.

\subsubsection{Immersed stratifications of laminations}

  A stratified (laminar) space $(A,\Sigma)$ $C^r$-immersed by $i$ into a manifold $M$ is \emph{$C^r$-preserved} by an endomorphism $f$ of $M$, if there exists an endomorphism $f^*\in End^r(\Sigma)$ such that the following diagram commutes:

\[\begin{array}{lll}
 & f& \\
  M& \rightarrow &  M \\
  \uparrow i& & \uparrow i\\
  A&\rightarrow  &  A \\
 & f^*& \\
\end{array}\]

Such endomorphism $f^*$ is the \emph{pullback} of $f$ (into $(A,\Sigma)$ via $i$).

Such an immersed stratified space is $C^r$-\emph{persistent} if, for any $C^r$-endomorphism $f'$ close to $f$, there exist an immersion $i'$ close to $i$ and an endomorphism $f'^*\in End_{f^*}^r(\Sigma)$ close to $f^*$ such that the following diagram commutes:

\[\begin{array}{lll}
 & f'& \\
  M& \rightarrow &  M \\
  \uparrow i'& & \uparrow i'\\
  A&\rightarrow  &  A \\
 & f'^*& \\
\end{array}\]

\subsubsection{Main result}\label{main:result}
Unfortunately, we will see in section \ref{cexp}, an example of an embedded (differentiable) compact stratified space, which is normally expanded but not persistent. Therefore some extra hypotheses are required.

We suspect the topology of the stratified space to play a main role. That is why throughout this section $(A,\Sigma)$ is a stratified space endowed with a $C^r$-trellis structure $\mathcal T$ (such structure does not exist in the example cited above), for a $r\ge 1$ now fixed. Moreover $M$ will refer to a Riemannian manifold.

Hypotheses of the main result need to generalize the notion of pseudo-orbits.

\begin{defi} Let $(L,\mathcal L)$ be a lamination, let $V$ be an open set of $L$, let $f$ be a continuous map from $V$ to $L$, and let $\epsilon$ be a continuous positive function on $V$.
A sequence $(p_n)_n\in V^\mathbb N$ is an \emph{ $\eta$-pseudo-orbit of $V$ which respects $\mathcal L$} if  $p_{n+1}$ and $f(p_n)$ belong to a same plaque of $\mathcal L$ of diameter less than $\epsilon(p_{n+1})$, for every $n\ge 0$.\end{defi}

We now state an useful corollary of the main theorem \ref{th2}.
\begin{coro}\label{cor3}
Let $i$ be a $\mathcal T$-controlled $C^r$-embedding of $(A,\Sigma)$ into $M$. We suppose that $A$ is compact.
We identify $(A,\Sigma)$ with its image by $i$ in $M$.
 Let $f$ be a $C^r$-endomorphism of $M$ preserving $(A,\Sigma)$. Moreover, we suppose that, for every stratum $X\in \Sigma$, there exists a neighborhood $V_X$ of $X$ in $A$ such that:

\begin{enumerate}[(i)]
\item each plaque of $\mathcal L_X$ included in $V_X$ is sent by $f$ into a leaf of $\mathcal L_X$, for every stratum $X$,
\item $f$ $r$-normally expands each stratum $X$,
\item for every stratum $X$, there exits a positive continuous function $\epsilon$ such that every  $\eta$-pseudo-orbit of $V_X$ which respects the plaques of $\mathcal L_{X}$ is contained in $X$,
\item $f$ is plaque-expansive at each stratum $X$.
\end{enumerate}

Then the stratification of laminations $(A,\Sigma)$ is persistent.

 Moreover, there exists a family of neighborhoods $(V_X')_{X\in\Sigma}$ adapted to $f$
such that, for every $f'$ $C^r$-close to $f$ (for the $C^r$-compact-open topology), there exists a $\mathcal T$-controlled $C^r$-embedding $i'$ of $(A,\Sigma)$ into $M$, close to $i$, which is preserved by $f'$ and satisfies:

\begin{enumerate}[(i)]
\item for every $x\in V_X'$, the endomorphism $f'$ sends $i'(x)$ into the image by $i'$ of the leaf of $f(x)$ in $\mathcal L_X$,
\item $f'$ $r$-normally expands the lamination $X$ embedded by $i'$,
\item there exits a positive continuous function $\epsilon'$ such that every $\eta'-f'$-pseudo-orbits of $V'_X$, which respects the plaques of the lamination $\mathcal L_{X}$ embedded by $i'$, is contained in $X$,
\item $f'$ is plaque-expansive at the lamination $X$ embedded by $i'$.
\end{enumerate}

\end{coro}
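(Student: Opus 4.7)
The plan is to derive this corollary directly from the main Theorem \ref{th2}, whose elementary formulation already appears as Theorem \ref{intro:main} in the introduction. The hypotheses of the corollary are essentially a reformulation of those of the main theorem, so the bulk of the work is matching the two sets of conditions and then verifying that the ``moreover'' assertion follows from the openness of the hypotheses together with the continuous dependence of $i'$ on $f'$.

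First I would match the hypotheses. Since plaques of $\mathcal L_X$ are contained in $L_X$, hypothesis (i) of the corollary, applied on $V_X \cap L_X$, coincides with condition (ii) of Theorem \ref{intro:main}. Hypothesis (ii) of the corollary is the $r$-normal expansion half of (i) in the main theorem, and (iv) is the plaque-expansivity half of the same condition. For (iii), the continuous function $\epsilon$ on $V_X$ can be replaced by a positive constant after shrinking $V_X$ to a relatively compact neighborhood of $X$, using compactness of $A$ and local finiteness of $\Sigma$. This recovers the form required in hypothesis (iii) of Theorem \ref{intro:main}. Thus Theorem \ref{th2} applies and produces, for every $f'$ sufficiently $C^r$-close to $f$, a $\mathcal T$-controlled embedding $i'$ close to $i$ which is preserved by $f'$.

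Next I would extract the ``moreover'' part. Remark \ref{r3} supplies the key refinement: for each stratum $X$ there is a neighborhood $V_X'$ of $X$ in $L_X$, depending only on $f$, such that every point $i'(x) \in i'(V_X')$ is sent by $f'$ into the image by $i'$ of the $\mathcal L_X$-leaf of $f(x)$. This gives conclusion (i) of the corollary. Conclusion (ii), $r$-normal expansion for $f'$, is an open condition in the $C^r$-compact-open topology once one uses the adapted metric supplied by Proposition \ref{P1}: the inequality becomes uniform and survives small perturbations of both the endomorphism and the embedding $i'$. Conclusion (iv), plaque-expansivity of $f'$, is an open condition under normal expansion, as established in Annex C. Finally, conclusion (iii) follows by shrinking $\epsilon'$ so that every $\eta'$-pseudo-orbit in $V_X'$ respecting $\mathcal L_X$ is forced to remain in $X$, exploiting the continuous dependence of $i'$, $\mathcal L_X$ and $f'$ on $f'$ together with the fact that the corresponding property holds strictly for $f$.

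The main obstacle I expect is ensuring all four properties hold on a single family $(V_X')_{X \in \Sigma}$ uniformly for every $f'$ in a fixed neighborhood of $f$. This requires combining the local-in-$f'$ statement of Theorem \ref{th2} and Remark \ref{r3} with the openness of conditions (ii)--(iv), and respecting local finiteness of $\Sigma$ so that only finitely many estimates must be controlled simultaneously. Compactness of $A$ is essential to pass from pointwise continuous estimates to uniform ones and to guarantee that the perturbation neighborhood of $f$ remains nontrivial after this finite intersection.
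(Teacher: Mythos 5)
Your reduction has a genuine gap at its central step. Theorem \ref{th2} does not produce an embedding: its conclusion is a $\mathcal T$-controlled \emph{immersion} $i(f')$ together with a pullback $f'^*$, and plaque-expansiveness does not even appear among its hypotheses. The statement you are proving asserts a $\mathcal T$-controlled \emph{embedding} $i'$, i.e.\ a homeomorphism onto its image, and the passage from the immersed to the embedded conclusion is precisely where hypothesis (iv) does its work. In your proposal plaque-expansiveness is only used to "match hypotheses" (and, later, in an unsupported claim that it is an open condition "established in Annex C" --- Annex C only gives sufficient criteria such as Lyapunov stability or bundle structure, not openness), so nowhere do you prove that $i(f')$ is injective. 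This is not a formality: the paper derives Corollary \ref{cor3} from Corollary \ref{cor2} (with $A$ compact, $A'=A$), and the entire proof of Corollary \ref{cor2} in section \ref{demcor} is devoted exactly to this injectivity. It runs by decreasing induction on the adapted filtration $(K_p)_p$ of property \ref{Li'}: one first treats $K_N\subset X_N$ using plaque-expansiveness and the continuity of $f'\mapsto (i(f'),f'^*)$ to show two points with the same image generate $\epsilon$-pseudo-orbits that must lie in a common plaque; then, for lower strata, one combines hypothesis (iii) (pseudo-orbits of $V_p$ respecting $\mathcal L_p$ lie in $X_p$), the transversality statement \ref{Li'}.3 and the local inversion theorem to push coincidences forward until they land in $K_{p+1}$, where the induction hypothesis applies, and then back. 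None of this is replaced by anything in your argument.

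Relatedly, invoking Theorem \ref{intro:main} is circular: that introductory statement is announced as a special case of the main theorem and is never proved independently, so you cannot use its "embedding" phrasing to bypass the immersion-to-embedding step; likewise Remark \ref{r3} is a refinement of the conclusion, not an independently proved lemma you can cite as input. The parts of your plan concerning conclusions (i)--(iii) (adapted neighborhoods $V_X'$ independent of $f'$, openness of $r$-normal expansion via the adapted metric of Proposition \ref{P1}, and the pseudo-orbit condition for $f'$) do track what the paper actually establishes in the proof of Theorem \ref{th2} itself (section \ref{zzzzz}), so that portion is in the right spirit; but as written the proposal omits the key argument --- the injectivity of the perturbed embedding, which is the only place hypothesis (iv) is genuinely consumed --- and also leaves conclusion (iv) for $f'$ unjustified.
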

\begin{rema} Hypothesis $(i)$ says that the restriction $f_{|A}$ is a $\mathcal T$-controlled endomorphism and that the family $(V_X)_X$ is adapted to $f_{|A}$.\end{rema}

\begin{rema} 
The fact that $i'$ is a $\mathcal T$-controlled $C^r$-embedding $i'$ of $(A,\Sigma)$ into $M$, close to $i$, means that: 
\begin{itemize}
\item $i'$ is an homeomorphism of $A$ onto its image in $M$, $C^0$-close to the embedding $i$,
\item for each stratum $X$, the restriction of $i'$ to $L_X$ is an immersion of the lamination $(L_X,\mathcal L_X)$ and the $r$-first partial derivatives of $i'$ along the plaques of $\mathcal L_X$  are close to those of $i$ for the compact-open topology.\end{itemize}\end{rema}
\begin{rema}
Conclusion $(i)$ says that: 
\begin{itemize}
\item the pullback $f'^*$ of $f'$ via $i'$ is $\mathcal T$-controlled,
\item the family of neighborhoods $\mathcal V'=(V_{X}')_{X\in \Sigma}$ is adapted to this pull back and to the pull back $f_{|A}$ of $f$,
\item the endomorphism $f'^*$ belongs to the equivalence class $End_{f_{|A}\;\mathcal V'}^r(\mathcal T)$ of $f_{|A}$.\end{itemize}
Moreover, as $f'$ is close to $f$ and $i'$ close to $i$, the pull back of $f'$ via $i'$ is close to $f_{|A}$ for the topology of $End_{f_{|A}\;\mathcal V'}^r(\mathcal T)$.\end{rema}
\begin{rema} Conclusions $(i)$, $(ii)$, $(iii)$ and $(iv)$ imply that the hypotheses of this theorem are open.

 Moreover, as $(V_{X}')_{X\in \Sigma}$ does not depend on $f'$ $C^r$-close to $f$, conclusion $(i)$ appears to be very useful for the proof of the structural stability of non-hyperbolic compact subsets or the persistence of non-normally hyperbolic laminations (SA diffeomorphisms, SA bundles, extension of Shub's theorem on conjugacy of repulsive compact subset...).  
\end{rema}
Let us now give some easy applications of this corollary.

\begin{exem} \[\mathrm{Let}\;  f\;:\; \mathbb R^2\rightarrow \mathbb R^2\]
\[(x,y)\mapsto (x^2,y^2)\]
For any $r\ge 1$, the endomorphism $f$ $r$-normally expands the canonical stratification $\Sigma$ on the square $[-1,1]^2$ formed by the vertexes $X_0$, the edges $X_1$ and the interior $X_2$. Moreover $f$ is obviously plaque-expansive at each stratum of this stratification. Let $\mathcal T=(L_{X_i},\mathcal L_{X_i})_{i=0}^3$ be the trellis structure built in example \ref{carre}. We suppose that $L_{X_1}$ is disjoint from the diagonals of the square. Let $V_0$, $V_1$ and $V_2$ be equal to respectively $L_0\cap f^{-1}(L_0)\setminus [-1/2,1/2]^2$, $V_1\cap f^{-1}(V_1)\setminus [-1/2,1/2]^2$ and $L_2=X_2$. For these settings, hypotheses $(i)$, $(ii)$, $(iii)$ and $(iv)$ are satisfied. 

Thus, by corollary \ref{cor3}, this stratification is $C^r$-persistent: 

 For $f'\in End^r(\mathbb R^2)$ close enough to $f$, there exists a homeomorphism $i'$ from $[-1,1]^2$ onto its image in $\mathbb R^2$, whose restriction to each stratum $X_0$, $X_1$, $X_2$ is a $C^r$-embedding and such that $f'$ preserves the stratification $(i'(X_0), i'(X_1), i'(X_2))$ of $i([-1,1]^2)$. 

But, as we will see in section \ref{part varbor}, generally $i([-1,1]^2)$ is not diffeomorphic to $[-1,1]^2$. 

In fact, in section \ref{applications} we will systematize this example by showing, on the one hand,  the persistence of the canonical stratification of submanifolds with corners normally expanded and, on the other, the persistence of some product stratifications.\end{exem}

\begin{exem}{Viana Map}
 \[\mathrm{Let}\; V\;:\; \mathbb C\times \mathbb R\rightarrow \mathbb C\times \mathbb R\]
\[(z,h)\mapsto (z^2,h^2+c)\]

We fix $c\in ]-2,1/4[$. Therefore the map $h\mapsto h^2+c$ preserves an open interval $I$ and expands its boundary $\partial I$.

Thus, the endomorphism $V$ preserves the stratification $\Sigma$ of $C:= \{(z,h)\in \mathbb C\times \mathbb R;\; |z|\le 1\; \mathrm{and}\; h\in I\}$ formed by the strata $X_0:= \mathbb S^1$, $X_1:= \mathbb S^1\times I$, $X_2:= \mathbb D\times \partial I$ and $X_3:= \mathbb D\times I$ of dimension respectively $0$, $1$, $2$ and $3$.

We endow $(C,\Sigma)$ with the trellis structure $\mathcal T= (L_{X_i}, \mathcal L_{X_i})_{i=0}^3$ described in example \ref{previana}. 

We notice that $V_{|C}$ is $\mathcal T$-controlled, hence hypothesis $(i)$ of corollary \ref{cor3} is satisfied.

The endomorphism $1$-normally expands each stratum of $\Sigma$ (see \cite{BST} for estimation which implies the $1$-normal expansion of $X_1$). Thus, hypothesis $(ii)$ of the corollary is satisfied.    

Moreover, for an adapted family of tubular neighborhoods small enough, hypothesis $(iii)$ is also satisfied (for any functions $\eta$).  

Finally, as all the strata are bundles, $V$ is plaque-expansive at each of these laminations (see annex \ref{plaque-expansiveness}). Thus, hypothesis $(iv)$ is also satisfied. 

Therefore, by corollary  \ref{cor3}, the  $a$-regular stratification $\Sigma$ is $C^1$-persistent.

In other words, for every endomorphism $V'$ $C^1$-close to $V$, there exists a homeomorphism $i'$ of $cl(\mathbb D\times I)$ onto its image in $\mathbb C\times \mathbb R$, $C^0$-close to the canonical inclusion, such that for each stratum $X_k\in \Sigma$:
\begin{itemize}
\item the restriction $i'_{|X_k}$ is an embedding of the lamination, close to the canonical inclusion of $X_k$ in $\mathbb C\times \mathbb R$.
\item the lamination $i'(X_k)$ is preserved by $V'$ and, for $x\in X_k$, the point $V'\circ i'(x)$ belongs to the image by $i'$ of a small plaque of $X_k$ containing $V(x)$.
\end{itemize}

An artistic view of such a perturbation of this stratification is represented  figure \ref{cylindreconv}.


\end{exem}

More sophisticated applications of this corollary will be given in section \ref{applications}. 

The above corollary is a consequence of the following corollary \ref{cor2} of theorem \ref{th2} (for $A$ compact and $A'=A$). Now $A$ is no more supposed to be compact. We now use the notations explain in \ref{eq:co:mo}.

\begin{coro}\label{cor2}
Let $i$ be a $\mathcal T$-controlled $C^r$-embedding of $(A,\Sigma)$ into $M$. Let $f$ be a $C^r$-endomorphism of $M$ preserving $(A,\Sigma)$. We suppose that:
\begin{enumerate}[(i)]
\item $f_{|A}$ is $\mathcal T$-controlled,
\item $f$ $r$-normally expands each stratum $X$,
\item for every stratum $X$, there exits a positive continuous function $\epsilon$ on a neighborhood $V_X$ of $X$ in $A$ such that every  $\eta$-pseudo-orbit of $V_X$, which respects the plaques of $\mathcal L_{X}$, is contained in $X$,
\item $f$ is plaque-expansive at each stratum $X$.
\end{enumerate}

 Let $A'$ be a precompact open subset of $A$ such that $f^*(cl(A'))$ is included in $A'$. Then there exist a neighborhood $V_f$ of $f$ in $End^r(M)$, a family of neighborhoods $\mathcal V'$ adapted to $f_{|A'}$ and a continuous map 
\[V_f\rightarrow  Em^r (\mathcal T_{|A'},M )\]
\[f'\mapsto i(f')\]
with $i(f)= i$ and such that $(f',i(f'))$ satisfies the above properties $(i)$, $(ii)$, $(iii)$ and $(iv)$ for the stratified space $(A',\Sigma_{|A'})$ endowed with the trellis structure $\mathcal T_{|A'}$.
Moreover, for every $x\in V_X'$, the endomorphism $f'$ sends $i'(x)$ into the image by $i'$ of a small $\mathcal L_X$-plaque containing $f(x)$. In particular $(V_X')_X$ is adapted to $f'_{|i'(A)}$.

  In particular, the stratification $(A',\Sigma_{|A'})$ is persistent.
\end{coro}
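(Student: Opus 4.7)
The plan is to reduce Corollary \ref{cor2} to Theorem \ref{th2} (the compact-$A$ case) by a cutoff-and-glue argument, exploiting the strict forward-invariance $f^*(cl(A')) \subset A'$ to localize everything near $cl(A')$. Because $A$ is locally compact (Property \ref{locpt}) and $f^*$ is continuous, I first pick a precompact open $A'' \subset A$ with $cl(A') \subset A''$ and $f^*(cl(A'')) \subset A''$ (such an $A''$ exists by choosing it inside a small neighborhood $U$ of $cl(A')$ with $f^*(U) \subset A'$, which exists by continuity since $f^*(cl(A'))$ is compact in the open set $A'$). Then every point of $cl(A'')$ has its forward $f^*$-orbit trapped in the compact set $cl(A'')$.

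Next I construct a compact laminar stratified space $(\tilde A, \tilde \Sigma)$ with $C^r$-trellis $\tilde{\mathcal T}$, a $\tilde{\mathcal T}$-controlled embedding $\tilde \imath$ into $M$, and a $C^r$-endomorphism $\tilde f$ of $M$, such that $(\tilde A, \tilde \Sigma, \tilde{\mathcal T}, \tilde \imath, \tilde f)$ satisfies the hypotheses of Theorem \ref{th2} and coincides with $(A, \Sigma, \mathcal T, i, f)$ on a neighborhood of $cl(A')$. The candidate is $\tilde A = cl(A'')$ with $A''$ refined so that its boundary intersects the strata cleanly into a compact trellis, and with $f$ modified outside a compact neighborhood of $i(cl(A'))$ so that $\tilde f$ preserves $\tilde \Sigma$ while still meeting (i)--(iv). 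Applying Theorem \ref{th2} then produces a continuous map $f' \mapsto \tilde \imath(f')$ on a neighborhood $V_f$ of $f$. Because $\tilde f = f$ near $cl(A')$ and because the graph-transform construction in Theorem \ref{th2} only sees $\tilde f$ along forward orbits, which stay in $A'$, the restriction $i(f') := \tilde \imath(f')|_{A'}$ depends only on $f'$ restricted to a small neighborhood of $i(cl(A'))$ and is continuous in $f'$. Properties (i)--(iv) for $(A', \Sigma|_{A'}, \mathcal T|_{A'})$ and the plaque-preservation statement for $f'$ then follow from the corresponding conclusions of Theorem \ref{th2} on $\tilde A$; the adapted family $\mathcal V'$ is the restriction to $A'$ of the one produced for $\tilde A$.

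The main obstacle is the construction of the compact extension $(\tilde A, \tilde \Sigma, \tilde{\mathcal T}, \tilde f)$: the naive restrictions $\Sigma_{|cl(A'')}$ and $\mathcal T_{|cl(A'')}$ may violate the frontier condition at $\partial A''$, and there is no canonical way to extend $f$ so that it preserves them. A cleaner alternative, which I would probably prefer to carry out in detail, is to re-run the proof of Theorem \ref{th2} directly in the precompact setting, inducting over the layers $(X_p)_p$ given by Property \ref{arbo}: at each step the graph transform defined on $X_p$ via the adapted metric of Proposition \ref{P1} is contracting on the compact $X_p \cap cl(A')$, the plaque-expansivity (iv) together with the pseudo-orbit confinement (iii) pin down the unique fixed point of the contraction as the correct perturbed embedding, and the trellis coherence (the $C^r$-foliation of $\mathcal L_{X_p}$ by $\mathcal L_{X_q}$ for $X_q \le X_p$) glues the stratum-wise perturbations into a genuinely $\mathcal T_{|A'}$-controlled embedding. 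Either way, it is the forward-invariance $f^*(cl(A')) \subset A'$ that funnels all relevant dynamics into a compact region on which the contractive machinery of Theorem \ref{th2} applies.
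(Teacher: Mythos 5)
Your proposal misses the actual content of Corollary \ref{cor2} and spends its effort on a reduction that is not needed. Theorem \ref{th2} is \emph{not} the compact-$A$ case: it is already stated for a general stratified space $A$, with $A'$ a precompact open subset satisfying $f^*(cl(A'))\subset A'$. So no compact extension $(\tilde A,\tilde\Sigma,\tilde{\mathcal T},\tilde f)$ is required; the paper simply applies Theorem \ref{th2} to a slightly larger neighborhood $V_{A'}$ of $cl(A')$ with $f^*(cl(V_{A'}))\subset A'$ (your first localization step, but for a different purpose). Your proposed compactification is, as you yourself note, obstructed at the boundary (frontier condition, extension of $f$ preserving the strata), and it is never carried out; the fallback ``re-run the proof of Theorem \ref{th2} in the precompact setting'' is not a proof of the corollary but a restatement of the theorem, whose proof the corollary is allowed to use as a black box.

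The genuine gap is the passage from immersion to embedding, which is the whole point of Corollary \ref{cor2} and the only place where hypothesis $(iv)$ is used. Theorem \ref{th2} only delivers $i(f')\in Im^r(\mathcal T_{|A'},M)$; the corollary asserts $i(f')\in Emb^r(\mathcal T_{|A'},M)$, i.e.\ injectivity of $i(f')$ on $cl(A')$ (a homeomorphism onto its image then follows by compactness). Your only invocation of plaque-expansiveness --- that it ``pins down the unique fixed point of the contraction'' --- is not its role: the graph transform has a unique fixed point regardless. In the paper, injectivity is proved by a decreasing induction on the filtration $(K_p)_p$ of Property \ref{Li'}: if $i(f')(x)=i(f')(y)$ with $x,y\in K_N$, commutativity of the diagram forces $i(f')(f'^{*n}(x))=i(f')(f'^{*n}(y))$ for all $n$, a compactness argument shows the two forward orbits stay $\epsilon$-close and are $\epsilon$-pseudo-orbits respecting the top stratum, and plaque-expansiveness together with injectivity of $i(f')$ on small plaques yields $x=y$; for lower $p$, hypothesis $(iii)$, Property \ref{Li'}.3 and the local inversion theorem are used to push a coincidence forward into $K_{p+1}$, apply the induction hypothesis there, and then recover $x=y$ by a backward induction along the orbit. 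Without an argument of this kind your construction cannot conclude that the perturbed stratification is embedded, so the persistence claim of the corollary is not established.
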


\begin{rema}
The continuity of the map $f'\mapsto i(f')$ means that for $f'$ close to $f''$ in $V_f$, for any stratum $X\in \Sigma_{|A'}$, any compact subset $K\subset A'\cap L_{X}$, the elements $i(f')(x)$ and $\partial^s_{T_x\mathcal L_X}i(f')$  are uniformly close, for $x\in K$ and $s\in \{1,\dots, r\}$, to respectively $i(f'')(x)$ and $\partial_{T_x\mathcal L_X}^s i(f'')$.\end{rema}

The following theorem is the main result of this memory.

\begin{theo}\label{th2}

 Let $f$ be a $C^r$-endomorphism of $M$, $i$ be a $\mathcal T$-controlled $C^r$-immersion of $(A,\Sigma)$ into $M$ and $f^*$ be a $\mathcal T$-controlled $C^r$-endomorphism such that:
\begin{enumerate}[(i)]
    \item the following diagram commutes
$\begin{array}{lll}
 & f& \\
  M& \rightarrow &  M \\
  \uparrow i& & \uparrow i\\
  A&\rightarrow  &  A \\
 & f^*& \\
\end{array}$,
  \item $f$ normally expands each stratum $X$ immersed by $i_{|X}$  and over $f^*_{|X}$,

 \item for every stratum $X\in \Sigma$, there exist an adapted neighborhood $V_X$ of $X$ and a continuous positive function $\eta$ on $V_X$,  such that every $\eta$-pseudo-orbit of $V_X$ which respects $\mathcal L_{X}$ is contained in $X$.
\end{enumerate}

 Let $A'$ be a precompact open subset of $A$ such that $f^*(cl(A'))\subset A'$. Then $f^*_{|A'}$ is a $\mathcal T_{|A'}$-controlled $C^r$-endomorphism of class $C^r$,  there exist a neighborhood $V_f$ of $f$ in $End^r(M)$, a family of neighborhoods $\mathcal V'$ adapted to $f_{|A'}^*$ and a continuous map 
\[V_f\rightarrow End_{f_{|A'}^*\mathcal V'}^r(\mathcal T_{|A'})\times Im^r (\mathcal T_{|A'},M )\]
\[f'\mapsto (f'^*,i(f'))\]
with $i(f)= i$ and such that $(f',i(f'),f'^*)$ satisfies the above properties $(i)$, $(ii)$ and $(iii)$ for the stratified space $(A',\Sigma_{|A'})$ endowed with the trellis structure $\mathcal T_{|A'}$.

 In particular, $f'$ preserves the stratification of laminations $\Sigma_{|A'}$ immersed by $i(f')$  and,
 for every $X\in \Sigma_{|A'}$, each point $x\in V_X'$ is sent by $f'^*$ into a small plaque of $\mathcal L_X$ containing $f^*(x)$.
 
 In other words, the immersed stratifications $\Sigma_{|A'}$ is persistent. 
\end{theo}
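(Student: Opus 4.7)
The plan is to reduce to the single-stratum Theorem \ref{th1} by induction on strata dimension, using the trellis foliation condition to glue successive perturbations. Following Property \ref{arbo}, order the distinct dimensions of strata as $d_0<d_1<\cdots<d_N$ and group the $d_p$-dimensional strata into a lamination $(L_p,\mathcal{L}_p)$ whose support is the union of their tubular neighborhoods, so that $X_p:=\bigcup_{\dim X=d_p}X$ is $\mathcal{L}_p$-admissible. The trellis axiom gives that $\mathcal{L}_q$ is a $C^r$-foliation of $\mathcal{L}_p$ on $L_q\cap L_p$ whenever $q<p$; hypothesis $(i)$ means that $f$ descends to a lamination endomorphism of each $\mathcal{L}_p$ on an adapted neighborhood of $X_p$; and combining $(ii)$ with the foliation condition, the tangent bundles $T\mathcal{L}_X\subset T\mathcal{L}_p$ for $\dim X=d_p$ give that $f$ $r$-normally expands $(L_p,\mathcal{L}_p)$ near $X_p$ in the sense of Theorem \ref{th1}.

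I would then prove the theorem by finite induction on $p$, producing $\mathcal{T}$-controlled perturbed immersions $i_p(f')$ and pullbacks $f_p'^*$ on $(L_0\cup\cdots\cup L_p)\cap A'$ whose restriction to $L_0\cup\cdots\cup L_{p-1}$ agrees with the previous step. The base case $p=0$ is immediate: strata of minimal dimension are closed by the frontier condition, $f^*$ traps $L_0\cap A'$ into a precompact sub-lamination, and Theorem \ref{th1} directly yields $i_0(f')$ and $f_0'^*$ depending continuously on $f'$. For the inductive step I would apply the graph transform underlying Theorem \ref{th1} to $(L_p,\mathcal{L}_p)$ on a precompact neighborhood of $X_p\cap A'$, but restricted to the closed affine subspace of $\mathcal{T}$-controlled sections of the normal bundle whose values on $L_p\cap(L_0\cup\cdots\cup L_{p-1})$ are prescribed by the perturbation built at earlier stages. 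Hypothesis $(iii)$ enters exactly as in the proof of Theorem \ref{th1}: the pseudo-orbit trapping allows one to identify the graph-transform fixed point with the perturbed stratum $X$ rather than with a nearby leaf of $\mathcal{L}_X$.

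The hard part is verifying that the graph transform preserves the compatibility constraint with the already-perturbed lower-dimensional laminations. To handle this I would build, via Proposition \ref{P1} applied inductively along the trellis filtration, a single Riemannian metric on a neighborhood of $A'$ in $M$ that is simultaneously adapted to the normal expansion of every $(L_p,\mathcal{L}_p)$ and for which the chain $T\mathcal{L}_{q_0}\subset T\mathcal{L}_{q_1}\subset\cdots\subset T\mathcal{L}_p$ along any incidence chain of strata splits $TM$ into mutually orthogonal complements. With this metric the normal bundle of $\mathcal{L}_p$ restricted to $L_p\cap L_q$ decomposes as the normal bundle of $\mathcal{L}_q$ plus an orthogonal complement that realizes $T\mathcal{L}_p/T\mathcal{L}_q$; the graph transform for $\mathcal{L}_p$ then decouples over the overlap into the graph transform for $\mathcal{L}_q$, handled by the previous induction step, plus an action in the complementary summand which is a strict contraction by the cone-field Property \ref{property B} and which preserves any prescribed boundary values. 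Gluing the fixed points yields $(i(f'),f'^*)$; the openness of conditions $(ii)$ and $(iii)$ under $C^r$-perturbation, and the contraction-principle dependence of the fixed point on the parameter $f'$, give the announced continuous dependence and verify that the conclusion of the theorem holds.
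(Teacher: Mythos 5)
Your induction runs in the wrong direction relative to the dynamics, and this breaks the argument at the base case already. You claim that ``$f^*$ traps $L_0\cap A'$ into a precompact sub-lamination'' so that Theorem \ref{th1} produces the perturbed object on the tubular neighborhood $L_0$ of the minimal strata. It does not: by normal expansion (indeed by hypothesis $(iii)$ itself, which says any pseudo-orbit staying in $V_X$ lies in $X$), every point of $L_0\setminus X_0$ escapes $L_0$ under forward iteration, so the only $f^*$-trapped precompact pieces of $L_0$ are contained in $X_0$. Theorem \ref{th1} therefore only gives you the perturbation of the minimal strata themselves, not of their tubular neighborhoods; consequently the ``values prescribed by the perturbation built at earlier stages'' on the overlaps $L_p\cap(L_0\cup\cdots\cup L_{p-1})$, which your inductive step needs as boundary data, simply do not exist at that stage. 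The deeper point is that the graph transform is a pullback: the new immersion at $x$ is determined by its value near $f^*(x)$, and forward orbits of points near a low-dimensional stratum (but off it) leave its tubular neighborhood and accumulate in the cores of the incident higher strata. So the perturbed object near a lower stratum is determined by the perturbed object near the higher strata --- exactly the part your increasing induction has not yet built. This is why the paper's proof inducts the other way: it builds a dynamical filtration $K_{N+1}=\emptyset\subset\cdots\subset K_1=cl(A')$ with $f^*(K_p)\subset int(K_p)$ (Property \ref{Li'}), starts from the maximal strata, defines the new immersion near $C_p$ by pulling back the already-constructed one (Lemma \ref{prelem6}), glues only on a transition zone $V_\Delta$ whose $f^*$-image avoids both itself and the new region (Lemma \ref{lem9}), and gets convergence from a $C^0$ cone-field contraction plus a contraction on $r$-jets (Lemma \ref{lem6}) together with the eventual constancy of the iterates off $X_p$ furnished by hypothesis $(iii)$ and the exiting-chain Property \ref{chaine}. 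Note also that $(iii)$ does not ``enter exactly as in the proof of Theorem \ref{th1}'': Theorem \ref{th1} has no such hypothesis; $(iii)$ is specifically what makes the stratified iteration terminate off the stratum and makes the adapted neighborhoods uniform in $f'$.

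Your proposed fix --- an adapted metric making the chain $T\mathcal L_{q}\subset T\mathcal L_{p}$ orthogonal so that the $\mathcal L_p$-graph transform ``decouples'' over the overlap into the $\mathcal L_q$-transform plus a contraction preserving prescribed boundary values --- does not repair this. The directions of $T\mathcal L_p/T\mathcal L_q$ are tangent to $\mathcal L_p$, hence are not part of the normal data on which the $\mathcal L_p$-graph transform acts, and no choice of metric makes the nonlinear transform respect a constraint imposed on an open overlap region: its output there depends on values at image points $f^*(x)$ lying outside that region, so prescribing the section on the overlap in advance over-determines the fixed point unless the prescription already coincides with it. The compatibility between incident levels is achieved in the paper not by decoupling but by constructing nested $\mathcal T$-controlled normal bundles $N_j(x)\subset N_k(x)$ for $X_k\le X_j$ (Lemma \ref{Ni}) and by forcing all corrections to move along the fibers $\mathcal F_{jx}^{f'}=\mathcal F_{px}\pitchfork i_{p+1}(f')(\mathcal L_{jx}^{\eta'})$ in the gluing lemma, which is the part your sketch leaves unproved.
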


\begin{rema}
 The continuity of the map $f'\mapsto f'^*$ means that for $f'$ $C^r$-close to $f''$ in $V_f$, for any stratum $X\in \Sigma_{|A'}$, any compact subset $K\subset V'_{X}$, the elements $f'^*(x)$ and $\partial_{T_x\mathcal L_X}^s f'^*$  are uniformly close, for $x\in K$ and $s\in \{1,\dots ,r\}$, to respectively $f''^*(x)$ and $\partial_{T_x\mathcal L_X}^sf''^*$.\end{rema}

\begin{ques}
\begin{itemize}
\item The counterexample in \ref{cexp} shows that the existence of a trellis structure, at least locally, seems to be important.
However, is it necessary that such a structure controls $f^*$ (or $i$), to imply the persistence of an embedded stratification? In proof of theorem \ref{cor2}, this hypothesis is used only in the lemma of \ref{lem6}. Without this hypothesis this theorem would be much more easier to apply to several cases.
\item When $i$ is an embedding, is hypothesis $(iii)$ always satisfied?  Under the hypotheses of theorem \ref{th2}, is this hypothesis necessary?

The first question could be a first step to the construction of a counterexample of a normally expanded embedded lamination, but not plaque-expansive.
\item Given an $a$-regular stratification of normally expanded laminations, does the existence of a (local) trellis structure is linked to extra dynamic conditions?

For example, given a diffeomorphism that satisfies axiom $A$ and the strong transversality condition, the stratification 
$(W^s(\Lambda_i))_i$ (see \ref{desstruc}) locally admits a trellis structure.

 \end{itemize}
\end{ques}
\subsection{A normally expanded but not persistent stratification}
\label{cexp}

Let us present, for all $r\ge 1$,  an example of an $a$-regular compact differentiable stratification with $r$-normally expanded strata, but not topologically persistent. This means that there exists $f'$ $C^\infty$-close to $f$, which does not preserve the image of each stratum by any homeomorphism $C^0$-close to the canonical inclusion of the support. We will notice that this stratified space cannot support a trellis structure (even locally).

Let $\mathbb S^1$ be a circle embedded into $\mathbb R^3$ and $r$-normally hyperbolic for a diffeomorphism $f$ of $\mathbb R^3$. We suppose that the strong stable dimension is $1$. According to \cite{HPS}, the union of the strong stable manifolds of the circle is an immersed manifold, that we denote by $W^s$. We suppose that the point $0\in \mathbb R^3$ is fixed by $f$ and that the restriction of $f$ to $]-1,1[^3$ is equal to 
\[f_{|]-1,1[^3} \;:\; ]-1,1[^3\rightarrow \mathbb R^3\]
\[(x,y,z)\mapsto(x+x^3,2y,2z)\]
 
Thus, the point $0$ is topologically repulsive for $f$. We suppose that $W^s$ without the circle $\mathbb S^1$ is contained in the repulsive basin of $0$. Therefore, $W^s$ is a manifold embedded into $\mathbb R^3$.

Let us suppose that the restriction of $f$ to the circle $\mathbb S^1$ has a repulsive fixed point and that the stable manifold of this point (in $\mathbb R^3$) intersects $]-1,1[^3$ at  $]-1,1[\times \{0\}^2\setminus \{0\}$.

  Then the union of $0$ with this stable manifold is a circle $X$ differentially $C^r$-embedded into $\mathbb R^3$. Let $Y$ be the submanifold $W^s\setminus X$.

  We may suppose that the intersection of $W^s$ with \[\Big[-\frac{1}{2}-\frac{1}{8},-\frac{1}{2}\Big]\cup\Big[\frac{1}{2},\frac{1}{8}+\frac{1}{2}\Big]\times \Big]-1,1\Big[^2\] is an union of graph of maps from $[-\frac{1}{2}-\frac{1}{8},-\frac{1}{2}]\cup[\frac{1}{2},\frac{1}{8}+\frac{1}{2}]$ into $]-1,1[^2$.

   Thus, the partition $\Sigma:=(X,Y)$ on $A:=X\cup Y$ is an $a$-regular stratification of $\mathbb R^3$, $r$-normally expanded by $f$. We draw in figure \ref{le cexple} how this stratification looks-like.

\begin{figure}
    \centering
        \includegraphics[width=7cm]{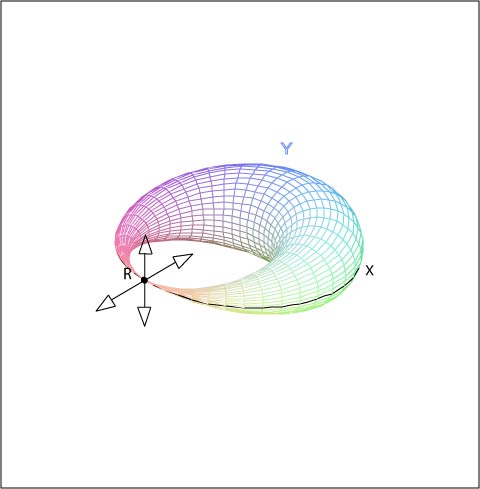}
    \caption{Stratification of normally expanded laminations, which is not persistent}
    \label{le cexple}
\end{figure}

If this stratified space (restricted to a neighborhood of $0$) could admit a trellis structure, then a small neighborhood of $0$ in $A$ would be homeomorphic to the product of a neighborhood of $0$ in $X$ with the intersection of $A$ with a  plan transverse to $X$. This last product can be a segment, which is not homeomorphic to any neighborhood of $0$ in $A$, because a segment does not contain any surface.

  We suppose, for the sake of contradiction, that the stratification  $(X,Y)$ is topologically $C^r$-persistent. This means that for every diffeomorphism $f'$ $C^r$-close to $f$ there exists a homeomorphism $p$, $C^0$-close to the canonical inclusion, such that $h(X)$ and $h(Y)$ are $f'$-stable.

  We build now a family of $C^\infty$-perturbations  of $f$ which contradicts this persistence hypothesis. Let $\rho$ a $C^\infty$-function with support in $]-1,1[$ and such that its restriction to $]-\frac{1}{2},\frac{1}{2}[$ is equal to $1$. For any small $t\ge 0$, let $f_t$ be the diffeomorphism of $\mathbb R^3$ equal to $f$ on the complement of $]-1,1[^3$ and such that its restriction  to $]-1,1[^3$ is equal to 
\[f_{t|]-1,1[^3}\;:\; ]-1,1[^3\rightarrow \mathbb R^3\]
\[(x,y,z)\mapsto f(x,y,z)+(-t\cdot \rho(x)\cdot x,0,0)\]
We notice that $f_0$ is equal to $f$.

 For $t$ small enough, let $(X(t),Y(t))$ be the $f_t$-stable stratification given by the persistence hypothesis.

Let us prove that the stratum $X(t)$ is equal to $X$ for $t$ small enough. We note that each diffeomorphism $(f_t)_t$ preserves and $0$-normally expands $X$. Thus, there is a neighborhood $V$ of $X$ such that, for $t$ small enough, the intersection $\cap_{n\ge 0} f_t^{-n}(V)$ is equal to $X$. But, for $t$ small enough, the stratum $X(t)$ is included in $V$. By $f_t$-stability of the stratum $X(t)$, we have
\[X(t)\subset \cap_{n\ge 0} f_t^{-n}(U)=X\]
since the stratum $X(t)$ is compact, this stratum is a close subset of $X$. Since this stratum has the same dimension as $X$, this stratum is an open subset of $X(t)$. Therefore, by connectivity, $X(t)$ is equal to $X$.

For every $r\in]0,\frac{1}{2}[$ small enough, the set $Y$ only intersects the faces $\{-r\}\times ]-r,r[^2$ and $\{r\}\times ]-r,r[^2$ of the cube $[-r,r]^3$. The same is satisfied by $Y(t)$, for $t$ small enough.

We can also suppose that $t$ is less than $r^2<\frac{1}{4}$. This implies that the interval $]-\sqrt{t},\sqrt{t}[$ is sent into itself by the map 
\[\phi_t\;:\;x\mapsto x+x^3-t\cdot \rho(x)\cdot x.\]
We notice that the restriction $f_{t|]-1,1[^3}$ is equal to $(x,y,z)\mapsto(\phi_t(x),2y,2z)$.

The compact set $X\cup Y$ is locally connected and the closure of $Y$ contains $X$. Since these properties are invariant by homeomorphism, they are also satisfied by  $X(t)$ and $Y(t)$.

 Thus, there exists a path $\gamma$ included in $]-\sqrt{t},\sqrt{t}[\times [-r,r]^2\cap Y(t)$ and containing $0\in X=X(t)$ in its closure.
We are going to show that the $f_t$-orbit of $\gamma$ intersects another face of the cube  $[-r,r]^3$ than $\{-r\}\times ]-r,r[^2$ and $\{r\}\times ]-r,r[^2$. As $Y(t)$ is $f_t$-stable, this would imply that $Y(t)$ intersects another face of  $[-r,r]^3$ than $\{-r\}\times ]-r,r[^2$ or $\{r\}\times ]-r,r[^2$. This is a contradiction.

As $\gamma$ is included in the repulsive basin of $X$, there exists a first integer $n$ such that $f_t^n(\gamma)$ intersects the complement of $]-r,r[^3$. Since the set $]-\sqrt{t},\sqrt{t}[$ is $\phi_t$-stable and  $r$ is less than $\frac{1}{2}$, it follows that $f^{n}_t(\gamma)$ is included in $]-\sqrt{t},\sqrt{t}[\times ]-1,1[^2$ and intersects $]-\sqrt{t},\sqrt{t}[\times (]-1,1[^2\setminus ]-r,r[^2)$.
As $0$ is a fixed point of $f_t$, it belongs to the closure of $f_t^n(\gamma)$. By connectivity, there exists a point of $f_t^n(\gamma)$ whose second or third coordinate are equal to $-r$ or $r$.  But $\sqrt{t}$ is less than $r$, so the path $f_t^n(\gamma)$ intersects the boundary of $[-r,r]^3$ in other faces than $\{-r\}\times ]-r,r[^2$ and $\{r\}\times ]-r,r[^2$.

\subsection{Consequences of the main result (theorem $\ref{th2}$)}\label{applications}
\subsubsection{Submanifolds with boundary}
\label{part varbor}
\begin{theo}
Let $(M,g)$ be a Riemannian manifold and let $N$ be a compact submanifold with boundary of $M$. Let $f$ be a  $C^1$-endomorphism of $M$ which preserves and $1$-normally expands the boundary $\partial N$ and the interior $\mathring{N}$ of $N$.
Then the stratification $(\mathring N,\partial N)$ on $N$ is persistent.
\end{theo}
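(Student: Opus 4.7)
The plan is to reduce the statement to Corollary \ref{cor3} applied to the compact stratified space $(N,\Sigma)$ with $\Sigma = \{\mathring N, \partial N\}$. I need a $C^1$-trellis structure $\mathcal{T}$ on $(N,\Sigma)$, the observation that the canonical inclusion $i : N \hookrightarrow M$ is $\mathcal{T}$-controlled, and the verification of the four hypotheses of that corollary.

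For the trellis I set $(L_{\mathring N}, \mathcal{L}_{\mathring N}) := (\mathring N, \mathring N)$ with its manifold structure. The delicate piece is the tubular neighborhood of $\partial N$: the naive choice of leaves equidistant to $\partial N$ (sketched right after the definition of trellis) forms a trellis but will not satisfy hypothesis (i) of Corollary \ref{cor3}. Instead I would apply the Hirsch--Pugh--Shub plaque family theorem to the compact $1$-normally expanded submanifold $\partial N \subset M$: this yields an $f$-invariant $C^1$-plaque family tangent to $T\partial N$ which assembles, on a small neighborhood $L_{\partial N}$ of $\partial N$ in $N$, into a $C^1$-foliation $\mathcal{L}_{\partial N}$ of dimension $\dim \partial N$, containing $\partial N$ as a leaf, and such that every sufficiently small plaque is sent by $f$ into a leaf of $\mathcal{L}_{\partial N}$. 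The pair $\mathcal{T} := ((L_{\partial N}, \mathcal{L}_{\partial N}), \mathring N)$ is then a trellis, because $\mathcal{L}_{\partial N}$ is a smooth codimension-$1$ foliation of the open submanifold $L_{\partial N}\cap \mathring N$ and therefore foliates its manifold structure; the inclusion $i$ is clearly $\mathcal{T}$-controlled.

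It remains to verify the hypotheses of Corollary \ref{cor3}, with $V_{\partial N}\subset L_{\partial N}$ chosen small enough that $f(V_{\partial N})\subset L_{\partial N}$, and $V_{\mathring N} = \mathring N$. Hypothesis (i), that plaques of $\mathcal{L}_X$ in $V_X$ go into leaves of $\mathcal{L}_X$, is automatic for $X = \mathring N$ (single-leaf lamination) and holds for $X = \partial N$ by construction of the invariant foliation. Hypothesis (ii) is the normal expansion assumption of the theorem, vacuous on $\mathring N$ whose normal bundle is zero. For hypothesis (iii), an $\eta$-pseudo-orbit $(x_n)_n$ of $V_{\partial N}$ respecting $\mathcal{L}_{\partial N}$ satisfies that $x_{n+1}$ and $f(x_n)$ lie in the same leaf, so by $f$-invariance of the foliation an induction shows that $x_n$ lies on the leaf of $\mathcal{L}_{\partial N}$ through $f^n(x_0)$; the normal expansion then forces $d(x_n,\partial N) \geq C\lambda^n\, d(x_0,\partial N)$, and confinement to $V_{\partial N}$ gives $x_0 \in \partial N$. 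For $\mathring N$ hypothesis (iii) is trivial. Hypothesis (iv), plaque-expansiveness, is immediate because each stratum is a single-leaf lamination, so plaques are ordinary open neighborhoods.

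Applying Corollary \ref{cor3} then yields, for every $f'$ that is $C^1$-close to $f$, a $\mathcal{T}$-controlled embedding $i'$ preserved by $f'$, and the submanifolds $\mathring N' := i'(\mathring N)$ and $\partial N' := i'(\partial N)$ satisfy all the conclusions of the theorem. The main obstacle in this plan is the construction of the $f$-invariant codimension-$1$ foliation $\mathcal{L}_{\partial N}$: this is the genuine geometric input and relies on the Hirsch--Pugh--Shub plaque family construction applied to the normally expanded submanifold $\partial N$; the remaining verifications are either formal or follow by iterating the normal expansion inequality.
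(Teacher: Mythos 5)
Your reduction is exactly the one the paper itself indicates (the paper only gives an ``idea of proof'': build a trellis structure on $(N,(\partial N,\mathring N))$ satisfying hypotheses $(i)$ and $(iii)$ of corollary \ref{cor3}, the other two being automatic, and defer the construction to the author's thesis). The verifications you give of $(iii)$ and $(iv)$, and the observation that the equidistant tubular neighborhood is a trellis but is not $f$-invariant, are all in line with that plan. (Two small slips: hypothesis $(ii)$ is not ``vacuous'' on $\mathring N$ unless $\dim N=\dim M$ -- in general $\mathring N$ has positive codimension and its $1$-normal expansion is an assumption of the theorem, so there is nothing to prove but your justification is wrong; and in $(iii)$ one must also use that leaves of $\mathcal L_{\partial N}$ are uniformly transverse to the normal direction so that points of a common leaf have comparable distance to $\partial N$.)

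The genuine gap is at the step you yourself identify as ``the genuine geometric input'': the existence of a locally $f$-invariant $C^1$-lamination $\mathcal L_{\partial N}$ on a neighborhood of $\partial N$ in $N$, of dimension $\dim\partial N$, having $\partial N$ among its leaves and sending small plaques into leaves. Invoking ``the Hirsch--Pugh--Shub plaque family theorem'' does not deliver this, for two reasons. First, HPS theory is a theory of diffeomorphisms, while here $f$ is an endomorphism which may be non-invertible, and may even have critical points, on and near $\partial N$ (the paper's own model example $(x,y)\mapsto(x^2,y^2)$ on the square has both features on the boundary); so the cited theorem simply does not apply and an endomorphism version must be proved. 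Second, and more seriously, a plaque family is a family of locally invariant discs attached to the points of a compact invariant set carrying a dominated splitting; it says nothing about points of $\mathring N$ near $\partial N$ (which are not in such an invariant set), and, even where it applies, plaque families are not foliations: the passage from a locally invariant plaque family to an actual invariant foliation is precisely the coherence/integrability problem, which fails in general and certainly does not ``assemble'' formally. What HPS does give for a normally expanded invariant manifold is the foliation of a neighborhood by strong unstable leaves \emph{transverse} to $\partial N$, not a foliation by leaves parallel to $\partial N$. A correct argument has to construct the invariant tubular lamination by hand -- e.g.\ by choosing a foliation on a fundamental domain of a collar of $\partial N$ in $N$, compatible on its two boundary hypersurfaces, pulling it back under the transversally expanding dynamics, and using the $1$-normal expansion (a cone/graph-transform estimate as in the paper's section on the $1$-jet space) to show that the resulting leaves are uniformly $C^1$, are graphs over $\partial N$, and accumulate $C^1$ on $\partial N$ so that adding $\partial N$ as a leaf yields a lamination satisfying $(i)$. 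None of this is supplied by the citation, so as written the proof of the theorem is incomplete at its central point.
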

\begin{rema}
 In other words, the above theorem concludes that, for any map $f'$ $C^1$-close to $f$, there exist two submanifolds $\partial N'$ and $\mathring N'$ such that:\begin{itemize}
\item  $\mathring N'$ (resp. $\partial N'$) is preserved by $f'$, diffeomorphic and close to $\mathring N$ (resp. $\partial N$) for the compact-open $C^1$-topology,
\item the pair $(\mathring N',\partial N')$ is a stratification (of laminations) on $N':= \mathring{N'} \cup \partial N'$,
\item the set $N'$ is the image of $N$ by an embedding $C^0$-close to the canonical inclusion of $N$ into $M$.
\end{itemize}
\end{rema}
\begin{proof}[Idea of proof] We build a trellis structure on $(N,(\partial N,\mathring N))$ which satisfies properties $(i)$ and $(iii)$ of corollary \ref{cor3}. As other properties $(ii)$ and $(iv)$ are obviously checked, the corollary implies the $C^1$-persistence of the stratification. Details of this proof are in \cite{PB}; we hope to publish them later.\end{proof}

\begin{rema}
Usually, $N'$ is not a submanifold with boundary.
\end{rema}

\subsubsection{Submanifolds with corners}
The above theorem can be generalized to submanifold with corners.

 We recall that a compact manifold with corners $N$ is a differentiable manifold modeled on  $\mathbb R_+^d$. A subset $N$ of manifold $M$ (without corner) is a submanifold with corners if there exists charts $(\phi_\alpha)_\alpha$ of $M$ whose restrictions to respectively $(\phi_\alpha^{-1}(\mathbb R^d\times \{0\}))_\alpha$ form an atlas of manifold with corners (for a fixed integer $d$). For example,  a cube, a product of manifolds
with boundary or a generic intersection of submanifolds are endowed with a canonical structure of manifold with corners.

 We denote by $\partial^{0_k}N$ the set of points in $N$ which, seen in a chart, have exactly $k$ coordinates equal to zero. The pair $(N,\Sigma:=\{\partial^{0_k} N\})$ is a stratified space.
\begin{theo}
 Let $N$ be a compact submanifold with corners of a manifold $M$. Let $f$ be a $C^1$-endomorphism of $M$, which preserves and $1$-normally expands each stratum  $\partial^{0_k} N$.
Then the stratification $\Sigma$ on $N$ is $C^1$-persistent.
\end{theo}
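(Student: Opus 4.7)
My plan is to apply corollary \ref{cor3} to the compact stratified space $(N,\Sigma)$ with the canonical inclusion $N \hookrightarrow M$ playing the role of the controlled embedding $i$. For this I must first equip $(N,\Sigma)$ with a $C^1$-trellis structure $\mathcal T$ under which the inclusion is $\mathcal T$-controlled, and then verify the four hypotheses $(i)$--$(iv)$ of the corollary.

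For the trellis, I would work inductively on the codimension $k$ of the strata $\partial^{0_k}N$. Fix, once and for all, an atlas of corner charts $\phi_\alpha : U_\alpha \to \mathbb R_{\ge 0}^d$ of $N$. In each chart, the intersection of $\partial^{0_k}N$ with $U_\alpha$ is a union of coordinate faces of the form $\{x_{i_1}=\cdots=x_{i_k}=0\}$, and the natural local tubular neighborhood of such a face is the slab where these $k$ coordinates are small, foliated by the $(d-k)$-dimensional slices on which $(x_{i_1},\dots,x_{i_k})$ is fixed. I would then glue these local slabs into global tubular neighborhoods $(L_{\partial^{0_k}N},\mathcal L_{\partial^{0_k}N})$ using a partition of unity subordinate to the $U_\alpha$, performing the gluing from the highest codimension (corner points) outward, so that the foliation of $L_{\partial^{0_k}N}\cap L_{\partial^{0_j}N}$ by plaques of $\mathcal L_{\partial^{0_k}N}$ sits inside the previously constructed foliation by plaques of $\mathcal L_{\partial^{0_j}N}$ for $j<k$. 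By construction the canonical inclusion is $\mathcal T$-controlled.

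Hypotheses $(ii)$ and $(iv)$ of corollary \ref{cor3} are essentially free: $(ii)$ is exactly the hypothesis on $f$, and $(iv)$ holds because every stratum $\partial^{0_k}N$ is a manifold, hence a lamination with one leaf, on which any pair of nearby pseudo-orbits trivially lies in a common plaque. Hypothesis $(iii)$ follows from the normal expansion: applying property \ref{property B}, there is an unstable cone field $C$ along $\partial^{0_k}N$ in which $Tf$ is expanding by some factor $\lambda>1$; any plaque-respecting $\eta$-pseudo-orbit in $V_{\partial^{0_k}N}$ that is not contained in $\partial^{0_k}N$ has transverse displacement in $C$ which grows like $\lambda^n$ (modulo controlled jumps of size $\eta$), so for small enough $\eta$ and $V_{\partial^{0_k}N}$ it must escape $V_{\partial^{0_k}N}$, contradiction.

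The hard part will be hypothesis $(i)$: for each stratum $X=\partial^{0_k}N$, plaques of $\mathcal L_X$ inside a neighborhood $V_X$ of $X$ must be sent by $f$ into leaves of $\mathcal L_X$. A naively chosen foliation (say orthogonal to the stratum in some fixed metric) will not satisfy this. My plan is to replace the initial foliation by an $f$-invariant one obtained through a graph transform along the expanding normal direction: starting from the naive local plaques and using the normal expansion together with the fact that $f$ already preserves $X$, I would push candidate plaques forward and select preimages so that the resulting family is, up to coherent reparametrization, sent plaque-into-leaf by $f$. Carrying out this graph transform inductively from corners outward, while preserving the coherence with the already-fixed tubular neighborhoods of higher-codimension strata, is the technical core of the argument and is exactly where the boundary case (treated separately above) is generalized; once it is done, corollary \ref{cor3} delivers the $C^1$-persistence of $\Sigma$.
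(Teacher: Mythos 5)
Your proposal follows essentially the same route as the paper: reduce the statement to corollary \ref{cor3} by equipping $(N,\Sigma)$ with a $C^1$-trellis structure controlling the inclusion, note that hypotheses $(ii)$ and $(iv)$ are immediate, and concentrate all the work in arranging $(i)$ and $(iii)$. Be aware, however, that the paper itself only gives an idea of proof here --- it calls the construction of a trellis satisfying $(i)$ and $(iii)$ ``far from obvious'' and defers the complete argument to \cite{PB} --- and your decisive step (the graph transform producing an $f$-compatible foliation coherent with the higher-codimension tubular neighborhoods, as well as the multiplicative, leaf-level control of the transverse displacement that your cone-field argument for $(iii)$ actually needs in place of the additive ``jumps of size $\eta$'' estimate) is likewise announced rather than carried out, so your write-up matches the paper's sketch in approach and in level of completeness rather than constituting a full proof.
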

\begin{rema}
 In other words, the above theorem concludes that, for every endomorphism $f'$ $C^1$-close to $f$, there exists submanifolds $(\partial^{0_k} N')_k$ such that:\begin{itemize}
\item for each $k$, $\partial^{0_k} N'$ is preserved by $f'$, is diffeomorphic, and is $C^1$-close to $\partial^{0_k} N$ for the compact-open topology,
\item the family  $(\partial^{0_k} N')_k$ is a stratification (of laminations) on $N':=\cup_k \partial^{0_k} N'$,
\item the set  $N'$ is the image of $N$ by an embedding $C^0$-close to the canonical inclusion of $N$ into $M$.
\end{itemize}
\end{rema}
\begin{proof}[Idea of proof] We build a trellis structure on $(N,\Sigma)$ which satisfies  properties $(i)$ and $(iii)$ of corollary \ref{cor3} (this is far to be obvious). As the other properties $(ii)$ and $(iv)$ are obviously satisfied, the corollary implies the $C^1$-persistence of the stratification. The complete proof is in \cite{PB} ; we hope to publish it later.\end{proof}

\begin{rema}
Usually, $N'$ is not an embedded submanifold with corner.
\end{rema}

\subsubsection{Extension of the Shub's theorem on conjugacy of repulsive compact set}
The same corollary \ref{cor3} implies a complement of M. Shub's celebrated result \cite{Shubthese} on the structural stability of expanded compact sets for endomorphisms:
the conjugacy between the compact set and its continuation can be extended to a homeomorphism of the ambient space, which is still a conjugacy in the neighborhood of the compact set. C. Robinson already proved a similar result for locally maximal hyperbolic sets of diffeomorphisms.
\begin{coro}\label{K}
Let $r\ge 1$, let $M$ be a compact Riemannian manifold, let $f$ be a $C^r$-endomorphism of $M$, and let $K$ be a compact subset of $M$ that satisfies 
\[f^{-1}(K)= K.\]

Moreover, we suppose that $f$ expands $K$, that is, for every $x\in K$, the differential $T_xf$ is invertible and with inverse contracting.

Then there exists a neighborhood $V_K$ of $K$  such that, for every $f'$ $C^r$-close to $f$, there exists a homeomorphism $i'$ of $M$ close to the identity such that
\[\forall x\in V_K, \quad f'\circ i'(x)=i'\circ f(x).\]
Moreover the restriction $i(f')_{|K^c}$ belongs to $Diff^r(M\setminus K, M\setminus i(f')(K))$ and is $C^r$-close to the identity (for the compact-open topology). \end{coro}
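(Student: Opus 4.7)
The plan is to recognize the statement as an instance of Example \ref{intro:K} and apply Corollary \ref{cor3} directly. First I would observe that since $M$ is compact and $f$ expands $K$, either $K=M$ (in which case the conclusion is essentially Shub's theorem with $V_K=M$) or $K$ is nowhere dense, and I focus on the latter case. I set $A=M$ and $\Sigma=(K,M\setminus K)$ (the frontier condition is immediate since $\mathrm{cl}(M\setminus K)=M\supset K$), and equip $\Sigma$ with the trellis structure $\mathcal{T}=((L_K,\mathcal{L}_K),M\setminus K)$ of Example \ref{treillis sur compact repu}, where $L_K$ is a small open neighborhood of $K$ endowed with the $0$-dimensional lamination. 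The canonical inclusion $i:M\hookrightarrow M$ is $\mathcal{T}$-controlled, and the goal is to read off the conclusions from those of Corollary \ref{cor3}.

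Next I would verify the four hypotheses of Corollary \ref{cor3}. Hypothesis $(i)$ holds because the plaques of $\mathcal{L}_K$ are points and $f(K)\subset K$, while $M\setminus K$ is a single leaf preserved by $f$ (since $f^{-1}(K)=K$). Hypothesis $(ii)$ for $K$ follows from the assumption that each $T_xf$ is invertible with contracting inverse, extended to an expanding cone-field on a neighborhood via Property \ref{property B}; for $M\setminus K$ the normal bundle is zero-dimensional and the condition is vacuous. Hypothesis $(iv)$ is also direct: for $K$, plaques being points, plaque-expansiveness reduces to ordinary expansiveness of $f$ on $K$, which follows from expansion; for $M\setminus K$, the stratum is a single leaf, so any two nearby pseudo-orbits have initial points lying in a common small chart-plaque.

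The main obstacle is Hypothesis $(iii)$, which is essentially local maximality of $K$. For $X=K$, since plaques are singletons, any $\eta$-pseudo-orbit respecting $\mathcal{L}_K$ is a true forward orbit $(f(x_n)=x_{n+1})$. Using an adapted metric provided by Property \ref{property B}, I would choose $V_K\subset L_K$ small enough that the expanding cone-field survives on $V_K$ and $d(f(x),K)\ge\lambda\, d(x,K)$ for some $\lambda>1$ and all $x\in V_K\setminus K$. Then any point in $V_K\setminus K$ must eventually leave $V_K$ under iteration, so a forward orbit contained in $V_K$ lies entirely in $K$; combined with $f^{-1}(K)=K$ (which prevents an orbit from re-entering $K$ after leaving it) this gives $(iii)$. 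For $X=M\setminus K$ I take $V_{M\setminus K}=M\setminus K$ itself, which is open and a neighborhood of $X$ in $M$, so the condition is trivial.

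Applying Corollary \ref{cor3} produces, for every $f'$ $C^r$-close to $f$, a $\mathcal{T}$-controlled embedding $i':M\to M$ close to the identity. Since $M$ is compact and $i'$ is $C^0$-close to the identity on $M$, $i'$ is a homeomorphism of $M$ onto itself. The conclusion $(i)$ of the corollary specialised to the stratum $K$ gives exactly $f'\circ i'(x)=i'\circ f(x)$ for every $x$ in an adapted neighborhood $V_K'$ of $K$, which is the required conjugation equation. Finally, the restriction $i'|_{M\setminus K}$ is, by $\mathcal{T}$-controlledness, a $C^r$-embedding of the single-leaf lamination $M\setminus K$, hence a $C^r$-diffeomorphism from $M\setminus K$ onto $i'(M\setminus K)=M\setminus i'(K)$, $C^r$-close to the identity for the compact-open topology, as claimed.
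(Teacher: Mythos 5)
Your route is the paper's own: stratify $M$ by $K$ and $M\setminus K$, put on it the zero-dimensional trellis of example \ref{treillis sur compact repu}, verify hypotheses $(i)$--$(iv)$ of corollary \ref{cor3} (this is exactly the content of lemma \ref{cpctrep}: the choice $V_K=L_K\cap f^{-1}(L_K)$, the fact that pseudo-orbits respecting a $0$-dimensional lamination are true orbits, $\bigcap_n f^{-n}(V_K)=K$ from expansion plus $f^{-1}(K)=K$, and expansiveness giving plaque-expansiveness), then read the conjugacy on an adapted neighborhood of $K$ and the $C^r$-control on $K^c$ off the conclusions, with the same "embedding close to the identity of a compact manifold is onto" step. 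On all of this you are at the same level of detail as the paper.

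The gap is your opening "observation" that either $K=M$ or $K$ is nowhere dense. As stated it is false when $M$ is disconnected (take $K$ equal to one whole connected component union a totally invariant expanded Cantor set in another component), and the corollary does not assume $M$ connected; the paper instead proves the splitting $K=K_0\sqcup K_1$ with $K_0$ a union of connected components of $M$ and $K_1$ nowhere dense, and runs corollary \ref{cor3} on the three strata $\{K_0,K_1,M\setminus K\}$. More importantly, even for connected $M$ this dichotomy is not an observation: it is the only non-formal step of the paper's proof, obtained by a contradiction argument using the $f$-invariance of $\partial K$, an adapted metric, and the expansion (the decreasing iterates $U_n=f^n\big(K\setminus f^{-1}(B(\partial K,\epsilon))\big)$ and the radii $\epsilon_n$ of balls centered on $\partial U_\infty$). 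You cannot bypass it, because if $\mathrm{int}(K)$ were nonempty without consisting of whole components, then $K\not\subset cl(M\setminus K)$ and the two-element partition $(K,M\setminus K)$ would violate the frontier condition and the dimension requirement of a laminar stratified space, so neither lemma \ref{cpctrep} nor corollary \ref{cor3} would apply to your $\Sigma$; your whole reduction silently rests on the fact to be proved. (Your handling of the $K=M$ case by quoting Shub's theorem is legitimate, though the paper treats $K_0$ uniformly through corollary \ref{cor3} rather than by citation; and for hypothesis $(i)$ you should state the choice $f(V_K)\subset L_K$, e.g. $V_K=L_K\cap f^{-1}(L_K)$, which your text leaves implicit.)
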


This corollary is, in a way, the (regular) analogous theorem for endomorphisms of the following (\cite{Rs}, Theorem 4.1):
\begin{theo}[Robinson 1975']
Let $f\;:\; M\rightarrow M$ be a $C^1$-diffeomorphism of a compact manifold $M$. Let $K$ be a compact subset of $M$ which is $f$-invariant and which has a local product structure. Then there exist a neighborhood $V_K$ of $K$ and a $C^1$-neighborhood $V_f$ of $f$ such that, if $f'$ belongs to $V_f$, then there exists a homeomorphism $h$ from $V_K$ onto its image, satisfying $h\circ f=f'\circ h$. Moreover, when $f'$ is $C^1$-close to $f$, then $h$ is $C^0$-close to the canonical inclusion.
\end{theo}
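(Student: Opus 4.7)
The plan is to apply the persistence theorem for normally expanded plaque-expansive embedded laminations (Corollary \ref{cor1}) twice --- once to the local stable lamination of $K$ under $f$ and once to the local unstable lamination of $K$ under $f^{-1}$ --- and then to glue the two continuations using the local product structure to build the conjugating homeomorphism $h$ on a neighborhood $V_K$.

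First I would set up the two laminations. For $\epsilon > 0$ small enough, hyperbolicity and the local product structure of $K$ yield $C^1$-lamination structures $\mathcal{L}^s$ and $\mathcal{L}^u$ on
\[
L^s := \bigcup_{x\in K} W^s_\epsilon(x), \qquad L^u := \bigcup_{x\in K} W^u_\epsilon(x),
\]
whose leaves are the local stable (resp.\ unstable) manifolds. By hyperbolicity, the normal bundle of $\mathcal{L}^s$ is contained in $E^u$, which $Tf$ expands, while the tangent bundle lies in $E^s$, which $Tf$ contracts; hence $f$ is $1$-normally expanding at $(L^s,\mathcal{L}^s)$. Symmetrically $f^{-1}$ is $1$-normally expanding at $(L^u,\mathcal{L}^u)$. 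Plaque-expansiveness on each side follows from the shadowing property supplied by the local product structure: two nearby pseudo-orbits respecting the stable plaques must be shadowed by genuine orbits, which by local product structure lie on the same local stable manifold.

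Second, I would invoke Corollary \ref{cor1} on the precompact open subset $L'^s$ (a slight shrinking of $L^s$ with $f^*(\mathrm{cl}(L'^s))\subset L'^s$). For every $f'$ in a $C^1$-neighborhood of $f$, this provides an embedding $i^s(f')$, $C^1$-close to the inclusion, whose image is an $f'$-invariant lamination, with pullback $f'^*_s$ equivalent and $C^r$-close to $f|_{L'^s}$; moreover $i^s(f')$ agrees with the inclusion off a fixed compact set. The symmetric application to $f^{-1}$ on $(L^u,\mathcal{L}^u)$ furnishes an embedding $i^u(f')$ with analogous properties. Then I set
\[
K' := i^s(f')(K) \cap i^u(f')(K),
\]
which is an $f'$-invariant hyperbolic compact set with local product structure, $C^0$-close to $K$, and naturally conjugate to $K$ via the common pullback dynamics (the conjugacies on $K$ induced by $i^s$ and $i^u$ coincide by uniqueness of the continuation of a hyperbolic set).

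Third, I would define the homeomorphism $h$ on a small enough neighborhood $V_K$ by transporting the local product bracket: for $p \in V_K$, there exist unique $x, y$ close to $K$ with $\{p\} = W^s_\epsilon(x) \cap W^u_\epsilon(y)$, and I set
\[
h(p) := i^s(f')\bigl(W^s_\epsilon(x)\bigr) \cap i^u(f')\bigl(W^u_\epsilon(y)\bigr),
\]
a single point by transversality (preserved under $C^1$-perturbation). The relation $h\circ f = f' \circ h$ on $V_K \cap f^{-1}(V_K)$ is checked on each factor: because $f$ sends stable/unstable leaves to stable/unstable leaves, and the pullback conjugacies provided by Corollary \ref{cor1} intertwine the leafwise dynamics of $f$ and $f'$, both factors of the bracket match after applying $f$ on the source and $f'$ on the target.

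The hardest step is the third. The leafwise dynamical conjugacies supplied by persistence are $C^r$-controlled along the leaves but only $C^0$-controlled transversally, so the global continuity of $h$ rests on the joint continuity of the local product bracket for the perturbed laminations, which in turn requires uniform control of the stable/unstable holonomies under $C^1$-perturbation. This is precisely what Remark \ref{r3} provides: a perturbation-independent neighborhood $V_X'$ on which the plaque-preservation is uniform, so that the bracket $[\,\cdot\,,\,\cdot\,]'$ associated to $(\mathcal{L}'^s,\mathcal{L}'^u)$ is defined on a fixed $V_K$ and depends continuously on both arguments. The $C^0$-closeness of $h$ to the inclusion when $f'$ is $C^1$-close to $f$ then follows from the $C^0$-closeness of $i^s(f')$ and $i^u(f')$ to the respective inclusions together with continuity of $[\,\cdot\,,\,\cdot\,]'$, and the conjugacy relation transfers from the leafwise case via the intertwining of the pullback dynamics.
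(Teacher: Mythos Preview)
The paper does not prove this theorem: it is quoted from Robinson \cite{Rs} (Theorem~4.1) solely to contextualise Corollary~\ref{K} as its endomorphism analogue. There is therefore no proof in the paper to compare your proposal against.

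As an independent derivation from the paper's machinery, your bracket construction is the standard route and is broadly sound, but two steps are not as automatic as you present them. First, Corollary~\ref{cor1} yields pullbacks $f'^*_s$ and $f'^*_u$ that are only \emph{leaf-equivalent} to $f|_{L^s}$ and $f^{-1}|_{L^u}$, not pointwise conjugacies; consequently your line ``the conjugacies on $K$ induced by $i^s$ and $i^u$ coincide'' has no content as stated, since neither $i^s|_K$ nor $i^u|_K$ is a conjugacy. The identity $h\circ f=f'\circ h$ must instead be argued directly from the fact that both sides lie in the \emph{same} transverse intersection $i^s(W^s_\epsilon(f(x)))\cap i^u(W^u_\epsilon(f(y)))$, which is a single point. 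Second, the inclusion $f'(h(p))\in i^u(W^u_\epsilon(f(y)))$ is not free: $f'$ expands the new unstable leaves, so $f'\bigl(i^u(W^u_\epsilon(y))\bigr)$ may overflow the local piece $i^u(W^u_\epsilon(f(y)))$, and you need a separate smallness argument (controlling how close $h(p)$ stays to $K'$) to keep the image inside the local leaf. Both gaps are repairable, but they are precisely where Robinson's original argument does its work, and Remark~\ref{r3} alone does not close them.
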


To show corollary \ref{K}, we will use the following lemma, which will be useful in other context.
\begin{lemm}\label{cpctrep}
Let $M$ be a Riemannian $n$-manifold and $f$ be a $C^1$-endomorphism of $M$. Let $A$ be a compact subset of $M$, which is the union of a compact subset $K$ with an open subset $X$ disjoint from $K$, and such that 
\[K\subset cl(X),\; f(K)\subset K,\; f(X)\subset X\]

Let us suppose that  $f$ is expanding on $K$.

We endow $K$ with the $0$-dimensional lamination structure and $X$ with the $n$-dimensional lamination structure.

Then $(A,(K,X))$ is a stratified space whose canonical embedding is preserved and normally expanded by $f$, and there exists a trellis structure on it such that hypotheses $(i)$, $(ii)$, $(iii)$ and $(iv)$ of corollary \ref{cor3} are satisfied.
\end{lemm}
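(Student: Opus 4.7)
The plan is to verify the stratified-space axioms, exhibit an explicit trellis, and then check the four hypotheses of Corollary~\ref{cor3} one by one, with the main work concentrated on hypothesis $(iii)$ for the stratum $K$.

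First, $K$ is closed and $X$ is open in $A$, so both are locally closed; the partition is finite hence locally finite; and the frontier condition reduces to $K \subset \mathrm{cl}(X)$, which is given. Since $\dim K = 0 \le n = \dim X$, the pair $(A,(K,X))$ is a laminar stratified space, the canonical inclusion is a stratified embedding, and it is preserved by $f$ since $f(K)\subset K$ and $f(X)\subset X$. For normal expansion: on the stratum $X$ the normal bundle vanishes (as $\dim X = \dim M$), so $r$-normal expansion is vacuous; on $K$, $T\mathcal{L}_K = 0$, hence the normal bundle coincides with $i^{*}TM|_K$, and the normal-expansion inequality reduces exactly to the hypothesis that $f$ is expanding on $K$. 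For the trellis, set $(L_X,\mathcal{L}_X) := (X,\mathcal{L}_X)$ with its $n$-dimensional manifold structure, and let $L_K$ be a small neighborhood of $K$ in $A$ endowed with the $0$-dimensional lamination $\mathcal{L}_K$ (each point a leaf). Any manifold chart on $L_K \cap X$ is simultaneously a chart of $\mathcal{L}_K$ (take $\{\mathrm{pt}\}$ as plaque factor and the chart image as transversal), so $\mathcal{L}_K$ foliates $\mathcal{L}_X$ on the overlap, and $\mathcal{T} := ((L_K,\mathcal{L}_K),(X,\mathcal{L}_X))$ is a $C^\infty$-trellis. The canonical inclusion is trivially $\mathcal{T}$-controlled.

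For hypothesis $(i)$, take $V_X := X$; plaques of $\mathcal{L}_X$ are sent into $X$ since $f(X)\subset X$. Pick $V_K \subset L_K$ a neighborhood of $K$ in $A$ with $f(V_K) \subset L_K$, possible by continuity since $f(K) \subset K \subset L_K$; the plaques of $\mathcal{L}_K$ are singletons, and $f(\{y\}) = \{f(y)\}$ is indeed a leaf of $\mathcal{L}_K$. Hypothesis $(ii)$ was verified above. For hypothesis $(iv)$, the plaques of $\mathcal{L}_X$ are arbitrarily small open balls, so any two sufficiently close points lie in a common plaque and plaque-expansiveness is automatic; on $K$ the plaques are singletons, so pseudo-orbits are genuine orbits and plaque-expansiveness reduces to ordinary expansiveness, which follows from $f$ being expanding on the compact set $K$.

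The heart of the argument is hypothesis $(iii)$. On $X$ we have $V_X = X$, so any pseudo-orbit in $V_X$ lies in $X$ trivially. The delicate point is the stratum $K$: the plaques of $\mathcal{L}_K$ being singletons, an $\eta$-pseudo-orbit respecting $\mathcal{L}_K$ must satisfy $f(p_n) = p_{n+1}$ exactly, so it is a \emph{genuine} forward $f$-orbit. Thus $(iii)$ amounts to: every forward $f$-orbit contained in $V_K$ must lie in $K$. By continuity of $Tf$ on $M$ and uniform expansion on the compact set $K$, there exist $\mu > 1$ and an open neighborhood $W$ of $K$ in $M$ such that $\|T_y f(u)\| \ge \mu\,\|u\|$ for all $(y,u) \in TW$; consequently $f$ is a local diffeomorphism on $W$ whose inverse branches are contractions. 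A standard Shub-type argument for expanding repellers then yields an open $V \subset W$ with $K = \bigcap_{n\ge 0} f^{-n}(V)$, meaning any $y \in V \setminus K$ eventually leaves $V$ under iteration (its distance to $K$ grows by a factor $\mu$ at each step until the orbit exits). Setting $V_K := V \cap A$, any forward $f$-orbit entirely in $V_K$ must lie in $\bigcap_n f^{-n}(V) \cap A \subset K$, which is exactly $(iii)$. All four hypotheses of Corollary~\ref{cor3} thus hold, completing the proof. The main obstacle, as anticipated, is the local-repulsion argument, which relies crucially on extending the expansion of $f$ from $K$ to a neighborhood by continuity of $Tf$ on the compact set $K$.
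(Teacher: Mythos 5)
Your overall plan coincides with the paper's: the stratified-space axioms, the trellis $\big((L_K,\mathcal L_K),(X,\mathcal L_X)\big)$ (this is exactly Example \ref{treillis sur compact repu}), the observation that hypothesis $(ii)$ is vacuous on $X$ and reduces to the expansion hypothesis on $K$, the choice $V_X=X$, and the remark that $(iv)$ is trivial on the codimension-zero stratum and is forward expansiveness of an expanding map on $K$ -- all of this is how the paper argues. The paper also reduces $(iii)$ in the same way you do, to the statement that genuine $f$-orbits of $A$ contained in a suitable neighbourhood of $K$ lie in $K$; it obtains this from the existence of a neighbourhood $L_K$ of $K$ in $A$ with $\bigcap_{n\ge 0} f_{|A}^{-n}(L_K)=K$, and it explicitly invokes the hypothesis $f_{|A}^{-1}(K)=K$ (equivalently $f(X)\subset X$) to get it, then sets $V_K=L_K\cap f_{|A}^{-1}(L_K)$.

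It is precisely at this point that your argument has a genuine gap. You claim that expansion of $f$ on a neighbourhood $W$ of $K$ yields, by a ``standard Shub-type argument'', an open $V$ with $K=\bigcap_{n\ge 0}f^{-n}(V)$, the justification being that the distance to $K$ grows by a factor $\mu$ at each step. Derivative expansion alone does not give this: $d(f(z),K)$ is governed by whichever point $k'\in K$ is nearest to $f(z)$, and the preimage of $k'$ under the local inverse branch at $z$ need not belong to $K$, so the distance to $K$ need not grow. In fact expanding invariant compacta are in general not locally maximal (for the doubling map of the circle, the closure of a Sturmian orbit is a compact invariant expanding set whose every neighbourhood has a strictly larger maximal invariant set), so the absolute statement you call standard is false without a backward-invariance input -- and you never use $f(X)\subset X$ in your proof of $(iii)$. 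The repair is to prove the relative statement the paper uses, and this is where that hypothesis enters: since $A=K\cup X$ is compact and $X$ is open, one has $\partial X\subset K$, and $f(X)\subset X$ means no point of $X$ is sent into $K$. For $z\in X$ near $K$, pull the minimizing path from $f(z)$ to its nearest point $k'\in K$ back by the $\mu^{-1}$-contracting local inverse branch at $z$ (adapted metric): its endpoint cannot lie in $X$, so either it lies in $K$ or the pulled-back path crosses $\partial X\subset K$; in both cases $d(z,K)\le \mu^{-1}d(f(z),K)$. Hence $d(f(z),K)\ge \mu\, d(z,K)$ for points of $A$ near $K$, which gives $\bigcap_{n\ge 0} f_{|A}^{-n}(L_K)=K$; this relative isolation is all that hypothesis $(iii)$ requires, since the pseudo-orbits there consist of points of $V_K\subset A$.
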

\begin{rema} As $K$ is a $0$-dimensional lamination structure (and $M$ a $C^\infty$-manifold), the trellis structure is of class $C^r$ and $f$ is $r$-normally expands $(A,(K,X))$, for every $r\ge 1$.\end{rema}

\begin{proof}[Proof of lemma \ref{cpctrep}]
As $f_{|A}^{-1}(K)=K$ and as $f$ is expanding on $K$, there exists an open neighborhood $L_K$ of $K$ in $A$, which satisfies
$\cap_{n\ge 0}f_{|A}^{-1}(L_K)=K$.

 The subset $L_K$ endowed with the $0$-dimensional lamination structure $\mathcal L_K$ and $X$ endowed with the $n$-dimensional lamination structure form a trellis structure on the stratified space $(A,(K,X))$, which obviously controls $f_{|A}$. That is why hypothesis $(i)$ is satisfied.

Let $V_K$ be the  open neighborhood of $K$ in $A$, equal to $f_{|A}^{-1}(L_K)\cap L_K$. As the pseudo-orbits of $V_K$  which respect $\mathcal L_K$ are orbits of $f$ in $V_K$
and, as $\cap_{n\ge 0}f^{-1}(V_K)$ is equal to $K$, hypothesis $(iii)$ of corollary \ref{cor3} is satisfied. Moreover, an endomorphism
which is expanding on a compact set is necessarily expansive, thus $f$ is plaque-expansive at each stratum $K$ and $X$. That is why hypothesis $(iv)$ is satisfied.\end{proof}

\begin{proof}[Proof of lemma \ref{K}]
First of all, let us show that the compact set $K$ split into a compact set $K_1$ nowhere dense and  a union $K_0$ of connected components of $M$.

We suppose the contrary, for the sake of contradiction. Thus, the boundary $\partial K$ of $K$ is non-empty. As $K$ is $f$-invariant ($f^{-1}(K)=K$) and as $f$ is open on a neighborhood of $K$, we have the $f$-invariance of $\partial K$. We endow $M$ with an adapted metric to the expansion of $K$. We denote by $B(\partial K,\epsilon)$ the $\epsilon$-neighborhood of $\partial K$.
For $\epsilon>0$ small enough, by expansion of $\partial K$, the closure of $f^{-1}(B(\partial K,\epsilon))$ is included in $B(\partial K,\epsilon)$.
 Let $U:= K\setminus f^{-1}(B(\partial K,\epsilon))$ which, for $\epsilon>0$ small enough, is non-empty by the contradictory hypothesis. Moreover, the map $f$ sends $U$ into the interior of $U$. Let $U_n:=f^{n}(U)$. The sequence of compact subsets $(U_n)_n$ is decreasing and so converges, for the Hausdorff distance, to $U_\infty:= \cap_{n\ge 0}U_n$. As the restriction of $f$ to $K$ is open, for every $n\ge 0$, the compact subset $U_{n+1}$ is included in the interior of $U_n$. For every $n\ge 0$, let
$\epsilon_n>0$ be the maximal radius of a ball centered on the boundary of $U_\infty$ which is included in $U_n$. It follows from the convergence of $(U_n)_n$, that the sequence $(\epsilon_n)_n$ converges to $0$ but, by expansion of $f$, for $\epsilon_n$ small enough, the real $\epsilon_{n+1}$ is greater than $\epsilon_n$. This is a contradiction.

 We endow the compact subsets $K_0$ and $K_1$ with the $0$-dimensional lamination structure. Let $X$ be the open subset $M\setminus K$ of $M$, endowed with the lamination structure of the same dimension as $M$.

Therefore,  $(M,\Sigma:=\{K_0,K_1,X\})$ is a stratified space whose strata are normally expanded.
It follows from lemma \ref{cpctrep}, that hypotheses $(i)$, $(ii)$, $(iii)$ and $(iv)$ of corollary  \ref{cor3} are satisfied, for a trellis structure $\mathcal T$ on $(M,\Sigma$). This corollary provides a $C^r$-neighborhood $V_f$ of $f$ and a neighborhood $V_{K_1}'$ of $K_1$ such that, for $f'$ $C^r$-close to $f$, there exists $C^r$-embedding $i'$ from $(A,\Sigma)$ into (and hence onto) $M$, close to the identity, such that conclusion $(i)$ holds:

For any $x\in V_{K_1}\cup K_0$, the map $f'$ sends $i'(x)$ into the image by $i'$ of the leaf of $f(x)\in {K_0}\cup L_{K_1}$.

 As this leaf is 0-dimensional, we have obtained:
\[\forall x\in V_{K_1}\cup K_0,\quad f'\circ i'(x)=i'\circ f(x)\]
We note that $V_K=V_{K_1}\cup K_0$ is a neighborhood of $K$.

As $i'$ is a stratified embedding $C^r$-close to the identity, $i'$ is $C^0$ close to the identity and the restriction $i'_{|K^c}$ is $C^r$-close to the identity of $K^c$ for the compact-open topology.
 \end{proof}

\subsubsection{Product of stratifications of laminations}
The following proposition provides several examples of persistent stratifications in product dynamics.
\begin{prop}\label{prod}
Let $M$, $(A,\Sigma)$, $\mathcal T$, $f$, $i$ and $f^*$ be respectively a manifold, a stratified space, a trellis structure on $(A,\Sigma)$, an endomorphism of $M$, a $\mathcal T$-controlled immersion and a $\mathcal T$-controlled endomorphism that satisfy hypotheses $(i)$ and $(iii)$ of theorem \ref{th2}.

Let $M'$, $(A',\Sigma')$, $\mathcal T'$,  $f'$, $i'$, and $f'^*$ be respectively a manifold, a stratified space, and a trellis structure on $(A',\Sigma')$, an endomorphism of $M'$, a $\mathcal T'$-controlled immersion and a $\mathcal T'$-controlled endomorphism that satisfy hypotheses $(i)$ and $(iii)$ of theorem \ref{th2}.

We denote by $(f,f')$ and $(f^*,f'^*)$ the product dynamics on $M\times M'$ and on $A\times A'$.
We denote by $(i,i')$ the immersion of the product stratified space $(A\times A',\Sigma\times \Sigma')$ into $M\times M'$. 

Then there exists a trellis structure $\mathcal T_{prod}$ on the product stratified space such that  $(f^*,f'^*)$ and $(i,i')$ are $\mathcal T_{prod}$-controlled and satisfy hypotheses $(i)$ and $(iii)$ with $(f,f')$.

Moreover if $i$ and $i'$ are embedding and if $f$ and $f'$ are plaque-expansive at each stratum of respectively $\Sigma$ and $\Sigma'$, then $(f,f')$ is plaque-expansive at each stratum of $\Sigma\times \Sigma'$.
\end{prop}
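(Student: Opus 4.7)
The plan is to take for $\mathcal T_{prod}$ exactly the trellis structure on $(A\times A',\Sigma\times\Sigma')$ supplied by Proposition \ref{treillis:sur:produit}, whose tubular neighborhood at the stratum $X\times X'$ is $L_{X\times X'}=(L_X\times L_{X'})\cap W_{X\times X'}$ equipped with the restriction of $\mathcal L_X\times\mathcal L_{X'}$, the open sets $W_{X\times X'}$ being those provided by Lemma \ref{<}. There are then three points to verify: that $(i,i')$ is $\mathcal T_{prod}$-controlled, that $(f^*,f'^*)$ is a $\mathcal T_{prod}$-controlled endomorphism making the diagram $(i)$ of Theorem \ref{th2} commute with $(f,f')$, and that condition $(iii)$ holds for the product.

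The first two verifications are essentially formal. The restriction of $(i,i')$ to $L_{X\times X'}$ is the restriction of the product of two $C^r$-immersions of laminations, which is manifestly a $C^r$-immersion of the product lamination $\mathcal L_{X\times X'}$. For the endomorphism, let $V_X\subset L_X$ and $V_{X'}\subset L_{X'}$ be neighborhoods adapted to $f^*$ and $f'^*$; I would set $V_{X\times X'}'$ to be a neighborhood of $X\times X'$ in $L_{X\times X'}$ contained in $(V_X\times V_{X'})\cap (f^*,f'^*)^{-1}(L_{X\times X'})$ (whose existence follows from the invariance $(f^*,f'^*)(X\times X')\subset X\times X'$, the inclusion $X\times X'\subset L_{X\times X'}$, and continuity). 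On $V_{X\times X'}'$ the map $(f^*,f'^*)$ sends every plaque of $\mathcal L_{X\times X'}$ into a leaf of $\mathcal L_{X\times X'}$, because plaques and leaves of $\mathcal L_{X\times X'}$ are restrictions of products of plaques and leaves of the factors. Coordinate-wise commutativity of the two factor diagrams yields the commutative diagram for $((f,f'),(i,i'),(f^*,f'^*))$.

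The substantive step is condition $(iii)$. The key observation is that, for a product metric on $A\times A'$, the diameter of any plaque of $\mathcal L_{X\times X'}$ dominates the diameters of its two projections onto plaques of $\mathcal L_X$ and $\mathcal L_{X'}$. Consequently, if $\eta_X$ on $V_X$ and $\eta_{X'}$ on $V_{X'}$ are the positive continuous functions produced by hypothesis $(iii)$ for the two factors, and if I choose a positive continuous function $\eta$ on $V_{X\times X'}'$ bounded above by the pullback of $\min(\eta_X,\eta_{X'})$, then any $\eta$-pseudo-orbit $(p_n,p'_n)$ in $V_{X\times X'}'$ respecting $\mathcal L_{X\times X'}$ projects to an $\eta_X$-pseudo-orbit of $V_X$ respecting $\mathcal L_X$ and to an $\eta_{X'}$-pseudo-orbit of $V_{X'}$ respecting $\mathcal L_{X'}$. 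Applying hypothesis $(iii)$ separately in each factor gives $(p_n)\subset X$ and $(p'_n)\subset X'$, so the product pseudo-orbit is contained in $X\times X'$.

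The main obstacle I anticipate is purely bookkeeping around the restriction $L_{X\times X'}=(L_X\times L_{X'})\cap W_{X\times X'}$, namely ensuring that the adapted neighborhoods and the projected pseudo-orbits stay inside $W_{X\times X'}$; this is absorbed by shrinking $V_{X\times X'}'$ near $X\times X'$. The supplementary assertion on plaque-expansiveness of $(f,f')$ at each product stratum follows from the same projection argument: two $\eta$-pseudo-orbits in $\Sigma\times\Sigma'$ which stay $\eta$-close project to pairs of pseudo-orbits which stay close in each factor, hence, by plaque-expansiveness of $f$ at $X$ and of $f'$ at $X'$, lie respectively in small plaques of $\mathcal L_X$ and of $\mathcal L_{X'}$; the product of these two plaques is a small plaque of $\mathcal L_{X\times X'}$ containing both initial points, as required.
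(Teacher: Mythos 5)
Your proposal is correct and follows essentially the same route as the paper: you take the product trellis structure of Proposition \ref{treillis:sur:produit}, define the adapted neighborhood of $X\times X'$ by intersecting $V_X\times V_{X'}$ with $L_{X\times X'}$ and its preimage under $(f^*,f'^*)$, and verify hypothesis $(iii)$ and plaque-expansiveness by projecting pseudo-orbits to the two factors with the function $\min(\eta_X,\eta_{X'})$, exactly as in the paper's argument. No gaps to report.
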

\begin{proof} Let $\Sigma\times \Sigma'$ be the product stratification on $A\times A'$ defined in \ref{exprod} and $\mathcal T_{prod}$ be the trellis structure defined in  \ref{Tprod}. Let us show that this structure controls $(f,f')$.
For each strata $(X,X')\in \Sigma\times \Sigma'$, there exist neighborhoods $V_X$ and $V_{X'}$ of $X$ and $X'$ adapted to respectively  $f^*$ and $f'^*$. This means that the restrictions $f_{|V_X}^*$ and $f'^*_{|V_{X'}}$ are morphism from  $\mathcal L_{X|V_X}$ and $\mathcal L_{X'|V_{X'}}$ to respectively $\mathcal L_{X}$ and $\mathcal L_{X'}$. Then the products dynamics $(f^*,f'^*)$ of $A\times A'$ restricted to $V_X\times V_{X'}$ is a morphism from the product lamination  $\mathcal L_{X|V_X}\times \mathcal L_{X'|V_{X'}}$ to $\mathcal L_{X}\times \mathcal L_{X'}$. Let $V_{X\times X'}:= (V_X\times V_{X'})\cap L_{X\times X'}\cap (f^*,f'^*)^{-1}(  L_{X\times X'})$ be the adapted neighborhood of $X\times X'$.
The products dynamics $(f^*,f'^*)$ restricted to $V_{X\times X'}$ is a morphism from the product lamination  $\mathcal L_{X\times X'|V_{X\times X'}}$ to $\mathcal L_{X\times X'}$, since the lamination
$\mathcal L_{X\times X'}$ is a restriction of $\mathcal L_{X}\times \mathcal L_{X'}$.
 
Thus, the endomorphism $(f^*,f'^*)$ is $\mathcal T_{prod}$-controlled.

Let us check hypothesis $(iii)$ of theorem \ref{th2}. Let $X\times X'$ be a stratum of $\Sigma\times \Sigma'$. Let $\eta$ and $\eta'$ be the functions on respectively $V_X$ and $V_{X'}$ provided by hypothesis $(iii)$. Let $\eta_{prod}$ be the function on $V_{X\times X'}$ defined by 
\[\eta_{prod}\;:\; (x,x')\in V_{X\times X'}\mapsto \min(\eta(x),\eta'(x')).\]
Let $(x_n)_n$ be an $\eta_{prod}$-pseudo-orbit of $V_{X\times X'}$ which respects $\mathcal L_{X\times X'}$. By projecting canonically to $A$ and $A'$, we obtain an $\eta$-pseudo-orbit of $V_X$ which respects $\mathcal L_X$ and an $\eta'$-pseudo-orbit of $V_{X'}$ which respects $\mathcal L_{X'}$.  Hypothesis $(iii)$ implies that these two last pseudo-orbits belong to respectively $X$ and $X'$. Therefore, the pseudo-orbit $(x_n)_n$ belongs to $X\times X'$. Thus, hypothesis $(iii)$ is satisfied.

We show similarly the plaque-expansiveness.
\end{proof}

\subsubsection{Example of persistent stratifications of laminations in product dynamics}\label{expprod}

\subsubsection{Product of quadratic hyperbolic polynomials}
\[\mathrm{Let}\; f\;:\; \mathbb R^n\rightarrow \mathbb R^n\]
\[(x_i )_i\mapsto (x_i^2+c_i)_i\]

 We choose $(c_i)_i\in [-2,1/4]^n$, such that, for each $i$, the endomorphism $f_i\;:\;x\mapsto x^2+c_i$ has an attractive periodic orbit. Therefore, the trace of the (non filled) Julia set is an expanding compact set $K_i$.

According to Graczyk-{\'S}wiatek \cite{grac} and Lyubich \cite{Lyu}, this is the case for an open and dense set of parameters $(c_i)_i\in [-2,1/4]^n$.

The map $x\mapsto x^2+c_i$ normally expands the stratification of laminations $\Sigma_i$ formed by the $0$-dimensional lamination supported by $K_i$ and the $1$-manifold $X_i$ supported by $\mathbb R\setminus  K_i$ without its unbounded connected components.

It follows from lemma \ref{cpctrep} that $f$ is a product of maps $f_i$ which satisfy hypotheses $(i)$, $(ii)$, $(iii)$ and $(iv)$ of corollary \ref{cor3} with the stratification $\Sigma_i$.

We note that the product stratification $\prod \Sigma_i$ consists of the strata $(Y_J)_{J\subset \{1,\dots ,n\}}$, with $Y_J$ the lamination of dimension the cardinal of $J$ and  of support 
\[ \prod_{j\in J}X_j\times \prod_{j\in J^c}K_j.\]
The leaves of $Y_J$ are in the form $\prod_{j\in J} C_j\times \prod_{j\in J^c} \{k_j\}$, with $C_j$ a connected component of $X_j$ and $k_j$ a point of $K_j$.

Since $f$ $r$-normally expands the product stratification  $\prod \Sigma_i$, by applying $n-1$-times the proposition  \ref{prod} and finally the corollary \ref{cor3}, we show the $C^r$-persistence of this $a$-regular stratification of laminations, for every $r\ge 1$.

Figure \ref{fig:cantore} is a numerical experimentation which sheds light the persistent stratification of a $C^\infty$-perturbation of $f$, for $n=2$ and $c_1=c_2=-1$. The curves form the one-dimensional strata which spiral at each intersection at a exponential speed, that is why this spiraling is imperceptible.

\begin{figure}[h]
    \centering
        \includegraphics[width=7cm]{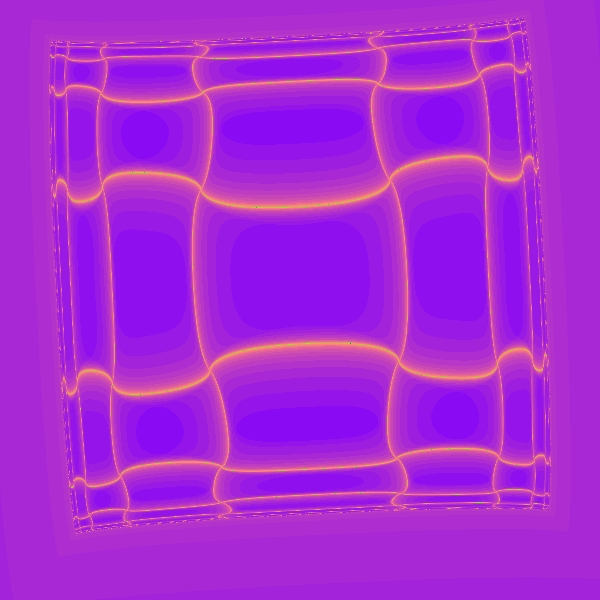}
    \caption{Numerical experimentation of the example \ref{expprod}.2}
    \label{fig:cantore}
\end{figure}

In dimension two, it is actually  J.-C. Yoccoz who remarked the persistence of such a family of curves which spiral at the crossing points. It is this example who motives the presented theory.

\subsubsection{Products of hyperbolic rational functions}\label{ex3}
\[\mathrm{Let}\; f\;:\; \hat {\mathbb C}^n\rightarrow \hat{\mathbb C}^n\]
\[\qquad (z_i )_i\mapsto (R_i(z_i))_i\]

 We assume that, for each $i$, $R_i$ is a hyperbolic rational function of the Riemann sphere $\hat{\mathbb C}$. It follows that its Julia set $K_i$ is an expanded compact subset. The complement $X_i$ of $K_i$ in $\hat{ \mathbb C}$ is the union of attraction basins of the attracting periodic orbits. The map $R_i$ normally expands the stratification of laminations $\Sigma_i$ consisting of the $0$-dimensional lamination support by $K_i$ and of the $2$-dimensional lamination supported by $X_i$.

We notice that the product stratification $\prod_i \Sigma_i$ consists of the strata $(Y_J)_{J\subset \{1,\dots,n\}}$, where the stratum $Y_J$ is of dimension $2$ times the cardinal of $J$ and with support $\prod_{j\in J}X_j\times \prod_{j\in J^c}K_j$. The leaves of $Y_J$ are in the form $\prod_{j\in J} C_j\times \prod_{j\in J^c} \{k_j\}$, with $C_j$ a connected component of $\hat{ \mathbb C}\setminus K_j$ and $k_j$ an element of $K_j$.

For the same reason as above, the $a$-regular stratification $\Sigma$ is $C^r$-persistent, for every $r\ge 1$.

\subsubsection{Lamination normally axiom $A$}\label{axiom A}

In the diffeomorphism context, we would love to unify two remarkable theorems, that we are going to recall.

The first is the following:
\begin{theo}[Hirsch-Pugh-Shub \cite{HPS}]
Let $(L,\mathcal L)$ be a compact lamination embedded into a manifold $M$. Let $f$ be a diffeomorphism of $M$ which lets invariant $L$ ($f(L)=L$) and preserves $\mathcal L$.
If $f$ is $r$-normally hyperbolic and plaque-expansive at $\mathcal L$ then the embedded lamination is $C^r$-persistent.\end{theo}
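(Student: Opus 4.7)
My plan is to follow the classical Hirsch-Pugh-Shub strategy via graph transform, since the main theorem \ref{th2} of this memoir covers only the normally expanded case and does not directly accommodate the stable direction appearing in normal hyperbolicity. I would first fix an adapted Riemannian metric realizing the $r$-normally hyperbolic splitting $TM_{|L} = T\mathcal L \oplus E^s \oplus E^u$, with contraction factor $\lambda < 1$ on $E^s$ and expansion factor $\mu > 1$ on $E^u$ both dominating the tangent rates by a factor of $r$. Via the exponential map, a neighborhood of $L$ in $M$ is identified with a neighborhood of the zero section in the Whitney sum $E^s \oplus E^u$, and any nearby $f'$-preserved lamination can be sought as the image, under this identification, of the graph of a continuous section $\sigma\colon L \to E^s \oplus E^u$ whose restriction to each leaf of $\mathcal L$ is of class $C^r$.

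For each $f'$ close to $f$ in $End^r(M)$, I would study the graph transform $\Gamma_{f'}$ acting on the Banach space of such sections. The normally hyperbolic rates guarantee that $\Gamma_{f'}$ is a uniform contraction in the $C^0$-norm: the stable component is contracted by essentially $\lambda$, while the unstable component becomes contracting under the inverse graph transform because $f'$ is a diffeomorphism and the unstable rate $\mu > 1$ dominates the tangent expansion. Banach's theorem then produces a unique invariant continuous section $\sigma_{f'}$ depending continuously on $f'$. Leaf-by-leaf smoothness is obtained by the fibered contraction principle applied to the bundle of $r$-jets along leaves; the $r$-normal hyperbolicity is precisely what allows the Lipschitz constants of $\Gamma_{f'}$ on jets of order $\le r$ to stay below $1$. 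This produces an $f'$-invariant set $L' = \mathrm{graph}(\sigma_{f'})$, an embedding $i'\colon L \to M$ realizing it close to the initial inclusion, and a candidate pullback $f'^* \in End^r(\mathcal L)$.

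What remains is to verify that $L'$ carries a lamination structure $\mathcal L'$ isomorphic to $\mathcal L$ through a homeomorphism close to the identity, and this is where plaque-expansivity intervenes. The candidate identification is the projection $\pi \colon L' \to L$ along the tubular neighborhood. If two such identifications were proposed, then the forward and backward images of a given $f'$-orbit of $L'$ would produce two $\epsilon$-pseudo-orbits of $f^*$ respecting $\mathcal L$ that remain uniformly close; plaque-expansivity forces them to lie in common small plaques. This rigidifies $\pi$ into a well-defined homeomorphism sending leaves to leaves, exhibits $f'^*$ as an element of $End_{f^*}^r(\mathcal L)$, and transports the charts of $\mathcal L$ to a $C^r$-close lamination structure $\mathcal L'$ on $L'$.

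The main obstacle I anticipate is the $C^r$-regularity of $\sigma_{f'}$ along leaves together with its continuous dependence on $f'$: a direct contraction argument in a $C^r$-Banach space would require a differentiable transverse structure on $\mathcal L$, which is not available. Instead, one must run an iterated fibered contraction leaf by leaf in the jet bundles $J^s T\mathcal L$ for $s = 1, \dots, r$, each step relying on the fact that the $s$-th power of the tangent expansion is strictly dominated by the normal rates; bookkeeping the uniform estimates across the transverse space of $\mathcal L$ is the delicate analytic point. Once this is in place, comparison with the single-stratum case of theorem \ref{th2} (normal expansion replaced by normal hyperbolicity and graph-transform in $E^s \oplus E^u$ instead of $E^u$ alone) closes the proof.
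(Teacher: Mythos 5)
This statement is quoted in the paper as the Hirsch--Pugh--Shub theorem and is not proved there: the paper cites \cite{HPS}, and its own machinery (theorem \ref{th2}) deliberately covers only the normally \emph{expanded}, endomorphism case, where there is no stable direction and no invertibility to exploit. So your proposal can only be compared with the classical HPS argument, and in outline it is that argument: adapted metric, tubular neighborhood, a graph-transform fixed point, fibered contraction on leafwise jets for the $C^r$ regularity, and plaque-expansiveness to identify the new invariant set with $(L,\mathcal L)$ by a leaf-preserving homeomorphism close to the identity.

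There is, however, one step that does not work as you state it. You propose a single Banach contraction on sections $\sigma\colon L\to E^s\oplus E^u$, contracting the stable component forward and the unstable component ``under the inverse graph transform''. To define either transform you must first invert the induced base map $h_\sigma:=\pi\circ f'\circ\mathrm{graph}(\sigma)\colon L\to L$ (or its backward analogue), and $h_\sigma$ is only $C^0$-close to $f_{|L}$; over a lamination, i.e.\ a general compact metric space rather than a manifold, a map $C^0$-close to a homeomorphism need not be injective or surjective, and even when it is, it need not respect the plaques of $\mathcal L$, so ``the'' section over $L$ whose graph is $f'(\mathrm{graph}\,\sigma)$ is not well defined and your $\Gamma_{f'}$ is not an honest self-map of a Banach space of sections. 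This is precisely where the classical proof is organized differently: HPS work with locally invariant center-stable and center-unstable plaque families (equivalently, two separate contractions each involving a single normal direction, or a Lyapunov--Perron scheme along orbits rather than sections over $L$), obtain the perturbed object as their intersection, and only afterwards invoke plaque-expansiveness to show that the resulting correspondence with $L$ is a well-defined, leaf-conjugating homeomorphism. Your use of plaque-expansiveness at the identification stage is placed correctly, but it cannot retroactively make the global graph transform over $L$ well defined; the fixed-point scheme itself must be reorganized as above, after which the leafwise $C^r$ regularity via the fibered contraction/$C^r$-section theorem --- the point you flag as delicate --- goes through in the standard way.
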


\label{diffexp} We recalled the definition of plaque-expansiveness in the diffeomorphism context in section \ref{plaque-expansiveness}.
Let us be more precise about normal hyperbolicity.

\begin{defi} Let $(L,\mathcal L)$ be a compact lamination embedded into a manifold $M$. Let $f$ be a diffeomorphism of $M$ preserving $(L,\mathcal L)$. The diffeomorphism $f$ is \emph{$r$-normally hyperbolic} to $(L,\mathcal L)$ if the support $L$ is $f$-invariant $(f(L)=L)$ and if there exist two subbundles $E^s$ and $E^u$ of the restriction of tangent
bundle of $M$ to $L$, such that:
\begin{itemize}
\item $E^s$ and $E^u$ are $Tf-$invariant,
\item $E^s\oplus T\mathcal L\oplus E^u=TM_{|L}$,
\item there exists $\lambda < 1$ satisfying for all $x\in L$, $u\in T_x\mathcal L\setminus \{0\}$ and $v\in E^u(x)$:\end{itemize}
\[\left\{\begin{array}{c}
   \|T_xf_{|E^s}\|\le \lambda\cdot \min\left(1,\frac{\|Tf_{|T_x\mathcal L}(u)\|^r}{\|u\|^r}\right)\\
   \\
   \lambda\cdot \|T_xf(v)\|\ge  \max(1,\|Tf_{|T_x\mathcal L}\|^r)\|v\|
  \end{array}\right.\]

\end{defi}

Hence for a zero dimensional lamination $K$, the normal hyperbolicity of $K$ means that $K$ is hyperbolic. Moreover the persistence of $K$ means that $K$ is structurally stable. But there exist structurally stable diffeomorphisms, that is diffeomorphisms such that
their $C^1$-perturbation are $C^0$-conjugated to them, which are not Anosov. Thus, the Hirsch-Pugh-Shub so called theorem is not optimal. Fortunately, the identification of $C^1$-structurally stable diffeomorphism is done, and lead up to the following definition:

\begin{defi} A \emph{diffeomorphism satisfies axiom $A$  and the strong transversality condition (SA)} if:
\begin{itemize}
\item  the nonwandering set $\Omega$ is hyperbolic,
\item  the periodic points are dense in $\Omega$,
\item the stable and unstable manifolds of points of $\Omega$ intersects their self transversally.\end{itemize}\end{defi}
 
The work of Smale \cite{Sm}, Palis  \cite{PS}, de Melo  \cite{dM}, Ma\~ne \cite{Mane}, Robbin  \cite{Ri} and Robinson  \cite{Rs} have concluded to the following theorem:
\begin{theo} The diffeomorphisms $C^1$-structurally stable of a compact manifold are exactly the SA diffeomorphisms.\end{theo}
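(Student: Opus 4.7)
The proof splits into two implications, each established by different authors.

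For the easier direction (\emph{SA implies $C^1$-structural stability}), the plan is to exploit the spectral decomposition $\Omega = \sqcup_i \Lambda_i$ together with the strong transversality condition. By Property~\ref{Strat:AxiomA}, the family $(W^s(\Lambda_i))_i$ is an $a$-regular stratification of laminations of $M$ whose strata are normally hyperbolic. First I would build, on this stratification, a trellis structure $\mathcal T$: locally, transverse unstable disks to the $\Lambda_i$'s organize into laminations $\mathcal L_{W^s(\Lambda_i)}$ foliating a neighborhood of $W^s(\Lambda_i)$, and the strong transversality condition between distinct $W^u(\Lambda_j)$ and $W^s(\Lambda_i)$ gives the compatibility/foliation conditions of a trellis (this is essentially the work of de Melo for surfaces, and of Robinson in general). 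Next, apply the main theorem (or its analogue for normally hyperbolic strata) to produce, for each $f'$ $C^1$-close to $f$, a $\mathcal T$-controlled embedding $i'$ close to the canonical inclusion, with $f'$ preserving the strata $i'(W^s(\Lambda_i))$ and the induced dynamics on each stratum conjugated to that of $f$. Finally I would check, using Remark~\ref{r3}, that $i'$ is in fact a global homeomorphism of $M$ conjugating $f$ to $f'$ --- the point is that the $\mathcal T$-controlled condition provides the plaque-preservation needed to glue the stratumwise conjugacies into a continuous map of $M$, and a symmetric argument (swap the roles of $f$ and $f'$) yields bicontinuity.

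For the hard direction (\emph{$C^1$-structural stability implies SA}), the strategy is by now classical. First, density of periodic points in $\Omega$ for a structurally stable $f$: use Pugh's closing lemma together with the fact that periodic points and their stable/unstable data are preserved by conjugacy. Second, hyperbolicity of $\Omega$: this is Mañé's theorem, obtained by showing that structural stability forces the Lyapunov spectrum over periodic orbits to stay uniformly hyperbolic, otherwise one can $C^1$-perturb near a non-hyperbolic periodic point to change the conjugacy class. Third, the strong transversality condition: one shows that if some $W^s(x) \cap W^u(y)$ fails to be transverse for $x,y \in \Omega$, then an arbitrarily small $C^1$-perturbation creates a homoclinic tangency, which produces cascades of bifurcations (Newhouse phenomena, sinks/sources appearing) incompatible with stability; this argument was carried out by Robbin and Robinson.

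The main obstacle is the hyperbolicity statement in the second direction. Establishing that $C^1$-stability forces uniform hyperbolicity of $\Omega$ required Mañé's deep machinery relating dominated splittings, ergodic closing lemmas, and analysis of non-hyperbolic periodic orbits; any naive perturbative argument fails because one must perturb inside the (very restricted) class of diffeomorphisms sharing the same conjugacy type. I would treat this step as a black box (citing \cite{Mane}), and devote most of my actual work to carefully constructing and controlling the trellis structure in the first direction, since that is the part that fits directly into the framework developed in this memoir.
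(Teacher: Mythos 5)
First, note that the paper does not prove this statement at all: it is quoted as the culmination of the classical works of Smale, Palis, de Melo, Ma\~ne, Robbin and Robinson, so the only thing at stake here is whether your outline correctly reduces to those black boxes. For the hard direction your plan is essentially the right attribution (Pugh's closing lemma for density of periodic points, Ma\~ne for hyperbolicity of $\Omega$, Robinson for the necessity of strong transversality), with one inaccuracy: the argument that a non-transverse intersection contradicts stability does not go through Newhouse phenomena --- those are $C^2$ and unavailable in the $C^1$ setting --- but through the simpler observation that a $C^1$-small perturbation changes the local intersection pattern of $W^s$ and $W^u$, while a conjugacy must preserve stable and unstable sets.

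The genuine gap is in your sufficiency direction. Even granting that de Melo's and Robinson's compatible unstable tubular families can be organized into a trellis structure on $(W^s(\Lambda_i))_i$ satisfying hypotheses $(i)$--$(iii)$, the conclusion of theorem \ref{th2} (and of corollaries \ref{cor3}, \ref{cor2}, even with remark \ref{r3}) is only the persistence of the stratification: a $\mathcal T$-controlled embedding $i'$ close to the inclusion such that $f'\circ i'(x)$ lies in the $i'$-image of a \emph{small plaque} containing $f(x)$. Since the leaves are the positive-dimensional stable manifolds, this is a leaf (plaque) conjugacy, not the identity $f'\circ i'=i'\circ f$; upgrading it to an actual topological conjugacy of $M$ is precisely the delicate content of Robbin--Robinson (choosing the conjugacy point inside each stable leaf, which uses the unstable families and a contraction argument, not a gluing of stratumwise data, and whose bicontinuity comes from expansiveness rather than from swapping $f$ and $f'$). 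The paper itself is explicit that its machinery does not yet yield this: the SA case is formulated as conjecture \ref{3} on normally SA laminations, with only the partial result \ref{normallysa} (fibers over a surface) actually proved. So your first direction, as written, does not close; you must either invoke Robinson's theorem as a black box there too, or supply the leaf-to-point conjugacy construction that the main theorem does not provide.
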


In order to generalize the above two theorems in only one conjecture, let us introduce a last definition:

\begin{defi} Let $(L,\mathcal L)$ be a compact lamination, preserved by a diffeomorphism $f$ of a manifold $M$. We denote by $\Omega(\mathcal L)$ the smallest $\mathcal L$-saturated compact subset, which contains the nonwandering set of $f_{|L}$. The lamination $\mathcal L$ is \emph{$r$-normally SA} if:
\begin{itemize}
 \item there exist $\epsilon>0$ and a neighborhood $U$ of $\Omega(\mathcal L)$, such that every $\epsilon$-pseudo-orbit of $U$ which respects $\mathcal L$ is included in $\Omega(\mathcal L)$,
 \item the lamination $\Omega(\mathcal L)$ is $r$-normally hyperbolic and plaque-expansive,
\item the stable set of a leaf of $\Omega(\mathcal L)$ (which is an immersed manifold) intersect transversally the unstable set of every leaf of $\Omega(\mathcal L)$.
\end{itemize}\end{defi}

This is our conjecture:
\begin{conj}\label{3}
Compact $r$-normally SA laminations are $C^r$-persistent.
\end{conj}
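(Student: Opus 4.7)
The plan is to deduce the conjecture from the main theorem (Theorem \ref{th2}) by constructing an appropriate stratification of laminations on $L$ equipped with a $C^r$-trellis structure. First, applying Hirsch--Pugh--Shub to $\Omega(\mathcal L)$ (which is $r$-normally hyperbolic and plaque-expansive) gives $C^r$-persistence of $\Omega(\mathcal L)$ as a lamination. The pseudo-orbit hypothesis in the $r$-normally SA definition plays the role of local maximality for $\Omega(\mathcal L)$ inside $L$: combined with plaque-shadowing, it should yield a laminar spectral decomposition $\Omega(\mathcal L) = \bigsqcup_{i=1}^{N} \Lambda_i$ into finitely many compact, $\mathcal L$-saturated, $f$-invariant, plaque-transitive pieces, each itself $r$-normally hyperbolic with splitting $T\mathcal L \oplus E^s_i \oplus E^u_i$ on $TM_{|\Lambda_i}$.

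Next, for each $\Lambda_i$ one defines its stable set $W^s(\Lambda_i) \subset L$ as in Property \ref{Strat:AxiomA}, consisting of points of $L$ whose forward orbit approaches $\Lambda_i$. By the laminar analogue of Example \ref{lamhyper}, each $W^s(\Lambda_i)$ carries a canonical $C^r$-lamination structure whose leaves extend leaves of $\Lambda_i$ by their stable manifolds. The strong transversality condition, combined with a no-cycle property inherited from plaque-expansiveness on $\Omega(\mathcal L)$, yields a partial order $\Lambda_j \le \Lambda_i \Longleftrightarrow W^u(\Lambda_j) \cap W^s(\Lambda_i) \neq \emptyset$ and the frontier condition. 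Thus $\Sigma := (W^s(\Lambda_i))_i$ is a stratification of laminations on $L$, and $f$ $r$-normally expands each stratum since the normal direction in $M$ contains $E^u_i$, on which $Tf$ dominates the tangential dynamics.

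The principal obstacle, and the core of the proof, is to construct a $C^r$-trellis structure $\mathcal T$ on $(L, \Sigma)$ satisfying hypothesis (ii) of Theorem \ref{th2}. Following the de Melo and Robinson constructions for SA diffeomorphisms, the tubular neighborhood of $W^s(\Lambda_i)$ should be built from local unstable plaques of $\Lambda_i$ and of incident pieces $\Lambda_j \le \Lambda_i$. In the laminar context one must simultaneously ensure coherence with $\mathcal L$; this is possible because the normal splitting $E^s \oplus E^u$ is transverse to $T\mathcal L$ and $Tf$-invariant, allowing a graph-transform construction that produces $C^r$-unstable foliations compatible with $\mathcal L$. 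The foliation condition between tubes of incident strata, the most delicate point, requires the $r$-bunching built into $r$-normal hyperbolicity. Once $\mathcal T$ is constructed, hypothesis (i) follows from $Tf$-invariance of the unstable foliations, (ii) is the normal expansion noted above, and (iii) is the pseudo-orbit trap, which for each stratum reduces via the spectral decomposition to the SA pseudo-orbit hypothesis on $\Omega(\mathcal L)$; plaque-expansiveness of each stratum follows from that of $\Omega(\mathcal L)$ combined with exponential contraction in $E^s_i$. Applying Theorem \ref{th2} through Corollary \ref{cor3} and Remark \ref{r3} then yields $C^r$-persistence of the stratification, and hence of $(L, \mathcal L)$ itself.
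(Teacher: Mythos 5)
There is no proof of this statement in the paper to compare against: it is stated as Conjecture \ref{3}, and the author only records a partial result (Theorem \ref{normallysa}, proved in \cite{PB}, for $1$-normally SA laminations whose leaves are the connected components of a $C^1$-bundle over a \emph{surface}), obtained via a trellis structure built by de Melo together with Remark \ref{r3}. Your proposal reproduces the author's announced program (spectral decomposition of $\Omega(\mathcal L)$, stable sets $W^s(\Lambda_i)$ as strata, Theorem \ref{th2}), but it treats the decisive step as an analogy rather than a proof. The construction of a $C^r$-trellis on $(L,(W^s(\Lambda_i))_i)$ that is coherent with $\mathcal L$, satisfies the foliation condition between tubes of incident strata, and is such that $f$ is controlled (plaques of $\mathcal L_X$ in $V_X$ sent into leaves of $\mathcal L_X$) is exactly what the paper identifies as ``the main difficulty to apply this theorem'', and it is why the proved case is restricted to $r=1$ and two-dimensional base dynamics. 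Your appeal to a graph-transform producing ``$C^r$-unstable foliations compatible with $\mathcal L$'' does not go through in general: for an SA system the unstable tubular families of de Melo and Robinson are only compatible families of disks (de Melo obtains the foliation property because the base is a surface; Robinson's construction is local and topological), the unstable lamination of a basic piece is generically not even $C^1$, and there is no normal-hyperbolicity ``$r$-bunching'' along the base dynamics of the strata --- the $r$-normal hyperbolicity in the SA definition is assumed only on $\Omega(\mathcal L)$, not along $W^s(\Lambda_i)$.

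Two further steps are asserted without argument. First, $r$-normal expansion of each stratum $W^s(\Lambda_i)$ (with leaves the stable sets of leaves of $\Omega(\mathcal L)$) must be verified along the whole stratum, whereas the domination hypotheses are given only over $\Omega(\mathcal L)$; an asymptotic-rate argument is needed and is not supplied. Second, the concluding ``persistence of the stratification, and hence of $(L,\mathcal L)$ itself'' is not automatic: persistence of the strata in the sense of Theorem \ref{th2} gives embeddings stratum by stratum, and one must still reassemble them into a lamination structure close to $\mathcal L$ and preserved by the perturbation; this is precisely where the paper invokes Remark \ref{r3} and additional work in the special case it can handle. As it stands, your text is a reasonable strategy outline consistent with the author's, but it does not close the conjecture; the gap is concentrated in the trellis construction and in the recovery of the full lamination from the persisted stratification.
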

\begin{exem} Let $f$ be a SA diffeomorphism of a manifold $M$. Let $N$ be a compact manifold. Let $\mathcal L$ be the lamination on $M\times N$ whose leaves are written in the form $\{m\}\times N$, for $m\in M$. Let $F$ be the dynamics on $M\times N$ equal to the product of $f$ with the identity of $N$. Then the lamination $\mathcal L$ is normally SA. This conjecture would imply that this lamination is persistent.\end{exem}

Kipping in mind the example \ref{Strat:AxiomA}, we can hope to show the conjecture \ref{3}, by using our main persistence theorem \ref{th2}. With this tool, we have proved in \cite{PB} the following theorem:
\begin{theo}\label{normallysa}
A compact $1$-normally SA lamination, whose leaves are the connected components of a $C^1$-bundle over a surface is $C^1$-persistent.\end{theo}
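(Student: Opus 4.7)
The strategy is to reduce the statement to the main Theorem~\ref{th2} via a stratification of $M$ obtained by pullback from the base $S$. Since $f_b$ satisfies Axiom~$A$ and the strong transversality condition on the compact surface $S$, Property~\ref{Strat:AxiomA} produces a stratification $\Sigma_b=(W^s(\Lambda_i))_i$ of laminations on $S$ indexed by the spectral decomposition of $f_b$. As recalled at the end of Section~\ref{desstruc}, W.~de~Melo's construction for SA surface diffeomorphisms endows $(S,\Sigma_b)$ with a $C^1$-trellis structure $\mathcal{T}_b$ for which $f_b$ is $\mathcal{T}_b$-controlled and the pseudo-orbit condition of Theorem~\ref{th2} holds.

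Next, lift everything to $M$ through $s$. Set $X_i:=s^{-1}(W^s(\Lambda_i))$; because $\mathcal{L}$ is a $C^1$-bundle, each $X_i$ carries a natural lamination whose leaves are the connected components of $s^{-1}(\ell)$ as $\ell$ ranges over the leaves of $W^s(\Lambda_i)$, and $\Sigma:=(X_i)_i$ is a stratification of $M$. Taking $s$-preimages of the tubular neighborhoods of $\mathcal{T}_b$ produces a $C^1$-trellis structure $\mathcal{T}$ on $(M,\Sigma)$. The decisive geometric observation is that $\mathcal{L}$ is a sub-foliation of every tubular lamination $\mathcal{L}_{X_i}$: any map that is $C^1$ along the plaques of $\mathcal{T}$ is automatically $C^1$ along the plaques of $\mathcal{L}$.

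I would then verify the hypotheses of Corollary~\ref{cor3} for $(M,\Sigma,\mathcal{T},f)$. The $\mathcal{T}$-control property and the pseudo-orbit condition project via $s$ to the corresponding properties of $(S,\Sigma_b,\mathcal{T}_b,f_b)$. For the $1$-normal expansion of each $X_i$, the normal direction in $M$ lifts from the unstable direction of $\Lambda_i$ in $S$, which is uniformly expanded by $f_b$; the tangent direction contains $T\mathcal{L}$, and the $1$-normal hyperbolicity of the $\mathcal{L}$-saturated nonwandering set---combined with Proposition~\ref{P1} to obtain a metric globally adapted on $M$---gives the required domination of $T\mathcal{L}$ by the unstable expansion. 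Plaque-expansivity at each $X_i$ follows from expansivity of $f_b|_{\Lambda_i}$ and the bundle structure.

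Applying Theorem~\ref{th2} then yields a $\mathcal{T}$-controlled embedding $i'$ close to the canonical inclusion such that $f'$ preserves $\Sigma$ via $i'$, and Remark~\ref{r3} strengthens this to say that $f'$ sends every point of $i'(V_{X_i}')$ into an $\mathcal{L}_{X_i}$-plaque containing $f$'s image. Since $\mathcal{L}$ sub-foliates each $\mathcal{L}_{X_i}$, the map $i'$ is itself a $C^1$-embedding of $\mathcal{L}$ close to the canonical inclusion. The hardest step is the last one: Theorem~\ref{th2} only guarantees preservation at the coarser scale of the trellis, not of the finer sub-foliation~$\mathcal{L}$. To close the gap I would combine Remark~\ref{r3} with the Hirsch--Pugh--Shub theorem applied to the $\mathcal{L}$-saturated nonwandering set---which is $1$-normally hyperbolic by hypothesis, hence HPS-persistent---to persist $\mathcal{L}$ on this core set, and then propagate the fiber-leaf structure to the wandering region by saturating along the stable and unstable manifolds of the $\Lambda_i$, the $C^1$-bundle structure of $s$ ensuring $C^1$-closeness to the original $\mathcal{L}$.
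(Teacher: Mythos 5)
Your outline correctly identifies the ingredients the paper itself points to (the stratification $(W^s(\Lambda_i))_i$ of the SA base dynamics from Property~\ref{Strat:AxiomA}, de Melo's trellis structure on it, Remark~\ref{r3}, and Theorem~\ref{th2}); note, however, that the paper does not actually prove Theorem~\ref{normallysa} here but defers the proof to \cite{PB}, so the comparison can only be against that indicated strategy. Measured on its own terms, your proposal has two genuine gaps. First, the $1$-normal expansion of the lifted strata $X_i=s^{-1}(W^s(\Lambda_i))$ (laminated by the components of $s^{-1}(W^s(x))$) is asserted, not proved. The hypothesis only gives $1$-normal hyperbolicity of $\mathcal L$ over $\Omega(\mathcal L)$; to get the inequality $\|p\circ Tf^n(v_1)\|\ge C(x)\lambda^n(1+\|Tf^n(v_0)\|)$ on a whole stratum you must control complete forward orbits that linger near intermediate basic pieces $\Lambda_j$ before reaching $\Lambda_i$, where the direction normal to the $X_i$-leaf is only comparable to $E^u_j$ via strong transversality and an inclination-lemma argument, and where $Tf|_{T\mathcal L}$ is unconstrained outside $\Omega(\mathcal L)$ during the transition times. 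Proposition~\ref{P1} cannot help here: it produces an adapted metric \emph{given} normal expansion, it does not establish it. Similarly, hypotheses $(ii)$ and $(iii)$ of Theorem~\ref{th2} for de Melo's tubular families (that plaques near each stratum are sent into leaves, and the pseudo-orbit condition) are exactly the kind of verification the paper calls ``far from obvious'' even in the simpler corner case, and you take them for granted.

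Second, and more fundamentally, the step you yourself flag as hardest is not an argument. Theorem~\ref{th2} (even with Remark~\ref{r3}) only yields that $f'$ sends $i'(x)$ into the $i'$-image of a plaque of the \emph{coarse} lamination $\mathcal L_{X_i}$; it says nothing about preservation of the finer fiber lamination $\mathcal L$, which is the statement to be proved. Your fix --- HPS on $\Omega(\mathcal L)$ plus ``propagating the fiber-leaf structure to the wandering region by saturating along the stable and unstable manifolds'' --- is a declaration, not a construction: one must actually build, inside each persisted coarse leaf and over the whole wandering set, an $f'$-invariant $C^1$ sub-foliation close to the fibers, varying continuously and compatibly across incident strata (this is where the compatible tubular families of de Melo/Robinson and Remark~\ref{r3} do real work, e.g.\ by playing the stable stratification against the dual unstable one --- which your sketch never introduces --- or by a graph-transform/shadowing argument along the strata). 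Regularity of $i'$ along $\mathcal L$-plaques, which you do get from the sub-foliation remark, is beside the point; invariance of the fiber leaves under $f'$ is the content of the theorem, and it is missing from your proof.
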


\begin{rema} This theorem provides non-trivial lamination which are persistent but not normally hyperbolic.\end{rema}
\begin{rema} The proof of this theorem can be found in \cite{PB}. We hope to publish soon a proof where the base of the bundle is not necessarily a surface.
\end{rema}
The above theorem can be written in the following equivalent form:
\begin{theo} Let $s$ be a $C^1$-submersion of a compact manifold $M$ onto a compact surface $S$. Let $\mathcal L$ be the lamination structure on $M$ whose leaves are the connected components of the fibers of $s$.

Let $f$ be a diffeomorphism of $M$ which preserves the lamination $\mathcal L$. Let $f_b\in Diff^1(S)$ be the dynamics induced by $f$ on the leaves spaces of $\mathcal L$.
We suppose that:
\begin{itemize}
\item  $f_b$ satisfies axiom $A$ and the strong transversality condition,
\item the $\mathcal L$-saturated subset generated by the nonwandering set of $f$ in $M$ is $1$-normally hyperbolic.\end{itemize}
Then $\mathcal L$  is $C^1$-persistent.\end{theo}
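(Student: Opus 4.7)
The plan is to apply Corollary~\ref{cor3} together with Remark~\ref{r3} to an auxiliary stratification of laminations on $M$ obtained by lifting, via the submersion $s$, the Axiom~A stratification of the surface $S$. Apply Property~\ref{Strat:AxiomA} to $f_b$: the stable sets $W^s(\Lambda_i)$ of the basic pieces of the spectral decomposition of $f_b$ form a laminar stratification $\Sigma_S$ on $S$. Setting $X_i:=s^{-1}(W^s(\Lambda_i))$ and using $s\circ f = f_b\circ s$, we obtain an $f$-invariant partition $\Sigma_M=(X_i)_i$ of $M$; the frontier condition on $M$ is pulled back from that on $S$, and each $X_i$ carries a natural lamination structure whose leaves are the connected components of $s^{-1}(W^s(x))$ for $x\in\Lambda_i$. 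Since $S$ is a surface, de~Melo's construction \cite{dM} equips $\Sigma_S$ with a $C^1$-trellis $\mathcal T_S = (L^S_i,\mathcal L^S_i)_i$ controlling $f_b$. Pulling back: take $L^M_i := s^{-1}(L^S_i)$ with the lamination structure $\mathcal L^M_i$ whose plaques are connected components of preimages of plaques of $\mathcal L^S_i$. This defines a $C^1$-trellis $\mathcal T_M$ on $(M,\Sigma_M)$, and the commutation $s\circ f = f_b\circ s$ makes $f_{|M}$ into a $\mathcal T_M$-controlled endomorphism.

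Next, I verify the remaining hypotheses of Corollary~\ref{cor3}. For normal expansion of each stratum $X_i$, the normal bundle of $X_i$ in $M$ is the horizontal lift of the unstable bundle $E^u_b$ of $\Lambda_i$ under any connection compatible with $s$. On $\Omega(\mathcal L)$ the splitting $TM|_{\Omega(\mathcal L)}=E^s\oplus T\Omega(\mathcal L)\oplus E^u$ given by the $1$-normal hyperbolicity hypothesis identifies $E^u$ with this horizontal lift, so expansion there is immediate. To extend to all of $X_i$, cover $\Lambda_i$ by a forward-invariant neighborhood in $S$ carrying a $Tf_b$-invariant unstable cone field (a standard consequence of Axiom~A), and lift it via $s$ to a $Tf$-invariant cone field in $TM$ along $X_i$; uniform expansion of vectors inside this cone yields the sought normal expansion. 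Hypothesis~(iii) follows because any $\mathcal L^M_i$-respecting pseudo-orbit of a small $X_i$-neighborhood projects via $s$ to an $\mathcal L^S_i$-respecting pseudo-orbit in an $S$-tubular neighborhood of $W^s(\Lambda_i)$; the adapted filtration constructed in the proof of Property~\ref{Strat:AxiomA} forces the projected pseudo-orbit to remain in $W^s(\Lambda_i)$, so the original lies in $X_i$. Plaque-expansivity of each $X_i$ is reduced to the plaque-expansivity of $\Omega(\mathcal L)$ (given by hypothesis) by a standard shadowing argument: any pseudo-orbit in $X_i$ eventually shadows orbits in $\Omega(\mathcal L)$ because $X_i$ projects to the stable basin of $\Lambda_i$.

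Applying Corollary~\ref{cor3} together with Remark~\ref{r3} yields, for every $f'$ $C^1$-close to $f$, a $\mathcal T_M$-controlled embedding $i'$ close to the inclusion such that $i'(\Sigma_M)$ is preserved by $f'$ and, on a fixed neighborhood $V'_{X_i}$ of each $X_i$, the map $f'$ sends each $i'(x)$ into the image under $i'$ of a small $\mathcal L^M_i$-plaque containing $f(x)$. To extract from this the persistence of $\mathcal L$ itself, observe that the $\mathcal L$-leaves are precisely the common refinement, inside each trellis leaf, of the fibers of the submersion; so the perturbed lamination $\mathcal L'$ on $i'(M)$ is obtained by tracking this fiber-like sub-foliation inside each $i'(L^M_i)$, using the Hirsch--Pugh--Shub persistence of the $1$-normally hyperbolic plaque-expansive lamination $\Omega(\mathcal L)$ as anchor and propagating along the (unstable) trellis leaves to the rest of $M$. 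I expect the main obstacle to be the uniform verification of normal expansion of each $X_i$ outside $\Omega(\mathcal L)$: the hypothesis provides hyperbolicity only on $\Omega(\mathcal L)$, and the invariant cone field on all of $X_i$ must be built by carefully lifting $f_b$-invariant cones from a neighborhood of $\Lambda_i$ in $S$ and controlling how the lift interacts with the non-hyperbolic dynamics of $f$ along $\mathcal L$.
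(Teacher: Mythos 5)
First, a caveat: the paper does not actually prove this theorem here --- it is announced with the proof deferred to \cite{PB}, and the text only indicates the ingredients (theorem \ref{th2}/corollary \ref{cor3}, remark \ref{r3}, and de Melo's trellis \cite{dM} on the stratification $(W^s(\Lambda_i))_i$). Your skeleton --- stratify $M$ by $X_i=s^{-1}(W^s(\Lambda_i))$ with leaves the preimages of stable manifolds, pull back de Melo's trellis through the submersion, and invoke corollary \ref{cor3} with remark \ref{r3} --- is consistent with that announced strategy. But as a proof it has a genuine gap at the decisive step, and a second step is argued by a mechanism that would fail.

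The main gap is the last paragraph. Corollary \ref{cor3} applied to your $\Sigma_M$ produces, for $f'$ close to $f$, an invariant stratification whose strata-leaves are perturbed copies of the center-stable sets $s^{-1}(W^s(x))$; the theorem asks for an $f'$-invariant lamination close to $\mathcal L$, whose leaves are close to the $s$-fibers, on all of $M$. For $f'$ there is no submersion any more, so ``tracking the fiber-like sub-foliation inside each $i'(L^M_i)$'' is precisely the object that has to be constructed, and describing the $\mathcal L$-leaves as ``the common refinement of the fibers'' is circular. Nor can you sidestep this by giving the strata the fiber-leaf lamination from the start: with fiber leaves the normal bundle of a stratum contains the contracted (stable) directions of $f_b$, so those strata are not normally expanded and corollary \ref{cor3} does not apply --- this is exactly why $\mathcal L$ is in general not normally hyperbolic (cf. the remark after theorem \ref{normallysa}) and why the refinement step is the real content of the theorem. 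This is where remark \ref{r3} has to be used in substance (the neighborhoods $V_X'$ on which every $f'$ near $f$ respects the tubular-neighborhood plaques are \emph{uniform} in $f'$), combined with the Hirsch--Pugh--Shub continuation of the compact $1$-normally hyperbolic, plaque-expansive lamination $\Omega(\mathcal L)=s^{-1}(\bigcup_i\Lambda_i)$, to actually build the new fiber-leaves over each stratum and to prove that the resulting partition is a $C^1$ lamination of all of $i'(M)$, coherent across incident strata. None of this is present in your proposal; you cite the right tools but do not use their content.

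The second problem is the verification of normal expansion of the strata. $1$-normal expansion of $X_i$ is a \emph{relative} condition: the normal growth must dominate $\max\big(1,\|Tf^n\|\big)$ along the leaves of $X_i$, and those leaves contain the $s$-fibers, along which nothing bounds $Tf$ except the $1$-normal hyperbolicity of $\Omega(\mathcal L)$. A cone field lifted from $S$ cannot deliver this: $(Ts)^{-1}$ of an unstable cone of $f_b$ contains all of $T\mathcal L$ and its vectors are not uniformly expanded, while a horizontal lift with respect to a connection is not $Tf$-invariant (and is not identified with $E^u$ over $\Omega(\mathcal L)$); in either case the estimate only compares with the base dynamics and never sees the fiber derivative. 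The correct route is to take the invariant bundle $E^u$ over $\Omega(\mathcal L)$ given by the hypothesis, extend the associated dominated cone field to a neighborhood of $s^{-1}(\Lambda_i)$, and use that every forward orbit in $X_i$ enters and remains in that neighborhood, absorbing the finite initial segment into the continuous function $C(x)$ allowed by the definition of normal expansion. Your closing sentence correctly identifies this as the obstacle, but the mechanism you propose (lifting $f_b$-invariant cones from $S$) is exactly the one that does not address it; the reductions of hypothesis $(iii)$ and of plaque-expansiveness to the base and to $\Omega(\mathcal L)$ are likewise only asserted, though these are comparatively minor.
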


\newpage\section{Proof of the persistence of stratifications}

\subsection {Preliminary}\label{prelim}
\subsubsection{Statements and notations}

Along all this chapter, we work -- at least -- under the hypotheses of theorem \ref{th2} with the following notations:
\begin{itemize}
\item we denote by $n$ the dimension of $M$,
\item we denote by $\mathcal V=(V_X)_{X\in \Sigma}$ the neighborhood and the family of neighborhoods adapted to  $f^*$,
\item we denote by $\Sigma':=\{X_1,\dots,X_N\}$ the set of strata of $\Sigma$ which intersect $cl(A')$, indexed such that, for any integers $i\le  j$ of $\{1,\dots,N\}$, if $X_i$ and  $X_j$ are comparable then $X_i\le X_j$,
\item we denote by $d_j$ the dimension of $X_j$. To make lighter the notations, we denote by $(L_j,\mathcal L_j)$ the tubular neighborhood $(L_{X_j},\mathcal L_{X_j})$ and by $V_j$ the neighborhood $V_{X_j}$.
\item Given a compact subset $C$, we denote by $V_C$ an open and precompact neighborhood of $C$, and  
by $(V_C',\hat V_C)$ a pair of open subsets that satisfies 
\[C\subset V_C'\subset cl(V_C')\subset V_C\subset cl(V_C)\subset \hat V_C\]
\end{itemize}

We recall that if $L_j$ intersects $L_k$, then $X_j$ and $X_k$ are comparable and, if $j\le k$, then we have $d_j\le d_k$.

\subsubsection{Construction of vector bundles}

As in theorem \ref{th2}, the trellis structure occurs in a ``germinal'' way, we can restrict each tubular neighborhood of the structure (and so the adapted neighborhood associated) without loss of generality.
Therefore, we allow ourself to restrict the laminations $(L_k,\mathcal L_k)_k$, to open neighborhoods of strata $(X_k)_k$.

For $d\in \{ 0,\dots,n \}$, let $Gr(d,TM)$ be the Grassmannian bundle over $M$ of $d$-plans of $TM$. The Riemannian metric $g$ over $M$ induces -- in a standard way -- a Riemannian metric on this bundle.

Let $j\in \{1,\dots,N\}$ and let
\[N_j': L_j \rightarrow Gr(n-d_j,TM)\]
 \[x\mapsto \big(T_x i(T_x\mathcal{L}_j)\big)^\bot.\]

This map is a continuous lifting of $i_{|L_j}$:
\[\begin{array}{rcl}
 & N_j' & Gr(n-d_j,TM)\\
&\nearrow &\downarrow \\
L_j &\rightarrow  &M\\
&i_{|L_j}&\\
\end{array}\]

The family $(N_j')_j$ satisfies two properties that we like:
\begin{itemize}
\item for all $j\ge k$ and $x\in L_k\cap L_j$, the space $N_j'(x)$ is included in $N_k'(x)$,
\item for all $j\in \{1,\dots ,N\}$ and $x\in L_j$, $N_j'(x)\oplus Ti(T_x\mathcal{L}_j)=T_{i(x)}M$.
\end{itemize}

Nevertheless, these liftings are -- in general -- only continuous. And we would like to use these liftings in order to control the map $f'\mapsto i(f')$ in the following way:
for an open covering $(V_{C_k})_k$ of $A'$, for all $x\in U_k$ and $f'\in V_f$, the point $\exp^{-1}_{i(x)} \circ i(f')(x)$ is well defined and belongs to $N_k'(x)$, with $\exp$ the exponential map associated to the Riemannian metric on $M$.

As we want $i(f')$ to be $C^r$-$\mathcal T$-controlled, we have to change $(N_k')_k$ to a family of $\mathcal T$-controlled $C^r$-liftings $(N_k)_k$ which still satisfy the above properties, by using the following lemma:

\begin{lemm}\label{Ni}
If we reduce the tubular neighborhoods, there exists a family of maps $(N_k)_k$, where $N_k\in Mor^r (\mathcal T_{|L_k},Gr(n-d_k ,TM))$ is a lifting of $i_{|L_k}$, which can be chosen arbitrarily $C^0$-close to $N_k'$ and satisfies:
\begin{enumerate}
\item for all $j\ge k$ and $x\in L_k\cap L_j$, the space $N_j(x)$ is included in $N_k(x)$,
\item for every $x\in L_k$, $N_k(x)\oplus Ti(T_x\mathcal{L}_k)=T_{i(x)}M$.
\end{enumerate}
\end{lemm}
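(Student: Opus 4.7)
The plan is to construct the family $(N_k)_k$ by induction on $k$ from $1$ to $N$ (i.e.\ from the smallest-dimensional stratum upward), at each step producing a $\mathcal T$-controlled $C^r$-lifting $N_k$ of $i_{|L_k}$ valued in the open subset of $Gr(n-d_k,TM)$ consisting of complements to $Ti(T\mathcal L_k)$, satisfying $N_k(x)\subset N_m(x)$ for every $m<k$ with $x\in L_k\cap L_m$, and arbitrarily $C^0$-close to $N_k'$.

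For the base case $k=1$, note that $N_1'$ is a continuous section of $Gr(n-d_1,TM)$ over $L_1$ taking values in the (fiberwise) open set of complements of $Ti(T\mathcal L_1)$. Since there is no induction constraint to satisfy, I would obtain $N_1$ by applying the partition-of-unity machinery for laminations (proposition \ref{part1}, applied to a $\mathcal T_{|L_1}$-controlled partition of unity) to smooth $N_1'$ along the leaves of each tubular neighborhood $\mathcal L_m$ that meets $L_1$; the result is a $\mathcal T_{|L_1}$-controlled $C^r$-morphism to $Gr(n-d_1,TM)$, and by making the mollification fine enough it stays in the open set of complements, hence satisfies condition 2 and is as $C^0$-close to $N_1'$ as we wish.

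For the inductive step, suppose $N_1,\dots,N_{k-1}$ are already built. At a point $x\in L_k$, let $m(x)$ be the maximal index $m<k$ with $x\in L_m$ (if any); by induction the containments $N_{m'}(x)\subset N_m(x)$ already hold for $m\le m'<k$, so the joint constraint $\bigcap_{m<k,\,x\in L_m}N_m(x)$ reduces to $N_{m(x)}(x)$. The foliation condition in the trellis guarantees $Ti(T_x\mathcal L_{m(x)})\subset Ti(T_x\mathcal L_k)$, hence $Q(x):=N_{m(x)}(x)\cap Ti(T_x\mathcal L_k)$ is a $(d_k-d_{m(x)})$-plane inside $N_{m(x)}(x)$, and the admissible $N_k(x)$ are exactly the complements of $Q(x)$ inside $N_{m(x)}(x)$. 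This set of admissible complements forms a fiberwise-open affine bundle, and the continuous candidate $N_k'(x)=(Ti(T_x\mathcal L_k))^\perp\cap N_{m(x)}'(x)$ takes values in it whenever the previous $N_m$ are close enough to $N_m'$ (which I enforce step by step). Using again a $\mathcal T_{|L_k}$-controlled partition of unity, I mollify $N_k'$ along leaves of each $\mathcal L_m$ with $m\le k$ meeting $L_k$; the affine structure of the admissible bundle guarantees that convex combinations are still admissible, so the output is a $C^r$-$\mathcal T_{|L_k}$-controlled lifting satisfying conditions 1 and 2.

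The main obstacle is the affine compatibility: I must check that the mollification, done in $\mathcal T$-controlled distinguished charts of $\mathcal L_m$ for varying $m$, does not break the inclusion $N_k(x)\subset N_m(x)$ across overlaps $L_m\cap L_{m'}\cap L_k$. This is handled by the nested inclusions already guaranteed by the induction hypothesis together with the fact that in a trellis-chart near such $x$, $\mathcal L_{m}$ is a $C^r$-foliation of $\mathcal L_{m'}$ of $\mathcal L_k$, so the relevant subbundles and affine complements depend $C^r$-smoothly on the plaque parameters in a compatible way. After shrinking the $L_m$'s if necessary (allowed by the germ-like use of the trellis), a single partition-of-unity argument over $L_k$ produces $N_k$ with all required properties, completing the induction.
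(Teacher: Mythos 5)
Your base case is fine and coincides with the paper's (which invokes the density of controlled $C^r$-liftings, annex \ref{relstra}, itself proved by exactly the partition-of-unity/centroid smoothing you describe). The problem is in your inductive step, in the way you try to enforce condition 1 \emph{during} the mollification. First, your ``continuous candidate'' is not admissible: since $Ti(T_x\mathcal L_{m(x)})\subset Ti(T_x\mathcal L_k)$, the intersection $(Ti(T_x\mathcal L_k))^\perp\cap N'_{m(x)}(x)$ is just $N_k'(x)=(Ti(T_x\mathcal L_k))^\perp$, and this plane is in general \emph{not contained} in the already-constructed $N_{m(x)}(x)$ (only in $N'_{m(x)}(x)$); being $C^0$-close to $N'_{m(x)}$ does not give the containment, and a complement of $Q(x)$ inside $N_{m(x)}(x)$ must in particular be a subspace of $N_{m(x)}(x)$. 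Second, and more seriously, your ``admissible affine bundle'' is not a bundle over $L_k$: the index $m(x)$, hence the constraint space $N_{m(x)}(x)$, jumps as $x$ crosses the boundaries of the various $L_m$ inside $L_k$. Near such a boundary your leafwise mollification necessarily averages planes subject to the stronger constraint with planes satisfying only the weaker one, and a convex combination of a plane contained in $N_{m'}(x)$ with one that is not will generically fail the strict inclusion exactly at points where it is required. Shrinking the $L_m$'s after the fact does not repair this, because the inclusion is still demanded on all of the (shrunk) $L_m\cap L_k$, including the transition region created by the averaging.

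The paper's proof avoids this by decoupling the two requirements. It first smooths $N_{k+1}'$ with \emph{no} constraint (annex \ref{relstra}), getting a controlled lifting $N_{k+1}^{k+1}$ satisfying only condition 2. It then enforces condition 1 by an explicit correction: for each $j\le k$ it takes the homotopy $p_j(t,\cdot)=\mathrm{id}+t\,(p_{N_j}-\mathrm{id})$ from the identity to the orthogonal projection onto $N_j$, a controlled bump function $\rho_j$ equal to $1$ on a smaller neighborhood $L_j'$ of $X_j$ and supported in $L_j$, and replaces the current lifting by its image under $p_j(\rho_j(x),\cdot)$, by decreasing induction on $j$; the nested inclusions among the $N_j$ already constructed guarantee that a later projection $p_{N_{j'}}$ ($j'<j$) fixes pointwise any plane already pushed into $N_j\subset N_{j'}$, so the corrections do not undo one another. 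The crucial final move, which has no counterpart in your argument, is to \emph{restrict each tubular neighborhood $L_j$ to $L_j'$}, so that the inclusion $N_{k+1}(x)\subset N_j(x)$ is only required where $\rho_j\equiv 1$, i.e.\ where the full projection has actually been applied; the interpolation region $\{0<\rho_j<1\}$, where the inclusion fails, is discarded using the germinal nature of the trellis. Condition 2 survives because all modifications are $C^0$-small perturbations of $N_{k+1}'$. To make your argument work you would have to reorganize it along these lines (smooth first, then project with cut-offs, then shrink), rather than trying to mollify inside a constraint set that is not even continuous over $L_k$.
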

\begin{proof}
Let us construct $(N_k)_k$ by induction on $k$.

\emph{ Step $k=1$}

This step is obtained by applying the annex \ref{relstra} to the continuous lifting $N_1'$, of the $\mathcal T_{|L_1}$-controlled immersion $i_{|L_1}$, to obtain the lifting $N_1\in Mor^r(\mathcal T_{|L_1},Gr(n-d_1,TM))$ of $i_{|L_1}$ arbitrarily $C^0$-close to $N_1'$.

\emph{ Step $k\rightarrow k+1$}

Following the annex \ref{relstra}, there exists a $\mathcal T_{|L_{k+1}}$-controlled lifting $N_{k+1}^{k+1}$ of $i_{|L_{k+1}}$ in $Gr(n-d_{k+1},TM)$, arbitrarily $C^0$-close to $N_{k+1}'$. Thus, the lifting $N_{k+1}^{k+1}$ satisfies the condition $2$.

Let us change this lifting, in order that it satisfies the condition $1$.

For $j\le k$, let $p_{N_j}$ be the orthogonal projection of $i^*TM_{|L_j}$ onto the vector bundle induced by $N_j$.

Let us define the homotopy $p_j$ from the identity of $i^*TM$ to $p_{N_j}$:

 \[p_j\;:\; [0,1]\times i^*TM_{|L_j}\rightarrow i^*TM\]
 \[(t,u)\mapsto u+t\cdot (p_{N_j}(u)-u)\]
As $(L_j,X_j^c)$ is an open covering of $A\setminus \cup_{X_p<X_j} X_p$, it follows from the annex \ref{partstra}, that there exists a function $\rho_j\in Mor(\mathcal T_{p|A\setminus \cup_{X_p<X_j} X_p}, [0,1])$, whose restriction to an open neighborhood $L_j'$ of $X_j$ is equal to $1$ and whose support is included in $L_j$.

We now define by decreasing induction on $j\in\{1,\dots,k\}$, the map 
 \[N_{k+1}^{j} \::\; L_{k+1} \longrightarrow Gr(n-d_{k+1},TM) \]
\[\quad\quad\quad x \mapsto \left\{
          \begin{array}{ll}p_j \big(\{\rho_j(x)\},N_{k+1}^{j+1}(x)\big)
  & \qquad \mathrm{if}\quad x\in L_j \\
  N_{k+1}^{j+1}(x)
 & \qquad  \mathrm{else} \\
\end{array}
        \right. \]
which is well defined and can be constructed arbitrarily close to $N_{k+1}'$, by supposing the lifting $N_{j}$ close enough to $N_j'$ and the lifting $N_{k+1}^{j+1}$ close enough to $N_{k+1}'$. Moreover, we show by induction that $N_{k+1}^j$ is $C^r$-$\mathcal T_{|L_{k+1}}$-controlled, by remarking that $\cup_{X_p<X_j}X_p$ does not intersect $L_{k+1}$.

Let us suppose now the inductive construction achieved. The lifting $N_{k+1}:=N_{k+1}^0$ satisfies therefore condition $2$, if it is chosen close enough to $N_{k+1}'$. For $j\le k$, we restrict $(L_j, \mathcal L_j)$ to $L_j'$. The condition $1$ is then satisfied.\end{proof}

 For $k\in\{0,\dots ,N\}$, we denote by $\pi\;:\; F_k\rightarrow L_k$ the vector bundle induced by $N_k$.
We endow this bundle with the norm induced by the Riemannian metric on $M$.
\subsubsection{Construction of an adapted filtration}
 \emph{We denote by $K$ the compact $cl(A')$.}

\begin{propr}\label{Li'}
There exists a family of compact sets $(K_p)_{p=1}^{N+1}$ which satisfies:

 \begin{itemize}
\item [\ref{Li'}.1] $K=K_1\supset  K_2\supset \cdots \supset K_{N+1}=\emptyset  \; \mathrm{and} \: f^*(K_p)\subset int(K_p),\; \forall p\ge 0$,

\begin{center}
such that, for all $p\le N$, with $C_p:= cl(K_{p}\setminus K_{p+1})$, we have:
\end{center}

\item[\ref{Li'}.2] the compact set $C_p$ is included in the adapted neighborhood $V_p$.

\item [\ref{Li'}.3] 
For any $x\in C_p$, any $u\in Ti(T_x\mathcal L_p)^\bot$, the orthogonal projection of $T_xf(u)$ onto the subspace $Ti(T_{f^*(x)}\mathcal L_p)^\bot$ is nonzero.
\end{itemize}
\end{propr}

\begin{rema} For all $p\in\{1,\dots,N\}$, the compact set $K_p$ is equal to $\cup_{j=p}^N C_j$. It follows from \ref{Li'}.2 that the compact set $K_p$ is included in $\cup_{j\ge p}X_j$.\end{rema}
\begin{figure}[h]
    \centering
\includegraphics[width=7cm]{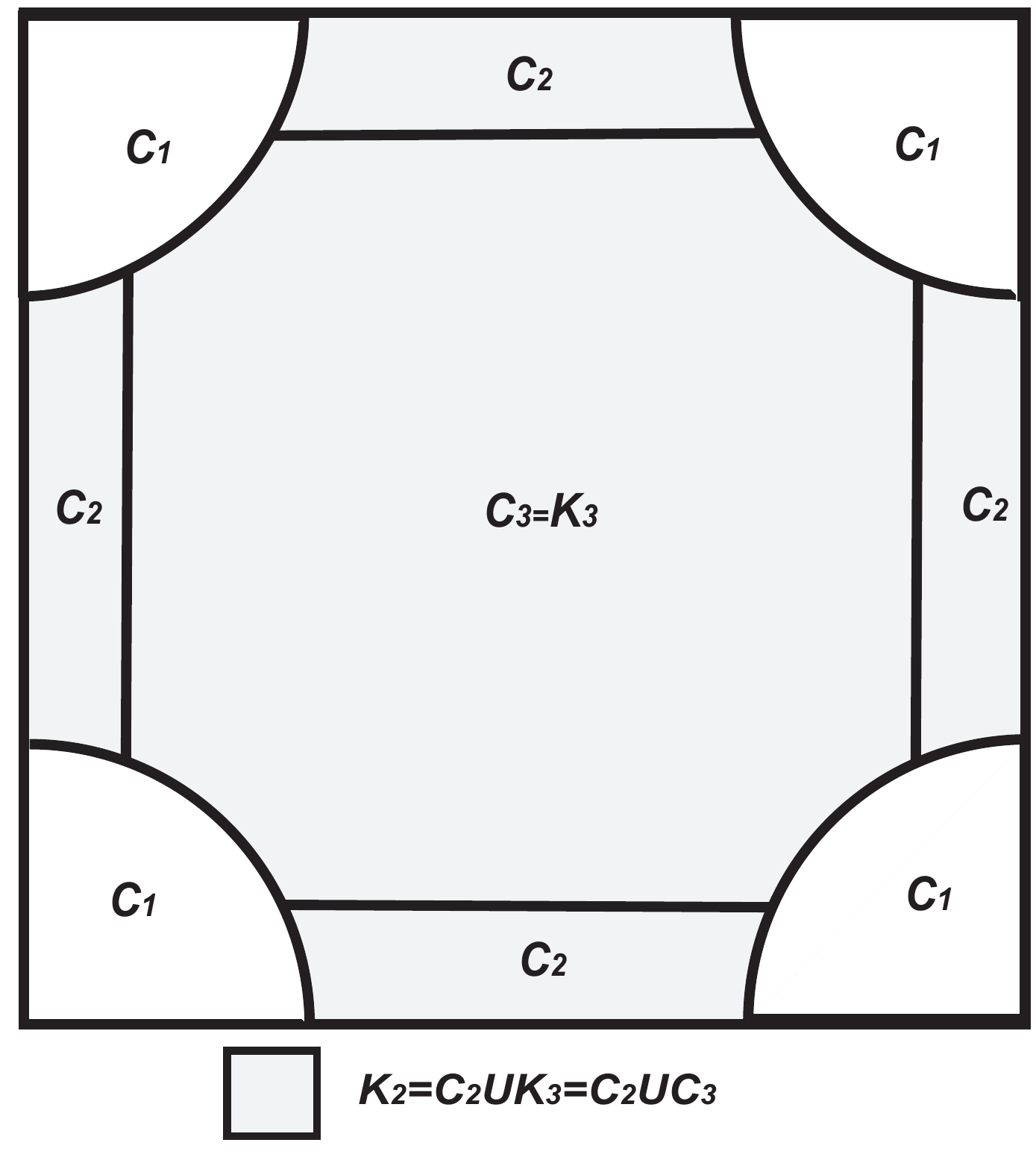}
    \caption{Compact sets $(C_k)_k$ for the simplicial stratification of a square, endowed with the trellis structure drawn figure \ref{coupe treillis}.}
    \label{fig:schemack}
\end{figure}

\begin{proof}
We will prove the existence of an open neighborhood $S_p$ of  $\cup_{j\le p} X_j\cap K$ in the topology induced by $K$, that satisfies 
\begin{equation}\label{0}\emptyset =S_0\subset S_1\subset \cdots \subset S_{N}=K\quad \mathrm{and}\quad {f^{*^{-1}}_{|K}}(cl(S_p))\subset S_p\end{equation}
such that $cl(S_{p}\setminus S_{p-1})$ can be chosen arbitrarily close to $ X_{p+1}\cap K\setminus S_{p-1}$ (the open subset $S_{p-1}$ being  fixed) and satisfies 
\begin{equation}\label{fond}
\bigcap_{n\in\mathbb N} f_{|K}^{*^{-n}}(S_{p}) =    \bigcup_{j\le p} X_j\cap K
\end{equation}

We define then $K_p:= K\setminus S_{p-1}$, for $p\ge 1$. Let us show that (\ref{0}) and (\ref{fond}) are sufficient to prove this property:

\noindent{\it Proof of \ref{Li'}.1}

The first part of \ref{Li'}.1 is obvious by the first part of (\ref{0}).

The compact set $K$ is sent by  $f^*$ into the interior of $K$ and, following the second inclusion of (\ref{0}),  we have
\[ K_p=K\setminus S_{p-1} \subset f_{|K}^{*^{-1}}\big(int(K)\big)\setminus f^{*^{-1}}_{|K}\big(cl(S_p)\big)=f_{|K}^{*^{-1}}\big(int(K)\setminus cl(S_p)\big)\]
\[\Rightarrow f^*(K_p)\subset int(K)\setminus cl(S_p)= int(K_p)\]

\noindent{\it Proof of \ref{Li'}.2}

The compact set $C_p$ is equal to $cl(S_p\setminus S_{p-1})$, which can be chosen arbitrarily close to the compact set $X_{p}\cap K_p=X_p\cap K\setminus S_{p-1}$, which is included in $V_{p}$.

\noindent{\it Proof of \ref{Li'}.3}

By normal expansion, for any $x\in K \cap X_p\setminus S_{p-1}$, any $u\in Ti(T_x\mathcal L_p)$, the orthogonal projection of $T_xf(u)$ onto $Ti(T_x\mathcal  L_p)^\bot$
 is nonzero.
 
 By compactness, statement \ref{Li'}.3 is then  satisfied if $C_p$ is enough close to $X_{p}\cap K_p$.   

\noindent{\it Proof of $(1)$ and $(2)$}

Let us construct, by induction on $p\in\{0,\dots,N\}$, the subset $S_p$ which satisfies $(1)$ and $(2)$. For the rest of the proof, we deal with the topology induced by $K$.

Let $p$ be an integer that satisfies the induction hypothesis. Let $U:=(K\cap V_{p+1})\cup S_p$. Following (\ref{fond}), every orbit which starts in $U\cap K$ without $\tilde K:= K\cap(\cup_{j\le p+1} X_j)$ leaves definitively $S_p$ and, by hypothesis $(iii)$ of theorem \ref{th2}, leaves also $V_{p+1}$. As the set $f_{|K}^{*^{-1}}(\tilde K)$ is equal to $\tilde K$, we have
\[ \bigcap_{n\ge 1}f_{|K}^{*^{-n}}(U)=\tilde K.\]

Let $V_0$ be a compact neighborhood of $\tilde K$ in $U$. We have also 
\[\bigcap_{n\ge 1}f_{|K}^{*^{-n}}(V_0)=\tilde K.\]
By compactness, there exists $M\ge 0$ such that $\bigcap_{n= 1}^Mf_{|K}^{*^{-n}}(V_0)$ is included in $V_0$. We now define 
\[V_1:=  \bigcap_{n= 0}^{M}f_{|K}^{*^{-n}}(V_0).\]
The compact set $V_1$ has its preimage by $f^*_{|K}$ which is included into itself. The decreasing sequence of preimages of $V_1$ converges to $\tilde K$. Moreover, $V_1$ is a neighborhood of $\tilde K$ (for the topology induced by $K$). We would like the preimage $f^{*^{-1}}_{|K}(V_1)$ to be included into the interior of $V_1$. This require the construction of a new neighborhood.

There exists $M'>0$ such that $f^{*^{-M'}}(V_1)$ is included in the interior of $V_1$. We chose a family of open subsets $(V^i)_{i=0}^{M'-1}$ that satisfy

 \[int\big(f^{*^{-M'}}_{|K}(V_1)\big)=:V^0\subset cl(V^0)\subset V^1\subset cl(V^1)\subset V^2\subset \cdots \subset V^{M'-1}:=int(V_1).\]

Let us define the following open neighborhood of $\tilde K$ in $V_0$:
 \[V_2:=\bigcup_{n=0}^{M'-1} f^{*^{-n}}_{|K}(V^n)\]

 We easily check that the preimage by $f^{*}_{|K}$ of $cl(V_2)$ is included in $V_2$, and that 
 \[\bigcap_{n\ge 0}f^{*^{-n}}_{|K}(V_2)=\tilde K\]
Let us define $S_{p+1}:=V_2\cup S_p$, which is a neighborhood of $\tilde K$ and satisfies (\ref{0}). Moreover, $f_{|K}^{*^{-k}}(S_{p+1})$ is equal to $f_{|K}^{*^{-k}}(V_2)\cup f^{*^{-k}}_{|K}(S_p)$, thus $\cap_{n\ge 0}f_{|K}^{*^{-n}}(S_{p+1})$ is equal to $K\cap(\cup_{l\le p+1} X_l)$, which is (\ref{fond}).
The hypothesis is satisfied with $cl(S_{p+1}\setminus S_p)$ arbitrarily close to $K\cap X_{p+1}\setminus S_p$, by replacing $S_{p+1}$ by $f_{|K}^{*^{-n}}(S_{p+1})\cup S_{p}$.
\end{proof}

\subsubsection{Uniformity of exiting chains}
Let $(L,\mathcal{L})$ be a lamination, $V$ be a subset of $L$ and $f^*$ be a continuous map from $V$ to $L$. A \emph{ $\epsilon $-pseudo-chain of $V$ which respects $\mathcal L$} is a sequence
$(p_n )_{ n=0}^N\in {V}^{N+1}$ such that, for all $n\in\{0,\dots ,N-1\}$, the points $p_{n+1}$ and $f^*(p_n)$ are in  a same plaque of $\mathcal{L}$ of diameter less than $\epsilon$. We say that $(p_n )_{ n=0}^N\in L^{N+1}$ {\it starts} from $p_0$, {\it arrives} to $p_N$, and is of \emph{length} $N$.

\begin{propr}\label{chaine}
Let $p\in \{1,\dots,N\}$ and let $\eta$ be the function on $V_p$ associated to $X_p$ in hypothesis $(iii)$ of  theorem \ref{th2}. For every open subset $V$, precompact in $V_p$, and every real $\eta'\in]0,\inf_{V}\eta[$, we have 
\[\cup_{j\ge 0} int(U_j)=V\setminus X_p\]
with $U_j$ the subset of points $x\in V_p$ such that there is no any $\eta'$-pseudo-chain of $V$, which respects $\mathcal L_p$, starts from $x$ and of length $j$.
\end{propr}

\begin{proof}

To show this property, it is sufficient to prove that, for every $x\in V\setminus X_p$, there exists $j\ge 0$ such that $x$ belongs to the interior of $U_j$. Let $W$ be a compact neighborhood of $x$ included in $V\setminus X_p$. Let $W_n$ be the subset $V$ consisting of the arriving points of $\eta'$-pseudo-orbits of $V$, of length $n$, starting from  $x'\in W$, and which respects $\mathcal L_p$.

If for $n$ large enough, the subset $W_n$ is empty, then $x$ belongs to the interior of $U_n$.

Else, we show a contradiction: there exists then a family $\big((x_i^k)_{i=0}^{N_k}\big)_k$ of $\eta'$-pseudo-chains of $V$ which respect $\mathcal L_p$, start from $W$, and such that $(N_k)_k$ converges to the infinity.
We complete $(x_i^k)_{i=0}^{N_k}$ to a family $(x_i^k)_{i\in \mathbb N}\in V^\mathbb N$ with $x_i^k:= x$ for every $i>N_k$.
As $V$ is precompact in $V_p$, by diagonal extraction, a subsequence converges to an $\eta$-pseudo-orbit of $V_p$ which respects $\mathcal L_p$ and starts at $x'\in W$. As $x'$ belongs to $V_p\setminus X_p$, this $\eta$-pseudo-orbit is included in $V_p\setminus X_p$. This contradicts hypothesis $(iii)$ of theorem \ref{th2}.\end{proof}

\subsection{Proof to corollary $\ref{cor2}$}\label{demcor}

Let $V_{A'}$ be a neighborhood of $cl(A')$, such that $f^*(cl(V_{A'}))$ is included into $A'$. By applying theorem \ref{th2} with $V_{A'}$ instead of $A'$, we may suppose that $f'\mapsto f'^*$ is a continuous map from $V_f$ to $End_{f^*\mathcal V'}^r(\mathcal T_{|V_A'})$. Nevertheless, we continue to use all the notations and statements of the preceding sections.

Let $(K_p)_p$ be the compact sets family provided by property \ref{Li'}. Let us show, by decreasing induction on $p$, that by taking $V_f$ sufficiently small, we may suppose that $i(f')_{|K_p}$ is injective, for every $f'\in V_f$. Therefore, the restriction $i(f')_{|K}$ is a homeomorphism onto its image and $i(f')_{|A'}$ is a $\mathcal T_{|A'}$-controlled embedding.

For $p=N$, we first remark that $K_N$ is a compact subset of $X_N$. Let $\epsilon>0$ less than the minimum on $K_N$ of the plaque-expansiveness function of $X_N$. By taking $\epsilon$ sufficiently small, we may suppose that restricted, to any $X_N$-plaque intersecting $K_N$ and with diameter less than $\epsilon$, the map $i(f')$ is injective, for every $f'\in V_f$.
  
We notice that the following map is continuous:
    \[\phi\;:\;  V_f\longrightarrow \mathbb R^+\]
    \[f'\mapsto \min_{(z,z')\in K_N^2,\; d(z,z')\geq \epsilon} d\big( i(f')(z),i(f')(z')\big)\]
As $\phi(f)$ is positive, by taking $V_f$ sufficiently small, we may suppose that $\phi$ is positive on $V_f$.
Let $(x,y)\in K_N^2$ and $f'\in V_f$ such that $i(f')(x)=i(f')(y)$. By commutativity of the diagram, this implies that,
for every $n\ge 0$, the points $i(f')(f'^{*^n}(x))$ and $i(f')(f'^{*^n}(y))$ are equal. It follows from \ref{Li'}.1 and from the continuity of the extension of $f'\mapsto i(f')$ that, by taking $V_f$ sufficiently small, the points $f'^{*^n}(y)$ and $f'^{*^n}(y)$ belong to $K_N$, for every $n\ge 0$. As $\phi(f')$ is positive, this implies that  the points $f'^{*^n}(y)$ and $f'^{*^n}(y)$ are $\epsilon$-distant. By taking $V_f$ sufficiently small, $(f'^{*^n}(x))_n$ and $(f'^{*^n}(y))_n$ are $\epsilon$-pseudo orbits which respect the lamination $X_N$. By plaque-expansiveness and injectivity of the restriction of $i(f')$ to the $\epsilon$-$X_N$-plaques, $x$ and $y$ are equal. This implies that the restriction of $i(f')$ to $K_N$ is injective, for every $f'\in V_f$.

 We suppose the injectivity to be shown on $K_{p+1}$. By proceeding as in the step $p=N$, one shows that the restriction of $i(f')$ to the compact set $K_p\cap X_p$ is injective, for every $f'\in V_f$.

 Let $(x,y)\in K_p^2$ and $f'\in V_f$ that satisfy 
\[i(f')(x)=i(f')(y).\]

By taking $V_f$ sufficiently small, we may suppose that, for every $f''\in V_f$, the compact sets $i(f'')(K_p\cap X_p)$ and $i(f'')(K_{p+1})$ are disjoint, and $f''^*(K_p)$ is contained in $K_p$.

If $x$ belongs to $ X_p$, by commutativity of the diagram, we have 

\[\forall n\geq 0,\; i(f')\circ f^{'*^n}(x)=i(f')\circ f^{'*^n}(y)\Rightarrow \forall n\ge 0,\; f^{'*^n}(y)\in K_p\setminus K_{p+1}\subset C_p\]

By taking $V_f$ sufficiently small and by the compactness of $C_p$ in $V_p$, we may suppose that $(f'^{*^n}(y))_n$ is an  $\eta$-pseudo-orbit which respects $\mathcal L_p$, with $\eta$ the function on $V_p$ provided by hypothesis $(iii)$ of theorem \ref{th2}. Therefore, following this hypothesis $(iii)$, $y$ belongs to $X_p\cap C_p$. Thus, $x$ and $y$ are equal.

Let us treat the case where nether $x$ nether $y$ belongs to  $X_p$. We fix a compact neighborhood $V_{C_p}$ of $C_p$ in $V_p'$, and we note that:
\begin{enumerate}
    \item By taking $V_f$ sufficiently small, it follows from the local inversion theorem and from the compactness of $V_{ C_p}$ that there exists $\epsilon>0$, which does not depend on $f'\in V_f$, such that the restriction of $i(f')$ to any plaque of $\mathcal L_p$, with diameter less than $\epsilon$ and nonempty intersection with $V_{ C_p}$, is an embedding.

    \item By taking $\epsilon$ and then $V_f$ sufficiently small, it follows from \ref{Li'}.3 that there exists $\epsilon>0$ such that for any pair $(x',y')\in V_{ C_p}^2$ satisfying $f'^{*}(x')=f'^{*}(y')$ and $d(x',y')<\epsilon$, the points $x$ and $y$ belong to a same plaque of $\mathcal L_p$ whose diameter is less than $\epsilon$. We can suppose moreover that the open set $V_{ C_p}$ contains the $\epsilon $-neighborhood of $C_p$.
    \item We notice that the following map is continuous:
    \[\phi\;:\;  V_f\longrightarrow \mathbb R^+\]
    \[f'\mapsto \min_{(z,z')\in K_p^2,\; d(z,z')\geq \epsilon} d\big( i(f')(z),i(f')(z')\big)\]
As $\phi(f)$ is positive, by taking $V_f$ sufficiently small, for every $ f'\in V_f$, the real number $\phi(f')$ is also positive.
\end{enumerate}
Since $i(f')(x)$ is equal to $i(f')(y)$, by commutativity of the diagram, for every  $ n\geq 0$, the point $i(f')\circ f'^{*^n}(x)$ is equal to $i(f')\circ f'^{*^n}(y)$. It follows from 3) that, for $ n\geq 0$, we have $d(f'^{*^n}(x),f'^{*^n}(y))< \epsilon$.

By tacking $V_f$ sufficiently small, it follows from hypothesis $(iii)$ and \ref{Li'}.2, as  neither $x$ nor $y$ belongs to $X_p$, that there exists a minimal integer $M$ such that $f'^{*^M}(x)$ and $f'^{*^M}(y)$ belong to $K_{p+1}$. Using the induction hypothesis, the point  $f'^{*^M}(x)$ is equal to $f'^{*^M}(y)$. Moreover, by definition of $M$, the points $f'^{*^{M-1}}(x)$ and $f'^{*^{M-1}}(y)$ belong to the $\epsilon$-neighborhood of $C_p$ and so to  $V_{ C_p}$. Using 2) then 1), we have 
\[f'^{*^{M-1}}(x)=f'^{*^{M-1}}(y)\]
By decreasing induction, using 3), then 2), and finally 1), we have 
\[\forall n\leq N,\;  f'^{*^n}(x)=f'^{*^n}(y)\]
Thus, $x$ and $y$ are equal.
\begin{flushright}
$\square$
\end{flushright}

\subsection{Proof of main theorem $\ref{th2}$}

\subsubsection{Fundamental property of dynamics on $K_p$}

We denote by $\exp$ the exponential map associated to a complete Riemannian metric on $M$.

Let $\epsilon\in C^\infty(M,\mathbb R^+_*)$ be a positive function less than the injectivity radius of the exponential map.
\[\mathrm{Let}\;Exp\;:\; i^*TM\rightarrow M\]
\[(x,v)\mapsto \exp_{i(x)}\left(\epsilon\circ i(x)\cdot \frac{v}{\sqrt{1+\|v\|^2}}\right)\]

For all $p\in \{1,\dots, N\}$ and $x\in  L_p$, let $\mathcal F_{px}$ be the submanifold $Exp(F_{px})$ and let $\mathcal F_{px}^{\eta'}$ be the submanifold $Exp(B_{F_{px}}(0,\eta'))$.

\label{prop fonda}

We will prove by decreasing induction on $p$ the
\begin{proprf} For every $p\in\{1,\dots,N+1\}$, there exist:
\begin{itemize}
\item a real $\eta'>0$ and a neighborhood $V_f$ of $f\in End^r(M)$, both arbitrarily small,
\item an open neighborhood $A_p$ of $K_p$, which is precompact in $\cup_{q\ge p} X_q$ and whose closure is sent by $f^*$ into $int(K_p)$,
\item a family of neighborhoods $\mathcal V^p:=(V_X)_{X\in \Sigma_{|A_p}}$ adapted to $f^*_{|A_p}$,
\item a continuous map 
\[V_f\rightarrow End_{f^*_{|A_p} \mathcal V^p}^r (\mathcal T_{|A_p})\times Mor^r(\mathcal T,M)\]
\[f'\longmapsto (f_p'^*,i_p(f'))\]
\end{itemize}
that satisfy:\begin{enumerate}
\item $f_p^*=f^*_{|A_p}$ and $i_p(f)=i$,
\item \label{hatf}
the diagram $\begin{array}{rcccl}
&&f'&&\\
&M&\rightarrow&M&\\
i_p(f')&\uparrow &&\uparrow &i_p(f')\\
&A_p&\rightarrow&A_p&\\
&&f_p^{'^*}&&\\
\end{array}$ commutes,
\item \label{Im} the restriction of $i_p(f')$ to $A_p$ is an immersion,

\item  \label{N} there exists a neighborhood of $f^*(C_k)$, which does not depend on $f'\in V_f$, whose points $x$ are sent by $i_p(f')$ into $\mathcal F_{kx}$.\\
\begin{center} For every $j\ge p$, let $X_j^p$ be the stratum of $\Sigma_{|A_p}$ associated to $X_j\in\Sigma'$.\end{center}

\item \label{cp} For every $j\ge p$, the subset $V_{X_j^p}$ is a neighborhood of $C_j$, and for all $x\in V_{X_j^p}$ and $f'\in V_f$, the point $f'^*_p(x)$ belongs  to the set \footnote{We recall that $\mathcal L_{jy}^\delta$ denote the union of the plaques of $\mathcal L_j$ containing $y\in L_j$ and of diameter less than $\delta>0$.} $\mathcal L_{jf^*(x)}^{\eta'}$.

\end{enumerate}
\end{proprf}

The link between $\eta'$ and $V_f$ is the following: $\eta'$ will be chose small enough, then  $V_f$ will be chosen small enough regarding to $\eta'$.

\subsubsection{Proof of main theorem \ref{th2} by admitting the fundamental property}\label{zzzzz}
Let us show that, for $p=1$, the above property is sufficient to show theorem \ref{th2}. For every $j\ge 1$, we denote by $X_j'$ the stratum of $\Sigma_{|A'}$ associated to $X_j$.

As $cl(A')$ is a compact subset included in $A_1$ sent by $f^*$ into $A'$, by taking $\eta'$ and  $V_f$ sufficiently small, we may suppose that, for all $f'\in V_f$, we have \[d(A'^c,f'^{*}_1(A'))>\eta'.\]

Therefore, it follows from fundamental property \ref{Im}, that we can define the following continuous map:
\[V_f\rightarrow End_{f^*_{|A'}\mathcal V'}^r(\mathcal T_{|A'})\times Im^r(\mathcal T_{|A'},M)\]
\[f'\mapsto \big(f'^*:=f'^*_{1|A'},i(f'):=i_1(f')_{|A'}\big)\]
\[\mathrm{with\; for}\; j\ge 1,\; V_{X_j'}:= V_{X_j^1}\cap A'\;\mathrm{and}\; \mathcal V':= (V_{X})_{X\in \Sigma_{|A'}}.\]

Conclusion $(i)$ of theorem \ref{th2} is a simple consequence of fundamental property  \ref{hatf}.

Conclusion $(iii)$ of theorem \ref{th2} for the stratum $X_p'$ can be shown by induction on $p\ge 1$.

For every $p\ge 1$, by restricting $V_{X_p^1}$, we may suppose that $V_{X_p^1}\cap K_p$ is precompact in $V_p$. We may also suppose that $\eta'$ is less than the minimum on $V_{X_p^1}\cap K_p$ of the function $\eta$ associated to $X_p$ in hypothesis $(iii)$.

The step $p=1$ is then obvious. We now consider $p>1$.

As $K_p$ is sent by $f^*$ into its interior, by taking $V_f$ and $\eta'$ sufficiently small, every $\eta'$-$f'^*$-pseudo-orbit of $V_{X_p'}$, which respects $\mathcal L_p$, and starts in $V_{X_p'}\cap K_p$ is included in $K_p$, and by hypothesis $(iii)$, is necessarily included in $X_p$.

In the other hand, by taking $V_f$ sufficiently small, it follows from  \ref{Li'}.1 that we have 
\begin{equation}\label{aaaa}A'\cap K_p\subset int\big(f'^{*^{-1}}(A'\cap K_p)\big)\end{equation}
and we can show that
\begin{equation}\label{aaaaa}\cup_{n\ge 0}f'^{*^{-n}}(A' \cap K_p)\supset V_{X_p'}.\end{equation}

Because, else there exists $x\in V_{X_p'}$ having its $f'^*$-orbit which does not intersect $K_p$. Let $q<p$ be maximal such that the orbit of $x$ intersects $C_q$. The point $x$ cannot belong to  $X_q$ ; so its orbit leaves necessarily $C_q$, by the hypothesis of induction, fundamental property \ref{cp} and the choice of $\eta'$. On the other hand, the orbit of $x$ intersects $K_q$, so eventually lands in $K_q$ but does not intersect $K_p$. Therefore its orbit intersects $C_{q'}$ with $p>q'>q$. This
is a contradiction with the maximality of $q$.

By (\ref{aaaa}) and (\ref{aaaaa}), we can build a continuous and positive function $\eta''$ on $V_{X'_p}$, which is less than  $\eta'$ and such that: for all $n\ge 1$, $x\in f'^{*^{-n}}(A'\cap K_p)\cap V_{X_p'}$, and $x_1\in V_{X_p'}\cap \mathcal L_{pf'^*(x)}^{\eta''(x)}$, the set $V_{X'_p}\cap \mathcal L_{pf'^*(x_1)}^{\eta''(x_1)}$ is included in $f'^{*^{-n+1}}(A' \cap K_p)\cap V_{X'_p}$. Such a function $\eta''$ satisfies conclusion $(iii)$ of theorem \ref{th2}, because all
$\eta''$-pseudo-orbits of $f'^*$ in $V_{X_p'}$ belong eventually to $V_{X_p'}\cap K_p$.

Only the proof of conclusion $(ii)$ remains. By fundamental properties \ref{hatf}, \ref{Im}, and \ref{cp}, the lamination $X_p'$, is immersed by $i(f')$ and preserved by $f'\in V_f$, for every $p\ge 1$.
By continuity of $f'\mapsto (i_1(f'),f'^*)$ and the $r$-normal expansion, we may restrict $V_f$ such that the endomorphism $f'$ uniformly $r$-normally expands the lamination $X^1_{p}$ over the compact set $X_p\cap K_p$. It follows from  (\ref{aaaa}) and (\ref{aaaaa}), that the endomorphism $f'$ $r$-normally expands the immersed lamination $ X_p'$. Thus, conclusion $(ii)$ of theorem \ref{th2} is satisfied.

\subsubsection{Step p=N+1}

By taking $A_{N+1}:=\emptyset$, $V_f:=End^r(M)$, and $i(f'):=i$ for every $f'\in V_f$, this step is obviously satisfied.

\subsubsection{Step p+1 $\Rightarrow$ step p}
Naively, the idea of the proof is, for every $f'\in V_f$, to glue $i_{p+1}(f')$ with its pull back given by the following lemma:
\begin{lemm}\label{prelem6}
By restricting $\eta'$ and then $V_f$, there exist an open precompact neighborhood  $\hat V_{C_p}$ of $C_p$ in $V_p$ 
such that, for any small neighborhood $\hat {A_p}$ of $K_p$, there exist
a neighborhood $V_i$ of $i_{|\hat A_p}\in Mor^r(\mathcal T_{|\hat A_p},M)$ and a continuous map
\[S^0\;:\; V_f\times V_i\rightarrow Mor^r(\mathcal T_{|\hat V_{C_p}},M)\]
satisfying:
\begin{enumerate}
\item the morphism $S^0(f,i_{|\hat A_p})$ is equal to $i_{|\hat V_{C_p}}$,

for all $x\in L_p$ and $f'\in V_f$,
\item \label{preconc4}  the preimage of $i'(\mathcal L_{pf^*(x)}^{\eta'})$ by $f'$ intersects $\mathcal F_{px}^{\eta'}$ at a unique point $S^0(f',i')(x)$, for all $f'\in V_f$ and $i'\in V_i$.

 Let $f'^*_{i'}(x)\in \mathcal L_{pf^*(x)}^{\eta'}$ be defined by 
 \[f'\circ S^0(f',i')(x)=i'\big(f'^*_{i'}(x)\big).\]
\item\label{preim} 
If $i'$ is a controlled immersion around $f_{i'}'^*(x)$, then $S_0(f',i')(x)$ is a controlled immersion around $x$.
\end{enumerate}

\end{lemm}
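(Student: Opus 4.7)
The plan is to define $S^0(f',i')(x)$, for $x$ in a precompact neighborhood $\hat V_{C_p}$ of $C_p$ in $V_p$, as the unique point of the transverse disk $\mathcal F_{px}^{\eta'}$ whose image under $f'$ lies in the plaque $i'(\mathcal L_{p\,f^*(x)}^{\eta'})$, and then to verify that this implicitly defined map depends $\mathcal T$-controlledly on $x$ and continuously on $(f',i')$. The starting point is property \ref{Li'}.3: at every $x\in C_p$, the restriction of $p_\bot\circ T_xf$ to $F_{px}=Ti(T_x\mathcal L_p)^\bot$ is injective, and since both sides have dimension $n-d_p$ it is in fact a linear isomorphism onto $F_{p\,f^*(x)}$. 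Equivalently, $f(\mathcal F_{px})$ meets $i(\mathcal L_{p\,f^*(x)})$ transversally at $f(x)=i\circ f^*(x)$, with $f(x)$ as isolated intersection. Compactness of $C_p$ and continuity of all the data then yield $\eta'>0$, a precompact neighborhood $\hat V_{C_p}$ of $C_p$ in $V_p$, a neighborhood $V_f$ of $f$, and a neighborhood $V_i$ of $i_{|\hat A_p}$, such that this transversality and the uniqueness of the intersection persist uniformly in $x\in \hat V_{C_p}$ and $(f',i')\in V_f\times V_i$; we also shrink $\hat A_p$ so that $f^*(\hat V_{C_p})\subset \hat A_p$, so that $i'(\mathcal L_{p\,f^*(x)}^{\eta'})$ is defined.

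To implement this I apply a parametric implicit function theorem. Choose locally a $C^r$ transversal section $\tau_{i',y}$ to $i'(\mathcal L_{py}^{\eta'})$ at $i'(y)$, depending $C^r$ on $y\in\hat A_p$ and continuously on $i'\in V_i$, and set
\[\Psi(x,f',i',v):=\tau_{i',f^*(x)}^{-1}\bigl(f'(\mathrm{Exp}_x(v))\bigr)\in F_{p\,f^*(x)},\qquad v\in B_{F_{px}}(0,\eta').\]
Then $\Psi(x,f,i,0)=0$ and $\partial_v\Psi(x,f,i,0)=p_\bot\circ T_xf$ is invertible. The standard IFT with parameters yields a unique $v=v(x,f',i')\in B_{F_{px}}(0,\eta')$ with $\Psi=0$; set $S^0(f',i')(x):=\mathrm{Exp}_x(v(x,f',i'))$. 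Conclusion 1 holds since $v(\cdot,f,i)\equiv 0$, and conclusion \ref{preconc4} is the uniqueness clause.

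Next I check that $S^0(f',i')$ lies in $Mor^r(\mathcal T_{|\hat V_{C_p}},M)$. The map $\Psi$ is $C^r$ along the plaques of every $\mathcal L_j$ with $j\ge p$ meeting $\hat V_{C_p}$: the bundle $F_p$ is $\mathcal T$-controlled by Lemma \ref{Ni}, hence varies $C^r$ along such plaques thanks to the foliation condition of the trellis; the endomorphism $f^*$ is $\mathcal T$-controlled on its adapted neighborhood, so $f^*$ is a $C^r$-morphism of laminations along the same plaques; $f'\in End^r(M)$ is $C^r$ on $M$; and $i'\in Mor^r(\mathcal T_{|\hat A_p},M)$ is $C^r$ along plaques. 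The $C^r$-continuous dependence of IFT solutions on parameters therefore gives $v$, hence $S^0(f',i')$, as a $\mathcal T_{|\hat V_{C_p}}$-controlled $C^r$-morphism, and continuity of $(f',i')\mapsto S^0(f',i')$ into $Mor^r(\mathcal T_{|\hat V_{C_p}},M)$.

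For conclusion \ref{preim}, differentiate $f'\circ S^0(f',i')(x)=i'(f'^*_{i'}(x))$ along a plaque of $\mathcal L_j$ at $x$, for $j\ge p$. The same IFT provides $f'^*_{i'}$ as a $\mathcal T$-controlled $C^r$-morphism and yields $Tf'\cdot\partial_j S^0=Ti'\cdot \partial_j f'^*_{i'}$. Assuming $i'$ is a controlled immersion near $f'^*_{i'}(x)$, so that $Ti'_{|T\mathcal L_j}$ is injective there, and using that $Tf'$ is close to $Tf$ while the transversality $Tf(F_{px})\oplus Ti(T_{f^*(x)}\mathcal L_j)=T_{f(x)}M$ persists under perturbation, one deduces injectivity of $\partial_j S^0$. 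The hard part, the one I would expect to require the most care, is precisely the last step: doing this bookkeeping \emph{simultaneously} for all $j\ge p$ with $X_p\le X_j$, and ensuring that the IFT outputs assemble into a genuine $\mathcal T$-controlled morphism on $\hat V_{C_p}$, rather than merely a stratified map that happens to be $C^r$ along each individual lamination — this is where the foliation compatibility built into the trellis structure, via Lemma \ref{Ni} and the $\mathcal T$-control of $f^*$, is indispensable.
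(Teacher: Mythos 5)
Your construction of $S^0$ and the regularity argument follow the paper's own route: the paper also defines $S^0(f',i')(x)$ as the unique transverse intersection point of $\mathcal F_{px}^{\eta'}$ with $f'^{-1}\big(i'(\mathcal L_{pf^*(x)}^{\eta'})\big)$, gets uniformity from \ref{Li'}.3 and compactness of $C_p$, and proves $\mathcal T$-controlled $C^r$-regularity and continuity in $(f',i')$ by exactly the transversality/implicit-function and holonomy bookkeeping you sketch, using the foliation condition of the trellis, lemma \ref{Ni} and the control of $f^*$; your parametric IFT is the same device (modulo the fact that $\tau_{i',y}$ must be a plaque-defining submersion, vanishing on the whole image $i'(\mathcal L_{py}^{\eta'})$, not a transversal section through the single point $i'(y)$, and that it is $\eta'$ and $\hat V_{C_p}$, not $\hat A_p$, that one shrinks so that the $\eta'$-plaques around $f^*(\hat V_{C_p})$ stay inside $K_p\subset \hat A_p$).

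The genuine gap is in your argument for conclusion 3. From $Tf'\circ\partial_{T_x\mathcal L_j}S^0=Ti'\circ\partial_{T_x\mathcal L_j}f'^*_{i'}$ and injectivity of $Ti'$ you only get that the kernel of $\partial S^0$ is contained in the kernel of $\partial f'^*_{i'}$, and this is inconclusive precisely in the leafwise directions: $f$ is an endomorphism, so $Tf'$ and $Tf^*$ may be singular along the leaves of $\mathcal L_p$ (e.g.\ $z\mapsto z^4$ inside the disk stratum of the Viana example), hence the differentiated relation cannot exclude $\partial S^0(u)=0$ for a nonzero $u\in T_x\mathcal L_p$. The paper treats these directions by a different mechanism: since $S^0(f,i)=i$ and $S^0$ is continuous into $Mor^r$, after shrinking $\hat V_{C_p}$, $V_f$, $V_i$ the map $S^0(f',i')$ is an immersion of $\mathcal L_{p|\hat V_{C_p}}$, and only for $u\in T_xX_j\setminus T_x\mathcal L_p$ does it use the differentiated relation. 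Moreover, for those transverse directions the needed input is not the transversality $Tf(F_{px})\oplus Ti(T_{f^*(x)}\mathcal L_j)=T_{f(x)}M$ you invoke (which is not even a direct sum when $d_j>d_p$), but property \ref{pro2}.3: it gives $Tf\circ Ti(u)\notin Ti(T_{f^*(x)}\mathcal L_p)$, i.e.\ $T_xf^*(u)\notin T_{f^*(x)}\mathcal L_p$; since $f'^*_{i'}$ belongs to $Mor_{f^*}(\mathcal L_{p|\hat V_{C_p}},\mathcal L_p)$, its transverse component in a foliated chart of $\mathcal L_p$ inside $\mathcal L_j$ coincides with that of $f^*$, so $Tf'^*_{i'}(u)\ne 0$, and only then do injectivity of $Ti'$ and the relation yield $\partial S^0(u)\ne 0$. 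Without these two steps (the immersion of $\mathcal L_p$ by continuity from the unperturbed case, and \ref{pro2}.3 combined with the fact that $f'^*_{i'}$ is equivalent to $f^*$) your deduction of the injectivity of $\partial_{T\mathcal L_j}S^0$ does not go through.
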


\begin{proof}
A small neighborhood $\hat V_{C_p}$ of $C_p$ is precompact in $V_p$. We may suppose  $\eta'>0$ and the compact neighborhood $cl(\hat V_{C_p})$ small enough such that, by \ref{Li'}.3, the restriction of $f'$ to $\mathcal F_{px}^{\eta'}$ is a diffeomorphism onto its image, and this image intersects transversally at a unique point the image of the plaque $\mathcal L_{pf^*(x)}^{\eta'}$ by $i'$, for all $x\in cl(\hat V_{C_p})$, $i'$ $C^r$-close to $i$ and $f'$ $C^r$-close to $f$.

Writing this intersection point in the form 
\[f'(v)=i'(x'),\]
\[\mathrm{we\; define}\; \left\{\begin{array}{l} 
f'^*_{i'}(x):= x'\in \mathcal L_{pf^*(x)}^{\eta'}\\
S^0(f',i')(x):=v\in \mathcal F_{px}^{\eta'}\end{array}\right.\]
\begin{figure}[h]
    \centering
        \includegraphics{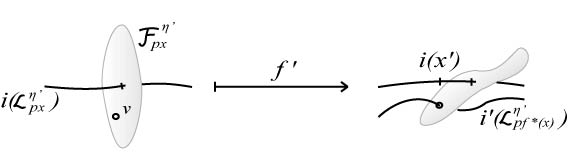}
        
    \caption{Definition of $S^0$.}
    \label{cexple}
\end{figure}

Such a map $S^0$ satisfies conclusions 1 and \ref{preconc4} of lemma \ref{prelem6}.
Let us show that $S^0$ takes continuously its values in the set of morphisms from the lamination $\mathcal L_{p|\hat V_{C_p}}$ into $M$.   

Let $x\in cl(C_p)$ and let $(U_y,\phi_y)\in \mathcal L_{p|\hat V_{C_p}}$ be a chart of a neighborhood of $y:=f^*(x)$.
We may suppose that $\phi_y$ can be written in the form 
\[\phi_y\;: \; U_y\rightarrow \mathbb R^{d_p}\times T_y\]
where $T_y$ is a locally compact metric space. Let $(u_y,t_y)$ be defined by
\[\phi_y(y)=:(u_y,t_y).\]
We remark that $\mathbb R^{d_p}$ is $C^r$-immersed by $\psi:= u\in \mathbb R^{d_p} \mapsto i\circ \phi^{-1}(u,t_y)$.

As we saw, the endomorphism $f$, restricted to a small neighborhood of $i(x)$, is transverse to the above immersed manifold, at $z:=f\circ i(x)$.
In other words,
\[Tf(T_{x}M)+T\Psi(T_{u_y}\mathbb R^{d_p})=T_zM.\]

Thus, by transversality, there exist open neighborhoods $V_{u_y}$ of $u_y\in\mathbb R^d$
 and $V_{i(x)}$ of $i(x)\in M$ such that the preimage by $f_{|V_{i(x)}}$ of $\psi(V_{u_y})$ is a
 $C^r$-submanifold.
Moreover, such a submanifold depends continuously on $f$ and $\psi$, with respect to the $C^r$-topologies.

More precisely, there exist neighborhoods  $V_{u_y}$ of $u_y$,
$V_f$ of $f\in End^r(M)$, $V_\psi$ of $\psi\in C^r(\mathbb R^{d_p},M)$, and $V_{i(x)}$ of $i(x)$
such that, for all $f'\in V_f$ and $\psi'\in V_\psi$, the map $f'_{|V_{i(x)}}$ is transverse to $\Psi'_{|V_{u_y}}$ and the preimage by $f'_{|V_{i(x)}}$ of $\psi'(V_{u_y})$ is a manifold which depends continuously on $f'$ and $\psi'$, in the compact-open $C^r$-topologies.

There exist neighborhoods $V_{t_y}$ of $t_y$ in $T_y$ and $V_i$ of $i\in Im^r(\mathcal T, M)$, such that
\[\psi_{i',t}\;:\; u\in \mathbb R^d\mapsto i'\circ \phi(u,t)\]
belongs to $V_\psi$, for all $t\in T_y$ and $i'\in V_i$.

Thus, the preimage by every $f'\in V_f$, restricted to $V_{i(x)}$, of the plaque $\mathcal L_{t}:= \phi^{-1}_y(V_{u_y}\times \{t\})$,
immersed by $i'\in V_i$, depends $C^r$-continuously on $f'$, $i'$, and $t'$.

Let $(U_x,\phi_x)$ be a chart of a neighborhood of $x$.
 Let us suppose that $\phi_x$ can be written in the form 
\[\phi_x\;: \; U_x\rightarrow \mathbb R^{d_p}\times T_x\]
where $T_x$ is a locally compact metric space. We define 
\[(u_x,t_x):=\phi_x(x)\; \mathrm{and}\; x_t:=\phi_x^{-1}(u_x, t), \; \forall t\in T_x.\]

For $V_{i(x)}$, and then $\eta'>0$ and $U_x$ small enough, the manifolds $(\mathcal F_{px'}^{\eta'})_{x'\in \mathcal L_{px_t}^{\eta'}}$
 are open subsets of leaves of a $C^r$-foliation on $V_{i(x)}$, which depends $C^r$-continuously on $t\in T_x$. We may suppose $U_x$ small enough to have its closure sent by $f^*$ into $\phi_y^{-1}(V_{u_y}\times V_{t_y})$.
 
For all $\eta'>0$ and then $V_i$ and $V_f$ small enough, each submanifold $\mathcal F_{px'}^{\eta'}$
 intersects transversally at a unique point  the submanifold $f'^{-1}_{|V_{i(x)}}\big(i'(\mathcal L_{t'})\big)$, where $t'$ is the second  coordinate of $\phi_y\circ f^*(x')$ and $x'$ belongs to $U_x$.
As we know $S^0(f',i')(x')$ is this intersection point.
 
In other words, $S^0(f',i')_{|\mathcal L_{x_t}^{\eta'}}$ is the composition of $i$ with the holonomy along the $C^r$-foliation $(\mathcal F_{px'}^{\eta'})_{x'\in \mathcal L_{px_t}^{\eta'}}$, from $i(\mathcal L_{px_t}^{\eta'})$ to the transverse section $f'^{-1}_{|V_{i(x)}}(i'(\mathcal L_{t'}))$, where $t'$ is the second coordinate of $\phi_y\circ f^*(x_t)$.
 
 Thus, the map $S^0(f',i')$ is $C^r$ along the $\mathcal L_p$-plaque contained in $U_x$.
 As these foliations and manifolds vary $C^r$-continuously with $x'\in U_x$, the map $S^0(f',i')$ is a
 $\mathcal L_{p|U_x}$-morphism into $M$.
 
  These foliations and manifolds also depend $C^r$-continuously on $x'\in U_x$, $i'\in V_i$, and $f'\in V_f$. Thus, the map
  \[S^0\;:\;(f',i')\in V_f\times V_i \mapsto S^0(f',i')\]
  is continuous into $Mor^r(\mathcal L_{p|U_x}, M)$.

  As, $C_p$ is compact, we get a finite open covers of $C_p$ by such open subsets $U_x$ 
  on which the restriction of $S^0$ satisfies the above regularity property. 
  By taking $V_i$ and $V_f$ small enough to be convenient for all the subsets of this finite covers, we get the continuity of the following continuous map:
  \[S^0\;:\;(f',i')\in V_f\times V_i \mapsto S^0(f',i')\in Mor^r(\mathcal L_{p|\hat V_{C_p}}, M).\]
where $\hat V_{C_p}$ is the union of the open covers of $C_p$.

As $S^0(f,i)=i$ is an immersion and $S^0$ is continuous, by restricting a slice $\hat V_{C_p}$, and by restricting $V_f$ and $V_i$, we may suppose that $S^0$ takes its values in the set of immersions from $\mathcal L_{p|\hat V_{C_p}}$ into $M$.

Let us show that $S^0$ is continuous from $V_f\times V_i$ into the set of $\mathcal T_{|\hat V_{C_p}}$-controlled morphisms. For this, we generalize the proof of the $\mathcal L_p$-regularity of $S^0$ and so we come back to the above notations. 

Let $x'\in \hat V_{C_p}$ and $U_x$ be one subset of the covers of $\hat V_{C_p}$, such that $x'$ belongs to $U_x$. Let  $X_j$ be the stratum that contains $x'$ and $(u'_{x},t'_x):= \phi_x(x')$. 

By property \ref{coher}, $\phi^{-1}(\mathbb R^{d_p}\times \{t_x\})$ is contained in $X_j$. By restricting a little slice $U_x$, a small enough neighborhood $T_{x'}$ of $t_x'\in T_x$ satisfies that $U_{x'}:= \phi_x^{-1}(\mathbb R^{d_p}\times T_{x'})$ is included in $V_j$ and is a distinguish open subset of the foliation $\mathcal L_{p|L_j\cap L_p}$ of $\mathcal L_{j|L_j\cap L_p}$.

That is why $T_{x'}$ can be supposed identifiable to $\mathbb R^{d_j-d_p}\times T_j$ (for some metric space $T_j$), such that the restriction 
\[\phi_{x|U_{x'}}\;:\; \rightarrow \mathbb R^{d_p}\times \mathbb R^{d_j-d_p}\times T_j\]
is a chart of the foliation $\mathcal L_{p|L_j\cap L_p}$ of $\mathcal L_{j|L_j\cap L_p}$.

Similarly, the manifolds $(\{v\}\times \mathcal F_{px''}^{\eta'})$ for $x''\in \mathcal L_{px_{(v,t)}}^{\eta'}$ and $v\in \mathbb R^{d_j-d_p}$ are open subsets of leaves of a $C^r$-foliation on $\mathbb R^{d_j-d_p}\times V_{i(x)}$, which depends $C^r$-continuously on $t\in T_j$.

By regularities of $f^*$ and $i'$, the following union is a $C^r$-submanifold: 
\[\bigcup_{v\in \mathbb R^{d_j-d_p}} \{v\}\times i'(\mathcal L_{t'_v}),\]
with $t'_v:= \phi_{y_2}\circ f^*\circ \phi_x^{-1}(0,v,t)$ and  $\phi_{y2}$ the second coordinate of $\phi_y$ and $t\in T_j$.

The map $(v,x'')\in \mathbb R^{d_j-d_p}\times V_{i(x)}\mapsto (v,f'(x''))$ is transverse to the above manifold since $f'_{|V_{i(x)}}$ is transverse to $i'(\mathcal L_{t'})$ for any $t'\in V_{t_y}$.

That is why the following  union is a submanifold:

\[\bigcup_{v\in \mathbb R^{d_j-d_p}} \{v\}\times f'^{-1}_{|V_{i(x)}}\big(i'(\mathcal L_{t'_v})\big).\]

For each $x''\in \mathcal L_{px_{(v,t)}}^{\eta'}$ and $v\in \mathbb R^{d_j-d_p}$, the above submanifold is transverse to the plaque  $(\{v\}\times \mathcal F_{px''}^{\eta'})$. The intersections is $(\{v\}\times \{S^0(f',i')(x'')\})$.

By holonomy, the map $S^0(f',i')$ is of class $C^r$ along the $\mathcal L_j$-plaque contained in $U_{x'}$. Moreover this foliation and this submanifold depend continuously on $t\in T_j$, $i'\in V_i$, and $f'\in V_f$. Thus, the restriction of $S^0$ to $U_{x'}$ is a continuous map from $V_f\times V_i$ (which do not have been restricted in this step) to $Mor^r(\mathcal L_{j|U_{x'}},M)$.

As this is true for any $x'\in \hat V_{C_p}$, we get the regularity of $S^0$.
   
By using tools defined later, we will proof in section \ref{preuve lem6} that conclusion \ref{preim} holds.  
                                                                       \end{proof}

In order to satisfy statements \ref{hatf}, \ref{N}, and \ref{cp} of the fundamental property, we have to pay attention on the way we glue $i_{p+1}(f')$ and $i'_p(f'):= S^0(f',i_{p+1}(f'))$, for $f'$ close to $f$.

 We shall begin by studying  the combinatorial topology.

\paragraph{Topological study}
This is the ``gluing area'':

\begin{propr}\label{delta}
Let $\Delta$ be the compact subset $C_p\cap K_{p+1}$.

There exists an open neighborhood $V_\Delta $ of $\Delta$, arbitrarily small which is precompact in  $\hat V_{C_p}\cap A_{p+1}$ and such that:\begin{itemize}
\item[\ref{delta}.1] $f^*(cl(V_\Delta))$ is included in $int(K_{p+1}\setminus V_\Delta)$,
\item[\ref{delta}.2] ${f^*}(cl(A_{p+1}))$ is disjoint from $cl(V_\Delta)$.
\end{itemize}
\end{propr}
\begin{proof}

As $\Delta$ is included in $K_{p+1}$, the open subset $A_{p+1}$ is a neighborhood of $\Delta$. Since $\Delta$ is included in $C_p$ the open subset $\hat V_{C_p}$ is a neighborhood of $\Delta $. Thus, a small enough neighborhood of $\Delta$ is included in $\hat V_{C_p}\cap A_{p+1}$.
 
As $\Delta$ is included in $K_{p+1}$, the endomorphism $f^*$ sends $\Delta$ into $int(K_{p+1})$, by \ref{Li'}.1. Moreover, $\Delta$ is included in $C_p\subset cl(K_{p+1}^c)$. Thus, a small enough neighborhood $V_\Delta $ of $\Delta$ satisfies \ref{delta}.1.

Since $\Delta$ is included in $cl(K_{p+1}^c)$ and since, by the induction hypothesis, $f^*(cl(A_{p+1}))$ is included in $int(K_{p+1})$, a small enough neighborhood $V_\Delta $ of $\Delta$ satisfies \ref{delta}.2.\end{proof}

Let $V_\Delta'$ be an open subset of $A$ that satisfies 
\[\Delta \subset V_\Delta'\subset cl(V_\Delta')\subset V_\Delta.\]

\begin{propr}\label{pro2}
For each $j\ge p$, there exist two precompact open neighborhoods $V_{C_j}'$ and $V_{C_j}$ of $C_j$, that satisfy $cl(V_{C_j}')\subset V_{C_j}$, such that 
 \[\mathrm{with}\quad A_p:=\bigcup_{j\ge p} V_{C_j}'\;\mathrm{and}\;
 A_{p+1}':=\bigcup_{j> p} V_{C_j},\; \mathrm{we\; have \;}: \]
 \begin{itemize}
 \item[\ref{pro2}.0] $A_p$ and $A_{p+1}'$ are neighborhoods of $K_p$ and $K_{p+1}$ respectively. Moreover, $A_{p+1}'$ is included in $A_{p+1}$,
\item[\ref{pro2}.1] $f^*$ sends $cl(A_p\cup V_{C_p})$ into $int(K_p)$ and

$f^*$ sends $cl(A_{p+1})$ into $int(K_{p+1})\setminus cl(V_{C_p}\cup V_\Delta)$,

 \item[\ref{pro2}.2] $cl(V_{C_j})\subset V_{X_j^{p+1}}$, for every $j> p$, and $cl(V_{C_p})\subset \hat V_{C_p}$,
 
\item [\ref{pro2}.3]
for any $x\in \hat V_{C_p}$, any $u\in Ti(T_x\mathcal L_p)^\bot$, the orthogonal projection of $T_xf(u)$ onto the subspace $Ti(T_{f^*(x)}\mathcal L_p)^\bot$ is nonzero.
 
\item[\ref{pro2}.4]  for every $j\ge p$, there exists a neighborhood of $f^*(cl(V_{C_j}))$ whose  points $x$ are sent by $i_{p+1}(f')$ into $\mathcal F_{jx}$, for every $f'\in V_f$,

 \item[\ref{pro2}.5] the intersection  $cl(V_{C_p})\cap cl(A_{p+1}')$ is included in $V'_{\Delta}$,
 
 \item[\ref{pro2}.6] we have $V_{C_p}\cup A_{p+1}'\cup int(A_p^c)=A$.

 \end{itemize}
\end{propr}

\begin{proof}
Since the union of compact subsets $(C_j)_{j\ge p}$ is equal to $K_p$ and since the union of compact subsets $(C_j)_{j\ge p+1}$ is equal to $K_{p+1}$, we easily get statement \ref{pro2}.0. When the neighborhoods $(V_{C_j})_{j\ge p}$ are small, the neighborhoods $A_p$ and $A'_{p+1}$ are close to respectively $K_p$ and $K_{p+1}$. Thus, for $(V_{C_j})_{j>p}$ small enough, the subset  $A_{p+1}'$ is included in $A_{p+1}$.

The fist part of \ref{pro2}.1 follows from \ref{Li'}.1 for $(V_{C_j}')_{j\ge p}$ and $V_{C_p}$ small enough. The second part of \ref{pro2}.1 is true for $V_{C_p}$ and $V_\Delta$ small enough, by the fundamental property which states that the compact subset $cl(A_{p+1})$ is sent into the interior of $K_{p+1}$.
 
Inclusion \ref{pro2}.2 is a consequence of fundamental property \ref{cp}, for $(V_{C_j})_{j>p}$ small enough. 

Inequality \ref{pro2}.3 is a consequence of \ref{Li'}.3, for $\hat V_{C_p}$ and $V_{\Delta}$ small enough.

Assertion \ref{pro2}.4 is a consequence of fundamental property \ref{N}, for neighborhoods $(V_{C_j})_{j> p}$ small enough.

To obtain statement \ref{pro2}.5, we fix $V_\Delta'$, then we take neighborhoods $(V_{C_j})_{j\ge p}$ small enough.

Statement \ref{pro2}.6 is obvious.

\end{proof}

We can fix definitively $(V_{C_j})_{j\ge p}$, $(V_{C_j}')_{j\ge p}$, $\hat V_{C_p}$, $V_{\Delta}$, and $V_{\Delta'}$.
 
\paragraph{Gluing lemma}\label{recol}
The idea of the proof is to glue $i_p':= S(f',i_{p+1}(f'))$ to  $i_{p+1}(f')$ over $V_\Delta$, and then to reapply the lemma \ref{prelem6} and so on. We will prove then that such a sequence converges to a certain controlled morphism $i_p(f')$.

In order to satisfy fundamental property \ref{N} and \ref{cp}, we have to take care on the way of gluing.
We must connect $i_{p+1}(f')(x)$ to $i_{p}'(f')(x)$ along the submanifold 
\[\mathcal F_{j x}^{f'}:=\mathcal F_{px}\pitchfork i_{p+1}(f')(\mathcal L_{jx}^{\eta'}),\; \forall j>p,\; x\in V_{C_j} \; and \; f'\in V_f.\]
The above intersections are well transverse by taking $\eta'$ and then $V_f$ sufficiently small.

The point $i_{p+1}(f')(x)$ belongs well to $\mathcal F_{jx}^{f'}$ lemma \ref{Ni} and fundamental property \ref{N}. Let us show that $i_{p}'(f')(x)$ belongs also to $\mathcal F_{jx}^{f'}$. 

By conclusion \ref{preconc4} of lemma \ref{prelem6}, this point belongs to $\mathcal F_{px}$. Let us show that $i_p'(f')(x)$ belongs to $i_{p+1}(f')(\mathcal L_{jx}^{\eta'})$.

It follows from conclusion \ref{preconc4} of lemma \ref{prelem6}, that the point $i'_p(f')(x)$ is sent by $f'$ into $i_{p+1}(f') \big(\mathcal L_{pf^*(x)}^{\eta'}\big)$.

By taking $\eta'$ smaller, the distance $d(f^*(V_{C_j}), L_j^c)$ is greater than $2\eta'$. Thus, by coherence of tubular neighborhoods, the plaque  $\mathcal L_{pf^*(x)}^{\eta'}$ is included in $\mathcal L_{jf^*(x)}^{\eta'}$.

\begin{equation}\label{46} \Rightarrow f'\Big(i'_p(f')(x)\Big)\in i_{p+1}(f') \big(\mathcal L_{jf^*(x)}^{\eta'}\big).\end{equation}

It follows from property \ref{pro2}.3, the coherence of tubular neighborhoods, property \ref{pro2}.2 and fundamental property \ref{hatf} that, for all $y$ close to $i(x)$ and $f'$ close to $f$, we have 
\begin{equation}\label{(c)}d\Big(y,i_{p+1}(f')\big(\mathcal L_{jx}^{\eta'}\big) \Big)\le d\Big(f'(y),i_{p+1}(f')\big(\mathcal L_{jf^*(x)}^{\eta'} \big)\Big).\end{equation}
Therefore, by restricting $V_f$, by (\ref{46}) and (\ref{(c)}), the point $y:=i'_p(f')(x)$ belongs to $ i_{p+1}(f')\big(\mathcal L_{jx}^{\eta'} \big)$.

Thus, $i_p'(f')(x)$ belongs to $\mathcal F_{jx}^{f'}$ for all $f'\in V_f$ and $x\in V_{C_j}$.
  
This is the gluing lemma:

\begin{lemm}\label{lem9}
By taking $\eta'>0$ smaller and then $V_f$ smaller, there exist a neighborhood $G$ of the graph of $i_{|V_\Delta}$ and a continuous map  
\[\gamma\;:\; V_f\rightarrow Mor^r\big((\mathcal T\times M)_{|G}\times [0,1],M\big)\]
such that, for all $f'\in V_f$ and $(x,y)\in G$:
\begin{enumerate}
\item  $\gamma(f')(x,y, 0)$ is equal to $i_{p+1}(f')(x)$,
\item  for every $t\in [0,1]$, the point $\gamma(f')(x,y, t)$ belongs to $\mathcal F_{j x}^{f'}$,  if $x$ belongs to $V_{C_j}'$, 
\item  if $y$ belongs to $\mathcal F_{j x}^{f'}$ for every $j>p$ such that $x$ belongs to $V_{C_j}'$, then $\gamma(f')(x,y, 1)$ is equal to $y$.
\end{enumerate}
\end{lemm}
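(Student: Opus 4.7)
The plan is to build $\gamma(f')(x,y,\cdot)$ as a path from $i_{p+1}(f')(x)$ to $y$ that lives inside a single $C^r$-submanifold containing all of the $\mathcal F_{jx}^{f'}$ that $x$ meets, then to verify smooth dependence on $(x,y,f')$.

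First I would establish the nesting structure of the transverse intersections. For $\eta'>0$ and $V_f$ small enough, the transversality hypothesis \ref{pro2}.3 together with the continuity of $f'\mapsto i_{p+1}(f')$ makes every $\mathcal F_{jx}^{f'}=\mathcal F_{px}\pitchfork i_{p+1}(f')(\mathcal L_{jx}^{\eta'})$ a $C^r$-submanifold of $\mathcal F_{px}$ of dimension $d_j-d_p$, varying $C^r$-continuously with $x$ and $f'$ by the implicit function theorem. The trellis axiom, applied to the plaques of the stratum $X_l\in\Sigma$ containing $x$, asserts that for each $j>p$ with $x\in V_{C_j}'$ the plaque $\mathcal L_{jx}^{\eta'}$ is a $C^r$-subfoliation of $\mathcal L_{lx}^{\eta'}$. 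In particular, whenever $X_j\le X_k$ are both indices in $J(x):=\{j>p : x\in V_{C_j}'\}$, one has $\mathcal L_{jx}^{\eta'}\subset\mathcal L_{kx}^{\eta'}$ and therefore $\mathcal F_{jx}^{f'}\subset\mathcal F_{kx}^{f'}$.

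Next I would produce a common refinement. By the trellis axiom every $(\mathcal L_{jx}^{\eta'})_{j\in J(x)}$ is a subfoliation of the ambient plaque $\mathcal L_{lx}^{\eta'}$, so for $\eta'$ small enough the intersection $\mathcal L_{\mathrm{min},x}:=\bigcap_{j\in J(x)}\mathcal L_{jx}^{\eta'}$ is a $C^r$-plaque inside $\mathcal L_{lx}^{\eta'}$, depending $C^r$-continuously on $x$ on each locally constant piece of $J(\cdot)$. Setting $\mathcal F_{\mathrm{min},x}^{f'}:=\mathcal F_{px}\pitchfork i_{p+1}(f')(\mathcal L_{\mathrm{min},x})$ yields a $C^r$-submanifold of $\mathcal F_{px}$ that is contained in every $\mathcal F_{jx}^{f'}$, $j\in J(x)$, and that contains both endpoints — $i_{p+1}(f')(x)$ by fundamental property \ref{N} and lemma \ref{Ni}, and $y$ by the hypothesis of item $3$ in the lemma (and for item $2$, $y$ lies in every $\mathcal F_{jx}^{f'}$ we require by construction of $G$). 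One then defines $\gamma(f')(x,y,t)$ to be the straight segment, read in a chart of $M$ around $i(x)$ adapted to the trellis (i.e.\ one that simultaneously straightens $\mathcal F_{px}$, $\mathcal F_{\mathrm{min},x}^{f'}$, and each $\mathcal F_{jx}^{f'}$ into coordinate subspaces), joining the two endpoints. Conclusions $1$ and $3$ are then immediate, and conclusion $2$ follows from the nesting $\mathcal F_{\mathrm{min},x}^{f'}\subset\mathcal F_{jx}^{f'}$.

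Finally I would establish the regularity of $\gamma$ and its continuous dependence on $f'$. Smoothness along the plaques of $\mathcal T$ and the $C^r$-compact-open continuity in $f'$ are inherited from the $C^r$-continuity of the transverse intersections, which is standard from the implicit function theorem applied to $f'$ and $i_{p+1}(f')$. The neighborhood $G$ of the graph of $i_{|V_\Delta}$ is chosen small enough for all these transversalities to hold uniformly and for the adapted straightening charts to exist. The main obstacle is the combinatorial issue that $J(x)$ may jump as $x$ crosses from one stratum to another: one must check that the locally defined straightening charts patch together continuously, which is where the trellis foliation axiom is essential — because the plaques involved are nested subfoliations of a common plaque, the straightenings on overlapping pieces agree up to a $C^r$-diffeomorphism preserving the distinguished subspaces, and a partition-of-unity argument (with reparametrization in $t$ if necessary) produces a single continuous $\gamma$ with the required properties.
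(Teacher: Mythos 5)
There is a genuine gap, and it comes from a misreading of what $G$ is. In the lemma, $G$ is a neighborhood of the graph of $i_{|V_\Delta}$ inside the product $A\times M$, precisely so that $\gamma(f')$ is a controlled morphism of the product lamination $(\mathcal T\times M)_{|G}\times[0,1]$ and can later be composed with the controlled map $x\mapsto (x,i_p'(f')(x),\rho(x))$. Hence $y$ ranges over a full open neighborhood of $i(x)$ in $M$; it is \emph{not} assumed to lie in the manifolds $\mathcal F_{jx}^{f'}$ (that assumption appears only as the hypothesis of conclusion $3$). Conclusion $2$, however, must hold for \emph{all} $(x,y)\in G$ and all $t$. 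Your construction -- a straight segment in an adapted chart joining $i_{p+1}(f')(x)$ to $y$ inside $\mathcal F_{\mathrm{min},x}^{f'}$ -- is undefined, or violates conclusion $2$, as soon as $y\notin\mathcal F_{\mathrm{min},x}^{f'}$: a path ending at such a $y$ cannot stay in $\mathcal F_{jx}^{f'}$. The sentence ``for item $2$, $y$ lies in every $\mathcal F_{jx}^{f'}$ we require by construction of $G$'' cannot be repaired by shrinking $G$, because the set of such pairs $(x,y)$ depends on $f'$ and is not a neighborhood of the graph in the product lamination, so $\gamma$ would no longer be a morphism on $(\mathcal T\times M)_{|G}\times[0,1]$ and the gluing step that uses it would break. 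What is missing is a retraction mechanism: the paper first moves along the geodesic homotopy $\psi_{f'}(x,y,t)=Exp\big(t\cdot Exp^{-1}_{i_{p+1}(f')(x)}(y)\big)$ (which makes sense for arbitrary $y$ near $i(x)$) and then composes with retractions $\phi_{jf'}(x,r_j(x))$ of a neighborhood onto $\mathcal F_{jx}^{f'}$ which restrict to the identity on $\mathcal F_{jx}^{f'}$; this simultaneously gives conclusion $2$ for arbitrary $y$ and preserves conclusions $1$ and $3$.

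A second, related weakness is your treatment of the jumps of $J(x)=\{j>p:\ x\in V_{C_j}'\}$. Saying that the local straightening charts ``agree up to a $C^r$-diffeomorphism preserving the distinguished subspaces'' and invoking a partition of unity with reparametrization in $t$ is not an argument: averaging chart-straight segments over overlapping regions does not in general keep the path inside the required submanifolds, and the existence of a single chart simultaneously straightening all the $\mathcal F_{jx}^{f'}$ (which depend on $f'$ through $i_{p+1}(f')$, not only on the trellis) continuously across the jumps of $J(x)$ is exactly what would need to be proved. The paper resolves this combinatorial issue concretely: it chooses a partition of unity $(r_j)_{j>p}$ subordinate to the covering $\big(V_{C_j}\setminus\cup_{p<k<j}cl(V_{C_k}')\big)_j$ and feeds $r_j(x)$ into the \emph{time} parameter of the $j$-th retraction; then for $x\in V_{C_j}'$ one has $r_k(x)=0$ for $k>j$, so some $k\le j$ has $r_k(x)\ge 1/N$, the corresponding retraction is applied at full strength, the point lands in $\mathcal F_{kx}^{f'}\subset\mathcal F_{jx}^{f'}$ (coherence of the tubular neighborhoods), and the later retractions fix it. Your nesting observation in the first step is correct and is indeed used in the paper, but without the retraction-plus-time-partition mechanism the construction of $\gamma$ does not go through.
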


Lemma \ref{lem9} will be proved in section \ref{structloc}. Let us proceed with the gluing. 

Let $\rho\in Mor^r(\mathcal T,[0,1])$ be a function with support in $V_\Delta$ and equal to 1 on $V_\Delta'$.

By taking $V_f$ sufficiently small,  we may define  
\[i^0(f'):A\rightarrow M\]
 \[x\mapsto \left\{\begin{array}{cl}
 \gamma(f')\Big(x,i_p'(f')(x),\rho(x)\Big)&\mathrm{if}\; x\in V_\Delta\\
i_{p+1}(f')(x)&\mathrm{if} \; x\in V_\Delta^c\end{array}\right..  \]

Since the support of $\rho$ is included in $V_\Delta$, by statement 1 of lemma \ref{lem9}, the map $i^0$ is continuous from $V_f$ into $Mor^r(\mathcal T,M)$.

By statement 3 of lemma \ref{lem9}, the morphism $i^0$ is equal to $i'_p$ on $cl( V_\Delta')$ and to $i_{p+1}$ on the complement of  $V_\Delta$.

\begin{propr}\label{plaque}
By restricting $\eta'$ and $V_f$, we may suppose that, for all $ f'\in V_f$, $ j>p$ and $ x\in V_{C_j}'$:
\begin{itemize}
\item[\ref{plaque}.1] the point $f'\circ i^0(f')(x)$ belongs to $i^0(f')\big(\mathcal L_{j{f^*(x)}}^{\eta'}\big)$,
\item[\ref{plaque}.2] the point $i^0(f')(x)$ belongs to $\mathcal F_{jx}^{f'}$.\end{itemize}
\end{propr}

\begin{proof}
Statement \ref{plaque}.2 is an obvious consequence of conclusion 2 of lemma \ref{lem9}, let us show statement \ref{plaque}.1.

For every $x\in V_{C_j}'\setminus V_\Delta$, the points $i^0(f')(x)$ and $i_{p+1}(f')(x)$ are equal. It follows from fundamental property \ref{hatf}, that the points $f'\circ i_{p+1}(f')(x)$ and $i_{p+1}(f')\circ f'^*_{p+1}(x)$ are equal. Since $x$ belongs to $A_{p+1}$, by restricting $V_f$, the point  $f'^*_{p+1}(x)$ never belongs to $V_\Delta$, thus the points $i_{p+1}(f')\circ f'^*_{p+1}(x)$ and $i^0(f')\circ f'^*_{p+1}(x)$ are equal.  Finally, by fundamental property 
\ref{cp} and property \ref{pro2}.2, the point $f'^*_{p+1}(x)$ belongs to $\mathcal L_{j{f^*(x)}}^{\eta'}$. Hence 
 \[ f'\circ i^0(f')(x)=f'\circ i_{p+1}(f')(x)=i_{p+1}(f')\circ f'^*_{p+1}(x)\]
 \[=i^0(f')\circ f'^*_{p+1}(x)\in i^0(f')\big(\mathcal L_{j{f^*(x)}}^{\eta'}\big)\]

By statement 2 of lemma \ref{lem9}, for all $x\in V_\Delta\cap V_{C_j}'$ and $f'\in V_f$, the point $i^0(f')(x)$ belongs to $i_{p+1}(f')(\mathcal L_{jx}^{\eta'})(x)$. Moreover, the points $i^0(f)(x)$ and $i(x)$ are equal. Thus, by taking $V_f$ sufficiently small, $f'$ sends $i^0(f')(x)$ into $i_{p+1}(f')(\mathcal L_{jf^*(x)}^{\eta'})$.

By \ref{delta}.1, the image of $cl(V_\Delta)$ by $f^*$ is disjoint from $cl(V_\Delta)$. Thus, on a neighborhood of $f^*(V_\Delta)$, the morphism $i^0$ is equal to $i_{p+1}$. Therefore, for $\eta'$ and $V_f$ small enough, the images of $\mathcal L_{j{f^*(x)}}^{\eta'}$ by $i^0(f')$ and by $i_{p+1}(f')$ are equal.

Therefore, $f'$ sends the point $i^0(f')(x) $ into $i_{p+1}(f')(\mathcal L_{jx}^{\eta'})= i^0(f')(\mathcal L_{jx}^{\eta'})$, for any $x\in V_\Delta\cap V_{C_j}'$. 
\end{proof}

Let us show, for all $f'\in V_f$, $q> p$, and $x\in  A_{p+1}\cap L_q$, that the differential  $\partial_{T_x\mathcal L_q} i^0(f')$ is injective. By fundamental property \ref{Im}, it is sufficient to check this for $x\in V_\Delta$. By fundamental property \ref{Im} and property \ref{plaque}.2, it is sufficient to prove that $\partial_{T_x\mathcal L_j} i^0(f')$ is injective for every $j>p$ such that $x\in V_{C_j}'$. For $f'=f$, the morphism $i^0(f')$ is equal to $i$, thus $\partial_{T_x\mathcal L_j} i^0(f')$ is injective. Since $V_{C_j}'$ is precompact in $L_j$ and since the map $i_{p|L_j}^0$ is continuous from $V_f$ into $Mor^1(\mathcal L_j,M)$, by restricting $V_f$, the differential $\partial_{T_x\mathcal L_j} i^0(f')$ is always injective.

\paragraph{Construction of $i_p$}

Let $\hat A_p$ be equal to $A_{p+1}'\cup V_{C_p}$. By property \ref{pro2}.6, $\hat A_p$ is a neighborhood of $cl(A_p)$.
\[\mathrm{Let}\; \mathcal M_p:= \{j\in Mor^r(\mathcal T_{|\hat A_p},M): j(x)\in \mathcal F_{px},\; \forall x\in L_p\}.\]
\[\mathrm{Let}\; \mathcal M_p^{f'}:= \{j\in \mathcal M_p,\; j_{|A_{p+1}'}=i^0(f')_{|A_{p+1}'}\}.\]
 Let $\mathcal M$ be the following set  
\[\prod_{f'\in V_f}\{f'\}\times \mathcal M_p^{f'}\]
endowed with the topology  induced by $End^r(M)\times Mor^r(\mathcal T_{|\hat A_p},M)$.

We notice that $(f',i^0(f')_{|\hat A_p})$ belongs to $\mathcal M$, for every $f'\in V_f$.

We are going to define $i_p$, by induction and by using the following lemma:
\begin{lemm}\label{lem6}
There exists a neighborhood $V_{f,i}$ of $(f,i)\in \mathcal M$  such that the following  map is well defined and continuous:
\[S\;:\; V_{f,i}\rightarrow V_{f,i}\]
\[(f',j)\mapsto (f',S_{f'}(j)),\]
\begin{center}
 where $S_{f'}(j)$ is equal to $i^0(f')$ on $A'_{p+1}$ and equal to $S^0(f',i')$ on $V_{C_p}$.
\end{center}

Moreover $S$ satisfies, for all $(f',j)\in V_{f,i}$:
\begin{enumerate}
\item \label{conc4} for every $x\in V_{C_p}$, the point $S_{f'}(j)(x)$ is the unique intersection point of $\mathcal F_{px}^{\eta'}$ with the preimage by $f'$ of $j(\mathcal L_{pf^*(x)}^{\eta'})$.

Let $f'^*_j(x)\in \mathcal L_{pf^*(x)}^{\eta'}$ defined by
\[f'\circ S_{f'}(j)=j\circ f'^*_j(x),\]
\item \label{conc5} for all $k\ge p$ and $x\in V_{C_p}\cap X_k$, if $\partial_{T X_k}j$ is injective at $f'_j(x)$, then
 $\partial_{T X_k}S_{f'}(j)$ is also injective at $x$,
\item for the distance defining the strong topology of $C^r$-morphism from the immersed lamination $\mathcal L_p$ (see section \ref{Top}), for every $\delta>0$, there exist a neighborhood $V_{f'}$ of $f'\in V_f$ and $M>0$ such that
for every $f''\in V_{f'}$, we have 
\[diam_{\mathcal L_p}\Big(
\{j_{|V_{C_p}}:\; (f'',j)\in S_{f''}^M(V_{f,i})\Big)<\delta\]
 
\end{enumerate}
\end{lemm}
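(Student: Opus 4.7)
\textbf{Proof plan for Lemma \ref{lem6}.} My plan is to (a) verify that the two pieces defining $S_{f'}(j)$ agree on the overlap so the formula makes sense, (b) check that the result lies in $\mathcal{M}$ and depends continuously on $(f',j)$, (c) transfer the derivative-injectivity and uniqueness statements from Lemma \ref{prelem6}, and finally (d) prove the decay estimate of item 3 by exploiting normal expansion along the fibers $\mathcal{F}_{p,x}$.

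First, for well-definedness, I would use property \ref{pro2}.5 to get $A'_{p+1}\cap V_{C_p}\subset V'_\Delta$. On $V'_\Delta$ the cut-off $\rho$ equals $1$, so $i^0(f')=i'_p(f')=S^0(f',i_{p+1}(f'))$ there. Since $j\in\mathcal{M}_p^{f'}$ coincides with $i^0(f')=i_{p+1}(f')$ on $A'_{p+1}$, and since property \ref{delta}.1 guarantees $f^*(V_\Delta)$ is disjoint from $V_\Delta$, the plaque $\mathcal{L}_{p,f^*(x)}^{\eta'}$ lies entirely in $A'_{p+1}$ (for $\eta'$ small), where $j$ and $i_{p+1}(f')$ agree. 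Because $S^0(f',\cdot)$ depends only on $j$ restricted to that plaque, the two formulas agree on the overlap and $S_{f'}(j)$ is well-defined on $\hat{A}_p$. Membership in $\mathcal{M}_p^{f'}$ is automatic: the $\mathcal{F}_{p,x}$-condition on $V_{C_p}\cap L_p$ is conclusion \ref{preconc4} of Lemma \ref{prelem6}, while on $A'_{p+1}$ the morphism $S_{f'}(j)=i^0(f')$ inherits it from property \ref{plaque}.2 and fundamental property \ref{N}. Continuity of $S$ and the regularity $S_{f'}(j)\in Mor^r(\mathcal{T}_{|\hat A_p},M)$ follow directly from the continuity of $(f',j)\mapsto S^0(f',j)$ established in Lemma \ref{prelem6}, once we shrink $V_f$ (and hence $V_{f,i}$) so that the transversality hypothesis of Lemma \ref{prelem6} holds uniformly for every $j$ in a neighbourhood of $i$.

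Conclusion \ref{conc4} is the defining property of $S^0$ in Lemma \ref{prelem6}. For conclusion \ref{conc5}, I would argue as follows. On $A'_{p+1}$ the claim reduces to injectivity of $\partial_{TX_k} i^0(f')$, which was established at the end of the ``Gluing lemma'' subsection. On $V_{C_p}$, the identity $f'\circ S_{f'}(j)=j\circ f'^*_j$ and property \ref{pro2}.3 imply that the restriction of $f'$ to each $\mathcal F_{p,x}^{\eta'}$ is, for $V_f$ small, a diffeomorphism onto its image transverse to $j(\mathcal{L}_{p,f^*(x)}^{\eta'})$. Combining this transversality with the injectivity of $\partial_{TX_k} j$ at $f'_j(x)$ (lifting a nonzero tangent vector in $X_k$ back through the local diffeomorphism $f'_{|\mathcal{F}_{p,x}}$ together with a nonzero tangent component along $j(X_k)$) yields the injectivity of $\partial_{TX_k} S_{f'}(j)$ at $x$; this is exactly the argument sketched but not finished in item \ref{preim} of Lemma \ref{prelem6}.

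The main obstacle is item 3 — the iterated-diameter estimate. Here the plan is to view $S_{f''}$ as an analogue of the Hirsch--Pugh--Shub graph transform: for fixed $f''\in V_{f'}$, two elements $j_1,j_2$ of $\mathcal{M}_p^{f''}$ produce $S_{f''}(j_1),S_{f''}(j_2)$ whose values at $x\in V_{C_p}$ differ by $(f'_{|\mathcal F_{p,x}^{\eta'}})^{-1}$ applied to the difference $j_1-j_2$ evaluated on the plaque $\mathcal L_{p,f^*(x)}^{\eta'}$. By property \ref{pro2}.3 and the adapted metric coming from Proposition \ref{P1}, $(f'_{|\mathcal{F}_{p,x}^{\eta'}})^{-1}$ is a $\lambda^{-1}$-contraction in the normal direction, and its $s$-th derivative along $\mathcal{L}_p$ is controlled by $\lambda^{-1}\max(1,\|Tf^*\|^s)^{-1}$ in view of the $r$-normal expansion. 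Iterating $M$ times and covering $V_{C_p}$ by finitely many charts, one gets an $s$-th derivative estimate of the form $\lambda^{-M}\cdot C(f',V_{f,i})$ for every $s\le r$, which tends to $0$ uniformly as $M\to\infty$ and delivers the claimed diameter bound in the strong $\mathcal{L}_p$-topology. This contraction is the analytic heart of the induction and will occupy most of the work; the other items are essentially bookkeeping from Lemmas \ref{prelem6} and \ref{lem9}.
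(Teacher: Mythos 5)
Your treatment of the well-definedness (agreement of the two formulas on $A'_{p+1}\cap V_{C_p}\subset V'_\Delta$ via \ref{pro2}.5 and \ref{delta}.1) and of conclusions 1 and 2 (transferring uniqueness from lemma \ref{prelem6}, and deducing injectivity of $\partial_{TX_k}S_{f'}(j)$ from $f'\circ S_{f'}(j)=j\circ f'^*_j$, transversality \ref{pro2}.3 and injectivity of $\partial_{TX_k}j$) is essentially the paper's argument. The genuine gap is in conclusion 3, and it propagates back to the very first claim, namely that $S$ maps a neighborhood $V_{f,i}$ into itself. You assert that $S_{f''}$ acts as a graph transform whose $s$-th leafwise derivatives of differences decay like $\lambda^{-M}$ for every $s\le r$, i.e.\ that $S$ is (after finitely many iterates) a contraction in the strong $C^r$-sense. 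This is false in general, and the paper says so explicitly: ``Contrarily to what happened for the $C^0$-topology, the map $S_f$ is generally not contracting for the $C^1$-topology.'' The reason is the basepoint mismatch created by the holonomy: differentiating $f'\circ S_{f'}(j)=j\circ f'^*_j$ along a leaf direction, the derivative of $S_{f'}(j_1)$ at $x$ involves $\partial j_1$ at $f'^*_{j_1}(x)$ while that of $S_{f'}(j_2)$ involves $\partial j_2$ at the \emph{different} point $f'^*_{j_2}(x)$; their comparison picks up a modulus-of-continuity term of $\partial j_1$ (and of the higher jets) which is not dominated by $\lambda^{-1}$ times the $C^r$-distance of $j_1$ and $j_2$. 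Normal expansion only contracts the \emph{slope} of the image plane relative to the leaf distribution, not the $C^1$-distance between parametrized sections.

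What the paper actually does, and what is missing from your plan, is the following. Only the $C^0$-part is a contraction, measured with the cone semi-distances $d_x$ built from the adapted metric of property \ref{property B} (inequality (\ref{C0:contraction})). For the derivatives one passes to the bundle $P^r$ of $r$-jets of $d_p$-dimensional graphs over $\chi$ (the Grassmannian when $r=1$), shows that the \emph{fiberwise} backward action $\phi_{f'x}$ of $Tf'$ on $P^r$ is $\lambda$-contracting near the zero section (an algebraic computation on jets), extracts the attracting invariant continuous section $\sigma_{f'}$ of the truncated map $r\phi_{f'}$, and then obtains conclusion 3 by the triangle inequality: after enough iterates the jets of any two images are each $\delta$-close to $\sigma_{f''}$, $\sigma_{f''}$ is close to $\sigma_{f'}$ by continuity in $f'$, and the two evaluation points are close by the $C^0$-estimate together with the \emph{uniform continuity} of $\sigma_{f'}$ — the ``$5\delta$'' bound. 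None of these ingredients is available if one only claims a direct derivative contraction. Likewise, the invariance $S(V_{f,i})\subset V_{f,i}$ cannot be obtained by merely shrinking a $C^r$-neighborhood of $(f,i)$: the paper must define $V_{f,i}$ through the sets $V^{f'\epsilon}$, prescribed by the $d_0$-ball, the jet bound $Ti'(T_x\mathcal L_{p|U^*})\in B_{P^1_x}(0,\epsilon)$ and the condition of coinciding with $S^N_{f'}(i^0(f'))$ off $U^*$, precisely because plain $C^r$-balls are not preserved. You would need to incorporate this jet-bundle contraction and the invariant-section comparison to close the argument.
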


For $V_f$ sufficiently small, we can now define, for all $k>0$, the continuous map
\[i^k\;: \;f'\in V_f\mapsto S_{f'}^k(i^0(f'))\in Mor^r(\mathcal T,M)\]

We are going to prove that, for every $f'\in V_f$, the sequence $(i^k(f'))_k$ converges to a morphism $i^\infty_p(f')\in Mor^r(\mathcal T_{|\hat A_p},M)$. Then $i_p(f')$ will be equal to $i^\infty_p(f')$ on $A_p$ and to $i$ on the complement of $A$.

\paragraph{Convergence of $(i^k)_k$}
Let us describe the values of $i^{k}_p$ on $V_{C_p}$, for every $f'\in V_f$.

By conclusion \ref{conc4} of lemma \ref{lem6}, for every $x\in V_{C_p}$, the point $i^k(x)= S_{f'}(i_p^{k-1}(f'))(x)$  depends only on $i^{k-1}(x_1)$, where $x_1:=f'^*_{i^{k-1}(f')}(x)$ is $\eta'$-close to $f^*(x)$ in a plaque of $\mathcal L_{p}$. By \ref{pro2}.1, the map $f^*$ sends $cl(V_{C_p})$ into $int(K_p)$, thus we may suppose that $x_1$ belongs to $K_p$.

If we suppose moreover that $x_1$ belongs to $A_{p+1}'^c$, then $x_1$ belongs to $K_p\setminus A_{p+1}'\subset V_{C_p}$. And so on, we can iterate this process which constructs an $\eta'$-pseudo chain $(x_i)_{i=0}^{n_x}$ of $f^*$, which respects the plaques of $\mathcal L_p$, defined by 
\begin{equation}\label{xn}
\left\{\begin{array}{c}
x_0=x\\
x_{i+1}:=f'^*_{i^{k-i-1}(f')}(x_{i}),\; \mathrm{and}\; f'\circ i^{k-i}(f')(x_i)= i^{k-i-1}(f')(x_{i+1})\end{array}\right.
\end{equation}

that we stop when ${i}$ is equal to $k$ or $x_{i}$ belongs to $ A_{p+1}'$. Therefore, we have $x_0=x,\dots,x_i\in K_p\setminus A_{p+1}',\dots,x_{n_x}\in A_{p+1}'\cap K_{p}$ or $n_x=k$.

We are going to prove that 
\begin{equation}\label{Uk}
\forall x\in  V_{C_p}, \quad i^k(f')(x)=S_{f'}^{n_x}(i^0(f'))(x)\end{equation}

For $n_x=k$, this equality is the definition of $i^k$.

For $n_x<k$, by \ref{pro2}.5, the point $x_{n_x}$ belongs to $A_{p+1}'$ thus, by  (\ref{xn}) and by decreasing induction on $i$ along the chain  $(x_i)_{i=0}^{n_x}$, we also have (\ref{Uk}) (since $n_{x_i}=n_x-i$). Moreover, $n_x$ does not change for a greater $k$.

 Thus, for every $x\in \hat A_p$,  the sequence $(i^k(f')(x))_k$ is eventually constant, by hypothesis $(iii)$ of the theorem, for $\eta'$ and then $V_{f}$ sufficiently small.

It follows from conclusion 3 of lemma \ref{lem6} and from the description of the values of $(i^k)_k$, that this sequence converges in $C^0(V_f,C^0({\hat A_p},M))$, to a certain map $i_p^\infty$.

Let $r\in Mor^r(\mathcal T,[0,1])$ be equal to $1$ on the $\eta'$-neighborhood of $A_p$ and to 0 on a neighborhood of the complement of $\hat A_p$ (we may reduce $\eta'$ if necessary).

\[\mathrm{Let}\;
 i_p:= f'\in V_f\mapsto\left[x\in A\mapsto \left\{\begin{array}{cl}
Exp\Big(r(x)\cdot Exp^{-1}_{i(x)}\big(i_p^\infty(x)\big)\Big)& \mathrm{if}\; x\in \hat A_p\\
i(x)&\mathrm{else}\end{array}\right.\right].\]

\paragraph{Properties of $i_p$}\label{prop:ip}

Let us begin by showing that $i_p$ is a continuous map from $V_f$ into $Mor^r(\mathcal T,M)$.
As the restriction of $i^\infty _p$ to  $ A_{p+1}'$ is a continuous map from $V_f$ into the space $Mor^r(\mathcal T_{|A'_{p+1}},M)$, and as $i$ and $r$ are $\mathcal T$-controlled $C^r$-morphisms, the restriction of $i_p$ to $ A_{p+1}'$ is a continuous map from $V_f$ into $Mor^r(\mathcal T_{| A_{p+1}'},M)$.
 
It follows from conclusion 3 of lemma \ref{lem6} that the restriction of $i_p$ to $V_{C_p}$ is continuous from $V_f$ into $Mor^r(\mathcal L_{p|V_{C_p}},M)$.

By taking $\eta'$ small enough, we may suppose this constant less than  $\inf_{V_{C_p}} \eta$, where $\eta$ is the function on $V_p$ provided by hypothesis $(iii)$ of the theorem. We define $U_j$ as the interior of the subset of points in $V_{C_p}$ which are not the starting point of any $\eta'$-pseudo-chain of $V_{C_p}$ which respects $\mathcal L_p$ and with length $j$. By property \ref{chaine}, the sequence of open subsets $(U_j)_j$ is increasing and its union is equal to $V_{C_p}\setminus X_p$. It follows from the description of the values of $(i^k)_k$ that, for $l>k>j$, the morphism $i^k(f')$ and $i_p^{\infty}(f')$ are equal on $U_j$. Thus,  $i_p$ is continuous from $V_f$ into $Mor^r(\mathcal T,M)$.

Let us show that $i_p$ satisfies fundamental property \ref{N}:

\begin{center}
for $j\ge p$ and $x$ in a neighborhood of $f^*(cl(V_{C_j}')$, which does not depends on $f'\in V_f$, the point $i_p(f')(x)$ belongs to $\mathcal F_{jx}$.
\end{center}

The above statement is obvious when $j$ is equal to $p$, by  lemma \ref{Ni} and fundamental property \ref{N} (at step $p+1$).

As $i_{p+1}$ and $i_p$ are equal on $A_p\setminus V_{C_p}\cup V_{\Delta}$, its is sufficient to prove that 
 $f^*(cl(V_{C_j}))$ is contained in $A_p\setminus cl(V_{C_p}\cup V_\Delta)$; this follows 
from \ref{pro2}.1 and the fact that $V_{C_j}$ is included in $A_{p+1}$.

Let us prove that $i_p$ satisfies fundamental property \ref{Im}:

\begin{center}
 the restriction of $i_p(f')$ to $A_p$ is an immersion, for every $f'\in V_f$.
\end{center}

By definition of $i_p$, on a neighborhood of $A_{p+1}'\cap A_p$, the map $i_p(f')$ is equal to $i^0(f')$.
We recall that  $A_{p+1}'$ is included in $A_{p+1}$ and that we showed that  $i_{p|A_{p+1}}^0(f')$ is an immersion. Thus, $i_{p|A_{p+1}'}(f')$ is an immersion.

Let us now study the restriction of  $i_{p}(f')$ to $V_{C_p}'\setminus A_{p+1}'$ which is equal to $A_p\setminus A_{p+1}'$. Let $x\in  (V_{C_p}'\setminus (X_p\cup A_{p+1}')$. We regard as before the pseudo-chain $(x_k)_{k=0}^{n_x}$ which respects the plaques of $\mathcal L_p$ and which is associated to $x$. We will show by decreasing induction on $k\in \{0,\dots,n_x\}$ that, when $x_k$ belongs to $X_l$, the tangent map $\partial_{T_{x_k}\mathcal L_l} S_{f'}^{n_x-k}(i^0(f'))$ is injective. 

For $k=n_x$, it follows from the fact that $i^0(f')_{|A_{p+1}'}$ is an immersion.

Let us suppose that $\partial_{T_{x_k}\mathcal L_l}S_{f'}^{n-k}(i^0(f'))$ is injective.
Therefore, it follows from conclusion \ref{conc5} of lemma \ref{lem6}, that $\partial_{T_{x_{k-1}}\mathcal L_l} S_{f'}^{n-k+1}(i^0(f'))$ is injective.

Thus, the restriction of $i_p$ to $A_p\setminus X_p$ is an immersion. As $A_p\cap X_p $ is precompact in $L_p\cap \hat A_p$, by continuity of $i_p$ and by restricting $V_f$, $i_{p|A_p}$ is a $\mathcal T$-controlled immersion.

\paragraph{Construction of a family of adapted neighborhoods $\mathcal V^p$}\label{consVp}

For $j\ge p$, by \ref{pro2}.2, we may suppose $\eta'$ small enough such that 
\begin{equation}\label{??}
    d(f^*(V_{C_j}'), L_j^c)>2\eta',
\end{equation}
and that the close subset $cl(\mathcal L_{jf^*(x)}^{\eta'})$ is a compact subset included in $\mathcal L_{jf^*(x)}^{2{\eta'}}$ which depends continuously on $x\in cl(V_{C_j}')$, in the space of nonempty compact subsets of $A$ endowed with the Hausdorff distance.  

We can now define $\mathcal V^p:= (V_{X_k^p})_{k=p}^N$ by 
\[V_{X_j^p}:=\left\{x\in L_j\cap A_p;\; \exists k\in \{p,\dots ,j\}\;:\; x\in V_{C_k}'\; \mathrm{and}\; cl\big(\mathcal L_{kf^*(x)}^{\eta'}\big)\subset L_j\right\}.\]

We remark that, for every $j\ge p$, the subset $V_{X_j^p}$ is open. Let us show that $V_{X_j^p}$ contains  $X_j^p:=X_j\cap A_p$. Let $x$ be a point of $X_j^p$. As $(V_{C_k}')_k$ covers $A_p$, there exists $k\in\{p,\dots,N\}$ such that $x$ belongs to $V_{C_k}'$. Since, $V_{C_k}'\subset L_k$ intersects $X_j$, the integer $k$ is not greater than $j$. Moreover, $f^*(x)$ belongs to $X_j$ and the compact subset $cl(\mathcal L_{kf^*(x)}^{\eta'})$ is included in $\mathcal L_{kf^*(x)}^{2\eta'}$. Thus, by property \ref{coher},
$cl(\mathcal L_{kf^*(x)}^{\eta'})$ is included in $X_j$, itself included in $L_j$. Therefore, $x$ belongs to $V_{X_j^p}$. This show that $X_j^p$ is contained in $V_{X_j^p}$.

Finally, we notice that  $V_{X_j^p}$ contains $V_{C_j}'$ and hence $C_j$, for every $j\ge p$. Therefore, a part of fundamental property \ref{cp} is shown.

\paragraph{Construction of $f'\mapsto f_p^{'^{*}}$}\label{fp}

Since for every $k\ge 0$, the morphism $i^{k+1}(f')$ is equal to $S_{f'}(i^{k}(f'))$, by conclusion \ref{conc4} of lemma \ref{lem6}, for $f'\in V_f$, $x\in V_{C_p}'$, and $k\ge 0$ we have 
\[f'\circ i^k(f')(x)\in i^k(f')\big(\mathcal L_{p_{f^*(x)}}^{\eta'}\big).\]
By taking the limit, as $k$ approaches the infinity, we get 
\begin{equation}\label{plaquep} f'\circ i_p(f')(x)\in cl\Big(i_p(f')\big(\mathcal L^{\eta'}_{pf^*(x)}\big)\Big)\subset i_p(f')\big(\mathcal L^{2\eta'}_{pf^*(x)}\big)\end{equation}
 
On $A_{p+1}'$, the map $i_p$ and $i^0$ are equal. 

Moreover, for $j>p$, the compact subset $cl(V_{C_j}')$ is contained in $A'_{p+1}$ which is sent into 
$K_p$, by \ref{pro2}.1.  Thus, on a neighborhood of the compact subset $f^*(cl(V_{C_j}'))$ the map $i_p$ and $i^0$ are equal. By the fact \ref{plaque}.1, we deduce 

for all $x\in V_{C_j}'$ and $f'\in V_f$, we have 
\begin{equation}\label{plaquej}
f'\circ i_p(f')(x)\in i_p(f')\big(\mathcal L_{j{f^*(x)}}^{\eta'}\big)
\end{equation}

Therefore, (\ref{plaquep}) and (\ref{plaquej}) imply that, by restricting $V_f$, we get 

for all $j\ge p$, $x\in V_{C_j}'$, and $f'\in V_f$, 

\begin{equation}\label{plaquefinal} f'\circ i_p(f')(x)\in i_p(f')\big(\mathcal L_{j_{f^*(x)}}^{\eta'}\big)\end{equation}

For every $k\ge p$, there exists a positive continuous function $\epsilon_k$ on $L_k$ such that, for any $x\in L_k$, the submanifolds  $(\mathcal F_{ky})_{y\in \mathcal L_{kx}^{\epsilon_k(x)}}$ are the leaves of a $C^r$-foliation on a neighborhood $U_{kx}$ of $i(x)$, which is transverse to $i(\mathcal L_{kx}^{\epsilon_k(x)})$.
For $y\in U_{kx}$, let $\pi_k^x(y)\in \mathcal L_{kx}^{\epsilon_k(x)}$ be the point such that $y$ belongs to 
$\mathcal F_{k\pi_k^x(y)}$.

Let $G_k:= \cup_{x\in L_k}\{x\}\times U_{kx}$. The set $G_k$ is an open neighborhood of the graph of $i_{|L_k}$.

 \[\mathrm{Let}\;\pi_k\;:\; G_k\rightarrow L_k\]
\[(x,y)\mapsto \pi_k^x(y)\]

\begin{lemm}\label{lem8} The map $\pi_k$ is $C^r$-$\big((\mathcal T\times M)_{|G_k},\mathcal T\big)$-controlled.
\end{lemm}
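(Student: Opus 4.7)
The plan is to verify locally, around each point $(x_0,y_0)\in G_k$, that $\pi_k$ satisfies the $(\mathcal T\times M,\mathcal T)$-controlled $C^r$-regularity. Let $X\in\Sigma$ be the stratum containing $x_0$; since $L_k$ lies in strata incident to $X_k$, we have $X_k\le X$. By the trellis foliation clause applied to the pair $X_k\le X$, there exists a distinguished chart
\[\phi:U_{x_0}\to\mathbb R^{d_k}\times\mathbb R^{d_X-d_k}\times T\]
of $U_{x_0}\subset L_X\cap L_k$ that is simultaneously a chart for both laminations: the $\mathcal L_k$-plaques are the slices $\mathbb R^{d_k}\times\{s\}\times\{t\}$, and the $\mathcal L_X$-plaques are the slices $\mathbb R^{d_k}\times\mathbb R^{d_X-d_k}\times\{t\}$.

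Next, the bundle $F_k\to L_k$ is $\mathcal T_{|L_k}$-controlled of class $C^r$ by lemma \ref{Ni}, and hence the map $Exp:F_k\to M$ is $\mathcal T_{|L_k}$-controlled and $C^r$ along fibers. Combined with the transversality and foliation hypothesis defining $\epsilon_k$, after shrinking $U_{x_0}$ and a neighborhood $V_{i(x_0)}\subset M$, a $C^r$-diffeomorphism $\Psi:V_{i(x_0)}\to\mathbb R^{d_k}\times\mathbb R^{n-d_k}$ trivializes the foliation $(\mathcal F_{kz})_{z\in\mathcal L^{\epsilon_k(x_0)}_{k,x_0}}$ to the standard product foliation with leaves $\{u\}\times\mathbb R^{n-d_k}$. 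An application of the implicit function theorem to $\mathrm{pr}_{\mathbb R^{d_k}}\Psi\circ i\circ\phi^{-1}$ then produces a $C^r$-function $\alpha$ such that, for every $(x,y)$ in a small neighborhood of $(x_0,y_0)$ in $G_k$,
\[\pi_k(x,y)\;=\;\phi^{-1}\bigl(\alpha(x,y),\,\mathrm{pr}_{\mathbb R^{d_X-d_k}}\phi(x),\,\mathrm{pr}_T\phi(x)\bigr).\]

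Every required property now follows by inspection of this formula. The $T$- and $\mathcal L_X$-transversal coordinates of $\pi_k(x,y)$ coincide with those of $x$, so $\pi_k(x,y)$ lies in the $\mathcal L_X$-plaque of $x$; by property \ref{coher} applied to $\mathcal L_X$ and $\Sigma$, this plaque is contained in $X$ whenever $x\in X$, and hence $\pi_k\big((X\times M)\cap G_k\big)\subset X$ once the source neighborhood $V_{X\times M}$ is chosen small enough. The $C^r$-regularity along plaques of $(\mathcal L_X\times M)_{|V_{X\times M}}$ is immediate: along such a plaque $\mathrm{pr}_T\phi(x)$ is fixed, while the $\mathcal L_X$-plaque coordinates of $x$ and all of $y$ enter $\alpha$ smoothly; moreover $\pi_k$ sends $\mathcal L_X\times M$-plaques into $\mathcal L_X$-plaques of the target. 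Carrying out the same verification separately for every stratum $Y$ with $X_k\le Y$, using a chart adapted to the pair $(\mathcal L_k,\mathcal L_Y)$, yields the $(\mathcal T\times M,\mathcal T)$-controlled conclusion.

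The main technical ingredient is the construction of the simultaneous chart $\phi$ compatible with both $\mathcal L_k$ and $\mathcal L_X$, which is a direct use of the $C^r$-foliation clause of the trellis definition, together with the smoothness of the holonomy along $(\mathcal F_{kz})_z$ provided by lemma \ref{Ni}. The remainder is a direct unwinding of the definitions from the explicit local formula for $\pi_k$.
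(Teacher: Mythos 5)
Your local set-up (the simultaneous chart $\phi$ for $\mathcal L_k$ and the tubular lamination of the stratum of $x_0$, followed by an implicit function theorem) matches the paper's strategy, but the key step is flawed. You trivialize, once and for all, the single foliation $(\mathcal F_{kz})_{z\in\mathcal L^{\epsilon_k(x_0)}_{k,x_0}}$ attached to the plaque of the base point $x_0$, and then determine $\alpha(x,y)$ by solving an equation written in that fixed trivialization $\Psi$. But $\pi_k^x(y)$ is defined by the condition $y\in\mathcal F_{kz}$ for $z$ in the plaque of $x$, and the submanifolds $\mathcal F_{kz}=Exp(F_{kz})$ for $z$ outside the plaque of $x_0$ are \emph{not} leaves of the foliation you trivialized: the normal leaf through $i(z)$ attached to $z$ and the leaf of the $x_0$-plaque foliation through $i(z)$ both pass through $i(z)$ but are in general distinct submanifolds. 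Hence the equation $\mathrm{pr}_{\mathbb R^{d_k}}\Psi(i(\phi^{-1}(u,v,t)))=\mathrm{pr}_{\mathbb R^{d_k}}\Psi(y)$ computes the projection of $y$ onto the plaque determined by $(v,t)$ \emph{along the $x_0$-foliation}, not along $(\mathcal F_{kz})_z$ for $z$ in that plaque; your displayed formula therefore does not equal $\pi_k(x,y)$ as soon as $x$ leaves the plaque of $x_0$ (i.e.\ as soon as $v$ or $t$ changes). This is not a removable detail: the whole content of the lemma is the joint $C^r$-regularity of the projection in the directions transverse to $\mathcal L_k$ but tangent to the bigger stratum's plaques (the $v$-directions), which is exactly where the family of normal leaves varies with $x$, and your construction never confronts that variation.

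The paper's proof handles this by encoding the defining condition directly: it sets $\Psi_t(u,v,y)=p_1\circ n_k\bigl(Exp^{-1}_{i(x)}(y)\bigr)$ with $x=\phi^{-1}(u,v,t)$, where $n_k(x)$ is the orthogonal projection onto $N_k^\bot(x)$; the vanishing of $\Psi_t$ is equivalent to $y\in\mathcal F_{kx}$, and since $N_k$ is the $\mathcal T_{|L_k}$-controlled lifting of lemma \ref{Ni}, the map $(u,v,y)\mapsto\Psi_t(u,v,y)$ is $C^r$ jointly in $(u,v,y)$ with continuous dependence on $t$. The implicit function theorem, with $(v,y)$ as parameters, then yields $u=\rho_t(v,y)$ of class $C^r$ in $(v,y)$, continuously in $t$, and $\pi_k(x,y)=\phi^{-1}(\rho_t(v,y),v,t)$. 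If you wanted to salvage your route you would have to let the trivialization of the normal-leaf family depend on the plaque of $x$ and prove that this family of trivializations is $C^r$ in the $v$-direction; proving that is precisely the use of the controlled field $n_k$ above, i.e.\ the paper's argument. So the gap is genuine, and the controlled regularity of $N_k$ must enter the implicit equation itself, not only the regularity of $Exp$ along fibers.
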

\begin{proof}
The map $N_k$ is $C^r$-$\mathcal T_{|L_k}$-controlled and hence $N_k^\bot$ is $C^r$-$\mathcal T_{|L_k}$-controlled.
Thus, the map $n_k$ from $L_k$ into the bundle over $M$ of linear endomorphism of $TM$, such that $n_k(x)$ is the orthogonal projection of $T_{i(x)}M$ onto $N_k^\bot(x)$, is also $C^r$-$\mathcal T_{|L_k}$-controlled.

The map $(x,y)\in G_k\mapsto Exp_{i(x)}^{-1}(y)$ is well defined and $C^r$-$(\mathcal T_{|L_k}\times M)_{|G_k}$-controlled. 

Thus, the map $(x,y)\in G_k\mapsto n_k(Exp_{i(x)}^{-1}(y))$ is $C^r$-$(\mathcal T_{|L_k}\times M)_{|G_k}$-controlled.

Let $(x_0,y_0)$ be in $G_k$ such that $x_0=\pi_k^{x_0}(y_0)$.  Let $U_{x_0}$ be a neighborhood of $x_0$ such that $TM_{|U_{x_0}}$ is canonically diffeomorphic to $U_{x_0}\times R^n$. We may suppose that in this identification 
$Ti(T_{x_0}\mathcal L_j)$ is sent onto $\{x_0\}\times \mathbb R^{d_k}\times \{0\}$.
For $U_{x_0}$ small enough, we may suppose that the canonical projection  $p_1\;:\; TM_{|U_{x_0}}\cong U_{x_0}\times \mathbb R^{d_k}\times \mathbb R^{n-d_k}\rightarrow \mathbb R^{d_k}$,  sends bijectively $Ti(T_x\mathcal L_k)$ onto $\mathbb R^{d_k}$, for every $x\in U_{x_0}$.

Let $X_l$ be the stratum which contains $x_0$. We may also suppose that the subset $U_{x_0}$ is a distinguish open subset of the $\mathcal L_{k|L_k\cap L_l}$-foliation of $\mathcal L_{l|L_l\cap L_k}$. In other words, there exists $\phi\;:\; U_{x_0}\rightarrow \mathbb R^{d_l}\times \mathbb R^{d_l-d_k}\times T$ which is a chart of $\mathcal L_k$ and $\mathcal L_l$.

Let us finally identify an open neighborhood of $y_0\in M$ to $\mathbb R^n$.
 
For $t\in T$, let us regard the following map:
\[\Psi_t\;:\; \mathbb R^{d_k}\times \mathbb R^{d_l-d_k}\times \mathbb R^n\rightarrow \mathbb R^{d_k}\]
\[(u,v,y)\mapsto p_1\circ n_k\circ Exp_{i(x)}^{-1}(y)\]
\begin{center}
with $x=\phi(u,v,t)$.
\end{center}

For $t$ close enough to $t_0$, the map $\Psi_t$ is well defined and of class $C^r$ on a neighborhood of $(u_0,v_0,y_0)$, with 
$\phi(x_0)=(u_0,v_0,t_0)$. Moreover $\Psi_t$ depends continuously on $t$.

We remark that $\psi_t(u,v,y)$ is equal to $0$ if and only if $(u,v,t)$ is equal to $\phi\circ \pi_{x_t}^k(y)$, where $x_t$ is equal to $\phi^{-1}(u_0,v_0,t)$.

The value of  $\Psi_{t_0}$ at $(u_0,v_0,y_0)$ is $0$. 
For $\epsilon_k$ small enough, the transversality of $\mathcal F_k$ with the image of $i$, implies that 
the derivative $\partial_{u}\Psi_{t_0}(u_0,v_0,y_0)$ is bijective. Hence by the implicit function theorem, there exist  neighborhoods $T_0$ of $t_0\in T$, $U_0$ of $u_0\in \mathbb R^{d_p}$, $V_0$ of $v_0\in \mathbb R^{d_k}$ and $Y_0$ of $y_0\in R^n$, and a continuous family $(\rho_t)_{t\in T_0}$ of $C^r$-maps from $V_0\times Y_0$ into $U_0$, such that 
\[\left\{\begin{array}{c} \rho_{t_0}(v_0,y_0) = u_0\\
\forall (u,v,t, y)\in U_0\times V_0\times T_0\times Y_0,\; \Psi_t(u,v,y)=0\Leftrightarrow u= \phi_t (v,y)\end{array}\right.\]

Thus, $\pi_{x_t}^k(y)$ is equal to $\phi^{-1}(\rho_t(v,y),v,t)$.  Therefore, the regularity of $\pi^k$ follows from the regularity of $(\rho_t)_t$.
\end{proof}

By taking $V_f$ small enough, for every $f'\in V_f$ the set \[\big\{\big(f^*(x),f'\circ i_{p}(f')\big);\; x\in V_{C_k}'\big\}\] is included in $G_k$. Thus, the following map is well defined, for every $f'\in V_f$,
 \[V_{C_k}'\rightarrow L_k\]
 \[x\mapsto \pi_k^{f^*(x)}\circ f'\circ i_p(f')(x)\]

It follows from assertion (\ref{plaquefinal}) and fundamental property \ref{N}, by taking $V_f$ small enough, for all $k\geq p$, $x\in V_{C_k}'$, and $f'\in V_f$,  we have 
\[i_p\circ \pi_k^{f^*(x)}\circ f'\circ i_p(f')(x):=f'\circ i_p(f')(x).\]
Thus, for every $l\ge p$,  on the intersection of $V_{C_k}'$ with $V_{C_l}'$, we have 
\[i_p\circ \pi_k^{f^*(x)}\circ f'\circ i_p(f')(x)=i_p\circ \pi_l^{f^*(x)}\circ f'\circ i_p(f')(x)\]
As $i_{p|A_p}$ is an immersion, by taking $V_f$ small enough, we always have
\[\pi_k^{f^*(x)}\circ f'\circ i_p(f')(x)=\pi_l^{f^*(x)}\circ f'\circ i_p(f')(x)\]
Therefore, we can define 
\[V_f\stackrel{C^0}{\longrightarrow}C^0\big(A_p,A_p\big)\]
\[f'\longmapsto \big(x\mapsto f_p^{'^*}(x)=\pi_j^{f^*(x)}\circ f'\circ i_p(f')(x),\; \mathrm{if}\; x\in V_{C_j}'\big)\]

Hence, the map $f'\mapsto (i_p(f'),f'^*_p)$ satisfies fundamental properties 1 and \ref{hatf}.

By smoothness of the maps which define $f'\mapsto f'^*_p$, to show that this last is continue from $V_f$ into $End_{f^*_{|A_p} \mathcal V^p}^r(\mathcal T_{|A_p})$, it is sufficient to prove fundamental property \ref{cp}: \[\forall k\ge p,\;\forall x\in V_k^p,\; \forall f'\in V_f,\; f'^*_p(x)\in \mathcal L_{kf'^*(x)}^{\eta'}.\]

By definition of $V_{X_k^p}$, if $x$ belongs to $V_{X_k^p}$, then there exists $j\in \{p,\dots ,k\}$, such that $x$ belongs to $V_{C_j}'$ and the plaque $\mathcal L_{jf^*(x)}^{\eta'}$ is included in $L_k$. As the point $f'^*_p(x)$ belongs to $\mathcal L_{jf^*(x)}^{\eta'}$, it belongs also to $\mathcal L_{kf^*(x)}^{\eta'}$. \begin{flushright}
$\square$
\end{flushright}

\subsubsection{Proof of lemma \ref{prelem6} and \ref{lem6}}
\label{preuve lem6}

\paragraph{Proof of the injectivity of $TS^0$}
Let us prove, for every $j\ge p$ and $x\in X_j\cap \hat V_{C_p}$, that the partial derivative $\partial_{T_x X_j}( S_{f'}(i'))$ is injective when 
$\partial_{T_{{f'^*_{i'}}(x)} X_j}(i')$ is injective, for all $f'\in V_f$ and $i'\in V_i$.

As we have ever seen that $S^0(f',i')$ is an immersion of $\mathcal L_{p|\hat V_{C_p}}$ into $M$, it remains to prove that, for $u\in T_xX_j\setminus T_x\mathcal L_p$, the vector $T(S^0(f',i'))(u)$ is nonzero, for $j>p$.

By lemma \ref{lem8}, the following maps is $(\mathcal T_{|\hat V_{C_p}},\mathcal T_{|L_p})$-controlled:
\[f'^*_{\sigma}\;:\;\hat V_{C_p}\rightarrow L_p\]
\[x\mapsto \pi^{f^*(x)}_p\big( f'\circ S^0(f',i')(x)\big)\]

 By definition of $S^0$, we have for every $x'\in \hat V_{C_p}$,
\[f'\circ S^0(f',i') (x')=i'\circ {f'_{i'}}^*(x').\]
This implies that
\[\partial_{T_xX_j}\big( f'\circ S^0(f',i') \big)=\partial_{T_xX_j}\big( i'\circ {f'_{i'}}^*\big).\]

Thus, it is sufficient to prove that the vector $T\big( i'\circ {f'_{i'}}^*\big )(u)$ is nonzero.
As we assume that $\partial_{T_{f'^*_{i'}(x)}X_j}( i')$ is injective, it is sufficient to prove that $T{f'_{i'}}^*(u)$ is nonzero.

As ${f'_{i'}}^*$ belongs to $Mor_{f^*}(\mathcal L_{p|\hat V_{C_p}},\mathcal L_p)$, it is sufficient to show that  $T_x{f}^*(u)$ does not belong to $T_{f^*(x)}\mathcal L_p$. By property \ref{pro2}.3 and injectivity of $Ti$, the vector $Tf\circ T_xi(u)$ does not belong to $Ti(T_{f^*(x)}\mathcal L_p)$. Therefore, by commutativity of the diagram, the vector $T_xf^*(u)=(T_{f^*(x)}i)^{-1}\circ T(f\circ i)(u)$ does not belong to $T_{f^*(x)}\mathcal L_p$.

 \paragraph{Definition of $S_{f'}$}
\[\mathrm{For}\;  f'\in V_f,\; \mathrm{let}\; S_{f'}\;:\; i'\in V_i\cap \mathcal M_p^{f'}\longmapsto
\left[  
x\mapsto \left\{\begin{array}{cl} 
i^0(f')(x)& \mathrm{if}\; x\in A_{p+1}'\setminus V_{C_p}\\
S^0_{f'}(i')&\mathrm{if}\; x\in V_{C_p}\end{array}\right.
\right]\]
where $S^0_{f'}:=S^0(f',\cdot)$ was defined in lemma \ref{prelem6}.

By definition, the restriction to $V_\Delta'$ of 
 $i^0(f')$ is equal to the one of $i'_{p}(f')$.
Since the image by $f^*$ of $V_\Delta$ is disjoint from $cl(V_{C_p}\cup V_\Delta)$, by restricting $V_f$, for $f'\in V_f$ and $i'\in V_i\cap \mathcal M_p^{f'}$, the maps $i'_{p}(f')$ and $S^0_{f'}(i')$ are equal on $V_\Delta'$.  Thus, $i^0(f')$ and $S^0_{f'}( i')$ are equal on $A_{p+1}'\cap V_{C_p}\subset V_\Delta'$.  

Therefore, the values of $S_{f'}$ are in $\mathcal M_p^{f'}$.

Let $\tilde K_p$ be the compact set $cl(V_{C_p}\cap X_p)$. Let $U $ be a small open neighborhood of $i(\tilde K_p)$ in $M$ such that there exists a continuous extension $\chi$ of the section of the Grassmannian of $d_p$-plan of $TM_{|i(\tilde K_p)}$ defined by\footnote{It follows from property \ref{propertyA} that such a section is well defined.}  
\[\chi(y)=Ti(T_x\mathcal L_p),\; \mathrm{if}\; x\in \tilde K_p\;\mathrm{ and}\; y=i(x)\]

We endow $M$ with an adapted metric satisfying property \ref{property B}, for the normally expanded lamination $X_p$ over the compact subset $\tilde K_p$. Let $\exp$ be the exponential map associated to this metric.

Hence, by restricting $U$, there exist $\lambda>1$ and a open cone field $C$ over $U$ of $i(\tilde K_p)$ such that:
\begin{itemize}
\item for all $x\in i(\tilde K_p)$, $u\in \chi(x)$, $v\in \chi(x)^{\bot}\setminus \{0\}$, we have 
 \[\lambda\cdot \|T_xf(u)\|^r< \|p\circ T_xf (v)\|,\]
with $p$ the orthogonal projection of $T_{f(x)}M$ onto $\chi_{f(x)}^\bot$, 
\item for each $x\in \tilde K_p$, the space $Ti(T_x\mathcal L_p)^\bot$ is a maximal vector subspace included in $C(x)$,
\item there exist $\delta>0$ and $\lambda<1$ satisfying for all $x\in U$ and $u\in cl(C(x))\setminus \{0\}$ with norm less than $\delta$,
\[v:=\exp_{f(x)}^{-1}\circ f\circ \exp(u)\in C(f(x))\quad\mathrm{and}\quad \lambda \|v\|>\|u\|.\]
\end{itemize}

\paragraph{The $C^0$-contraction}
Let $U^*$ be a neighborhood of $X_p\cap \hat A_p$ in $V_{C_p}\setminus V_\Delta$ such that $cl(U^*\cup f^*(U^*))$ is sent by $i$ into $U$.

For $V_i$ and $V_f$ small enough, any $f'\in V_f$ and $i'\in V_i$ satisfy  
\[S^0(f',i')(U^*)\subset U\; \mathrm{and}\; i'\circ f'^*_{i'}(U^*)\subset U.\]

Consequently, for $\eta'>0$, $\delta>0$ and then $V_i$ and $V_f$ small enough, for all $f'\in V_f$, $(i',i'')\in \big(V_i\cup S^0_{f'}( V_i)\big)^2$, and $x\in U^*$, we have:
\begin{itemize}
\item for every $u\in C_{i'(x)}$ with norm less than $\delta$, the vector $\exp_{f'(x)}^{-1}\circ f'\circ \exp(u)$ belongs to $C_{f'(x)}$ and has norm greater than $ \|u\|/\lambda$, 

\item The following number is equal to $0$ if and only if $i'(x)$ is equal to $i''(x)$:
\[d_x(i',i''):=\sup_{u\in cl(C_{i'(x)}),\; \|u\|< \delta}\Big\{\|u\|;\; \exp(u)\in i''(\mathcal L_{px}^\delta)\Big\}\]
\item the number $d_x(i',i'')$ is also equal to
\[\sup_{u\in cl(C_{i'(x)}),\; \|u\|< \|Tf_{|U}\|\cdot \delta}\Big\{\|u\|;\; \exp(u)\in i''(\mathcal L_{px}^{\delta+2\eta'})\Big\}.\]
\end{itemize}

By definition of $S^0$ and $f'^*_{\cdot}$, the map $f'$ sends $S^0_{f'}(i')(x)$ to $i'\circ f'^*_{i'}(x)$.
For every $u\in C_{S^0(f',i')(x)}$ with norm less than $\delta$, if $\exp(u)$ belongs to $S^0_{f'}(i'')(\mathcal L_{px}^\delta)$, then it is sent by $f'$  into $i''(\mathcal L_{pf^*(x)}^{\delta+\eta})\subset i''(\mathcal L_{pf'^*_{i'}(x)}^{\delta+2\eta})$.

 Thus, we have    
\begin{equation}\label{C0:contraction} d_x(S^0_{f'}(i'),S^0_{f'}(i''))\le \lambda 
d_{f'^*_{i'}(x)}(i',i'').\end{equation}

If $d$ denote the Riemannian distance on $M$, we notice that $d_x$ and \[d_x^{\infty}\; :\; (i',i'')\mapsto d(i'(x),i''(x))\] are uniformly equivalent on $U^*$.

Therefore, the map
\[d_0\; : \; (i',i'')\mapsto \sup_{x\in U^*} \big(d_x(i',i'')+d_x(i'',i')\big)\]
defines a semi-distance on $V_i\cap S^0_{f'}( V_i)$, for every $f'\in V_{f}$.

Moreover, this semi-distance is equivalent to the semi-distance

\[d_{\infty}(i',i'')=\sup_{x\in U^*}d_x^\infty(i',i'').\]

In fact, the equivalence is uniform for $f'\in V_f$, since $d_x$ and $d_x^\infty$ do not depend on $f'\in V_f$.

As $S_f(i)$ is equal to $i$, by (\ref{C0:contraction}), for any $i'\in V_i$, we have

\[d_0(i,S^0_f(i))=d_0(S^0_f(i),S^0_f(i'))\le {\lambda} d_0(i,i').\]
Thus, for any $\epsilon\in ]0,\delta[$, the intersection of the $\epsilon$-ball centered at $i$ with $S_{f}^N (\mathcal M_p^{f'}\cap V_i)$ is sent by $S_f$ into the $\epsilon/\lambda$-ball centered at $i$, with respect to the $d_0$-distance.

By continuity of $f'\mapsto S_{f'}$, by restricting $V_f$, for every $f'\in V_f$, the intersection of $V_i$ with the $\epsilon$-ball centered at $i$ is sent by $S^0_{f'}$ into the $\epsilon$-ball centered at $i$.   

Moreover, by equation (\ref{C0:contraction}), the map $S^0_{f'}$ is $\lambda$-contracting on $V_i$, for any $f'\in V_f$, with respect of the semi-distance $d_0$.

\paragraph{the $1$-jet space}\label{r=1}
Contrarily to what happened for the $C^0$-topology, the map $S_f$ is generally not contracting for the $C^1$-topology. However, we are going to show that the backward action of $Tf$ on the Grassmannian of $d_p$-plans of $TM$ is contracting, at the neighborhood of the distribution induced by $TX_p$. 

The bundle of linear maps from $\chi$ into $\chi^\bot$, denoted by $P^1$,  
 is canonically  homeomorphic (via the graph map) to an open neighborhood of $\chi$ in the Grassmannian of $d_p$-plans of $TM_{|U}$.
 In this identification the zero section is sent to $\chi$.
 
 We endow $P^1$ with the norm subordinate to the adapted Riemannian metric of $M$.
 
 By normal expansion, for $U$ small enough and $f'$ close enough to $f$, for every $x\in U$ sent by $f'$ into some $y\in U$, any $l\in P^1_y$ small enough, the preimage by $T_xf'$ of the graph of $l$ is the graph of a small $l'\in P^1_x$.
  
Let us show the following lemma:
\begin{lemm}
For all $\epsilon>0$, then $U$ and then $V_f$ small enough, for all $f'\in V_f$, $x\in U\cap f'^{-1}(U)$ and $l\in P^1_{f'(x)}$ with norm not greater than $\epsilon>0$, then the norm of $\phi_{f'x}(l):=l'\in P^1_x$ is less than $\epsilon$. Moreover, the map $\phi_{f'x}$ is $\lambda$-contracting.\end{lemm}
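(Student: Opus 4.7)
The plan is to express $T_x f'$ in block form with respect to the orthogonal splittings $T_x M = \chi_x \oplus \chi_x^\bot$ and $T_{f'(x)}M = \chi_{f'(x)} \oplus \chi_{f'(x)}^\bot$, writing
\[T_x f' = \begin{pmatrix} A_x & B_x \\ C_x & D_x \end{pmatrix}.\]
Under the identification of a nearby $d_p$-plane with the graph of some $l : \chi \to \chi^\bot$, a direct computation shows that $\phi_{f'x}$ is characterized by the implicit equation $l' = D_x^{-1}\big(l A_x - C_x + l B_x l'\big)$, whenever $D_x$ is invertible and $\|l\|, \|l'\|$ are sufficiently small.

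First I would establish uniform estimates on the blocks. At points $x \in \tilde K_p$ with $f'=f$, the invariance $T_xf(\chi_x) \subset \chi_{f(x)}$ gives $C_x = 0$. The adapted metric of Property~\ref{property B} together with the first bullet of Proposition~\ref{P1} (applied with $r=1$, which is harmless since normal expansion is assumed) yield a constant $\mu_0 < 1$ such that $\|D_x^{-1}\| \cdot \|A_x\| \le \mu_0$ uniformly on $\tilde K_p$. By compactness and continuity of $x \mapsto \chi_x$, $x\mapsto T_xf'$, for any fixed $\mu \in (\mu_0,1)$ this inequality persists on a neighborhood $U$ of $i(\tilde K_p)$ and for all $f' \in V_f$, while $\|B_x\|$ stays bounded by some $M$ and $\|C_x\|$ becomes arbitrarily small.

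Next I would run a contraction mapping argument on each fibre $P^1_x$. Fix any $\lambda \in (\mu,1)$. Choose $\epsilon>0$ small enough that $\|D_x^{-1}\|\,M\,\epsilon \le (\lambda-\mu)/2$, then shrink $V_f$ so that $\|D_x^{-1}\|\,\|C_x\| \le (\lambda-\mu)\epsilon/2$. A direct estimate shows that the map $T : l' \mapsto D_x^{-1}(l A_x - C_x + l B_x l')$ sends the ball $\{\|l'\|\le\epsilon\}$ into itself for every $l$ with $\|l\|\le\epsilon$, and is strictly contracting in $l'$. Its unique fixed point is $\phi_{f'x}(l)$, and by construction $\|\phi_{f'x}(l)\| \le \epsilon$, giving the first conclusion.

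Finally, for the $\lambda$-contraction in the argument $l$, I subtract the fixed-point equations for $l_1,l_2$ and isolate the difference:
\[(I - D_x^{-1} l_2 B_x)(l_1'-l_2') = D_x^{-1}\big((l_1-l_2)A_x + (l_1-l_2)B_x l_1'\big).\]
Since $\|D_x^{-1} l_2 B_x\| \le \|D_x^{-1}\|M\epsilon$ is small, $(I - D_x^{-1} l_2 B_x)^{-1}$ has norm close to $1$, and one obtains
\[\|l_1'-l_2'\| \le \frac{\|D_x^{-1}\|(\|A_x\| + M\epsilon)}{1-\|D_x^{-1}\|M\epsilon}\,\|l_1-l_2\|.\]
The right-hand factor tends to $\|D_x^{-1}\|\|A_x\| \le \mu$ as $\epsilon \to 0$, so after a final shrinking of $\epsilon$ (and then $V_f$) it is bounded by $\lambda$. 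The only conceptual obstacle is extracting the domination $\|D_x^{-1}\|\|A_x\| < 1$ from the normal expansion hypothesis via the adapted metric; everything else is the standard graph-transform calculation.
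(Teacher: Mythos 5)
Your proposal is correct and follows essentially the same route as the paper: the same block decomposition of $T_xf'$ with respect to $\chi\oplus\chi^\bot$, the same backward graph-transform identity $(D_x-lB_x)l'=lA_x-C_x$, the contraction supplied by the uniform domination $\|D_x^{-1}\|\,\|A_x\|<1$ coming from the $r$-normal expansion in the adapted metric, and compactness/continuity to propagate the estimates to a small $U$ and $V_f$ (with $\epsilon$ taken small, exactly as in the paper's own proof). The only cosmetic difference is that you solve the linear equation by a fibrewise fixed-point argument and obtain the $\lambda$-contraction by subtracting the two fixed-point identities, whereas the paper inverts $(Tf'_v-l\circ Tf'_h)_{|\chi^\bot}$ explicitly and bounds the derivative in $l$ of the resulting algebraic expression.
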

\begin{proof}
Let $\pi_v$ and let $\pi_h$ be the orthogonal projections of $TM_{|U}$ onto respectively $\chi^\bot$ and $\chi$.
Let $Tf'_h:=\pi_h\circ Tf'$ and $Tf'_v:=\pi_v\circ Tf'$ which are defined on $f'^{-1}(U)$.

For any vector $e'\in \chi(x)$, the point $(e',l'(e'))$ is sent by $T_xf'$ onto 
\[\big(Tf'_h(e',l'(e')),Tf'_v(e',l'(e'))\big).\]
Let $e:=  Tf'_h(e',l'(e'))$. By definition of $l'$, the point $(e',l'(e'))$ is sent by $T_xf'$ onto
$(e,l(e))$.

Therefore, we have $l(e)= Tf'_v(e',l'(e'))$ and $l(e)=l\circ Tf'_h(e',l'(e'))$.
\[\Rightarrow Tf'_v(e',l'(e'))= l\circ Tf'_h(e',l'(e'))\]
\[\Rightarrow (Tf'_v-l\circ Tf'_h)(l'(e'))=(l\circ Tf'_h-Tf'_v)(e')\]
By normal expansion, for $\epsilon$, $U$ and $V_f$ small enough, the map $(Tf'_v-l\circ Tf'_h)_{|\chi^\bot}$ is bijective. Consequently,
\[l'(e')=(Tf'_v-l\circ Tf'_h)_{|\chi^\bot}^{-1}(l\circ Tf'_h-Tf'_v)(e')\]
Hence, the expression of $l'=\phi_{f'x}(l)$ depends algebraically  on $l$, and the coefficients of this algebraic expression depend continuously on $f'$ or $x$, with respect to some trivialisations.

When $f'$ is equal to $f$ and $x$ belongs to $i(\tilde K_p)$, the map 
\[\phi_{fx} \;:\;l'\mapsto (Tf_v-l\circ Tf_h)_{|\chi^\bot}^{-1}(l\circ Tf_h)\]
is $\lambda$-contracting for $l$ small, by normal expansion.

Thus, for $U$ small enough, for all $x\in U\cap f^{-1}(U)$ and $l \in B_{P^1_{f^*(x)}}(0,\epsilon)$, the tangent map of $\phi_{fx}$ has a norm less than $\lambda$.
Therefore, by restricting a slice $U$, for $V_f$ small enough, the tangent map $T\phi_{f'x}$ has a norm less than $\lambda$ and hence $\phi_{f'x}$ is a $\lambda$-contraction on $cl\big({B}_{P_y}(0,\epsilon)\big)$. 

As, for $x\in i(\tilde K_p)$, the map $\phi_{fx}$ vanishes at $0$, for $U$ and $V_f$ small enough, the norm $\phi_{f'x}(0)$ is less than 
\[(1-\lambda)\cdot \epsilon.\]

Consequently, by $\lambda$-contraction, the closed $\epsilon$-ball centered at the $0$-section is sent by $\phi_{f'}$ into the $\epsilon$-ball centered in $0$, for all $x\in f'^{-1}(U)\cap U$ and $f'\in V_f$.
 \end{proof}

\paragraph{Proof of the lemma \ref{lem6} when $r=1$}
Let us  begin by proving the existence of a neighborhood $V_{f,i}$ sent by $S$ into itself.

Following the arguments of sections \ref{prop:ip}, for $V_i$ and $V_f$ small enough, there exists $N>0$ such that the subset $V_{C_p}\setminus U^*$ is included in $U_N$\footnote{$U_N$ is the set of points $x\in V_{C_p}$ such that there is no any $\eta'$-chain of $V_{C_p}$ which respects $\mathcal L_{p}$, which start at $x$, and with length $N$.}.
In particular, the restriction of $S^{N}_{f'}(i')$ to $U^{*c}$ does not depend on $i'$ but only on $f'$, for $(f',i')$ in the domains of $S^N$.

For $\epsilon>0$ and then $V_f$ small enough, the set  

\[V^{f'\epsilon}:= 
\Big\{i'\in \mathcal M_p^{f'}; \quad i'=S^N_{f'}(i^0(f'))\; \mathrm{on} \; U^{*c};\; d_{0}(i,i')< \epsilon,\]
\[\mathrm{and}\; \; Ti(T_x\mathcal L_{p|U^*})\in B_{P^1_x}(0,\epsilon),\; \forall x\in U^*\Big\}\]
is nonempty and included in $V_i$.
 Therefore, by the  
two last steps and by the equality of $S$ and $S^0$ on $U^*\subset V_{C_p}\setminus V_\Delta$, the map $S_{f'}$ sends $V^{f'\epsilon}$ into it self. Thus, the following neighborhood of $(f,i)\in\mathcal M$ is sent by $S$ into itself:
\[V_{f,i}:=\bigcup_{f'\in V_f,\; 0\le k\le N}\{f'\}\times S^{-k}_{f'}(V^{f'\epsilon})\]

We remark that, for any $f'\in V_f$, the restriction of $d_0$ to $V^{f'\epsilon}$ is a distance, which is equivalent
 to the distance defining the strong $C^0$-topology, for which $S_{f'}$ is $\lambda$-contracting.
 
 By restricting $V_i$ and $V_f$, and hence $V_{f,i}$, we may suppose that the set 
\[F:= \bigcup_{(f',i')\in V_{f,i}} i'(U^*)\cup f'\circ i'(U^*)\]
has its closure included in the interior of the set

\[O:= \bigcap_{(f',i')\in V_{f,i}} U\cap f'^{-1}(U).\]
Let $r\in C^0 (U,[0,1])$ be equal to $1$ on $F$ and to $0$ on $O^c$.

\[\mathrm{Let}\; r\phi\;:\; V_f\times cl\big(B_{\Gamma^0P^1|U}(0,\epsilon)\big)
\rightarrow cl\big(B_{\Gamma^0P^1|U}(0,\epsilon)\big)\]
\[(f',\sigma)\longmapsto\left[ r\cdot \phi_{f'}\; :\;x\mapsto r(x)\cdot \phi_{f'x}\big(\sigma\circ f'(x)\big)\right]\]

where $\Gamma^0P^1|U$ is the set of the continuous sections of the bundle $P^1$ restricted to $U$.

For every $f'\in V_f$,  the map $r\phi_{f'}$ is still $\lambda$-contracting and hence has a fixed continuous section $\sigma_{f'}$, which depends continuously on $f'\in V_f$.

Thus, for every $f'\in V_f$, there exists a neighborhood $V_{f'}$ of $f'\in V_f$, such that, 
\[\|\sigma_{f'}-\sigma_{f''}\|_\infty<\delta, \; \forall f''\in V_{f'}.\]

By uniform continuity of $\sigma_{f'}$, there exists $e>0$ such that 
\[\forall(x,y)\in F^2,\;\mathrm{if}\; d(x,y)<e\; \mathrm{then}\; d(\sigma_{f'}(x),\sigma_{f'}(y))<\delta.\]

Let $N'> N$ such that $\lambda^{N'-N}\epsilon$ is less than $\delta$ and $e$. Thus,  for every $f''\in V_{f'}$, the diameter of $r\phi_{f''}^{N'}(B(0,\epsilon))$ is less that $\delta$ and the $C^0$-diameter of $\{i'\in V_i:\; (f'',i')\in S^{N'}(V_{f,i})\}$ is less than $e$.

For $N''>N'$ large enough, for all $f''\in V_{f'}$  and $((f'',i''),(f'',i'))\in S^{N''}(V_{f,i})^2$, we have for every $x\in V_{C_p}$:
\begin{itemize}
\item either $f''^{k} (i''(x))$ and $f''^{k}(i'(x))$ belong to $F$, for all $x\in\{0,\dots, N'\}$, hence $Ti''(T_x\mathcal L_p)$ and $Ti'(T_x\mathcal L_p)$ belong to $r\phi_{f''}^{N'}(B(0,\epsilon))$,
\item either $i''(x)$ is equal to $i'(x)$, and so $Ti''(T_x\mathcal L_p)$ and $Ti'(T_x\mathcal L_p)$ are equal.
\end{itemize}

In the first case, $d(Ti''(T_x\mathcal L_p),Ti'(T_x\mathcal L_p))$ is not greater than 

\[d\Big(Ti''(T_x\mathcal L_p),\sigma_{f''}\big(i''(x)\big)\Big)+ 
d\Big( \sigma_{f''}(i''(x)),  \sigma_{f'}(i''(x))\Big)+ 
d\Big(\sigma_{f'}(i''(x)), \sigma_{f'}(i'(x))\Big)\]\[+
d\Big( \sigma_{f'}(i'(x)), \sigma_{f''}(i'(x))\Big)+
d\Big(\sigma_{f''}\big(i'(x)),Ti''(T_x\mathcal L_p\big)\Big)\]
\[\Rightarrow d(Ti''(T_x\mathcal L_p),Ti'(T_x\mathcal L_p))\le 5\delta.\]
 
The last inequality concludes the proof of conclusion $3$, when $r=1$.

\paragraph{General case: $r\ge 1$}
As we deal with the $C^r$-topology, we should generalize the Grassmannian concept as follow:

For $x\in U$, let $G_x^r$ be the set of the $C^r$-$d_p$-submanifolds of $M$, which contain $x$, quotiented by the following $r$-tangent relation:

Two such submanifolds $N$ and $N'$ are equivalent if there exits a chart $(U,\phi)$ of a neighborhood of $x\in M$, which sends $N\cap U$ onto $\mathbb R^{d_p}\times \{0\}$ and sends $N'\cap U$ onto the graph of a map from $\mathbb R^{d_p}$ into $\mathbb R^{n-d_p}$, whose $r$-first derivatives vanish at $\phi(x)$.

We notice that for $r=1$, this space is the Grassmannian of $d_p$-plans of $T_xM$.

As we are interested in the submanifold ``close'' to the embedding of the small $\mathcal L_p$-plaques by $i$,  we restrict our study to the manifolds whose tangent space at $x$ is complement  to $\chi(x)^\bot$.

The preimage of such submanifolds by the map $\exp_x$ is a graph of some $C^r$-maps $\overline{l}$ from $\chi(x)$ to $\chi(x)^\bot$. 

Moreover, their $r$-tangent equivalence class can be identified to the Taylor polynomial of $\overline{l}$ at $0$:
\[\overline{l}(u)=T_0\overline{l}(u)+\frac{1}{2}T^2_0\overline{l}(u^2)+\cdots +\frac{1}{r!} T^r_0\overline{l}(u^r)+o(\|u\|^r)\]
where $u$ belongs to $\chi(x)$, the $k^{th}$-derivative $T_0^k\overline{l}$ belongs to the space $L_{sym}^k(\chi(x),\chi(x)^\bot)$ of $k$-linear symmetric space from $\chi(x)^k$ to $\chi(x)^\bot$. We notice that we abused of notation by writing $u^k$ instead of $(u,\dots,u)$.

The map $l(u):= \sum_{k=1}^n \frac{1}{k} T^k_0\overline{l}(u)$ is an element of the vector space 
\[P_x^r:= \prod_{k=1}^r L_{sym}^j (\chi(x),\chi(x)^\bot).\]

Conversely, any vector $l\in P_x^r$, that we will write in the form 
\[l\::\;u\in \chi(x)\mapsto \sum_{k=1}^r l_k(u^k)\]
is the class of the following $C^r$-$d_p$-submanifold:
 \[\exp\Big(\big\{(u+l(u));\; u\in \chi(x)\; \mathrm{and} \; \|u\|<r_i(x)/2\big\}\Big),\]
 where $r_i$ is the injectivity radius of $\exp$.

The linear map $l_1$ from $\chi(x)$ to $\chi(x)^\bot$ will be called the \emph{linear part of $l$}. We notice that $l_1$ belongs to $P^1_x$.

We denote by $P^r$ the vector bundle over $U$, whose fiber at $x$ is $P^r_x$.

 By normal expansion, for $U$ small enough and $f'$ close enough to $f$, for any point $x\in U$ sent by $f'$ into some $y\in U$, any $l\in P^r_y$ whose linear part is small enough, the preimage  by $f'$ of a representative of $l$ is a representative of vector $\phi_{f'x}(l)\in P^r_x$, which only depends on $l$.

Let us show the following lemma:
\begin{lemm}
For every $\epsilon>0$ small enough and then  $V_f$ and $U$ small enough, for all $f'\in V_f$, $x\in U\cap f'^{-1}(U)$, and $l\in P^r_{f'(x)}$ with linear part of norm not greater than $\epsilon>0$, the norm of the linear part of $\phi_{f'}(l)$ is less than $\epsilon$. Moreover the map $\phi_{f'}$ is $\lambda$-contracting for a norm on $P^r$ which does not depends on $x$ or $f'$.\end{lemm}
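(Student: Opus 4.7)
The plan is to mimic the graph-transform argument of Hirsch--Pugh--Shub in the $r$-jet bundle $P^r$, using the $r$-normal expansion as the source of contraction on each jet order.

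First I would equip $P^r$ with a fiberwise norm adapted to the grading, for instance
\[\|l\|:=\max_{1\le k\le r}\|l_k\|,\]
where $\|l_k\|$ is the operator norm of the symmetric $k$-linear map $l_k:\chi(x)^k\to\chi(x)^\bot$. I would work in local trivializations obtained via $\exp$, so that a representative of $l\in P_y^r$ (with $y=f'(x)$) is the graph $N_l=\exp(\{u+l(u):u\in\chi(y),\ \|u\|<\rho\})$, and $\phi_{f'x}(l)$ is the class of the local component of $f'^{-1}(N_l)$ through $x$, represented as the graph of $l'=\sum_{k=1}^r l'_k(u^k)$. Writing $f'$ in the splitting $\chi\oplus\chi^\bot$ as $(f'_h,f'_v)$ and equating Taylor coefficients at $0$ in the identity
\[f'_v(u+l'(u))=l\!\left(f'_h(u+l'(u))\right)\qquad\text{up to order }r\]
produces, by induction on $k\in\{1,\dots,r\}$, an explicit recursion
\[l'_k=(Tf'_v-l_1\circ Tf'_h)_{|\chi^\bot}^{-1}\Bigl(l_k\circ(Tf'_h)^k+R_k\bigl(l_1,\dots,l_k,\ l'_1,\dots,l'_{k-1};\ T^{\le k}f'\bigr)\Bigr),\]
where $R_k$ is a universal polynomial whose monomials all have total $l$-degree $\ge 2$ or contain a factor $l'_j$ with $j<k$.

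Next I would exploit the adapted metric of Proposition \ref{P1} and Property \ref{property B}. After shrinking $U$ and $V_f$, property $\ref{Li'}.3$ and the $r$-normal expansion give, for $x\in U\cap f'^{-1}(U)$ and $f'\in V_f$,
\[\bigl\|(Tf'_v)_{|\chi^\bot}^{-1}\bigr\|\cdot\|Tf'_h\|^{k}\le\lambda<1,\qquad k=1,\dots,r,\]
because $\|Tf^*\|^{r}\cdot\|(Tf'_v)_{|\chi^\bot}^{-1}\|<\lambda$. This is precisely the ingredient that made the $r=1$ step work and it remains available for every $k\le r$ exactly because of the \emph{$r$}-normal (rather than merely $1$-normal) expansion. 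The remainders $R_k$ have no constant nor purely-linear-in-$l_k$ part, so for $\epsilon$ small enough their contribution to $\|l'_k\|$ is dominated by $\tfrac{1-\lambda}{2}\epsilon$ once $\|l\|\le\epsilon$ and $\|l'_j\|\le\epsilon$ for $j<k$.

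Combining these two estimates, I would prove by induction on $k$ that the closed $\epsilon$-ball of $P^r$ is sent into itself by $\phi_{f'x}$ and that the map is $\lambda$-contracting for the norm $\|\cdot\|$: the linear part is handled by the already proved $r=1$ case, and for $k\ge 2$ the recursion above, combined with $\|(Tf'_v)^{-1}\|\|Tf'_h\|^k\le\lambda$, gives
\[\|l'_k-\tilde l'_k\|\le\lambda\|l_k-\tilde l_k\|+C(\epsilon)\max_{j<k}\|l_j-\tilde l_j\|,\]
and $C(\epsilon)\to 0$ as $\epsilon\to 0$. A triangular rescaling $\|l\|_\ast:=\max_k \delta^{k}\|l_k\|$ with $\delta>0$ sufficiently small absorbs the lower-order coupling terms, making $\phi_{f'x}$ a strict $\lambda$-contraction with a rate independent of $x$ and $f'$. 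The main obstacle is the bookkeeping in the recursion for $R_k$ and choosing the weights $\delta^k$ so that the coupling between jet orders is swallowed by the dominant contraction factor $\lambda$; once this algebraic tuning is done, uniformity in $(x,f')$ is automatic from the continuous dependence of every coefficient on $(T^{\le r}f',x)$ and the compactness of the closed $\epsilon$-ball fibrewise over the precompact set $U$.
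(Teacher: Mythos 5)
Your strategy is essentially the paper's: represent $\phi_{f'x}$ through the Taylor-coefficient recursion in the splitting $\chi\oplus\chi^\bot$, observe the triangular structure in which the $k$-th component is transformed, up to lower-order terms, by $l_k\mapsto(f'_{1v}-l_1\circ f'_{1h})^{-1}_{|\chi^\bot}\circ l_k\circ(f'_{1h})^k$, extract the factor $\lambda$ for every $k\le r$ from the $r$-normal expansion in the adapted metric (since $\|f'_{1h}\|^{k}\le\max(1,\|f'_{1h}\|^{r})$), treat the linear part by the $r=1$ case, and absorb the coupling between jet orders with a weighted norm. The only structural difference is that the paper substitutes the already-computed $l'_j$, $j<k$, so that its remainder depends only on $(l_i)_{i<k}$ and the jet of $f'$, whereas you keep $l'_j$ in $R_k$ and control it by the inductive Lipschitz bound; that is an equivalent bookkeeping choice.

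However, three of your intermediate claims are false and the induction as written would not close. Already at order $k=2$ the recursion contains the term $-(f'_{1v}-l_1\circ f'_{1h})^{-1}_{|\chi^\bot}\circ f'_{2v}\circ(\mathrm{id}+l'_1)^{2}$, where $f'_2$ is the quadratic coefficient of the $2$-jet of $f'$: this term is independent of $l_2$, does not vanish at $l=0$ (the preimage of $\exp_{f'(x)}(\chi(f'(x)))$ is tangent to $\chi(x)$ only to first order, so the zero section of $P^r$ is not even approximately invariant), and has size of order $\|f'\|_{C^2}$, not $O(\epsilon)$. Hence $R_k$ does have constant terms, the closed $\epsilon$-ball for your unweighted norm $\max_k\|l_k\|$ is not mapped into itself, and the coupling coefficients between orders are bounded but do not tend to $0$ as $\epsilon\to 0$ (they involve the higher jets of $f'$ and also the size of the higher components $l_j$ of the input, so uniform Lipschitz control additionally requires restricting those components to a bounded region, just as the paper restricts to $cl(B_{P^1}(0,\epsilon))$ in the $r=1$ case). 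None of this is fatal to your argument, because the lemma only asserts invariance of the set $\{\,\|l_1\|\le\epsilon\,\}$, which, as you note, follows from the $r=1$ case since the linear part of $\phi_{f'x}(l)$ depends only on $l_1$; and your final rescaled norm $\max_k\delta^{k}\|l_k\|$ with $\delta$ small absorbs \emph{bounded} (not small) couplings, which is exactly the ``norm on $P^r$'' the paper intends. So the repair is: drop the smallness claims, note that constant terms cancel in differences, bound the couplings on a region where the higher components are bounded, and let the triangular weights produce the uniform contraction rate.
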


\begin{proof}
We have showed in $r=1$, that $\phi_{f'x}$ sends the set of vectors of $P'$, and hence those of $P^r$, of linear part not greater than $\epsilon$ into it self. Let us show the $\lambda$-contraction of $\phi_{f'x}$. 

Let $l':=\phi_{f'}(l)$. Let $J_x^rf'$ be the $r$-jet map of $f'$ at $x$ (see \cite{Michor}).

We recall that that the $r$-jet $J_x^rf'$ of $f$ at $x$ is a vector of 
\[\prod_{j=1}^r L_{sym }^j (T_xM, T_{f'(x)}M)\] such that, if we denote by $f_j'$ its component in 
$L_{sym}^j(T_xM,T_{f'(x)}M)$, we have 

\[\exp^{-1}_{f'(x)}\circ f'\circ \exp_x(u)=\sum_{j=1}^r f_j(u^j) + o(\| u\|^r),\; \mathrm{for} \; u\in \chi(x)\]
By definition of $l':= \phi_{f'x}(l)$, for any $u'\in \chi(x)$, there exist $u\in \chi(f'(x))$ such that 

\begin{equation}\label{mise en valeur du jet de f}
J_x^rf'(u'+l'(u'))=(u+l(u))+o(\|u\|^r)\end{equation} 
  
We recall that $\pi_v$ and $\pi_h$ denote the orthogonal projection of $TM_{|U}$ onto respectively $\chi^\bot$ and $\chi$.

By (\ref{mise en valeur du jet de f}), we have 
\[u:= \pi_h\circ J^r_xf'(u'+l'(u'))+o(\|u'\|^r)\; \mathrm{and}\; l(u)=\pi_v\circ J^r_xf'(u'+l'(u'))+o(\|u'\|^r).\]

Thus, we have 
\begin{equation}\label{1eq}
l\circ \pi_h\circ J^r_xf'(u'+l'(u'))=\pi_v\circ J^r_xf'(u'+l'(u'))+o(\|u'\|^r).\end{equation}
We have 
\begin{equation}\label{2eq} J^rf'_x(u'+l'(u'))=\sum_{I\in R} f'_{|I|}\Big[\prod_{k\in I} l'_k(u'^k)\Big]+o(\|u'\|^r),\end{equation}
where $R$ is the set $\cup_{k=1}^r\{0,\dots ,r\}^k$, $l'_0(u'^0)$ is equal to $u'$, and for $I\in R$, $|I|$ is the length of $I$. 

Let $f_{kv}$ and $f_{kh}$ be respectively the linear maps $\pi_v\circ f_k$ and $\pi_h\circ f_k$ respectively, for every $k\in \{0,\dots, r\}$.

It follows from equations (\ref{1eq}) and (\ref{2eq}) that 
\begin{equation}\label{agd}l\Big(\sum_{I\in R} f'_{|I|h}\big[ \prod_{k\in I} l'_k(u'^k)\big]\Big)
=\sum_{I\in R} f'_{|I|v}\big[\prod_{k\in I}l_k'(u'^k)\big]+o(\|u'\|^k).\end{equation}

On the one hand, we have

\begin{equation}\label{ad}
\sum_{I\in R} f'_{|I|v}\big[\prod_{k\in I}l_k'(u'^k)\big]=\sum_{m=1}^r\left[ \sum_{I\in R,\; \Sigma I=m}f'_{|I|v} \big[\prod_{k\in I}l'_k(u'^k)\big]\right]+o(\|u'\|^r)\end{equation}

 with, for every $I\in  R$, $\Sigma I$ equal to $\sum_{j\in I} j$ plus the number of times that $0$ belongs to $I$.
 
On the other, as we have

\[l\Big(\sum_{I\in R} f'_{|I|h}\big[ \prod_{k\in I} l'_k(u'^k)\big]\Big)=
 \sum_{a=1}^r l_a\Big(\sum_{I\in R} f'_{|I|h}\big[ \prod_{k\in I} l'_k(u'^k)\big]\Big)^a.\]
As, 
\[\Big(\sum_{I\in R} f'_{|I|h}\big[ \prod_{k\in I} l'_k(u'^k)\big]\Big)^a=\sum_{(I_\alpha)_\alpha\in R^a} 
\prod_{\alpha} f'_{|I_\alpha|h}\big[\prod_{k\in I_\alpha} l_k'(u'^k)\big]\]
Thus, the polynomial map $l\Big(\sum_{I\in R} f'_{|I|h}\big[ \prod_{k\in I} l'_k(u'^k)\big]\Big)$ is equal to  
\begin{equation}\label{ag}
\sum_{m=1}^r
\sum_{(I_\alpha)_{\alpha\in A}\in R^*,\; \sum_\alpha\Sigma I_\alpha=m} 
l_{|A|} \Big[\prod_{\alpha\in A} f'_{|I_\alpha|h}\big[\prod_{k\in I_\alpha} l_k'(u'^k)\big]\Big]\end{equation}
with $R^*:= \cup_{a=1}^r R^a$.

By identification, it follows from equations  (\ref{agd}), (\ref{ad}), and (\ref{ag}) that for every $m\in \{1,\dots, r\}$

\[ \underbrace{\sum_{(I_\alpha)_{\alpha\in A}\in R^*,\; \sum_\alpha\Sigma I_\alpha=m} 
l_{|A|} \Big[\prod_{\alpha\in A} f'_{|I_\alpha|h}\big[\prod_{k\in I_\alpha} l_k'(u'^k)\big]\Big]}_{l'_m\; \mathrm{only\; occurs\; for}\; (I_\alpha)_\alpha=((m));\;  l_m\;  \mathrm{only\; for}\; (I_\alpha)_\alpha\in \{\{0\},\{1\}\}^m}=
\underbrace{ \sum_{I\in R,\; \Sigma I=m}f'_{|I|v} \big[\prod_{k\in I}l'_k(u'^k)\big]}_{\mathrm{here} \;l'_m\; \mathrm{only\; occurs\; for}\; I=(m)}\]
Thus,  there exists an algebraic function $\phi$, such that $f'_{1v}\circ l'_m(u'^m)$ is equal to
\[\sum_{(i_\alpha)_{\alpha=1}^M\in \{0,1\}^m} l_m\Big(\prod_{\alpha=1}^m f_{1h}'\circ l_{i\alpha}'(u'^{i_\alpha})\Big)+
l_1\circ f_{1h}'\circ l'_m(u'^m)+ \phi\big((l_i)_{i<m},(l_i')_{i<m},(f_i)_{i=1}^r\big)\]

Since the linear par $l_1$ of $l$ is small, we have 
 \[l_m'=(f'_{1v}-l_1\circ f'_{1h})_{|\chi^\bot}^{-1}
 \Big[ \phi\big((l_i)_{i<m},(l_i')_{i<m},(f_i)_{i=1}^r\big)+ 
 \sum_{I\in \{0,1\}^m} l_m\circ \prod_{k\in I} f'_{1h}\circ l_k\Big].\]
For $x\in i(\tilde K_p)$ and $f'=f$, we have $f'_{1h|\chi^\bot}=0$.

Thus,  
\[\sum_{I\in \{0,1\}^m} l_m\prod_{k\in I} f'_{1h}\circ l_k=l_m\circ  \big(f_{1h} \big)^m.\] 
It follows from the $r$-normal expansion that the map
\[C\;:\; l_m\mapsto  (f'_{1v}-l_1\circ f'_{1h})_{|\chi^\bot}^{-1} \circ l_m\circ  \big(f_{1h}\big)^m\]
is $\lambda$-contracting, when $l_1$ is small.

By induction, the map $l_s'$ is an algebraic function of only  $(l_k)_{k\le s}$ and $(f'_k)_{j\le s}$, for $s\le m$.
 Thus, 
 \[l_m'=C_m(l_m)+\phi\big((l_i)_{i<m},(f_i)_i\big)\]

This implies that for a norm on $P^r$, the map $\phi_{fx}$ is contracting, for $U$ small enough and then $f'$ $C^r$-close to $f$.
 
Then we prove conclusion 3 of the lemma, by replacing $P^1$ by $P^r$ in the proof of the case $r=1$ done in section \ref{r=1}. 
\end{proof}

\subsubsection{Proof of gluing lemma \ref{lem9}}\label{structloc}
For every $j>p$, let us admit the following lemma:

\begin{lemm}\label{sous-lem9} There exist of a neighborhood $G_j$ of the graph of $i_{|cl(V_{C_j})}$ and a continuous map 
\[\phi_j\;:\; V_{f'} \rightarrow Mor^r((\mathcal T\times M)_{|G_j}\times [0,1],M)\] 
\[f'\longmapsto \big[(x,y,t)\mapsto \phi_{jf'}(x,t)(y)\big]\]
 such that for every $f'\in V_f$ and $x\in cl(V_{C_j})$:
   \begin{itemize}
 \item[$a.$] For every $t\ge 1/N$, $\phi_{jf'}(x,t)$ is a retraction of $G_{jx}:= G_j\cap (\{x\}\times M)$ onto $\mathcal F_{jx}^{f'}:= \mathcal F_{px}^{\eta'}\pitchfork i_{p+1}(f')(\mathcal L_{jx}^{\eta'})$, 
\item[$b.$] the map $\phi_{jf'}(x,0)$ is equal to the identity, 
\item[$c.$] the restriction of $\phi_{jf'}(x,t)$ to $\mathcal F_{jx}^{f'}$ is the identity for every $t\in [0,1]$.
\end{itemize}
\end{lemm}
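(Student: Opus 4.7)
The plan is to build, for each $x\in cl(V_{C_j})$, a $C^r$-retraction $\pi_{jf'}(x)\colon G_{jx}\to \mathcal F_{jx}^{f'}$ of a suitable open neighborhood $G_{jx}\ni i(x)$ onto the transverse intersection, depending continuously on $(f',x)$ in the controlled sense, and then to obtain $\phi_{jf'}(x,t)$ by linearly interpolating (via $\mathrm{Exp}$) between the identity and $\pi_{jf'}(x)$ on $[0,1/N]$ and holding the retraction constant on $[1/N,1]$. By construction we immediately get $\phi_{jf'}(x,0)=\mathrm{id}$ and, since points of $\mathcal F_{jx}^{f'}$ are fixed by $\pi_{jf'}(x)$, the interpolation leaves them fixed for every $t$, while $\phi_{jf'}(x,t)=\pi_{jf'}(x)$ for $t\ge 1/N$ is the required retraction.

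The transversality that underlies the construction is essentially automatic: by lemma \ref{Ni}, $F_{jx}\subset F_{px}$ and $F_{px}\oplus Ti(T_x\mathcal L_p)=T_{i(x)}M$; since $Ti(T_x\mathcal L_j)\supset Ti(T_x\mathcal L_p)$, the tangent spaces to $\mathcal F_{px}^{\eta'}$ and $i(\mathcal L_{jx}^{\eta'})$ already span $T_{i(x)}M$ at $i(x)$, hence so do those of $\mathcal F_{px}^{\eta'}$ and $i_{p+1}(f')(\mathcal L_{jx}^{\eta'})$ for $f'$ close to $f$ and $i_{p+1}(f')$ close to $i$. Thus $\mathcal F_{jx}^{f'}$ is a $C^r$-submanifold of dimension $d_j-d_p$ depending $C^r$-continuously on $(f',x)$.

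The retraction $\pi_{jf'}(x)$ will be the composition of two projections. First, using the family $(\mathcal F_{py'})_{y'\in L_p}$ as a $C^r$-foliation transverse to the image of $i$ (whose holonomy regularity was established in lemma \ref{lem8} through the map $\pi_p$), we obtain a controlled projection $\rho_1$ onto $\mathcal F_{px}^{\eta'}$ defined on a neighborhood of $i(x)$. Second, inside the affine-like slice $\mathcal F_{px}^{\eta'}$, we project onto its transverse sub-submanifold $\mathcal F_{jx}^{f'}$ via the exponential along an orthogonal complement to $F_{jx}$ in $F_{px}$, corrected by the implicit-function-theorem argument already used for $S^0$ in lemma \ref{prelem6}: the condition "$\mathrm{Exp}(u+v)\in i_{p+1}(f')(\mathcal L_{jx}^{\eta'})$ with $v\in F_{jx}^{\perp_{F_{px}}}$" has non-degenerate Jacobian in $v$ by Stage~1 and therefore yields a controlled $C^r$-map $\rho_2$ defined on $\rho_1(G_{jx})$.

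The main obstacle is Stage~2, namely verifying that $\pi_{jf'}(x)=\rho_2\circ\rho_1$ is jointly a $\mathcal T$-controlled $C^r$-morphism in $(x,y)$ and continuous in $f'\in V_f$; this is precisely the kind of transverse-intersection regularity handled in the proof of lemma \ref{prelem6}, and the same foliation/implicit-function apparatus applies here, only with $i_{p+1}(f')$ replacing $i$. Once $\pi_{jf'}$ is in hand, the interpolation formula
\[
\phi_{jf'}(x,t)(y):=\mathrm{Exp}_y\!\Bigl(\min(Nt,1)\cdot \mathrm{Exp}_y^{-1}\bigl(\pi_{jf'}(x)(y)\bigr)\Bigr)
\]
is manifestly $C^r$ in $(y,t)$, and the required stratified-controlled regularity in $x$ and continuity in $f'$ are inherited from $\pi_{jf'}$. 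Properties $a$, $b$, $c$ follow by direct inspection.
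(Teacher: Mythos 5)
Your overall architecture is the paper's: build, for each $x$, a controlled retraction of a neighborhood of $i(x)$ onto $\mathcal F_{jx}^{f'}$ by composing a retraction onto $\mathcal F_{px}$ with a projection defined by a transverse intersection and the implicit function theorem, then interpolate with $Exp$ and a cutoff at $1/N$; the transversality you invoke (from lemma \ref{Ni} and $Ti(T_x\mathcal L_p)\subset Ti(T_x\mathcal L_j)$) is also the one the paper uses. But your Stage~2 map $\rho_2$ does not exist as specified, and this is exactly the technical heart of the lemma. You project $\mathcal F_{px}^{\eta'}$ onto $\mathcal F_{jx}^{f'}$ \emph{along} $F_{jx}^{\perp}\cap F_{px}$, solving $Exp(u+v)\in i_{p+1}(f')(\mathcal L_{jx}^{\eta'})$ for $v\in F_{jx}^{\perp}\cap F_{px}$. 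Count dimensions: $\mathcal F_{jx}^{f'}$ has dimension $d_j-d_p$, so its codimension inside $\mathcal F_{px}$ (dimension $n-d_p$) is $n-d_j=\dim F_{jx}$, not $d_j-d_p=\dim(F_{jx}^{\perp}\cap F_{px})$. Your system therefore has $n-d_j$ equations (membership in a codimension-$(n-d_j)$ submanifold of $M$) in $d_j-d_p$ unknowns: the ``Jacobian in $v$'' is square only in the coincidental case $n-d_j=d_j-d_p$, the implicit function theorem does not apply, and the slice $Exp\big(u+(F_{jx}^{\perp}\cap F_{px})\big)$ meets $i_{p+1}(f')(\mathcal L_{jx}^{\eta'})$ in a set of expected dimension $2d_j-d_p-n$, i.e.\ generically not at all when $d_j-d_p<n-d_j$. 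The correct projection — the paper's $\pi_{jf'}$ — goes the other way: hold the component $u$ of $Exp_x^{-1}(y)$ in $F_{jx}^{\perp}$ fixed and take the unique intersection point of the slice $Exp(F_{jx}+u)$ with $i_{p+1}(f')(\mathcal L_{jx}^{\eta'})$; that is $n-d_j$ equations in $n-d_j$ unknowns, with invertible Jacobian by the transversality coming from the normal expansion (property \ref{pro2}.3 and the choice of $N_j$), and since $F_{jx}+u\subset F_{px}$ whenever $u\in F_{px}$, the intersection point automatically lies in $\mathcal F_{px}$, hence in $\mathcal F_{jx}^{f'}$, which is what gives property $a$.

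Two smaller repairs. Your $\rho_1$ cannot be extracted from lemma \ref{lem8}: the map $\pi_k$ there projects a point onto the plaque $\mathcal L_{kx}$ \emph{along} the fibers, i.e.\ onto the base of the fibration, not onto a single fiber; a controlled retraction onto $\mathcal F_{px}$ is obtained directly as $y\mapsto Exp\big(p^x(Exp_x^{-1}(y))\big)$ with $p^x$ the orthogonal projection onto $F_{px}$ — this is the role of the paper's map $P$, which moreover interpolates to the identity while fixing $\mathcal F_{px}$ pointwise, which is what makes properties $b$ and $c$ hold simultaneously. Also, $\min(Nt,1)$ is only Lipschitz at $t=1/N$, whereas the statement asks for a $C^r$-morphism on $(\mathcal T\times M)_{|G_j}\times[0,1]$; use instead a $C^\infty$ cutoff $\rho$ equal to $0$ on $\mathbb R^-$ and to $1$ on $[1/N,+\infty[$, as in the paper. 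With these corrections your construction coincides with the paper's proof.
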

Let $(r_j)_{j>p}$ be a partition of the unity subordinate to the  covering $(V_{C_j}\setminus \cup_{p<k<j} cl(V'_{C_k}))_j$ of $A'_{p+1}$.

By restricting $V_f$, we can define for all $(x,y)$ in a neighborhood $G$ of the graph of $i_{|V_\Delta}$, $t\in [0,1]$, and $f'\in V_f$, the point 
\[\gamma_{f'}(x,y,t):= \phi_{Nf'}(x,r_N(x))\circ \cdots \circ \phi_{(p+1) f'}(x,r_1(x))\circ \psi_{f'}(x,y,t),\]
 with $\psi_{f'}(x,y,t):= Exp\big(t\cdot Exp^{-1}_{i_{p+1}(f')(x)}(y,t)\big)$ and 
 $\phi_{jf'}(x,0)(y)=y$, for all  $(x,y)\in G$, $t\in[0,1]$, and $f'\in V_f$.
 We notice that, by $b$,  $\gamma_{f'}$ is a $(\mathcal T\times M)_{|G}\times [0,1]$-controlled $C^r$-morphism which depends continuously on $f'\in V_f$.
 
 Let us check properties 1-2-3 of lemma \ref{lem9}.
 
 1) The point $\psi_{f'} (x,y,0)$ is equal to $i_{p+1}(f')(x)$. Since $i_{p+1}(f')(x)$ belongs to each submanifold $\mathcal F_{jx}^{f'}$ such that $x$ belongs to $V_{C_j}$, by $c$, we get 1).

 2)  Let $(x,y)\in G$ such that $x$ belongs to $V_{C_j}'$, for some $j\in \{p+1,\dots ,N\}$. Therefore $r_k(x)$ is equal to zero, for every $k>j$. Thus, the sum $\sum_{p<k\le j} r_k(x)$ is equal to $1$. Consequently, there exists $k\in \{p+1,\dots ,j\}$ such that $r_k(x)$ is greater than $1/N$. Therefore, by $a$, the point
 \[z:=  \phi_{kf'}(x,r_k(x))\circ \cdots \circ \phi_{1f'}(x,r_1(x))\circ \psi_{f'}(x,y,t)\]
 belongs to $\mathcal F_{kx}^{f'}$. 
 
 By coherence of the tubular neighborhoods, $z$ belongs to $\mathcal F_{mx}^{f'}$, for all $m>k$ such that 
 $x$ belongs to $ V_{C_m}$. 
 
 By $c$, the point $\phi_{mf'}(x,r_m(x))(z)$ is $z$. Consequently, $\gamma_{f'}(x,y,t)$ is equal to $z$ which belongs to $\mathcal F_{jx}^{f'}$. 
 
 3) This last property is obvious, by definition of $\psi$ and $c$.
 
 It remains to prove lemma \ref{sous-lem9}. 
 
 \begin{proof}[Proof of lemma \ref{sous-lem9}]
 We can construct, for every $f'\in V_f$, a map $\pi_{kf'}\;:\; G_k \mapsto V_{C_k}$ such that $\pi_{kf'}(x,y)$ is the unique intersection point of the transverse intersection of $i_{p+1}(f')(\mathcal L_{kx}^{\eta'})$ with the submanifold $Exp(F_{kx}+u)$, where $u$ is the orthogonal projection of $Exp^{-1}_x(y)$ on $F_{kx}^\bot$.
 
 By using the implicit function theorem, as in lemma \ref{lem8}, one shows that $\pi_{kf'}$ is $(\mathcal T\times M)_{|G^k}$-controlled and depends continuously on $f'\in V_f$.
      
 Let $\phi_{jf'}^0\;:\; G^j\times [0,1] \rightarrow M$ be defined by 
 
 \[\phi_{jf'}^0(x,y,t)= Exp\big(t\cdot Exp_x^{-1}(\pi_{jf'}(x,y)) +(1-t)Exp^{-1}_x(y)\big).\]
 
 The map $(\phi_{jf'}^0)_{f'}$ are $C^r$-$(\mathcal T\times M)_{|G_j}\times [0,1]$-controlled and depends continuously on $f'\in V_f$. 
 
 Moreover, the image by $ \phi_{jf'}^0$ of $\mathcal F_{px}\times [0,1]$ is contained in $\mathcal F_{px}$.
 
 Let $P\;:\; G^j\times [0,1] \rightarrow M$ be defined by $P(x,y,t)=Exp\big((1-t)p^x(Exp^{-1}_x(y)\big)$ where $p^x\;:\; T_{i(x)}M\rightarrow F_{px}$ is the orthogonal projection.
 
 As $P$ is a composition of controlled maps, $P$ is a controlled map.
 
 Finally we define 
 \[\phi_{jf'}(x,t)\; :\; y\mapsto \phi_{jf'}^0(x,P(x,y,\rho(t)),\rho(t)),\]
 where $\rho$ is a $C^\infty $ real function equal to $1$ on $[1/N,+\infty [$ and to $0$ on $\mathbb R^-$.
 
 Thus, for all $f'\in V_f$ and $x\in$, the map $\phi_j(f',x,t)$ preserves $\mathcal F_{px}$ for every $t\in [0,1]$. 
 
 For $t=0$, the map $\phi_j(f',x,t)$ is well equal to to the identity and hence property $b$ is satisfied. 
 
 For $t\in[1/N,1]$, the point $\phi_j(f',x,t)$ is the composition of a retraction into $\mathcal F_{px}$ with a retraction onto $i_{p+1}(f')(\mathcal L_{jx}^{\eta'})$ which preserves $\mathcal F_{px}$. Hence, $\phi_{j}(f',x,t)$ satisfies property $a$.
 
 For any $t\in[0,1]$, the map $P(x,\cdot,t)_{|\mathcal F_{px}}$ is equal to the identity and the restriction of $\phi_{jf'}^0(x,\cdot,t)$ to ${i_{p+1}(f')(\mathcal L_{jx}^{\eta'})}$ is also equal to the identity, hence the property $c$ is satisfied.  
\end{proof}

\appendix{}
\newpage\section{Analysis on laminations and on trellis}
\subsection{Partition of unity}
\subsubsection{Partition of unity on a lamination}\label{partlam}
\begin{propr}\label{lcl cpct}
\begin{enumerate}\item Let $L$ be a second countable locally compact metric space. There exists an increasing sequence of compact subsets $(K_n)_{n\ge 0}$ whose union is equal to $L$ and such that, for every $n\ge 0$, the compact subset $K_n$ is included in the interior of $K_{n+1}$.
\item Let $(L,\mathcal L)$ be a lamination. There exists a locally finite open covering $(V_i)_i$ of $L$, such that each open subset $V_i$ is precompact in a distinguish open subset.
\end{enumerate}
\end{propr}
\begin{proof}

 1) By local compactness of $L$, for every $x\in L$, we can define the supremum $r_x$ of $r\in ]0,1[$ such that the ball $B(x,r)$ is precompact. As $L$ is second countable, there exists a family $(x_i)_{i\in\mathbb N}$ dense in $L$. Thus, for each $x\in L$, there exists a point $x_i$ at a distance less than  $r_x/8$ from $x$. Therefore, the ball $B(x_i, r_x/4)$ is included in $B(x,r_x/2)$. As the last ball is precompact, the ball $B(x_i, r_x/4)$ is also precompact; this implies that $r_{x_i}\ge r_x/4$.
We remark that $x$ belongs to the ball $B(x_i,r_x/8)$ which is included in $B(x_i,r_{x_i}/2)$. Thus, the family of precompact balls  $(B(x_i,r_{x_i}/2))_i$ is a covering of $L$.
 
Let $K_n:= \cup_{0\le i\le n} cl\big(B(x_i,r_{x_i}/2)\big)$. The family of compact subsets $(K_n)_n$ is increasing and its union is equal to $L$. For every $n\ge 0$, the family $(K_n\setminus int(K_{n+p}))_{p\ge 0}$ is a decreasing sequence of compact subset whose intersection is empty:
\[\bigcap_{p\ge 0} K_n\setminus int(K_{n+p})= K_n\setminus \bigcup_{p\ge 0} int(K_{n+p})\subset
K_n\setminus \bigcup_{i\ge 0} B(x_i,r_{x_i}/2)=\emptyset.\]
Consequently, there exists $p\ge 0$ such that $K_n\setminus int(K_{n+p})$ is empty; in other words $K_n$ is included in the interior of $K_{n+p}$. Thus, by considering a subsequence of $(K_n)_n$, we may suppose that $K_n$ is included in the interior of $K_{n+1}$.

2) Let $(K_n)_n$ be the sequence of compact subsets given by 1). We denote by $C_n$ the compact subset $K_n\setminus int(K_{n-1})$ (with $K_{-1}:=\emptyset$). For each $n\ge 0$ and $x\in C_n$, there exists $r_x^n>0$ such that $B(x,r_x^n)$ is disjoint from $K_{n-2}$, included in $K_{n+1}$ and with (compact) closure included in a distinguish open subset of $\mathcal L$. By compactness of $C_n$, there exists a finite family $(x_i)_{i\in I_n}$ of points of $C_n$, such that $(B(x_i,r_{x_i}^n))_n$ covers
$C_n$. Thus, the family $(V_i)_i:= (B(x_i,r_{x_i}^n))_{n\ge 0,\; i\in I_n}$ is a locally finite covering of $L$ such that each open subset $V_i$ is included in a distinguish open subset.

\end{proof}

\begin{prop}\label{part1} Let $(L,\mathcal L)$ be a $C^r$-lamination, for some $r\ge 1$.
\begin{enumerate}
\item For all $\eta>0$ and $x\in L$, there exists a nonnegative function $\rho\in Mor^r( \mathcal L,\mathbb R)$ whose support is included in $B(x,\eta)$ and such that $\rho(x)$ is positive.
\item Given a locally finite open covering $(U_i)_{i\in I}$ of $L$, there exists $(\rho_i)_i\in Mor^r(\mathcal L,\mathbb R^+)^I$ such that $\sum_i \rho_i =1$ and such that the support of $\rho_i$ is included in $U_i$. We will say that  $(\rho_i)_i$ is a partition of unity subordinate to $(U_i)_i$.
\item The subset of morphisms from the lamination $(L,\mathcal L)$ to $\mathbb R$ is dense in the space of the continuous functions on $L$ endowed with the $C^0$-strong topology. \end{enumerate}

\end{prop}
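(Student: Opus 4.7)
The plan is to prove the three statements in order, since (2) uses (1) as the local bump construction and (3) uses (2) to glue local constant approximations. The crucial observation is that a lamination chart $h: U \to V \times T$ with $V \subset \mathbb{R}^d$ open and $T$ locally compact metric (hence normal) supports bump-like functions with independent factors in the two coordinates, and such a product of a $C^\infty$ bump in $V$ with a continuous bump in $T$ is manifestly a $C^r$-morphism to $\mathbb{R}$, because its partial derivatives in $x$ exist and are continuous on the whole chart.

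For (1), I would choose a distinguished chart $(U,h)$ around $x$ with $h(x)=(x_0,t_0)\in V\times T$, pick a standard $C^\infty$-bump $\phi$ on $\mathbb{R}^d$ with $\phi(x_0)=1$ and support in a small ball around $x_0$, and pick (by Urysohn, $T$ being metric) a continuous $\chi:T\to [0,1]$ with $\chi(t_0)=1$ and compact support in a small neighborhood of $t_0$. The function $\rho:=(\phi\cdot\chi)\circ h$, extended by zero, has the required properties: $\partial_x^s(\phi(u)\chi(t))=\chi(t)\partial_u^s\phi(u)$ is continuous on $V\times T$, so $\rho\in Mor^r(\mathcal{L},\mathbb{R})$, and by shrinking the supports we stay inside $B(x,\eta)$. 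For (2), I would apply proposition \ref{lcl cpct}.2 to refine $(U_i)_i$ to a locally finite covering $(W_j)_j$ with $\mathrm{cl}(W_j)$ compact and contained in some $U_{i(j)}$ and in a distinguished open subset. Covering each $\mathrm{cl}(W_j)$ by finitely many bumps from (1) whose supports lie in $W_j$ and summing yields a $C^r$-morphism $\sigma_j\ge 0$ with $\sigma_j>0$ on $W_j$ and compact support in $W_j$. Grouping $\tilde\rho_i:=\sum_{j:\,i(j)=i}\sigma_j$, the sum $\sigma:=\sum_i\tilde\rho_i$ is locally finite and strictly positive on $L$, and $\rho_i:=\tilde\rho_i/\sigma$ defines a partition of unity subordinate to $(U_i)_i$. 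Local finiteness ensures $\sigma$ and each $\rho_i$ are $C^r$-morphisms.

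For (3), let $f\in C^0(L,\mathbb{R})$ and let $\epsilon\in C^0(L,\mathbb{R}^+_*)$ define a strong neighborhood of $f$. For each $x\in L$ I would choose an open $U_x\ni x$ small enough that $|f(y)-f(x)|<\epsilon(y)/2$ for $y\in U_x$; the cover $(U_x)_x$ admits a locally finite refinement $(U_{x_j})_j$ by second-countability and local compactness, and (2) supplies a subordinate $C^r$-partition of unity $(\rho_j)_j$. Then $g:=\sum_j f(x_j)\rho_j$ is a locally finite $\mathbb{R}$-linear combination of $C^r$-morphisms, hence itself a $C^r$-morphism, and for every $y\in L$, $|f(y)-g(y)|\le\sum_j\rho_j(y)|f(y)-f(x_j)|<\epsilon(y)/2<\epsilon(y)$. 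The main (minor) obstacle is conceptual rather than computational: one must verify carefully that the factored bumps of the form $\phi(u)\chi(t)$ satisfy the precise morphism regularity demanded by the definition, since $\chi$ is only required to be continuous in $t$, not locally constant; once one notices that derivatives are taken only in the $x$-variable, this is automatic.
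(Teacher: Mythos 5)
Your parts (1) and (3) are fine. Part (1) is the same product-bump construction as the paper's. Part (3), which glues locally constant approximations $f(x_j)$ with a subordinate partition of unity, is a legitimate and simpler alternative to the paper's argument, which instead mollifies $f$ by leafwise convolution in charts and glues the mollifications; just be careful that what you need is a locally finite \emph{refinement} of $(U_x)_x$ together with a choice of center $x_j$ for each member of the refinement, not a locally finite subfamily of the $U_x$ themselves (a locally finite subcover need not exist, as the cover of $\mathbb R$ by the intervals $]-n,n[$ shows), and that property \ref{lcl cpct}.2 as stated does not produce refinements of a prescribed cover — you must rerun its ball construction, additionally requiring each ball to lie in some member of the given cover.

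The genuine gap is in part (2), in the construction of $\sigma_j$. You cannot cover $cl(W_j)$ (nor even all of $W_j$) by the positivity sets of finitely many bumps whose supports lie in $W_j$: the positivity sets are contained in $W_j$, so they miss every point of $cl(W_j)\setminus W_j$; and if one had $\sigma_j>0$ on all of $W_j$ with $supp(\sigma_j)\subset W_j$, then $W_j\subset supp(\sigma_j)\subset W_j$, so the open set $W_j$ would equal a finite union of supports, hence be closed as well as open — impossible in general. The standard repair is a shrinking step: choose a second locally finite open cover $(W_j')_j$ of $L$ with $cl(W_j')$ compact and contained in $W_j$ (this comes out of the same ball construction, keeping the half-radius balls), cover the compact set $cl(W_j')$ by finitely many positivity sets of bumps from (1) supported in $W_j$, and let $\sigma_j$ be their sum; then $\sigma=\sum_j\sigma_j$ is everywhere positive because $(W_j')_j$ covers $L$, the family of supports is locally finite, and your normalization $\rho_i=\tilde\rho_i/\sigma$ goes through. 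With this fix your route to (2) is valid and genuinely different from the paper's, which never normalizes by a global sum: the paper first reduces to finite index sets using a compact exhaustion and cut-off functions $r_n$, then by induction reduces to a two-element cover, which it handles with the closed sets $F_1,F_2$ defined by comparing distances to the complements and two locally finite sums of bumps. Your argument, once the shrinking is inserted, is the more direct one.
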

\begin{proof}

1) Let $(U,\phi)\in \mathcal L$ be a chart of a neighborhood of $x$, which can be written in the form 
\[\phi\;:\; U\rightarrow V\times T\]
where $V$ is a open subset of $\mathbb R^d$ and $T$ a metric space. We denote by $\phi_1$ and $\phi_2$ the coordinates of $\phi$. We can suppose that $\phi_1(x)=0$. Let $\rho_1\in C^\infty(V,\mathbb R^+)$ be a nonnegative function with compact support, such that $\rho_1(0)$ is nonzero and the preimage by $\phi$ of $supp(\rho_1)\times \{\phi_2(x)\}$ is included into the ball $B(x, \eta)$.

By compactness, there exists a neighborhood $\tau$ of $\phi_2(x)$ in $T$ such that the preimage by $\phi$ of $supp(\rho_1)\times \tau$ is included in the ball $B(x, \eta)$. Let $\rho_2$ be a nonnegative continuous function on $T$, with support in $\tau$ and nonzero at $\phi_2(x)$. We define then 
\[\rho\;:\; y\mapsto \left\{\begin{array}{cl} \rho_1\circ\phi_1(y)\cdot \rho_2\circ\phi_2(y) &\mathrm{if}\; y\in U\\
0&\mathrm{else}\end{array}\right.\]
We note that the function $\rho$ satisfies the requested properties.

2) Let us begin by admitting this statement when $I$ is finite.
Let $(K_n)_n$ be a sequence of compact subsets of $L$, given by property \ref{lcl cpct}.1.
Let $K_{-1}:=K_{-2}:=\emptyset$.  Thus, for each $n\ge 0$, there exists a function $r_n\in Mor^r(\mathcal L,[0,1])$ equal to 1 on $K_n\setminus K_{n-1}$ and 0 on $K_{n-2}\cup K_{n+1}^c$. Let $(U_i)_{i\in I_n}$ be a finite subcovering of the covering $(U_i)_{i\in I}$ of $K_{n+1}\setminus K_{n-2}$. Thus, there exists $(\rho_i^n)_{i\in I_n}$ a partition of unity subordinate to the open covering $(U_i)_{i\in I_n}$ of $\cup_{i\in I_n} U_i$. Let 
\[\rho_i:=\frac{\sum_{\{n:\; I_n\ni i\}} r_n\cdot \rho_i^n}{\sum_n r_n}\in Mor^r(\mathcal L,\mathbb R^+),\]
whose support is in $U_i$ and satisfies
 \[\sum_i \rho_i=\frac{\sum_n r_n\sum_{i\in I_n}\rho_i^n}{\sum_n r_n}=\frac{\sum_n r_n}{\sum_nr_n}=1.\]
Consequently $(\rho_i)_i$ is a partition of unity subordinate to $(U_i)_i$. It is now sufficient to prove the existence of a partition of unity when $I$ is finite.

Let us show, by induction on the cardinality of $I$, that it is sufficient to prove this proposition when the  cardinality of $I$ is equal to 2.
If the cardinality of $I$ is $k+1>2$, by the induction hypothesis, there exists  a partition of unity  $(r_0,r_{k+1})$ subordinate to $(\cup_{j\le k} U_j,U_{k+1})$ and a partition of unity $(r_j)_{j=1}^k$ subordinate to $(U_j)_{j=1}^{k}$ on the restriction of $\mathcal L$ to $\cup_{j\leq k} U_j$.
Then we note that $((r_0\cdot r_j)_{j=1}^k,r_{k+1})$ is a partition of unity subordinate to $(U_j)_{j=1}^{k+1}$.

We now suppose that the covering $(U_j)_j$ is constituted by only two subsets $U_1$ and $U_2$. Let us define two close subsets $F_1$ and $F_2$ included in respectively $U_1$ and $U_2$ such that the union of $F_1$ with $F_2$ is equal to $L$.

If, for example, $U_1$ is equal to $L$, we choose $F_1:=L$ and $F_2:=\emptyset$. If neither $U_1$ neither $U_2$ is equal to $L$, we define 
\[F_1:= \{x\in L;\; d(x,U_1^c)\ge d(x,U_2^c)\}\quad \mathrm{and}
\quad F_2:=\{x\in L;\; d(x,U_1^c)\le d(x,U_2^c)\}\]
Obviously, these two subsets cover $L$. Suppose, for the sake of contradiction, that $F_2$ is not included in  $U_2$. Thus, there exists a point $x$ which belongs to $U_2^c\cap F_2$ and so satisfies 
\[d(x,U_1^c)\le d(x,U_2^c)=0.\]
Consequently $x$ belongs to the intersection of $U_1^c$ with $U_2^c$ which is empty, this is a contradiction. In the same way, we prove that  $F_1$ is included in $U_1$.

Let us now construct two nonnegative functions $r_1\in Mor^r(\mathcal L,\mathbb R)$ and $r_2\in Mor^r(\mathcal L,\mathbb R)$, such that the functions $r_1$ and $r_2$ are nonzero at all points of respectively $F_1$ and $F_2$, and have their support included in respectively $U_1$ and $U_2$.

 The following functions will then satisfy statement 2):
\[\rho_1:= \frac{r_1}{r_1+r_2}\quad \mathrm{and}\quad \rho_2:= \frac{r_2}{r_1+r_2}.\]

Let us construct, for example, the function $r_1$.

It follows from property \ref{lcl cpct}, that there exists an increasing sequence of compact subsets $(K_n)_n$ whose union is equal to $L$, and such that for any $n\ge 0$, the interior of $K_{n+1}$ contains $K_n$. Let $C_n:= K_n\setminus int(K_{n-1})$, with $K_{-1}=\emptyset$. Let $D_n:= C_n\cap F_1$. For every $x\in D_n$, there exists $\eta_x^n>0$ such that the ball $B(x,\eta_x^n)$ does not intersect $K_{n-2}$ and is included in $K_{n+1}\cap U_1$. Let $\rho_x^n$ be the function given by the first
statement of this proposition with $\eta=\eta_x^n$. We denote by $U_x^n$ the subset of points at which this function is nonzero. We note that the family of open subsets $(U_x^n)_{x\in D_n}$ is a covering of the compact subset $D_n$. Hence, there exists a finite subcovering $(U_{x_i}^n)_{i\in I_n}$. We remark that the family $(U_{x_i}^n)_{\{n\ge 0,\;i\in I_n \}}$ is a locally finite covering of $F_1$ and of union included in $U_1$. Thus, the following function is appropriate:
\[r_1:= \sum_{n\ge 0,\; i\in I_n} \rho_{x_i}^n.\]

3) Let $f\in C^0(L,\mathbb R)$ and $\epsilon>0$. Let us construct a function $f'\in Mor^r(\mathcal L,\mathbb R)$ satisfying  \[\sup_{x\in L}|f(x)-f'(x)|\le \epsilon.\]

Let $(U_i)_i$ be a locally finite covering of $L$ by precompact distinguish open subsets. For each $i$, let $\phi_i\;:\; U_i\rightarrow \mathbb R^d\times T_i$ be a chart. We denote by $\phi_{i1}$ and $\phi_{i2}$ its coordinates. Let $(\rho_i)_i\in Mor^r(\mathcal L,\mathbb R)^\mathbb N$ be a partition of unity subordinate to $(U_i)_i$. Let $W_i:= U_i\setminus \rho^{-1}(\{0\})$ which is precompact in $U_i$.

Let $r\in C^{\infty}(\mathbb R^d, \mathbb R^+)$ be a function whose support is included in the unity ball and whose integral is equal to 1.

For each $i$, let $\epsilon_i>0$ small such that the following function is well defines:
 \[f_i: W_i\rightarrow \mathbb R\]
 \[x\mapsto \frac{1}{\epsilon_i^d}\int_{B(0,\epsilon_i)} f\Big(\phi_{i}^{-1}\big(\phi_{i1}(x)+y,\phi_{i2}(x)\big)\Big)\cdot r\Big(\frac{y}{\epsilon_i}\Big)dy\]
and satisfies $\sup_{W_i} |f_i-f|<\epsilon$.

From the classical properties of the convolutions, the following function satisfies all the required properties:
 \[x\mapsto \sum_{\{i;\; x\in U_i\}} \rho_i(x)\cdot f_i(x).\]

\end{proof}

\subsubsection{Partition of unity controlled on a stratification of laminations}\label{partstra}
\begin{prop}\label{lem11}Let $(A,\Sigma)$ be a stratified space endowed with a $C^r$-trellis structure $\mathcal T$, for some $r\ge 1$.
\begin{enumerate}
\item For all $\eta>0$ and $x\in A$, there exists a nonnegative function $\rho\in Mor^r(\mathcal T,\mathbb R)$ whose support is included in $B(x,\eta)$ and such that $\rho(x)$ is positive.
\item For any $\eta>0$ and any function $\rho_0$ continuous on $A$, there exists a  $C^r$-$\mathcal T$-controlled function $\rho$ on $A$ such that 
\[\sup_{x\in A} |\rho(x)-\rho_0(x)|\le \eta.\]
\item Given an open covering $(U_i)_{i\in I}$ of $A$, there exists $(\rho_i)_i\in Mor^r(\mathcal T,\mathbb R^+)^I$ such that the support of $\rho_i$ is included in $U_i$ and $\sum_i \rho_i =1$. We say that $(\rho_i)_i$ is a partition of unity subordinate to $(U_i)_i$.
\end{enumerate}\end{prop}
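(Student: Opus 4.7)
We mimic Proposition \ref{part1} in three steps, in the logical order (1)\,$\Rightarrow$\,(3)\,$\Rightarrow$\,(2). The genuinely new difficulty is (1): a $\mathcal T$-controlled function must be $C^r$ along the plaques of \emph{every} incident $\mathcal L_j$, not just the plaques of the lamination of the stratum containing the chosen point. The naive lamination-bump $(\rho_1\circ\phi_1)\cdot(\rho_2\circ\phi_2)$ extracted from an $\mathcal L_k$-chart is only continuous transversally, which is insufficient along the coarser $\mathcal L_j$-plaques when $X_j>X_k$.

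\paragraph{Proof of (1), local bump.} Let $x\in A$ and let $X_k$ be the stratum containing $x$. By local finiteness of $\Sigma$ and the frontier axiom, only finitely many strata meet a small enough neighborhood $U$ of $x$, and all of them are incident to $X_k$, i.e.\ satisfy $X_k\le X_j$. I would argue by induction on the number of such higher strata. The foliation axiom of the trellis provides a chart $\phi=(\phi_1,\phi_2)\colon U\to V\times T$ of $\mathcal L_k$ around $x$ that is simultaneously a chart for each $\mathcal L_j$ with $X_k\le X_j$ meeting $U$: each $\phi(U\cap L_j)$ takes the product form $V\times T_j$ for a locally closed $T_j\subset T$, and the collection $(T_j)_j$ endows $T$ with an inherited trellis structure of strictly smaller depth (one fewer higher stratum than $(A,\Sigma,\mathcal T)$ at $x$). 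Applying the inductive hypothesis to this smaller trellis on $T$ yields a compactly supported continuous $\rho_2$ on $T$ which is $C^r$ along each induced laminar piece, with $\rho_2(\phi_2(x))>0$. Taking $\rho_1\in C_c^\infty(V)$ a standard bump with $\rho_1(\phi_1(x))>0$, the product $\rho:=(\rho_1\circ\phi_1)\cdot(\rho_2\circ\phi_2)$, extended by zero to $A$, is then $\mathcal T$-controlled, nonzero at $x$, and supported in $B(x,\eta)$ once $\rho_1$ and $\rho_2$ are taken with small enough support. The base case---no higher strata in $U$---is Proposition \ref{part1}.1 applied to $\mathcal L_k$.

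\paragraph{Proofs of (3) and (2), and main obstacle.} Statement (3) follows the standard pattern: use property \ref{lcl cpct} to exhaust $A$ by compacta, extract a locally finite refinement of $(U_i)_i$ by precompact open sets each supporting a bump from (1), and normalize the sum (the quotient is $\mathcal T$-controlled because division by a strictly positive $\mathcal T$-controlled function preserves the class stratum by stratum). For (2), cover $A$ by a locally finite family of distinguished charts of the various $\mathcal L_X$; within each such chart, mollify $\rho_0$ in the plaque direction with a smooth kernel to obtain a local $C^r$-plaque-wise approximation of error less than $\eta$, and patch the local approximations via the partition of unity produced in (3). The main obstacle throughout is the construction of the simultaneous chart in (1): one must verify that the different $\mathcal L_j$-foliations induced on the transversal space $T$ are jointly compatible and define a trellis on $T$ of strictly smaller depth. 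This is exactly the content of the trellis axiom propagated through incidence, and is the one place where the full strength of the trellis structure (rather than mere coherence of tubular neighborhoods) enters the argument.
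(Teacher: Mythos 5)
Your reduction of (3) to (1) and the overall plan are reasonable, but the core step of (1) rests on a claim that the trellis axioms do not give you. You assert that around a point $x$ of the stratum $X_k$ there is a single chart $\phi=(\phi_1,\phi_2)\colon U\to V\times T$ of $\mathcal L_k$ which is \emph{simultaneously} a distinguished chart for every incident $\mathcal L_j$, with $\phi(U\cap L_j)=V\times T_j$ and an induced trellis of smaller depth on $T$. The foliation axiom of a trellis is only a pairwise condition that produces a common chart \emph{at each point of the overlap} $L_k\cap L_j$; but for $X_j>X_k$ the point $x$ itself does not belong to $L_j$ (since $L_j\subset\bigcup_{Y\ge X_j}Y$ and $X_k\notin\{Y\ge X_j\}$), so the axiom gives nothing at $x$, only germs of product charts at points of $L_k\cap L_j$ accumulating on $x$, with no uniform size and with possible variation of the transverse structure as one moves along a plaque through $x$. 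Moreover $U\cap L_j$ need not be $\mathcal L_k$-plaque-saturated in a chart around $x$ (the boundary of the tubular neighborhood $L_j$ can cut across plaques), so the product form $V\times T_j$ is not available even set-theoretically without further work, and asking for a simultaneous product structure for all incident $\mathcal L_j$ over a whole plaque neighborhood of $x$ is a local triviality statement of Whitney/Trotman type (cf.\ the discussion of Conjecture \ref{trot} in section \ref{desstruc}); the paper never proves nor uses such a statement, and "the trellis axiom propagated through incidence" does not yield it. The same misconception undermines your proof of (2): near a lower stratum $X$ the points of $X$ are covered only by $\mathcal L_X$-charts, and a function mollified only along the low-dimensional $\mathcal L_X$-plaques is merely continuous in the complementary directions inside the $\mathcal L_Y$-plaques for $Y>X$; multiplying by a controlled partition of unity does not restore $C^r$-regularity along $\mathcal L_Y$ on $L_X\cap L_Y$, so the patched function is not $\mathcal T$-controlled.

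For contrast, the paper avoids any local product structure: it proves (1) and (2) \emph{simultaneously} by an induction over the dimension filtration of property \ref{arbo}, i.e.\ over the laminations $(L_p,\mathcal L_p)$ obtained by grouping strata of equal dimension. At stage $k+1$ it re-smooths the already constructed function by leafwise convolution along $\mathcal L_{k+1}$ in distinguished charts $W_i$ (Lemma \ref{lem10}), but with weights $\epsilon_i$ chosen to decay both in $i$ and like $\operatorname{diam}(W_i)\le d(W_i,L_{k+1}^c)$ and like $1/\|\lambda_i\|_{Mor^r}$; this yields the estimates (\ref{rec1}) and (\ref{rec2}), i.e.\ the correction and its leafwise derivatives along the lower laminations $\mathcal L_j$, $j\le k$, vanish at the boundary of $L_{k+1}$, which is exactly what preserves controlledness along the lower laminations across the region where $L_{k+1}$ ends. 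Statement (3) is then deduced as in Proposition \ref{part1}.2. If you want to keep your inductive-transversal strategy, you would first have to prove the simultaneous foliated chart (a strong local triviality property of trellis structures) — that is a separate, substantial assertion, not a consequence of the definitions; otherwise you should adopt the quantitative level-by-level smoothing argument.
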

\begin{proof}

1-2) Let us show statement 1 and 2 in the same times. We will replace all the propositions about the sign of the constructed functions by, respectively, the propositions about the distance to $\rho_0$ of the constructed functions.

We denote by $(X_p)_p$ and $(L_p,\mathcal L_p)_p$ the lamination obtained
from $\Sigma$ and $\mathcal T$, by property \ref{arbo}. For any $k\ge 0$, let $U_k:=\cup_{p\le k}L_p$.

We are going to construct, by induction on $k\ge 0$, a continuous function $\rho_k$ on $A$ such that:
\begin{itemize}
\item for $j\le k$, $\rho_{k|L_j}$ is morphism from $\mathcal L_j$ to $\mathbb R$,
\item for $j\le k$, the restriction to $U_j$ of $\rho_k$ is equal to the one of $\rho_j$,
\item $\rho_k$ is nonzero at $x$, nonnegative on $A$ and with support included in $B(x,(1-2^{-k-1})\cdot \eta)$.

\noindent (resp. $\sup_A | \rho_k-\rho|\le (1-2^{-k-1})\cdot \eta$)\end{itemize}

For the step $k=0$, we simply choose a continuous function $\rho_{0}$ on $A$, nonnegative, with support included in $B(x,\eta/2)$ and such that $\rho_{0}(x)>0$ (resp. for the step $k=0$, we chose the function $\rho_0$ given in the hypotheses).

We suppose the induction hypothesis satisfied for $k\ge 0$. By property \ref{lcl cpct},
there exists a locally finite open covering $(W_i)_i$ of $L_{k+1}$, such that each open subset $W_i$ is precompact in a distinguish open subsets of $\mathcal L_{k+1}$. By splitting each of these open subsets into smaller, we may also suppose that the diameter of $W_i$ is less than the distance from $W_i$ to the complement of $L_{k+1}$.

For each $j\le k+1$, we fix a Riemannian metric on $(L_j,\mathcal L_j)$. For any open subset $W$ in $L_j$ and $\lambda\in Mor^r(\mathcal L_{j|W},\mathbb R)$, we define 
\[\|\lambda\|_{Mor^r(\mathcal L_{j|W},\mathbb R)}:=\sup_{x\in W} \left(\sum_{s=1}^r\|\partial_{\partial_x^s\mathcal L_j} \lambda\|\right),\]
where the norm $\|\cdot\|$ is subordinated to the induced norm by the Riemannian metric on $T\mathcal L_j$ and to the Euclidean norm on $\mathbb R$.

We chose then a partition of unity $(\lambda_i)_i\in Mor^r(\mathcal L_{k+1},\mathbb R^+)^\mathbb N$ subordinate to $(W_i)_i$. For each $i$, we define 
\[\epsilon_i:=\frac{\eta}{2^{k+2+i}}\cdot \min\left(1,\frac{diam(W_i)}{\|\lambda_i\|_{Mor^r(\mathcal L_{k+1},\mathbb R)}},diam (W_i)\right)>0.\]\\

For each $i$, we apply the following lemma, that we will show at the end:

\begin{lemm}\label{lem10}
There exists a function $\rho'_i\in Mor^r(\mathcal L_{k+1|W_i}, \mathbb R)$ such that:
\begin{enumerate}
\item If the closure $W_i$ is included in $L_j$, for any $j\le k$, we have then 
\[||\rho_{k|W_i}-\rho'_i||_{Mor^r( \mathcal L_{j|W_i},\mathbb R)}<\epsilon_i\]
And, in the case of the first statement, we have moreover 
\item the support of $\rho'_i$ is included in the $\epsilon_i$-neighborhood of the support of $\rho_{k|W_i}$,
\item the function $\rho'_i$ is nonnegative, and if $x$ belongs to $W_i$, then $\rho'_i(x)$ is positive.
 
\end{enumerate}
\end{lemm}

\[\mathrm{Let}\;\rho_{k+1}\;:\; y\mapsto \left\{
\begin{array}{cr}
\sum_i \lambda_i(y)\cdot \rho_i'(y)& \mathrm{if}\; y\in L_{k+1}\\
\rho_k(y)                          &  \mathrm{else}\\
\end{array}\right.
\]

In the first statement case, we have well defined a nonnegative function which is positive at $x$. As for each $i$ the support of $\rho_i'$ is included in the $\frac{\eta}{2^{k+2}}$-neighborhood of $\rho_k$, the support of $\rho_{k+1}$ is included in the $\frac{\eta}{2^{k+2}}$-neighborhood of the support of $\rho_k$, so in $B(x,(1-2^{-k-2})\cdot \eta)$.

In the second statement case, for $y\in L_{k+1}$, the number $|\rho_{k+1}(y)-\rho(y)|$ is less than 
\[|\rho_k(y)-\rho(y)|+|\rho_k(y)-\rho_{k+1} (y)|\le (1-2^{-k-1})\eta+\sum_i \lambda_i(y)\cdot \epsilon_i \le (1-2^{-k-2})\cdot\eta\]
and for $y\in L_{k+1}^c$, the number $|\rho_{k+1}(y)-\rho(y)|$ is equal to $|\rho_{k}(y)-\rho(y)|$ which is less than
$(1-2^{-k-2})\cdot\eta$.\\

Now, let us show that, for $j\le k+1$, the function $\rho_{k+1|L_j}$ is $C^r$-morphism from  $\mathcal L_j$ to $\mathbb R$.

By local finiteness of the covering $(W_i)_i$, the map $\rho_{k+1|L_{k+1}}$ belongs to $Mor^r(\mathcal L_{k+1},\mathbb R)$. Thus, for $j\le k$, the function $\rho_{k+1|L_{k+1}\cap L_j}$ belongs to $Mor^r(\mathcal L_{j|L_{k+1}\cap L_j},\mathbb R)$. Moreover, for $y\in L_{k+1}$,
\[|\rho_k(y)-\rho_{k+1} (y)|\le \sum_i \lambda_i(y)\cdot |\rho_i'(y)-\rho_k(y)|\le \sum_{i;\; x\in W_i} \epsilon_i\le \sum_{i;\; x\in W_i} \frac{\eta \cdot diam W_i}{2^{i+2}},\]
\begin{equation}\label{rec1} \Rightarrow |\rho_k(y)-\rho_{k+1} (y)|\le  \eta\cdot d(y,L_{k+1}^c)
\end{equation}
Hence, the function $\rho_{k+1}$ is continuous.

For any $i\le k$ and $x_0\in L_i\setminus L_{k+1}$, there exists $r >0$ such that the ball $B(x_0,r)$ is included in $L_i$. If any $W_j$ intersects $B(x_0,r/2)$, then the closure of $W_j$ is contained in $L_i$. Thus, for every $y\in B(x_0,r/2)\cap W_j$, the number $\|\partial_{T\mathcal L_i}(\rho_k-\rho_{k+1}) (y)\|$ is less than 
\[\sum_{\{j;\;W_j\ni y\}}\sum_{k=0}^s C^k_s\underbrace{ \left\|\partial_{T\mathcal L_i}^{s-k}\lambda_j(y)\cdot \partial_{T\mathcal L_i}^{k}\big(\rho_{j}'(y)-\rho_k(y)\big)\right\|}
_{\le \frac{\eta}{2^{k+2+j}}\cdot diam W_j }.\]

As $diam W_j\le d(y, x_0)$, we have 
\begin{equation}\label{rec2}
    \|\partial_{T\mathcal L_i}(\rho_k-\rho_{k+1}) (y)\|\le \eta\sum_{k=0}^s C^k_s\cdot d(y,x_0).
\end{equation}

From equations (\ref{rec1}) and (\ref{rec2}), the restriction $\rho_{k+1|L_i}$ is a $C^r$-morphism  from $\mathcal L_i$ into $\mathbb R$, for each $i\le k$.
 
As $\Sigma$ is locally finite, the family $(L_k)_k$ is also locally finite. Thus, the sequence $(\rho_k)_k$ is locally eventually constant. Let $\rho$ be the limit of $(\rho_k)_k$. Therefore, this sequence satisfies, for any $k\ge 0$, that $\rho_{|L_k}$ belongs to $Mor^r(\mathcal L_k,\mathbb R)$. Hence, for all $X \in\Sigma$, the restriction of $\rho$ to $L_X$ is a $C^r$-morphism from $\mathcal L_X$ to $\mathbb R$. Consequently, $\rho$ is a  $\mathcal T$-controlled $C^r$-morphism. Moreover, the first (resp. second)
statement is checked.

     3) We do exactly the same proof as for proposition \ref{part1}.2, by replacing '$L$' by '$A$' and '$Mor^r(\mathcal       L,\mathbb R)$' by '$Mor^r(\mathcal T,\mathbb R)$'.
\end{proof}

    \begin{proof}[proof of lemma \ref{lem10}]

    Let $(U,\phi)$ be a chart of $\mathcal L_{k+1}$ such that the closure of $W_i$ is included in $U$. Let $d_{k+1}$ be         the dimension of $\mathcal L_{k+1}$, $V$ be an open subset of $\mathbb R^{d_{k+1}}$ and $\tau$ be a locally compact         metric space, such that 
    \[\phi\;:\; U{\longrightarrow} V\times \tau\]
    \[x\mapsto (\phi_1 (x),\phi_2(x))\]

    Let $r\in C^\infty (\mathbb R^{d_{k+1}},\mathbb R^+)$ be a function with support included in the unity ball,                        nonnegative on this ball and with integral on $\mathbb R^{d_{k+1}}$ equal to 1.
    \[\mathrm{For\; any}\; x'\in W_i,\; \mathrm{let}\; \rho'_i(x')= \frac{1}{\mu^{d_{k+1}}}\cdot \int_{y\in B(0,\mu)} \rho(z)\cdot r\left(\frac{y}{\mu}\right)dy,\]
    with $z:=\phi^{-1}\big(\phi_1(x')-y,\phi_2(x')\big)$ and $\mu>0$ small enough for $\phi$ to be well defined.


    It follows from the classical properties of convolutions that the function $\rho'_i$ is a morphism from $\mathcal           L_{k+1|W_i}$ to $\mathbb R$.

    Let us prove 1). For this, we now assume that $x'$ belongs to $W_i\subset cl(W_i)\subset L_j$. By taking $\mu$ small                enough, the point $z$ (defined above) always belongs to $L_i$.
 
    \[\Rightarrow\; \partial_{T_{x'}\mathcal L_{i}}^s\rho'_i= \frac{1}{\mu^{d_{k+1}}}\cdot \int_{y\in B(0,\mu)}                           \partial_{T_{z}\mathcal L_{i}}^s(\rho\circ z) \cdot  r\left(\frac{y}{\mu}\right)\: dy,\; \forall s\in \{1,\dots ,r\}.\]

    As, for $\mu>0$ small and $y\in B(O,\mu)$, the map $x\mapsto z$ is $C^r$-close to the identity, the function $x\mapsto \rho\circ z$ is $C^r$-close to $\rho$. Therefore, by taking  $\mu$ sufficiently small, we have 
    \[||\rho_{|W}-\rho'_i||_{Mor^r(\mathcal L_{i|W}),\mathbb R)}<\epsilon_i.\]

    In the first assertion case, for $\mu$ small enough, conclusion 2) is well satisfied. Conclusion 3) is                  obvious.

\end{proof}

\subsection{Density of smooth liftings of a smooth map}

Throughout this section, we denote by $G$ and $M$ two Riemannian manifolds and $p\;:\; G\rightarrow M$ a $C^\infty$-bundle.

Given a family of numbers $(r_k)_{k=1}^n\in [0,1]^n$ and given a family of points  $(m_k)_{k=1}^n$ that belong to a same fiber $G_x$ of $G$ and each other sufficiently close, using the Riemannian metric we may define \cite{CM} the centroid  $cent\{(m_k)_{k=1}^n,(r_k)_{k=1}^n\}\in G_x$
of the family of points $(m_k)_{k=1}^n$ weighted by the masses $(r_k)_{k=1}^n$ respectively.
This centroid is a $C^\infty$-map from the product of the product bundle $G^n$ over $M$, with $[0,1]^n$, to $G$.
The centroid does not depend on the indexation in $\{1,\dots,n\}$. Finally, if we add some points with wedges equal to zero, the centroid remains the same.

\subsubsection{Density of smooth liftings of a morphism of a lamination}\label{rellam}

Let $(L,\mathcal L)$ be a lamination and $i$ be a morphism from $(L,\mathcal L)$ to $M$.
\begin{prop}\label{rel} The subset of liftings of $i$ in $F$ which are $C^r$-morphisms from $(L,\mathcal L)$ to $G$ is dense in the space of the continuous liftings of $i$ endowed with the strong $C^0$-topology.\end{prop}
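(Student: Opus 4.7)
The plan is to approximate a continuous lifting $N'$ of $i$ in the strong $C^0$-topology by a $C^r$-$\mathcal{L}$-morphism lifting $i$, by gluing local $C^r$-approximations with the fiberwise centroid introduced just above the statement.

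First I would reduce to a local problem. By Property \ref{lcl cpct}, choose a locally finite open cover $(V_j)_j$ of $L$ such that each $cl(V_j)$ is contained in a distinguished open set $(U_j,\phi_j)$ of $\mathcal{L}$ and such that $i(cl(V_j))$ is contained in an open set $W_j\subset M$ over which the bundle $p\colon G\to M$ is $C^\infty$-trivialized: $p^{-1}(W_j)\cong W_j\times \Phi_j$ with $\Phi_j$ a fiber modeled on some Euclidean space. Shrinking the cover further, I may also assume that any two trivializations on overlapping $V_j,V_k$ differ by a uniformly $C^\infty$-bounded transition, and that the diameter of each $V_j$ is so small that the centroid $\mathrm{cent}\{(m_\alpha),(r_\alpha)\}$ is well-defined on the relevant points in each common fiber $G_{i(x)}$ for $x\in L$.

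Next I would build local $C^r$-liftings. Fix $\epsilon>0$ (the prescribed continuous positive size of the strong-$C^0$-neighborhood of $N'$). On each $V_j$, the continuous lifting $N'$ reads, via the trivialization, as a continuous map from $V_j$ into $\Phi_j\subset \mathbb{R}^{n_j}$. Apply Proposition \ref{part1}.3 coordinatewise to approximate it, uniformly within $\epsilon_j\ll \epsilon$, by a $C^r$-$\mathcal{L}_{|V_j}$-morphism $N_j\colon V_j\to \Phi_j$; reading $N_j$ back through the trivialization gives a $C^r$-morphism from $\mathcal{L}_{|V_j}$ to $G$ which is a lifting of $i_{|V_j}$ and is uniformly $\epsilon_j$-close to $N'_{|V_j}$. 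By taking $\epsilon_j$ small enough with respect to the local trivializations, I can ensure that for every $x\in L$ all the values $N_j(x)$ (for indices $j$ with $x\in V_j$, which form a finite set by local finiteness) lie in a common chart domain of the fiber $G_{i(x)}$ where the centroid is well-defined and $C^\infty$.

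Then I would glue. Choose a $C^r$-partition of unity $(\rho_j)_j$ on $(L,\mathcal{L})$ subordinate to $(V_j)_j$ (Proposition \ref{part1}.2), and define
\[
N(x):=\mathrm{cent}\Big\{\bigl(N_j(x)\bigr)_{j:\,x\in V_j},\bigl(\rho_j(x)\bigr)_{j:\,x\in V_j}\Big\}\in G_{i(x)}.
\]
Since the centroid is a $C^\infty$ map of its arguments, stays in the fiber $G_{i(x)}$, reduces to $N_j(x)$ when only one weight is nonzero, and depends $C^r$ on $x$ (as $\rho_j$ and $N_j$ are $C^r$-$\mathcal{L}$-morphisms on their domains), the map $N$ is a $C^r$-$\mathcal{L}$-morphism from $L$ to $G$ which is a lifting of $i$. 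Finally, the affine-like character of the centroid near the diagonal, together with the fact that $\max_j\epsilon_j$ is small, gives that $N$ is $\epsilon$-close to $N'$ in the strong $C^0$-topology.

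The main obstacle is the bookkeeping needed so that the fiberwise centroid is always defined and behaves like a convex combination: I must choose $\epsilon_j$ small enough depending on the local trivializations of $G$ and on the (possibly fine) structure of the fibers around $N'(x)$, uniformly on each compact piece of $L$, so that the finitely many points $\{N_j(x)\}_{x\in V_j}$ fit inside a geodesically convex chart of $G_{i(x)}$ and so that the resulting $C^0$-error in the ambient metric is controlled by $\epsilon$. All other points (local $C^r$-approximation, partition of unity, regularity of the centroid) are routine given the analytic tools already assembled in the appendix.
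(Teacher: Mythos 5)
Your proposal is correct and follows essentially the same route as the paper's proof: trivialize the bundle over a locally finite cover adapted to the continuous lifting, approximate the fiber coordinate by $C^r$-morphisms via Proposition \ref{part1}.3, and glue with the fiberwise centroid weighted by a controlled partition of unity from Proposition \ref{part1}.2, choosing the local errors small enough that the centroid is defined and the result stays $\epsilon$-close. The bookkeeping point you flag is exactly the smallness condition the paper also imposes, so there is no gap.
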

\begin{proof}

Let $N$ be a continuous lifting of $i$ and let $\epsilon$ be a positive number. Let us show the existence of a lifting $N'\in Mor^r(\mathcal L,G)$ of $i$ such that 
\[\sup_{x\in L} d(N(x),N'(x))\le \epsilon.\]

By property \ref{lcl cpct}, we may construct a locally finite covering $(U_k)_k$ of $L$, such that for each $k$, $N(U_k)$ is included into a precompact distinguish open subset $V_k$ of the bundle $G$. This means that there exists a trivialization $\phi_k$ of class $C^\infty$ from $V_k$ onto $p(V_k)\times \mathbb R^d$:
\[\phi_k\;:\; V_k\stackrel{\sim}{\rightarrow} p(V_k)\times \mathbb R^d.\]
As $N$ is a lifting of $i$, for each $k$, there exists a continuous map $F_k$ from $U_k$ into $\mathbb R^d$, such that 
\[\phi_k\circ N_{|U_k}\;:\; U_k{\rightarrow} p(U_k)\times \mathbb R^d\]
\[x\mapsto(i(x),F_k(x)).\]

By proposition \ref{part1} 2), there exists a partition of unity $(\rho_k)_k\in Mor^r(\mathcal L,[0,1])^\mathbb N$ subordinate to $(U_k)_k$.

Thus, by proposition \ref{part1} 3), there exists for each $k$, a morphism $F_k'\in Mor^r(\mathcal L_{|U_k}, \mathbb R^d)$ close enough to $F_{|U_k}$ such that:
\begin{itemize}
\item  the following morphism of laminations to be well defined:
\[N'\;:\; L\rightarrow G\]
\[x\mapsto cent\Big\{\big(F'_k(x)\big)_{\{k;\; x\in U_k\}},\big(\rho_k(x)\big)_{\{k;\; x\in U_k\}}\Big\},\]
\item for each $x\in L$,
\[d(N'(x),N(x))\le \epsilon.\]
\end{itemize}
Finally, we note that $N'$ is a lifting of $i$.\end{proof}

\subsubsection{Density of smooth controlled liftings of a controlled morphism}\label{relstra}
Let $(A,\Sigma)$ be a stratified space endowed with a trellis structure $\mathcal T$ and let $i$ be a $\mathcal T$-controlled morphism into $M$.
\begin{prop}\label{rellem} The subset of $C^r$-liftings of $i$ into $F$ which are $\mathcal T$-controlled is dense in the space of continuous liftings of $i$ endowed with the strong $C^0$-topology.\end{prop}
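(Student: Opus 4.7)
The plan is to mimic the proof of Proposition~\ref{rel}, replacing the lamination-theoretic partition of unity and density result (Proposition~\ref{part1}) by their trellis-controlled analogs (Proposition~\ref{lem11}), and then gluing local fiber approximations via the smooth centroid operation on $G$, exactly as in the lamination case. The centroid is the only ingredient that is not sensitive to the stratified structure, since it is intrinsic to the Riemannian fibers of $G$.

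First I would fix a continuous lifting $N$ of $i$ and a positive $\varepsilon$, and construct a locally finite open covering $(U_k)_k$ of $A$ such that $N(U_k)$ is contained in a precompact distinguished open set $V_k\subset G$ on which the bundle admits a smooth trivialization $\phi_k\colon V_k\stackrel{\sim}{\to} p(V_k)\times \mathbb R^d$. Such a cover exists because $A$ is locally compact (Proposition~\ref{locpt}) and second-countable, so one may apply the compact-exhaustion argument of Property~\ref{lcl cpct} to $A$. On each $U_k$ one writes $\phi_k\circ N_{|U_k}(x)=(i(x),F_k(x))$ with $F_k\in C^0(U_k,\mathbb R^d)$. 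Next, using Proposition~\ref{lem11}.2 componentwise on the stratified space $(U_k,\Sigma_{|U_k})$ endowed with the induced trellis $\mathcal T_{|U_k}$, I would approximate $F_k$ by a $\mathcal T_{|U_k}$-controlled $C^r$-map $F_k'\colon U_k\to\mathbb R^d$ as close to $F_k$ in the $C^0$-strong topology as needed. Finally I would pick a $\mathcal T$-controlled partition of unity $(\rho_k)_k$ subordinate to $(U_k)_k$ via Proposition~\ref{lem11}.3, and define
\[
N'(x):=\operatorname{cent}\Big\{\big(\phi_k^{-1}(i(x),F_k'(x))\big)_{\{k:\,x\in U_k\}},\,(\rho_k(x))_{\{k:\,x\in U_k\}}\Big\},
\]
which is well defined provided the $F_k'$ are chosen close enough to $F_k$ so that every $\phi_k^{-1}(i(x),F_k'(x))$ sits in a common small neighborhood in the fiber $G_{i(x)}$.

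The verification that $N'$ is $\mathcal T$-controlled reduces to checking, for every stratum $X\in\Sigma$, that the restriction $N'_{|L_X}$ is a $C^r$-morphism of $\mathcal L_X$ into $G$. Along any plaque of $\mathcal L_X$, the map $i$ is $C^r$ by hypothesis, each $\rho_k$ is $C^r$ since $(\rho_k)_k$ is $\mathcal T$-controlled, and each $F_k'$ is $C^r$ since it is $\mathcal T_{|U_k}$-controlled. As the trivializations $\phi_k$ and the centroid operation on $G$ are $C^\infty$, the composition defining $N'$ is $C^r$ on every such plaque. That $N'$ is still a lifting of $i$ follows from the fact that the centroid operates on a single fiber of $G$, and the $C^0$-closeness $\sup_A d(N,N')\le\varepsilon$ is arranged by taking both the $F_k'$ close enough to $F_k$ and the trivialization-fiber neighborhoods small enough in a locally finite selection.

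The only genuinely non-routine step is the initial construction of the cover $(U_k)_k$, because it must be simultaneously subordinate to the trivializations of $G$ over $i(A)$ and compatible with the trellis. Since $\mathcal T$-controlledness of the partition and of the local approximation is entirely handled by Proposition~\ref{lem11}, and since the trivializations can always be refined after taking the $U_k$ sufficiently small (using that $N$ is merely continuous, so for each $x\in A$ we can choose first a trivializing open set around $N(x)$ and then an open neighborhood of $x$ in $A$ whose image under $N$ lies there), this combinatorial step is straightforward; it is essentially the same adjustment as in the proof of Proposition~\ref{rel}, applied to $A$ rather than to a single lamination.
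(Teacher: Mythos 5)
Your proposal is correct and follows essentially the same route as the paper, whose proof of Proposition~\ref{rellem} is precisely the proof of Proposition~\ref{rel} with $L$ replaced by $A$, $\mathcal L$ by $\mathcal T$, and Proposition~\ref{part1} by Proposition~\ref{lem11}. Your only additions are harmless clarifications (writing the centroid on the fiber points $\phi_k^{-1}(i(x),F_k'(x))$ and invoking Proposition~\ref{locpt} for the local compactness of $A$ in the cover construction), which match the intended argument.
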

\begin{proof}
We do exactly the same proof as in proposition \ref{rel}, by replacing '$L$' by '$A$', '$\mathcal L$' by '$\mathcal T$', and proposition \ref{part1} by proposition \ref{lem11}.\end{proof}

\newpage\section{Adapted metric}
In this chapter we proof the following proposition stated in section \ref{section:P1}, where some notations used below are defined.
\label{proof P1}

\noindent{\bf Proposition \ref{P1}.}  {\it Let $(L,\mathcal{L})$  be a lamination  and let $(M,g)$ be a Riemannian $C^{\infty}$-manifold.
Let $f\in End^1 (M)$, $i\in Im^1(\mathcal{L},M)$, and $f^*\in End^1(\mathcal{L})$.

 If $f$ $r$-normally expands the lamination $\mathcal L$ immersed by $i$ over $f^*$, for every compact subset $K$ of $L$ stable by $f^*$ ($f^*(K)\subset K$), there exist a Riemannian metric $g'$ on $M$ and $\lambda'<1$, such that for the norm induced by $g'$ on $i^*TM$ and every $v\in (i^*TM/T\mathcal L)_{|K}\setminus \{0\}$, we have 
\[\max\big( 1,\|T_{\pi(v)}f^{*}\|^r\big)\cdot \|v\|<\lambda'\cdot \|[i^*Tf](v)\|\]

We say that $g'$ is an adapted metric to the normal expansion of $f$ on $K$.}

\begin{proof}
The existence of an adapted metric when $f$ is a diffeomorphism has been proved recently by Nikolaz Gourmelon \cite{Go}. In the following proof, we adapt some of his ideas.

Let $B$ the compact set $i(K)$ of $M$ and let $F$ be the vector bundle $TM_{|B}\rightarrow B$. 
Let $F'$ be the vector bundle over $B$ whose fiber at $y\in B$ is $Ti(T_x\mathcal L)$ if $x$ is sent by $i$ to $y$. By property \ref{propertyA}, $F'$ is a continuous vector bundle, well defined, even if $i_{|K}$ is not injective.  are $(F_x')_{x\in B}$. We endow $F$ with the norm induced by the Riemannian metric of $M$.

We denote by $T$ the restriction of $Tf$ to the bundle $F$, which is a bundle morphism over $f$.
As $T$ preserves the subbundle $F'$, this morphism defines a morphism, denoted by $[T]$, on the quotient bundle $F/F'$ over $B$.

For $x\in B$ and $n\ge 0$, we define 
\[m\big([T]^n(x)\big):=\min_{u\in (F/F')_x,\;\|u\|=1}\big(\|[T]^n(u)\|\big)\]

By $r$-normal expansion and compactness of $B$, there exist $N>0$ and $a<1$ such that for every $x\in B$,
\[\max\big(1,\| T^N_{|F'}(x)\|^r\big)<a^{2N}\cdot m([T]^N(x))\]

Therefore, there exists a function $r$ on $B$, continuous and greater than 1, such that for every $x\in B$ 
\[\frac{1}{a}  \sqrt[N]{\big\|T_{|F'}^N(x)\big\|^r}<r(x)<a\cdot\sqrt[N]{m\big([T]^N(x)\big)}\]

We denote by $R_n$ the continuous function on $B$ defined by 
\[R_n\;:=\;x\mapsto\prod_{i=0}^{n}r(f^i(x))\]

We use now the following lemma, that we will prove at the end:
\begin{lemm}\label{eq1}
There exists $c>0$ such that, for all $x\in B$ and $n\ge 0$, we have 
\[\frac{\big\|T^{n}_{|F'}(x)\big\|}{\sqrt[r]{R_n(x)}}\le c\cdot a^{n}\quad \mathrm{and}\quad \frac{m\big([T]^{n}(x)\big)}{R_n(x)}\ge c^{-1}\cdot a^{-n}\]
\end{lemm}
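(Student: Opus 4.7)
The plan is to iterate block-wise versions of the defining inequalities for $r(\cdot)$, namely
\[
\|T^N_{|F'}(y)\|^r<a^N r(y)^N\qquad\text{and}\qquad m([T]^N(y))>a^{-N}r(y)^N,
\]
which both follow at once from $\tfrac{1}{a}\sqrt[N]{\|T^N_{|F'}(y)\|^r}<r(y)<a\sqrt[N]{m([T]^N(y))}$, and then to carry out a careful uniform comparison between the resulting block-products and the orbit-product $R_n(x)$.

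Write $n=kN+q$ with $0\le q<N$. For the first inequality, I would apply the submultiplicativity of the operator norm together with the $Tf$-invariance of $F'$ to decompose
\[
\|T^n_{|F'}(x)\|\le\|T^q_{|F'}(x)\|\prod_{i=0}^{k-1}\|T^N_{|F'}(f^{q+iN}(x))\|,
\]
bound the boundary factor uniformly by a constant $C_1$ depending only on $N$ (using compactness of $B$, continuity of $Tf$, and the inclusion $f(B)\subset B$ coming from $f^*(K)\subset K$), and apply the block-wise bound to each of the remaining $k$ factors. For the second inequality, I would use the analogous supermultiplicativity of the conorm, $m([T]^{n_1+n_2}(x))\ge m([T]^{n_1}(x))\cdot m([T]^{n_2}(f^{n_1}(x)))$, to obtain a dual lower-bound decomposition with boundary factor $m([T]^q(x))$ bounded below uniformly. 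Both then reduce to comparing the block-product $P(x):=\prod_{i=0}^{k-1}r(f^{q+iN}(x))^N$ with the orbit-product $R_n(x)=\prod_{j=0}^n r(f^j(x))$ uniformly in $n$.

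The main obstacle is precisely this uniform comparison, since a naive estimate replacing $r(y)^N$ by $\prod_{s=0}^{N-1}r(f^s(y))$ introduces $N$ pointwise ratios of $r$ per block that a priori could accumulate exponentially in the number $k$ of blocks. I would resolve this by exploiting the freedom provided by the strict inequalities defining $r$, and in particular by choosing $r$ as a geometric mean along an $N$-orbit of a suitable auxiliary continuous positive function $\tilde r$ on $B$, namely $r(x):=\sqrt[N]{\prod_{s=0}^{N-1}\tilde r(f^s(x))}$ where $\tilde r$ is selected (using compactness of $B$, continuity of $\|Tf_{|F'}\|$ and $m([T])$, and the slack in the normal expansion inequality $\|T^N_{|F'}\|^r<a^{2N}m([T]^N)$) so that the resulting $r(x)$ lies strictly in the prescribed interval at every point of $B$. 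With this specific choice, $r(y)^N$ factors telescopically as $\prod_{s=0}^{N-1}\tilde r(f^s(y))$, so $P(x)$ collapses to the product $\prod_{m=q}^{n-1}\tilde r(f^m(x))$ over a contiguous orbit segment, and a direct index-counting computation shows that $R_n(x)$ equals this same product up to at most $2N$ endpoint $\tilde r$-factors, each bounded above and below on the compact set $B$. Absorbing these boundary discrepancies and the boundary factors $\|T^q_{|F'}(x)\|$, $m([T]^q(x))$ into a single uniform constant $c$ then yields both claimed inequalities.
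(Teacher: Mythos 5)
You have correctly isolated the real difficulty --- the single-phase block product $P(x)=\prod_{i=0}^{k-1}r(f^{q+iN}(x))^{N}$ is not comparable to the orbit product $R_n(x)$ with constants uniform in $n$ --- and your index-counting would indeed work \emph{if} the special form of $r$ you postulate existed. But that postulate is the gap: you need a continuous $\tilde r>0$ on $B$ such that $r(x):=\bigl(\prod_{s=0}^{N-1}\tilde r(f^{s}(x))\bigr)^{1/N}$ lies, at every $x\in B$, in the interval $\bigl(\frac1a\sqrt[N]{\|T^{N}_{|F'}(x)\|^{r}}\,,\,a\sqrt[N]{m([T]^{N}(x))}\bigr)$. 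Any function of this form is \emph{constant} along every periodic orbit whose period divides $N$ (from any starting point the window of length $N$ sweeps the whole orbit the same number of times), so all the intervals attached to the points of such an orbit would have to share a common value. Nothing in the hypotheses guarantees this: the expansion inequality compares $\|T^{N}_{|F'}\|$ and $m([T]^{N})$ only at the \emph{same} base point, and neither operator norms nor conorms of products are invariant under cyclic permutation of the factors. On a period-$2$ orbit $\{x,y\}$ one can arrange (already with $2$-dimensional leaf and normal directions, using diagonal and ``swap'' blocks) that $\|T^{N}_{|F'}(x)\|^{r}\ge a^{2N}m([T]^{N}(y))$ while $\|T^{N}_{|F'}(z)\|^{r}<a^{2N}m([T]^{N}(z))$ at $z=x,y$ and the full $r$-normal expansion holds; the two intervals are then disjoint and no $\tilde r$ exists, no matter how you exploit compactness, continuity or the $a^{2N}$ slack. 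Since your entire comparison of $P(x)$ with $R_n(x)$ rests on this telescoping choice, the central step of the proof does not go through.

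For comparison, the paper removes the block-versus-orbit mismatch without imposing any structure on $r$: instead of sampling one phase and raising the sampled values of $r$ to the $N$-th power, it works with all $N$ phases simultaneously and only first powers. For each phase $i\in\{0,\dots,N-1\}$ it applies submultiplicativity along the sub-orbit $f^{i}(x),f^{i+N}(x),\dots$ together with the lower bound defining $r$, getting $\prod_{j}r(f^{i+jN}(x))\gtrsim a^{-\#\{j\}}\,\|T^{N(\#\{j\})}_{|F'}(f^{i}(x))\|^{r/N}$, and each such block norm is bounded below by $C^{-2N}\|T^{n}_{|F'}(x)\|$; multiplying over the $N$ phases, the left-hand side is a sub-product of $R_n(x)$ in which every orbit index occurs exactly once and to the first power (the discarded factors are $\ge 1$ because $r>1$), so no exponential accumulation ever appears. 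The conorm inequality is handled dually. (A minor point, common to your computation and, if one tracks the $r$-th root carefully, to the paper's own display: the first inequality naturally comes out with rate $a^{n/r}$ rather than $a^{n}$; this geometric decay is all that the proof of proposition \ref{P1} actually uses.)
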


So there exists  $M\ge 0$ such that, for every $x\in B$, $\frac{m\big([T]^{M+1}(x)\big)}{R_{M+1}(x)}$ is greater than $\frac{1}{r(x)}$. For every $(x,u)\in F$, let $u_1$ be the orthogonal projection  of $u$ onto $F'_x$ and let $u_2$ be the equivalence class of $u-u_1$ in $(F/F')_x$. By lemma \ref{eq1}, the following Euclidean norm is well defined and depends continuously on $(x,u)$:

\[\|(x,u)\|'^2:=\sum_{n=0}^\infty\frac{\| T^n(x,u_1)\|^2}{R_n(x)^{\frac{2}{r}}}+
\sum_{n=0}^M\frac{\|[T]^n(x,u_2)\|^2}{R_n(x)^2}\]

We remark that we have 
\[\|T(x,u_1)\|'^2=\sum_{n=0}^\infty\frac{\| T^{n+1}(x,u_1)\|^2}{R_n(f(x))^{\frac{2}{r}}}
=r(x)^{\frac{2}{r}}\cdot\sum_{n=1}^\infty\frac{\| T^{n}(x,u_1)\|^2}{R_{n}(x)^{\frac{2}{r}}}\le r(x)^{\frac{2}{r}}\cdot\|(x,u_1)\|'\]
Hence, the norm induced by $\|\cdot \|'$ of $T_{|F'_x}$ is less than $\sqrt[r]{r(x)}$.

If $u_2\in (F/F')_x$ is nonzero, we have 
\[\|[T](x,u_2)\|'^2= \sum_{n=0}^M\frac{\|[T]^{n+1}(x,u_2)\|^2}{R_n(f(x))^2}=r(x)^2\cdot \sum_{n=1}^{M+1}\frac{\|[ T]^{n}(x,u_2)\|^2}{R_n(x)^2}\]
\[= r^2(x) \cdot\left(\|(x,u_2)\|'^2+ \frac{\|[T]^{M+1}(x,u_2)\|^2}{R_{M+1}(x)^2}-\frac{\|(x,u_2)\|^2}{r(x)^2}\right)>r^2(x) \cdot\|(x,u_2)\|'^2\]

Therefore, the real number  $\|[ T](x)^{-1} \|'^{-1}$ is greater than $r(x)>1$.

It follows from the two last conclusions that for every $x\in B$,
\[\big\| [T](x)^{-1}\big\|'.\max \big(1,\big\|T_{|F'}(x)\big\|'^r\big)< 1\]

By compactness of $B$, there exists a upper bound $\lambda'<1$ such that for $x\in B$, we have 
\[\big\| [T](x)^{-1}\big\|'.\max \big(1,\big\|T_{|F'}(x)\big\|'^r\big)< \lambda'\]

We extend the Euclidean norm $\|\cdot \|'$  on $F=TM_{|B}$ to a continuous Riemannian metric $g''$ on $TM$. We chose then a $C^\infty$-Riemannian metric $g'$ on $M$, close enough to $g''$ to have, with the norm induced by $g'$ on $i^*TM$:
\[\forall v\in (i^*TM/T\mathcal L)_{|K}\setminus \{0\},\quad \max\big( 1,\|T_{\pi(v)}f^{*}\|^r\big)\cdot \|v\|<\lambda'\cdot \|[i^*Tf](v)\|.\]
\end{proof}
\begin{proof}[Proof of lemma \ref{eq1}]
 \[\mathrm{Let}\; C:=\max_{x\in B}\left(\|T_{|F'}(x)\|,\|[ T](x)^{-1}\|,r(x)\right)>1\quad\mathrm{and}\quad c:=C^{4N}\cdot a^{-2N}\]

For every $n\in \mathbb N$, let $q\in \mathbb N$ and $p\in\{0,\dots,N-1\}$ such that $n=q \cdot N+p$.
For $x\in B$, we have 
\[ R_n(x)=\prod_{i=0}^{N-1}\prod_{j=0}^{q-1}r(f^{i+jN}(x))\cdot \prod_{k=0}^{p}r(f^{q N+k}(x)).\]
The first inequality of this lemma, when $q\ge 1$, is obtained by the following calculus:
\[\sqrt[r]{R_n(x)}\ge \prod_{i=0}^{N-1}\prod_{j=0}^{q-2}\frac{\sqrt[N]{\big\|T_{|F'}^{N}(f^{i+jN}(x))\big\|}}{a}
\ge \prod_{i=0}^{N-1} \frac{\sqrt[N]{\big\|T_{|F'}^{N(q-1)}(f^{i}(x))\big\|}}{a^{q-1}}\]
 \[\Rightarrow  \sqrt[r]{R_n(x)}\ge \prod_{i=0}^{N-1} \frac{\sqrt[N]{\big\|T_{|F'}^{n}(x)\big\|}}{a^{q-1}\cdot C^{2}}\ge
C^{-2N}\cdot a^{2N}\frac{\big\|T_{|F'}^{n}(x)\big\|}{a^n}
\ge c^{-1}\cdot \frac{\big\|T_{|F'}^{n}(x)\big\|}{a^n}.\]
If $q=0$, then $n< N$ and $\sqrt[r]{R_n(x)}\ge 1\ge  c^{-1}\cdot \frac{\big\|T_{|F'}^{n}(x)\big\|}{a^n}$.

The second inequality of this lemma, when $q\ge 1$, is obtained by the following calculus:
\[R_n(x)\le \prod_{i=0}^{N-1}\prod_{j=0}^{q-2}\Big(
a\cdot\sqrt[N]{m\big([T]^{N}(f^{i+jN}(x))\big)}
\Big)\cdot C^{N+p}\]
\[\Rightarrow R_n(x)\le \prod_{i=0}^{N-1} \Big(a^{q-1}\cdot \sqrt[N]{m\big([T]^{ N(q-1)}(f^{i}(x))\big)}\Big)\cdot C^{2N}\]
 \[\Rightarrow R_n(x)\le \prod_{i=0}^{N-1} \Big({a^q\cdot C^{2}\cdot \sqrt[N]{m\big([ T]^{n}(x)\big)}}\Big)\cdot a^{-N}\cdot C^{2N}\le c\cdot a^n\cdot  m\big([T]^{n}(x)\big).\]
If $q=0$, then $n< N$ and $R_n(x)\le C^N\le c\cdot a^n\cdot  m\big([T]^{n}(x)\big)$.

\end{proof}

\newpage\section{Plaque-expansiveness}\label{pppppplaque}

The definition of the plaque-expansiveness in the diffeomorphism context and the endomorphism context are different and recalled in section \ref{plaque-expansiveness}.

The plaque-expansiveness is satisfied in all the known examples of compact lamination normally expanded or hyperbolic. Nevertheless, we do not know if every compact lamination, normally  expanded or hyperbolic are plaque-expansive.
Moreover, we do not know if this hypothesis is necessary for a lamination to be persistent (as an embedded lamination).
\subsection{Plaque-expansiveness in the diffeomorphism context}
In the diffeomorphism context, up to our knowledge there exist essentially two results, both were proved in \cite{HPS}.

In order to state the first result, let us recall that a lamination $(L,\mathcal L)$ embedded into a manifold is \emph{locally} a saturated subset of a $C^1$-foliation if for every $x\in L$ there exists a $C^1$-foliation $\mathcal F$, on a neighborhood $U$ of $x$, such that $\mathcal L_{|U\cap L}$ is equal to $\mathcal F_{|U\cap L}$.

\begin{propr}[Hirsch-Pugh-Shub]\label{propHPS}
Let $(L,\mathcal L)$ be a compact lamination embedded  into a manifold $M$. Let $f$ be a diffeomorphism normally hyperbolic to this lamination. Then $f$ is plaque-expansive if $(L,\mathcal L)$ is locally a saturated subset of a $C^1$-foliation. \end{propr}

The second result was generalized in \cite{RHU} and require the definition of the \emph{Lyapunov stability}:

\begin{defi}Let $f$ be a diffeomorphism of a manifold $M$ preserving a compact lamination $(L,\mathcal L)$ embedded into $M$.
The diffeomorphism $f$ is \emph{ Lyapunov stable} along $\mathcal L$ if for every small $\epsilon>0$, there exists $\delta>0$ such that, for all $x\in L$ and $n\ge 0$, the plaque\footnote{Recall that we denote by $\mathcal L_x^\delta$ the union of the plaques whose diameter is less than $\delta$ and which contains x} $f^n(\mathcal L_x^\delta)$ is included in $\mathcal L_x^\epsilon$.\end{defi}

We remark that if the restriction of $f$ to the leaves of $\mathcal L$ is an isometry then $f$ is Lyapunov stable along $\mathcal L$.
\begin{prop}[Rodriguez Hertz- Ures]\label{Urez} Let $(L,\mathcal L)$ be a compact lamination embedded into a manifold $M$. Let $f$ be a diffeomorphism of $M$ which preserves $(L,\mathcal L)$.
 
\begin{itemize}
\item If $f$ is Lyapunov stable along $\mathcal L$ and normally expands this lamination, then $f$ is plaque-expansive.
\item If $f$ is normally hyperbolic on this lamination and if $f$ and $f^{-1}$ are Lyapunov stable along $\mathcal L$, then $f$ is plaque-expansive.\end{itemize}\end{prop}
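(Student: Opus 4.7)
The plan is to verify the quantitative form of plaque-expansiveness for both items: given two $\eta$-pseudo-orbits $(x_n)_{n\in\mathbb Z}$ and $(y_n)_{n\in\mathbb Z}$ that respect $\mathcal L$ and satisfy $d(x_n,y_n)<\eta$ for every $n$, I want to conclude that $y_0$ lies in a small plaque of $\mathcal L$ through $x_0$. The two tools I would combine are, first, the normal expansion (resp.\ normal hyperbolicity), which via propositions~\ref{P1} and~\ref{property B} furnishes an adapted metric and an invariant cone field in the normal direction along which $Tf$ is uniformly expanding (resp.\ along which $Tf^{\pm 1}$ uniformly expand the respective normal subbundles); and second, Lyapunov stability, which forces the leafwise drift of pseudo-orbits to remain uniformly bounded in plaque-diameter.

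For the first item, I would fix a small tubular neighborhood $U$ of $L$ in $M$ in which the transverse displacement $v(y)\in T\mathcal L^\bot$ of each $y\in U$ from the local leaf of $\mathcal L$ below it is well defined via the normal exponential. First I would use Lyapunov stability of $f$ along $\mathcal L$ to show that, for $\eta$ sufficiently small, any $\eta$-pseudo-orbit $(x_n)$ respecting $\mathcal L$ satisfies $x_n\in\mathcal L^{\epsilon}_{f^n(x_0)}$ for every $n\geq 0$: the plaque-jumps accumulate along the orbit, but Lyapunov stability absorbs their iterated images inside a uniform plaque of radius $\epsilon$. Applied to both $(x_n)$ and $(y_n)$, this keeps each forward orbit $(f^n(x_0))$, $(f^n(y_0))$ within an $\mathcal L$-plaque of diameter $O(\eta+\epsilon)$ of $x_n$, $y_n$, hence inside $U$, so the transverse component $v_n$ of $y_n$ relative to $\mathcal L_{x_n}$ is defined for every $n\geq 0$ and uniformly bounded. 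The cone-field property~\ref{property B} then yields a nonlinear estimate of the form $\|v_{n+1}\|\geq \lambda\|v_n\|-C\eta$ with $\lambda>1$; iterating, $\|v_n\|\to\infty$ unless $\|v_0\|\leq C\eta/(\lambda-1)$. Letting $\eta\to 0$ forces $v_0=0$, i.e.\ $y_0$ lies in a small plaque through $x_0$.

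For the second item the same scheme applies with the normal bundle replaced by the $Tf$-invariant splitting $E^s\oplus E^u$ provided by normal hyperbolicity. The unstable component of $v_n$ is handled by the forward argument above, using Lyapunov stability of $f$. For the stable component I would run the mirror argument on the reversed pseudo-orbits $(x_{-n}),(y_{-n})$ under $f^{-1}$: since $\mathcal L$ is $f$-invariant these are again $\eta'$-pseudo-orbits of $f^{-1}$ respecting $\mathcal L$ for a suitable $\eta'$, normal hyperbolicity for $f$ reads as normal expansion for $f^{-1}$ along $E^s$, and Lyapunov stability of $f^{-1}$ plays the role that Lyapunov stability of $f$ played in the first item. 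The vanishing of both normal components then gives $v_0=0$.

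The main obstacle will be making the discrete nonlinear step $v_n\mapsto v_{n+1}$ precise: $y_{n+1}$ is not literally $f(y_n)$ but only lies in the same $\mathcal L$-plaque of diameter at most $\eta$, and likewise $x_{n+1}$ is only within an $\eta$-plaque of $f(x_n)$, so the linear estimate on $Tf$ from~\ref{property B} has to be upgraded to a nonlinear estimate in tubular coordinates with moving base point. The delicate calculation is to confirm that the $O(\eta)$ plaque-jump perturbations enter only additively in the normal coordinates and are therefore dominated by the strict expansion factor $\lambda>1$ once $\eta$ is small enough. A subsidiary difficulty is that the transverse coordinate $v$ is only defined when the pseudo-orbit stays in the tubular neighborhood $U$ of $L$: this confinement is exactly what Lyapunov stability buys, and it must be established (for all forward times in item one, and for all times in item two) before the cone-field expansion argument can be iterated.
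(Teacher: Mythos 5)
There is a genuine gap, and it sits exactly where the content of the proposition lies. Plaque-expansiveness is an \emph{exact} statement for a fixed expansiveness constant: for each fixed $\eta\le\epsilon$ and each fixed pair of $\eta$-pseudo-orbits staying $\eta$-close, you must conclude that $x_0$ and $y_0$ lie in one small plaque; you cannot ``let $\eta\to 0$'' for a given pair. For fixed $\eta$ your recursion only yields $\|v_0\|\le C\eta/(\lambda-1)$, which is of the same order as the trivial bound $\|v_0\|\le d(x_0,y_0)<\eta$, so it carries no information: the whole point of the proposition is the upgrade from ``transversally $O(\eta)$-close'' to ``in the same plaque''. Moreover the recursion $\|v_{n+1}\|\ge\lambda\|v_n\|-C\eta$ is itself not justified: the plaque-jumps are leafwise, but re-basing the transverse coordinate from $f(x_n)$ to $x_{n+1}$ (and replacing $f(y_n)$ by $y_{n+1}$) is controlled only by the transverse modulus of continuity of the lamination, which is merely $C^0$, not Lipschitz; in the regime $\|v_n\|\ll\eta$ — precisely the regime at stake — this error need not be dominated by the expansion factor $\lambda$. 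Your use of Lyapunov stability is also misplaced: the claim that an $\eta$-pseudo-orbit respecting $\mathcal L$ satisfies $x_n\in\mathcal L^\epsilon_{f^n(x_0)}$ is false (take the leafwise dynamics to be the identity, which is Lyapunov stable: the pseudo-orbit may drift by $n\eta$ inside a leaf), and it is not needed for confinement, since the pseudo-orbits already lie in $L$.

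Compare with what the paper actually does (it only quotes proposition \ref{Urez} from \cite{RHU}, but proves the endomorphism analogue, proposition \ref{Myprop}, by the same mechanism). Assuming for contradiction that $y_n\notin\mathcal L^{\epsilon}_{x_n}$ for all $n$, one replaces $y_n$ by the point $z_n$ of the plaque of $y_n$ lying in the cone over $x_n$, and then iterates $f$ itself on the fixed pairs $(x_n,z_n)$: a two-index argument, with the pseudo-orbit index $n$ and the iteration index $k$. Along the $k$-direction there are no jumps, so the cone field of property \ref{property B} gives clean growth by $\lambda$ of $d(f^k(x_n),f^k(z_n))$ up to a definite macroscopic scale $\epsilon_0$; Lyapunov stability is what guarantees that $f^k$ of a small plaque stays inside a plaque of uniformly bounded diameter $A$, so that the sequences $(f^k(x_{pn+j}))_n$, $(f^k(z_{pn+j}))_n$ remain $A$-pseudo-orbits and the uniform separation of $A$-plaques of cone-related points at scale $\epsilon_0$ (inequality (\ref{disteta})) can be transferred along the index $n$; after $p$ further iterations ($\lambda^p\eta>\epsilon_0$) this contradicts $M_{k}:=\sup_n d(f^k(x_n),f^k(z_n))\le\epsilon_0$. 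The exact conclusion comes from contradicting the exact nonvanishing assumption, not from an approximate estimate. This is the idea your proposal is missing; once it is in place, your treatment of the second item (run the scheme forward on $E^u$ using Lyapunov stability of $f$, and backward on $E^s$ using Lyapunov stability of $f^{-1}$) is the right outline.
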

\subsection{Plaque-expansiveness in the endomorphism context}
In the endomorphism context, we have generalized a little bit the above result:
\begin{prop}\label{Myprop}
Under the hypotheses of theorem \ref{th1}, we suppose moreover that the lamination $(L,\mathcal L)$ is embedded. Let $\mathcal L':= \mathcal L_{|L'}$. We suppose that there exist $A>0$ and $\delta>0$ such that, for every $x\in L'$, the subset $\mathcal L_x'^A$ is precompact in the leaf of $x$, and we have for any $n\ge 0$
\[f^{n}(\mathcal L_{x}'^\delta)\subset \mathcal L_{f^{n}(x)}'^A.\]
Then $f$ is plaque-expansive at $(L',\mathcal L')$.\end{prop}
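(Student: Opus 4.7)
The plan is to use the cone-field characterization of normal expansion (Property \ref{property B}) in reverse: if two pseudo-orbits respecting $\mathcal L'$ stay uniformly close but do not lie in a common plaque, then their ``transverse separation'' is a nonzero vector in the unstable cone, and it must grow exponentially under iteration, contradicting the fact that the pseudo-orbits remain within a fixed distance of each other. The extra Lyapunov-stability hypothesis $f^n(\mathcal L_x'^\delta)\subset \mathcal L_{f^n(x)}'^A$ is what allows the transverse component to be defined coherently along the whole trajectory.

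First I would endow $M$ with a metric adapted to the normal expansion on $K:=cl(L')$ via Proposition \ref{P1}, and then invoke Property \ref{property B} to obtain $\lambda>1$ and a cone field $C$ over a neighborhood $W$ of $K$, whose fibers contain $T\mathcal L^\perp$ as maximal subspaces, and satisfying $Tf(\overline{C})\subset C$ with $\|Tf(u)\|\ge \lambda\|u\|$ for $u\in C$. Since each leaf is embedded, for $\delta'>0$ small enough there is a continuous nearest-point projection $\pi_x\colon B(x,\delta')\to \mathcal L'^{\delta'}_x$ together with a transverse coordinate $\tau_x(y)\in T_{\pi_x(y)}\mathcal L^\perp$ satisfying $y=\exp_{\pi_x(y)}(\tau_x(y))$, and $\tau_x(y)$ lies in the cone $C_{\pi_x(y)}$.

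Next, let $\epsilon>0$ be a continuous function bounded above by a small fraction of $\min(\delta,\delta',A)$, chosen so that plaques of diameter $<\epsilon$ project diffeomorphically to their tangent planes and so that the constraint $d(x_n,y_n)<\eta<\epsilon$ ensures $\tau_{x_n}(y_n)$ is defined. Suppose $(x_n),(y_n)$ are $\eta$-pseudo-orbits respecting $\mathcal L'$ with $d(x_n,y_n)<\eta$ for all $n\geq 0$, and set $v_n:=\tau_{x_n}(y_n)\in T\mathcal L^\perp\cap C$. Because $f(x_n)$ and $x_{n+1}$ lie in a common plaque of diameter $<\eta$, and similarly for $f(y_n)$ and $y_{n+1}$, one can compare $\tau_{f(x_n)}(f(y_n))$ (controlled by $\lambda$-expansion in the cone) with $v_{n+1}=\tau_{x_{n+1}}(y_{n+1})$ (the two plaque-jumps contribute errors linear in $\eta$). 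The resulting estimate is
\[ \|v_{n+1}\|\ge \lambda\,\|v_n\| - C\eta \]
for some constant $C$ depending only on the adapted metric and the geometry of the tubular neighborhood. Iterating,
\[ \|v_n\|\ge \lambda^n\|v_0\| - C\eta\,\frac{\lambda^n-1}{\lambda-1}. \]
Since $\|v_n\|\le d(x_n,y_n)<\eta<\epsilon$ remains bounded, letting $n\to\infty$ forces $\|v_0\|\le C\eta/(\lambda-1)$; applying the same argument with the shifted pseudo-orbits $(x_{n+k}),(y_{n+k})$ and letting $\eta$ be arbitrarily small in the definition of plaque-expansiveness, one concludes $v_0=0$, i.e.\ $y_0\in \mathcal L'^{\delta'}_{x_0}$.

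The main obstacle is the second step: making sure that the projection $\tau_{x_n}$ and the comparison of $\tau_{f(x_n)}(f(y_n))$ with $\tau_{x_{n+1}}(y_{n+1})$ remain valid for all $n$, and that the plaque-jump errors really do contribute only linearly in $\eta$ rather than amplifying and swallowing the factor $\lambda$. This is precisely where the hypothesis $f^n(\mathcal L'^\delta_x)\subset \mathcal L'^A_{f^n(x)}$ and the precompactness of $\mathcal L'^A_x$ in its leaf enter: without a uniform Lyapunov bound on forward images of small plaques, the plaque of $f(x_n)$ could drift out of any fixed tubular chart, the projection $\pi$ would cease to be defined in a uniform way, and the cone-field estimate could not be iterated. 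With this bound in hand, one can cover the trajectories by a fixed finite number of charts (up to compactness arguments inside each leaf) and the linear-error comparison goes through uniformly.
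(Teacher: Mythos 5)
Your overall strategy (adapted metric, cone field from property \ref{property B}, exponential growth of a transverse separation) is the right starting point, but the way you book-keep the pseudo-orbit jumps creates a genuine gap at the end. Your one-step estimate $\|v_{n+1}\|\ge\lambda\|v_n\|-C\eta$ carries an additive error at every step, so the recursion can only ever yield $\|v_0\|\le C\eta/(\lambda-1)$ (and the same bound for each $v_k$ after shifting). This is not the required conclusion: plaque-expansiveness demands that $y_0$ lie in a small plaque of $x_0$, i.e.\ $v_0=0$ exactly, and a transverse deviation of size $O(\eta)$ is essentially no better than the hypothesis $d(x_0,y_0)<\eta$ you started from. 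The proposed fix, ``letting $\eta$ be arbitrarily small in the definition of plaque-expansiveness'', misreads that definition: the function $\eta$ is given together with the pseudo-orbits, and the same-plaque conclusion must be proved for every fixed $\eta$ below the expansiveness constant and for every pair of $\eta$-pseudo-orbits staying $\eta$-close; you cannot shrink $\eta$ for the particular pair at hand. So the argument as written does not prove the proposition. (A secondary concern: your constant $C$ presupposes uniform Lipschitz-type control of the transverse projection as the base point slides inside a plaque, which is delicate since the lamination is only continuous transversally; but even granting it, the final step fails.)

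The paper's proof (following Rodriguez Hertz--Ures) avoids the additive error altogether, and this is where the hypothesis $f^{n}(\mathcal L_{x}'^\delta)\subset \mathcal L_{f^{n}(x)}'^A$ does its real work. Instead of re-projecting at each pseudo-orbit step, one replaces $y_n$ by a point $z_n$ in the plaque of $y_n$ lying in $\exp_{x_n}\big(B_{C(x_n)}(0,\delta)\big)$, and then iterates the honest dynamics on each pair, setting $M_k:=\sup_n d\big(f^k(x_n),f^k(z_n)\big)$. Along a true orbit the cone estimate gives $M_{k+1}\ge\lambda M_k$ with no error term, so some pair reaches a definite transverse size at a time $k_0+p$; the hypothesis then places $f^{k_0+p}(x_{n_0})$ in $\mathcal L_{f^{k_0}(x_{n_0+p})}'^A$ (and likewise for $z$), and since a cone separation of definite size forces the two $A$-plaques to be more than $\eta$ apart, the separation is transferred to the pair indexed $n_0+p$ at time $k_0$ with no loss; $p$ further iterations, using $\lambda^p\eta>\epsilon_0$, then contradict $M_{k_0+p}\le\epsilon_0$. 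In short, pseudo-orbit jumps are only ever compared through distances between whole $A$-plaques, never through a re-projection, and that is what makes the conclusion exact rather than $O(\eta)$; to repair your proof you would need to reorganize it along these lines rather than tighten the constant in your error term.
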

\begin{proof}
This proof uses several ideas from \cite{RHU}, in particular the one
where we consider the forward iterates of pseudo-orbits.

As $L'$ is precompact, we may suppose that the metric of $M$ satisfies property \ref{property B} for the compact subset $K=cl(L')$. We denote by $\exp$ the exponential map associated to this metric. Thus, there exists a cone field over $L'$ in $TM_{|L'}$ such that, for each $x\in L'$, $T_x\mathcal L^\bot$ is a maximal vector subspace included in $C(x)$ and satisfies moreover:

There exist a small $\epsilon_0>0$ and $\lambda>1$ such that, for all $x\in L'$ and $u\in C(x)$ with norm less than $\epsilon_0$, we have 
\begin{equation}\label{dilacona}
v:=\exp_{f(x)}^{-1}\circ f\circ \exp_{x}(u)\in C(f(x))\quad\mathrm{and}\quad \|v\|\ge \lambda \|u\|.\end{equation}
By precompactness, for $\epsilon_0>0$ sufficiently small, there exists $\eta>0$ such that for every $(x,y)\in L'^2$ satisfying $y=\exp_x(u)$, with $u\in C(x)$ of norm in $[\epsilon_0/\sup_{L'}\|Tf\|,\epsilon_0]$, we have
\begin{equation}\label{disteta} d(\mathcal L_x'^A,\mathcal L_y'^A)>\eta.\end{equation}

Let $p\in \mathbb N$ such that $\lambda^p\cdot \eta > \epsilon_0$.

We can also suppose that $\delta$ is less than  $\frac{\epsilon_0}{\sup_{L'}\|Tf\|^p}$.
\begin{fact}\label{thefact} There exists a small $\epsilon\in ]0,\epsilon_0[$ such that, for every pair of $\epsilon$-pseudo-orbits $(x_n)_n$ and $(y_n)_n$ which respect $\mathcal L'$ and satisfy 
\[d(x_n,y_n)<\epsilon\quad\mathrm{and} \quad y_n\notin \mathcal L_{x_n}'^\epsilon,\;\forall n\ge 0,\]
there exists a sequence $(z_n)_n\in L'^{\mathbb N}$ such that, for every $n\ge 0$, $z_n$ belongs to the intersection of $\exp_{x_n}(B_{C(x_n)}(0,\delta))$ with a small plaque containing $y_n$ (but not $x_n$).

For $\epsilon>0$ small enough, $f^p(z_n)$ belongs to $\mathcal L_{z_{n+p}}'^\delta$ and $f^p(x_n)$ belongs to $\mathcal L_{x_{n+p}}'^\delta$.\end{fact}

We have proved this proposition if there do not exist such pseudo-orbits $(x_n)_n$ and $(y_n)_n$.
 We suppose, for the sake of contradiction, that there exist such sequences $(x_n)_n$ and $(y_n)_n$, and so $(z_n)_n$.

The fact \ref{thefact} implies that, for all $k\ge 0$ and $j\ge 0$, the sequences $(f^k(x_{pn+j}))_n$ and $(f^k(z_{pn+j}))_n$ are $A$-pseudo-orbits of $f^p$ which respect $\mathcal L'$.

For $k\ge 0$, let $M_k:= \sup_n d(f^k(x_n),f^k(z_n))$. The number $M_0$ belongs to the interval $]0,\epsilon_0/\sup_{L'}{\|Tf\|^p}[$. Moreover, if $M_j<\epsilon_0$ for every $j\le k$, by (\ref{dilacona}) and the fact \ref{thefact}, the number $M_{k+1}$ belongs to $[\lambda M_k, \sup_{L'}\|Tf\|M_k]$. Thus, there exists $k_0\ge 0$ such that $M_{k_0+p}$ belongs to $]\epsilon_0/\sup_{L'}{\|Tf\|},\epsilon_0]$ and $M_j$ is less than $\epsilon_0$ for $j\le k_0+p$. Hence, there exists $n_0\ge 0$ such that 
        \[d(f^{k_0+p}(x_{n_0}),f^{k_0+p}(z_{n_0}))\in \left[\frac{\epsilon_0}{\sup_{L'}\|Tf\|},\epsilon_0\right].\]
Therefore, by (\ref{disteta}), we have 
        \[d(f^{k_0}(x_{n_0+p}),f^{k_0}(z_{n_0+p}))>\eta\]
Consequently, as $\lambda^p\eta$ is greater than $\epsilon_0$, we have 
         \[d(f^{k_0+p}(x_{n_0+p}),f^{k_0+p}(z_{n_0+p}))>\epsilon_0\]
This contradicts $M_{k_0+p}\le \epsilon_0$.
\end{proof}
\begin{rema} Under the hypotheses of theorem \ref{th1}, if the leaves of $\mathcal L$ are the connected components of the fibers of a bundle, then $f^*$ is plaque-expansive at $\mathcal L'$, by proposition \ref{Myprop}.\end{rema}

The following is equivalent, in the endomorphism context, to property \ref{propHPS}:
\begin{propr}
Under the hypotheses of theorem \ref{th1}, we suppose moreover that $(L,\mathcal L)$ is embedded. We denote by $\mathcal L'$ the lamination $\mathcal L_{|L'}$.  If $\mathcal L$ is locally a saturated subset of a $C^1$-foliation, then $f$ is plaque-expansive at $(L',\mathcal L')$. \end{propr}

\begin{proof} We suppose that $M$ is endowed with a metric which satisfies property \ref{property B} for the compact subset $K=cl(L')$. We denote by $\exp$ the exponential map associated to this metric. Thus, there exists a cones field $C$ on $L'$ in $TM_{|L'}$ such that, for each $x\in L'$, $T_x\mathcal L^\bot$ is a maximal vector subspace included in $C(x)$ and which satisfies moreover:

There exist $\epsilon_0>0$ and $\lambda>1$ such that, for all $x\in L'$ and $u\in C(x)$ with norm less than $\epsilon_0$, we have 
\begin{equation}\label{dilacone}
v:=\exp_{f(x)}^{-1}\circ f\circ \exp_{x}(u)\in C(f(x))\quad\mathrm{and}\quad \|v\|\ge \lambda \|u\|.
\end{equation}
Moreover, for $\epsilon_0$ small enough, by the $C^1$-foliation hypothesis, there exists a number $C>0$ such that, for all $(x,y)\in L'^2$, if
$y$ belongs to $\exp(C(x)\cap B_{T_xM}(0,\epsilon_0))$, the distance $d(\mathcal L_x^{\epsilon_0},\mathcal L_y^{\epsilon_0})$ is greater than $C d(x,y)$. Let $p\ge 0$ such that $C\lambda^p>2$. Then there exists $\epsilon_1\in]0,\epsilon_0[$ small enough such that for every $\epsilon_1$-pseudo-orbit $(x_n)_n$ which respects $\mathcal L'$, the sequence $(x_{np})_n$ is an $\epsilon_0$-pseudo-orbit of $f^p$ which respects $\mathcal L'$.
 
There exists $\epsilon\in ]0,\epsilon_1[$ such that, for every pair $\Big((x_n)_n,(y_n)_n\Big)$ of $\epsilon$-pseudo-orbits of $f$, which respects $\mathcal L'$ and satisfies 
\[d(x_n,y_n)<\epsilon,\quad \forall n\ge 0,\]
there exists $z_n\in \mathcal L_{y_n}'^{2\epsilon}$ such that $z_n$ belongs to $\exp(C(x_n)\cap B(0_{x_n},\epsilon_0))$, the distance $d(z_n,x_n)$ is less than $\epsilon_1$ and $(z_n)_n$ is an $\epsilon_1$-pseudo-orbit of $f^*$ which respects $\mathcal L'$.
 
Consequently, the sequences $(z_{pn})_n$ and $(x_{pn})_n$ are $\epsilon_0$-pseudo-orbits of $f^p$ which respects the plaques of $\mathcal L'$, such that $z_{pn}$ belongs to $\exp(C(x_{pn})\cap B(0_{x_{pn}},\epsilon_0))$ and such that the distance $d(z_{pn},x_{pn})$ is less than $\epsilon_0$.

Thus, the distance  $d(f^p(z_{pn}),f^p(x_{pn}))$ is greater than $\lambda^p d(z_{pn},x_{pn})$. Moreover, the distance $d(f^p(z_{pn}),f^p(x_{pn}))$ is less than $d(z_{p(n+1)},x_{p(n+1)})/C$. Therefore, 
$d(z_{p(n+1)},x_{p(n+1)})$ is twice greater than $d(z_{pn},x_{pn})$. We conclude that $d(z_{pn},x_{pn})$ is greater than $2^nd(z_0,x_0)$ and less than $\epsilon_0$, this implies the equality of $x_0$ and $z_0$.
Thus, $x_0$ belongs to $\mathcal L_{y_0}'^{2\epsilon}$.
\end{proof}

\begin{rema}

Under the hypotheses of theorem \ref{th2}, if for each stratum $X\in \Sigma_{|A'}$ the hypotheses of the proposition or the above property are satisfied on a precompact subset $L'$ of $X$, such that 
\begin{equation}\label{derniere equation}f^*(cl(L'))\subset L',\; cl(L')\subset int\Big(f^{*^{-1}}\big(cl(L')\big)\Big)\;\mathrm{and}\; \cup_{n\ge 0} f_{|A'}^{*^{-n}}(cl(L')=X,\end{equation}
we can reduce and extend the plaque-expansiveness constant on $L'$ to a continuous positive function $\epsilon$ on $X$, for which $f^*$ is plaque-expansive at $X$ (as in section \ref{zzzzz}). We note that there always exists an precompact open subset satisfying condition (\ref{derniere equation}): for example we can take $int(K_p)\cap X_p'$ for the stratum $X_p'$, with the notation of the demonstration of theorem \ref{th2}.
\end{rema}

\nocite{*}
\bibliographystyle{alpha}
\bibliography{references}
\bigskip
\bigskip

Pierre Berger\\
Institute for Mathematical Sciences\\
Stony Brook University\\
Stony Brook, NY 11794-3660

\noindent{pierre.berger@normalesup.org}

\noindent {http://www.math.u-psud.fr/~pberger/}

\noindent {http://www.math.sunysb.edu/~berger/}

\end{document}